\setlist[enumerate,1]{label={\upshape(\arabic*)}}
\setlist[enumerate,2]{label={\upshape(\alph*)}}
\tikzset{black/.style={circle,fill=black,inner sep=3pt,outer sep=3pt},
         white/.style={circle,fill=white,draw=black,inner sep=3pt,outer sep=3pt},
}
\newcolumntype{C}{>{$}c<{$}}
\newtheorem{theorem}{Theorem}[section]
\newtheorem{theoremi}{Theorem}
\newtheorem{propositioni}[theoremi]{Proposition}
\newtheorem{corollary}[theorem]{Corollary}
\newtheorem{lemma}[theorem]{Lemma}
\newtheorem*{lemma*}{Lemma}
\newtheorem*{theorem*}{Theorem}
\newtheorem{proposition}[theorem]{Proposition}
\newtheorem{definition-proposition}[theorem]{Definition-Proposition}
\newtheorem{conjecture}[theorem]{Conjecture}
\theoremstyle{definition}
\newtheorem{definition}[theorem]{Definition}
\newtheorem{remark}[theorem]{Remark}
\newtheorem{example}[theorem]{Example}
\newcommand{\qedb}{\hfill\blacksquare}
\newcommand{\la}{\langle}
\newcommand{\ra}{\rangle}
\newcommand{\al}{\alpha}
\newcommand{\be}{\beta}
\newcommand{\ga}{\gamma}
\newcommand{\vare}{\varepsilon}
\newcommand{\CC}{\mathcal{C}}
\newcommand{\GG}{\mathcal{G}}
\newcommand{\DD}{\mathcal{D}}
\newcommand{\KKK}{\mathsf{K}}
\newcommand{\FF}{\mathcal{F}}
\newcommand{\FFF}{\mathsf{F}}
\newcommand{\RR}{\mathcal{R}}
\renewcommand{\SS}{\mathcal{S}}
\newcommand{\Z}{\mathbb{Z}}
\newcommand{\R}{\mathbb{R}}
\newcommand{\Ext}{\operatorname{Ext}\nolimits}
\newcommand{\Hom}{\operatorname{Hom}\nolimits}
\newcommand{\End}{\operatorname{End}\nolimits}
\newcommand{\RHom}{\mathbf{R}\strut\kern-.2em\operatorname{Hom}\nolimits}
\newcommand{\Image}{\operatorname{Im}\nolimits}
\newcommand{\Kernel}{\operatorname{Ker}\nolimits}
\newcommand{\GL}{\operatorname{GL}\nolimits}
\newcommand{\SL}{\operatorname{SL}\nolimits}
\newcommand{\im}{\Image}
\renewcommand{\ker}{\Kernel}
\newcommand{\un}{\underline}
\newcommand{\ov}{\overline}
\newcommand{\cov}{\gtrdot}
\newcommand{\covd}{\lessdot}
\newcommand{\ot}{\leftarrow}
\DeclareMathOperator{\moduleCategory}{\mathsf{mod}} \renewcommand{\mod}{\moduleCategory}
\DeclareMathOperator{\ind}{\mathsf{ind}}
\DeclareMathOperator{\simp}{\mathsf{sim}}
\DeclareMathOperator{\torf}{\mathsf{torf}}
\DeclareMathOperator{\Hasse}{\mathsf{Hasse}}
\DeclareMathOperator{\brick}{\mathsf{brick}}
\DeclareMathOperator{\Sub}{\mathsf{Sub}}
\DeclareMathOperator{\Filt}{\mathsf{Filt}}
\DeclareMathOperator{\add}{\mathsf{add}}
\DeclareMathOperator{\inv}{\mathsf{inv}}
\DeclareMathOperator{\Binv}{\mathsf{Binv}}
\DeclareMathOperator{\Inv}{\mathsf{Inv}}
\DeclareMathOperator{\BInv}{\mathsf{BInv}}
\DeclareMathOperator{\supp}{\mathsf{supp}}
\DeclareMathOperator{\udim}{\un{\dim}}
\newcommand{\iso}{\cong}
\newcommand{\defl}{\twoheadrightarrow}
\newcommand{\EE}{\mathcal{E}}
\numberwithin{equation}{section}
\begin{document}
\title[Bruhat inversions and torsion-free classes over preprojective algebras]{Bruhat inversions in Weyl groups\\ and torsion-free classes over preprojective algebras}

\author[H. Enomoto]{Haruhisa Enomoto}
\address{Graduate School of Mathematics, Nagoya University, Chikusa-ku, Nagoya. 464-8602, Japan}
\email{m16009t@math.nagoya-u.ac.jp}
\subjclass[2010]{16G10, 16G20, 17B22, 18E40, 20F55}
\keywords{preprojective algebra; simple objects; Bruhat inversion; Jordan-H\"older property}
\begin{abstract}
  For an element $w$ of the simply-laced Weyl group, Buan-Iyama-Reiten-Scott defined a subcategory $\mathcal{F}(w)$ of a module category over a preprojective algebra of Dynkin type. This paper aims at studying categorical properties of $\mathcal{F}(w)$ via its connection with the root system.
  We show that by taking dimension vectors, simple objects in $\mathcal{F}(w)$ bijectively correspond to Bruhat inversion roots of $w$.
  As an application, we obtain a combinatorial criterion for $\mathcal{F}(w)$ to satisfy the Jordan-H\"older property (JHP).
  To achieve this, we develop a method to find simple objects in a general torsion-free class by using a brick sequence associated to a maximal green sequence of it.
  For type A case, we give a diagrammatic construction of simple objects, and show that (JHP) can be characterized via a forest-like permutation, introduced by Bousquet-M\'elou and Butler in the study of Schubert varieties.
\end{abstract}

\maketitle

\tableofcontents

\section{Introduction}
This paper focuses on the interplay between the preprojective algebras of Dynkin type and the root system. More precisely, we study a certain subcategory $\FF(w)$ of the module category of the preprojective algebra via an inversion set in the root system.

\subsection{Background}
Let $\Phi$ be the simply-laced root system of type $X$ and $W$ the corresponding Weyl group. Let $Q$ be a quiver of type $X$, that is, the underlying graph of $Q$ is the Dynkin diagram $X$. Then the celebrated Gabriel's theorem gives a bijection between indecomposable representations of $Q$ and positive roots in $\Phi$, by taking dimension vectors.

A \emph{preprojective algebra $\Pi$} of $\Phi$ is a finite-dimensional algebra which unifies the representation theory of all quivers of type $X$, and has a lot of symmetry compared to path algebras.
This algebra has been one of the most important objects in the representation theory of algebras, for example, \cite{AIRT,BIRS,GLS,IRRT,mizuno}, and also plays an important role in the theory of crystal bases of quantum groups, for example, \cite{Lus,KS}.

In this paper, we focus on a certain subcategory $\FF(w)$ of $\mod\Pi$ associated to an element $w$ of $W$ introduced by Buan-Iyama-Reiten-Scott \cite{BIRS} (under the name $\CC_w$). This category has a nice structure related to cluster algebras, that is, a stably 2-Calabi-Yau Frobenius category admitting a cluster-tilting object. Indeed, Geiss-Leclerc-Schr\"oer later \cite{GLS} proved that $\FF(w)$ gives a categorification of the cluster algebra structure on  the coordinate ring of the unipotent cell in the complex simple Lie group of Dynkin type $X$.

The category $\FF(w)$ naturally arises also from the viewpoint of the representation theory of algebras, as well as the lattice theoretical study of the Weyl group.
This category is a \emph{torsion-free class} in $\mod \Pi$, that is, closed under submodules and extensions. Mizuno \cite{mizuno} proved that the map $w \mapsto \FF(w)$ is actually a bijection from $W$ to the set $\torf\Pi$ of all torsion-free classes in $\mod\Pi$.
He also proved that this bijection is an isomorphism of lattices, where we endow $W$ with the right weak order and $\torf\Pi$ the inclusion order. Via this isomorphism, lattice theoretical properties of $W$ were investigated in \cite{IRRT,DIRRT}.
\subsection{Main results}
It is natural to expect that categorical properties of $\FF(w)$ are related to combinatorial properties of $w$. In this direction, we prove the two main results: we classify \emph{simple objects} in $\FF(w)$ (Theorem A), and  give a criterion for the validity of the \emph{Jordan-H\"older type theorem} in $\FF(w)$ (Theorem C).

A $\Pi$-module $M$ in $\FF(w)$ is called a \emph{simple object} in $\FF(w)$ if there is no non-trivial submodule $L$ of $M$ satisfying $L,M/L \in \FF(w)$. This notion was introduced in the context of Quillen's exact categories, and has been investigated by several papers such as \cite{eno,bhlr}.
In \cite{eno}, the author classified simple objects in a torsion-free class in $\mod kQ$ for type A case, and the original motivation of this paper is to generalize this to other Dynkin types and to preprojective algebras.

Our strategy is to consider $\FF(w)$ via the root system $\Phi$. For $M \in \mod\Pi$, we can regard its dimension vector as a vector in the ambient space of $\Phi$ naturally. Let $\inv(w)$ be the set of \emph{inversions} of $w$, positive roots which are sent to negative by $w^{-1}$. Then any $M \in \FF(w)$, its dimension vector $\udim M$ is a non-negative integer linear combination of inversions of $w$ (Corollary \ref{binv:cor:diminv}).
In parallel with simples in $\FF(w)$, it is natural to consider an inversion of $w$ which cannot be written as a sum of inversions of $w$ non-trivially. We call such a root a \emph{Bruhat inversion} (Definition \ref{binv:def:binv}, Theorem \ref{binv:thm:binvchar}). Then the first main result of this paper is the following:
\begin{theoremi}[= Corollary \ref{binv:cor:main}]\label{binv:thmA}
  Let $\Pi$ be the preprojective algebra of Dynkin type and $w$ an element of the corresponding Weyl group $W$. Then by taking dimension vectors, we have the bijection between the following two sets:
  \begin{enumerate}
    \item The set of isomorphism classes of simple objects in $\FF(w)$.
    \item The set of Bruhat inversions of $w$.
  \end{enumerate}
\end{theoremi}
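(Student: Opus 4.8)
The plan is to identify the simple objects of $\FF(w)$ with certain brick labels in the Hasse quiver of $\torf\Pi$, and then to translate the answer into root-theoretic terms through Mizuno's lattice isomorphism between $\torf\Pi$ and $W$ equipped with the right weak order. Fix a maximal chain $0=\FF_0\lessdot\FF_1\lessdot\cdots\lessdot\FF_\ell=\FF(w)$ in $\torf\Pi$; under Mizuno's isomorphism it corresponds to a reduced expression $w=s_{i_1}\cdots s_{i_\ell}$, the arrow $\FF_{k-1}\lessdot\FF_k$ is labelled by a brick $B_k\in\FF(w)$ with $\udim B_k=\beta_k:=s_{i_1}\cdots s_{i_{k-1}}(\alpha_{i_k})$, and as $k$ ranges over $1,\dots,\ell$ the $\beta_k$ run over the (pairwise distinct) elements of $\inv(w)$ while $\FF(w)=\Filt(B_1,\dots,B_\ell)$. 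From the last point, a simple object $M$ of $\FF(w)$ must in fact be one of the $B_k$: a filtration of $M$ with successive quotients among $B_1,\dots,B_\ell$ cannot have two or more steps, since otherwise its first term is a proper nonzero submodule of $M$ lying in $\FF(w)$ whose cokernel is again filtered by the $B_j$, hence in $\FF(w)$, contradicting simplicity. In particular every simple object $M$ has $\udim M\in\inv(w)$.

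It then remains to decide which $B_k$ are simple objects and to match this with the Bruhat-inversion condition. One implication is immediate: if $\beta_k$ is a Bruhat inversion and $0\to L\to B_k\to N\to 0$ is exact with $L$ and $N$ nonzero and in $\FF(w)$, then by Corollary~\ref{binv:cor:diminv} both $\udim L$ and $\udim N$ are nonzero elements of $\N\inv(w)$, so $\beta_k=\udim L+\udim N$ writes $\beta_k$ as a sum of at least two inversions of $w$, contradicting the definition of a Bruhat inversion; hence $B_k$ is a simple object. Combined with the previous paragraph, this already shows that every Bruhat inversion is the dimension vector of a simple object of $\FF(w)$, and that every simple object has dimension vector in $\inv(w)$.

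The reverse implication --- if $B_k$ is a simple object then $\beta_k$ cannot be written as a sum of two or more inversions of $w$ --- is the crux, and I expect it to be the main obstacle. Here I would invoke the general brick-sequence method for locating simple objects in an arbitrary torsion-free class: given a hypothetical decomposition $\beta_k=\gamma_1+\cdots+\gamma_m$ with $m\ge 2$ and $\gamma_j\in\inv(w)$, one has to manufacture a proper nonzero submodule $L\subseteq B_k$ with $L,\,B_k/L\in\FF(w)$, and the combinatorial characterization of Bruhat inversions (Theorem~\ref{binv:thm:binvchar}) is the tool that makes such a decomposition usable, by pinning down which sub-expression of the fixed reduced word it comes from. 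Everything else is bookkeeping with Corollary~\ref{binv:cor:diminv} and the standard dictionary between reduced words, inversions, and brick labels. Finally, the assignment $M\mapsto\udim M$ is injective on simple objects: simple objects of a torsion-free class are bricks, a nonzero homomorphism between simple objects of $\FF(w)$ is a monomorphism and therefore an isomorphism as soon as the two modules have equal dimension vectors, and by Crawley--Boevey's formula for $\Pi$ two bricks sharing a positive-root dimension vector $\beta$ cannot be $\Hom$-orthogonal, since the symmetric bilinear form satisfies $(\beta,\beta)=2$. Assembling these facts gives the claimed bijection between isomorphism classes of simple objects of $\FF(w)$ and Bruhat inversions of $w$.
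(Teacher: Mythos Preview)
Your overall architecture matches the paper's: fix a brick sequence $B_1,\dots,B_\ell$ coming from a reduced word, note that every simple object is some $B_k$, and then characterize which $B_k$ are simple in terms of whether $\beta_k=\udim B_k$ is a Bruhat inversion. The easy direction (Bruhat $\Rightarrow$ simple) is fine and is exactly the paper's argument via Corollary~\ref{binv:cor:diminv} and Theorem~\ref{binv:thm:binvchar}.

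The genuine gap is in the hard direction. You correctly flag ``non-Bruhat $\Rightarrow$ non-simple'' as the crux, but the sentence ``one has to manufacture a proper nonzero submodule $L\subseteq B_k$ \dots\ and Theorem~\ref{binv:thm:binvchar} is the tool that makes such a decomposition usable'' is not a proof; knowing that $\beta_k=\gamma_1+\gamma_2$ with $\gamma_i\in\inv(w)$ does not by itself produce any submodule of $B_k$, and ``bookkeeping'' will not conjure one. The paper's missing idea is this: after arranging $\beta_k=\beta_i+\beta_j$ with $i<k<j$ (using the root-sequence characterization, Theorem~\ref{binv:thm:rootseqchar}), one has $\langle\beta_k,\beta_j\rangle=1$ by Lemma~\ref{binv:lem:contfig}, and Crawley--Boevey's formula (Lemma~\ref{binv:lem:cbhom}) together with $\Hom_\Pi(B_j,B_k)=0$ (from $j>k$) forces $\Hom_\Pi(B_k,B_j)\neq 0$. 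Since $\dim B_k>\dim B_j$, any such map is a nonzero non-injection, so $B_k$ is not simple by Lemma~\ref{binv:lem:simpcri}. That homological step is the entire content of this direction.

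It is worth noting that you \emph{do} invoke Crawley--Boevey's formula, but only at the end to prove injectivity of $\udim$ on simple objects. That argument is correct and pleasant, but the paper gets injectivity more cheaply: since simples lie among $B_1,\dots,B_\ell$ and the $\beta_k$ are pairwise distinct (Proposition~\ref{binv:prop:invdist}), two simples with the same dimension vector are literally the same $B_k$. In short, you deployed the key homological lemma in the place where it is optional and omitted it in the place where it is essential.
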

This immediately deduces the similar result for the case of the path algebra $kQ$. A torsion-free class in $\mod kQ$ bijectively corresponds to a \emph{$c_Q$-sortable element} of $W$ by \cite{IT}. For such an element $w$ of $W$, we have a torsion-free class $\FF_Q(w)$ in $\mod kQ$, which is actually equal to the restriction of $\FF(w)$ to $\mod kQ$ (Proposition \ref{binv:prop:fwadd}).
Then we have the same result for $\FF_Q(w)$:
\begin{theoremi}[= Theorem \ref{binv:thm:pathmain}]\label{binv:thmB}
  Let $Q$ be the Dynkin quiver, $w$ a $c_Q$-sortable element of the Weyl group $W$ and $\FF_Q(w)$ the corresponding torsion-free class in $\mod kQ$. Then by taking dimension vectors, we have the bijection between the following two sets:
  \begin{enumerate}
    \item The set of isomorphism classes of simple objects in $\FF_Q(w)$.
    \item The set of Bruhat inversions of $w$.
  \end{enumerate}
\end{theoremi}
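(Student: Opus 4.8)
The plan is to deduce this from Theorem~\ref{binv:thmA} by comparing the category $\FF_Q(w)$ with $\FF(w)$. Recall that $kQ$ is a quotient of $\Pi$, so $\mod kQ$ sits inside $\mod\Pi$ as the full subcategory of modules annihilated by the arrows of the doubled quiver that do not lie in $Q$; in particular $\mod kQ$ is closed under submodules and quotients in $\mod\Pi$. By Proposition~\ref{binv:prop:fwadd} we have $\FF_Q(w)=\FF(w)\cap\mod kQ$, and I will also use the description of $\FF_Q(w)$ from \cite{IT}: for a $c_Q$-sortable $w$, taking dimension vectors gives a bijection between the indecomposable objects of $\FF_Q(w)$ and the inversion set $\inv(w)$; write $M_\beta$ for the indecomposable object of $\FF_Q(w)$ with $\udim M_\beta=\beta$. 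Since $\FF_Q(w)$ is closed under direct summands, every indecomposable summand of an object of $\FF_Q(w)$ has dimension vector in $\inv(w)$.

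\emph{Step 1.} The first step is to check that every simple object of $\FF_Q(w)$ is a simple object of $\FF(w)$. Indeed, if $0\to L\to M\to N\to 0$ is exact in $\mod\Pi$ with $M\in\FF_Q(w)$ and $L,N\in\FF(w)$, then $L$ is a $\Pi$-submodule of $M\in\mod kQ$ and $N$ is a quotient of it, so $L,N\in\mod kQ$ and hence $L,N\in\FF(w)\cap\mod kQ=\FF_Q(w)$; simplicity of $M$ in $\FF_Q(w)$ then forces $L=0$ or $N=0$. Combined with Theorem~\ref{binv:thmA}, this shows that $M\mapsto\udim M$ sends isomorphism classes of simple objects of $\FF_Q(w)$ into the set of Bruhat inversions of $w$, injectively: two simple objects of $\FF_Q(w)$ are indecomposable $kQ$-modules, hence isomorphic as soon as they have the same dimension vector (Gabriel's theorem).

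\emph{Step 2.} For surjectivity I would show that, for every Bruhat inversion $\beta$ of $w$, the module $M_\beta$ is a simple object of $\FF_Q(w)$; its dimension vector is then $\beta$, as required. First, $\beta\in\inv(w)$ gives $M_\beta\in\FF_Q(w)$. Choose a proper submodule $L\subsetneq M_\beta$ that is maximal among those with $M_\beta/L\in\FF_Q(w)$ (one exists, since $L=0$ qualifies and $M_\beta$ has finitely many submodules). Then $L\in\FF_Q(w)$ by closure under submodules, and maximality forces $M_\beta/L$ to be a simple --- hence indecomposable --- object of $\FF_Q(w)$, so $\udim(M_\beta/L)\in\inv(w)$. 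If $L\neq 0$, then the indecomposable summands of $L$ lie in $\FF_Q(w)$ and contribute dimension vectors in $\inv(w)$, so $\beta=\udim(M_\beta/L)+\udim L$ would be a sum of at least two elements of $\inv(w)$, contradicting that $\beta$ is a Bruhat inversion (Definition~\ref{binv:def:binv}, Theorem~\ref{binv:thm:binvchar}). Hence $L=0$ and $M_\beta$ is a simple object of $\FF_Q(w)$. Together with Step~1 this yields the asserted bijection.

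The one genuinely nontrivial point is the well-definedness in Step~1, that the dimension vector of a simple object of $\FF_Q(w)$ is a Bruhat inversion: there is no evident direct argument, since a decomposition $\udim M=\gamma_1+\dots+\gamma_k$ into inversions of $w$ need not be induced by a filtration of $M$ inside $\FF_Q(w)$. This is exactly what Theorem~\ref{binv:thmA} delivers, and it is transported from $\mod\Pi$ to $\mod kQ$ by the soft fact that $\mod kQ$ is closed under subobjects and quotients in $\mod\Pi$. The remaining ingredients --- the bijection $\ind\FF_Q(w)\leftrightarrow\inv(w)$ of \cite{IT}, injectivity via Gabriel's theorem, and the surjectivity argument of Step~2, which uses only the definition of a Bruhat inversion --- are routine.
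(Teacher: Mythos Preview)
Your proof is correct and follows essentially the same strategy as the paper: transfer simplicity between $\FF_Q(w)$ and $\FF(w)$ using $\FF_Q(w)=\FF(w)\cap\mod kQ$ and the fact that $\mod kQ$ is closed under subquotients in $\mod\Pi$, then invoke Theorem~\ref{binv:thmA}. The paper does this slightly more symmetrically, proving both implications ``simple in $\FF(w)$ $\Leftrightarrow$ simple in $\FF_Q(w)$'' at once (the direction you did not write, simple in $\FF(w)\Rightarrow$ simple in $\FF_Q(w)$, is immediate since $\FF_Q(w)\subset\FF(w)$), so surjectivity also comes directly from Theorem~\ref{binv:thmA} rather than from your separate Step~2 argument; your Step~2 is essentially a replay in $\mod kQ$ of the easy direction of Theorem~\ref{binv:thm:main}. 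One small correction: $M_\beta$ need not have finitely many submodules over an infinite field, but the existence of a maximal $L$ follows from finite length, so the argument stands.
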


As an application, we can characterize the validity of the \emph{Jordan-H\"older property (JHP)} in $\FF(w)$ or $\FF_Q(w)$. We say that a torsion-free class $\FF$ \emph{satisfies (JHP)} if for any $M$ in $\FF$, any relative composition series of $M$ in $\FF$ are equivalent (see Definition \ref{binv:def:jhp} for the precise definition). By using the characterization of (JHP) obtained in \cite{eno}, we prove the following second main result.
\begin{theoremi}[{= Theorem \ref{binv:thm:jhpmain}, Proposition \ref{binv:prop:jhpforest}}]\label{binv:thmC}
  Let $\Pi$ be the preprojective algebra of Dynkin type and $w$ an element of the corresponding Weyl group $W$. Then the following are equivalent:
  \begin{enumerate}
    \item $\FF(w)$ satisfies (JHP).
    \item Bruhat inversions of $w$ are linearly independent.
    \item The number of Bruhat inversions of $w$ is equal to that of supports of $w$.
  \end{enumerate}
  Moreover, for the type A case, the above statements are equivalent to the following:
  \begin{enumerate}[resume]
    \item $w$ is forest-like in the sense of \cite{bmb}, that is, its Bruhat inversion graph is acyclic.
  \end{enumerate}
\end{theoremi}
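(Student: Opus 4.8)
The plan is to combine Theorem A with the (JHP) criterion from \cite{eno} and then, in type A, to match up the linear-independence condition with the forest-like condition of \cite{bmb}. First, I would establish the equivalence of (1) and (2). By Theorem A (= Corollary \ref{binv:cor:main}), the isomorphism classes of simple objects in $\FF(w)$ are in bijection, via $\udim$, with the Bruhat inversions of $w$. The characterization of (JHP) in \cite{eno} should say that a torsion-free class $\FF$ satisfies (JHP) if and only if the dimension vectors of its simple objects are linearly independent (this is the ``combinatorial'' half of the story: (JHP) forces any two relative composition series to have the same multiset of simple factors by an abstract Jordan--H\"older/Schreier argument, and the converse fails exactly when a linear dependence among the $\udim$ of simples produces two genuinely inequivalent filtrations). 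Feeding the bijection of Theorem A into that criterion gives (1) $\Leftrightarrow$ (2) immediately.

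Next I would handle (2) $\Leftrightarrow$ (3). One inclusion is trivial: if the Bruhat inversions are linearly independent then their number is at most $\dim$ of the span, and I must argue that this span is exactly the span of the simple roots in the support of $w$, so the count equals $|\supp(w)|$. The key input is that $\udim$ of every $M \in \FF(w)$ lies in the non-negative span of $\inv(w)$ (Corollary \ref{binv:cor:diminv}), hence the Bruhat inversions span the same subspace as $\inv(w)$, which in turn is the root subsystem generated by $\supp(w)$ — so the ambient dimension is $|\supp(w)|$. Therefore linear independence is equivalent to the cardinality of the set of Bruhat inversions being equal to $|\supp(w)|$; and since that cardinality is always $\geq |\supp(w)|$ (the simple roots $\alpha_s$, $s \in \supp(w)$, are themselves Bruhat inversions, or at least the simples of $\FF(w)$ surject onto a spanning set), equality is equivalent to independence. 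I would need to check carefully that ``number of supports'' in the statement means $|\supp(w)| = |\{s : s \le w \text{ in subword order}\}|$ and that these simple roots really are among the Bruhat inversions — this is the one spot where a small lemma is needed.

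Finally, for the type A case I would prove (2) $\Leftrightarrow$ (4). Here I would use the diagrammatic description of Bruhat inversions for $w \in S_{n+1}$: a Bruhat inversion corresponds to a pair $(i,j)$ with $i<j$, $w(i)>w(j)$, and no $k$ with $i<k<j$ and $w(i)>w(k)>w(j)$ — equivalently the positive root $e_i - e_j$ is a Bruhat inversion iff $(i,j)$ is an inversion with no ``intermediate'' inversion splitting it. The Bruhat inversion graph of \cite{bmb} has these pairs as edges (on vertex set $\{1,\dots,n+1\}$), and Bousquet-M\'elou--Butler call $w$ forest-like when this graph is a forest. The linear-algebra translation is: for roots of type $A$, a set $\{e_i - e_j\}$ is linearly dependent iff the corresponding graph contains a cycle (the incidence-matrix/cycle-space fact for graphs). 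So the Bruhat inversions of $w$ are linearly independent iff the Bruhat inversion graph is acyclic iff $w$ is forest-like. I expect the main obstacle to be precisely the bookkeeping in this last equivalence: making sure the combinatorial notion of Bruhat inversion from Definition \ref{binv:def:binv}/Theorem \ref{binv:thm:binvchar} literally coincides with the edge set of \cite{bmb}'s graph (there may be a reindexing or a transpose-$w$ convention to reconcile), and citing the right form of the graph-theoretic linear-independence statement over the relevant coefficient field. The rest is assembling already-proved pieces.
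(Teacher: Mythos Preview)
Your overall architecture is right, and for (1)$\Leftrightarrow$(2) you are doing exactly what the paper does: feed Theorem~A into the (JHP) criterion of \cite{eno} (the paper packages this as Theorem~\ref{binv:thm:jhpchar}).

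There is, however, a genuine error in your argument for (2)$\Leftrightarrow$(3). You justify $\#\Binv(w)\geq \#\supp(w)$ by claiming that the simple roots $\alpha_s$ with $s\in\supp(w)$ are themselves Bruhat inversions of $w$. This is false already in type $A_2$: for $w=s_1s_2$ one has $\inv(w)=\{\alpha_1,\alpha_1+\alpha_2\}$, so $\alpha_2$ is not even an inversion of $w$, although $2\in\supp(w)$. What is actually needed is that the Bruhat inversions \emph{span} $\Z^{\supp(w)}$, i.e.\ that the map $\varphi_w\colon \Z^{\Binv(w)}\to\Z^{\supp(w)}$ is surjective. The paper obtains this surjectivity from the Grothendieck-group machinery of \cite{eno}: in Theorem~\ref{binv:thm:jhpchar} the natural map $\Z^{(\simp\FF)}\to\Z^{(\supp\FF)}$ is shown to be \emph{always} surjective, and after translating via Theorem~A and Proposition~\ref{binv:prop:suppsame} this becomes exactly the surjectivity of $\varphi_w$. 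Once that is in hand, (2)$\Leftrightarrow$(3) is immediate. (Alternatively one could argue purely in the root system: Theorem~\ref{binv:thm:binvchar} lets one iteratively decompose every inversion as a $\Z_{\geq 0}$-sum of Bruhat inversions, and then one checks that $\inv(w)$ spans $\Z^{\supp(w)}$; but this is not the route the paper takes, and your hedge ``or at least the simples of $\FF(w)$ surject onto a spanning set'' is exactly the missing lemma, not a proof of it.)

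For the type~A equivalence (2)$\Leftrightarrow$(4), your approach is correct and in fact more direct than the paper's. You use the standard graph-theoretic fact that a family of vectors $\{\varepsilon_i-\varepsilon_j\}$ is linearly independent if and only if the corresponding edge set is acyclic; this gives (2)$\Leftrightarrow$(4) immediately once the Bruhat inversion graph is identified with $G_w$. The paper instead goes through \cite{bmb}: it observes that the map $L_w$ appearing there is the $\Z$-dual of $\varphi_w$, so that the surjectivity of $L_w$ (which \cite{bmb} shows is equivalent to forest-like) is equivalent to the bijectivity of $\varphi_w$. Your route avoids this duality detour and the appeal to the internal equivalence of \cite{bmb}; the paper's route, on the other hand, makes the connection to the factoriality of Schubert varieties (Corollary~\ref{binv:cor:typeAjhp}) transparent.
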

Here a \emph{support} of $w$ is a vertex $i$ in the Dynkin diagram $X$ such that the reduced expression of $w$ contains the simple reflection $s_i$. See Appendix \ref{binv:sec:A2} for the details on forest-like permutations. The same result also holds for the case of path algebras (Corollary \ref{binv:cor:jhppath}).

To show these results, we develop a method to find simple objects in a given torsion-free class $\FF$ by using a \emph{brick sequence} associated to a maximal green sequence of $\FF$. A \emph{maximal green sequence} of $\FF$ is just a saturated chain $0 = \FF_0 \covd \FF_1 \covd \cdots \covd \FF_l = \FF$ of torsion-free classes. We can associate to it a sequence of bricks (modules $B$ such that $\End_\Lambda(B)$ is a division ring) by using the brick labeling introduced in \cite{DIRRT}. Once we can compute one brick sequence of $\FF$, the following gives a way to determine all simple objects in $\FF$:
\begin{propositioni}[= Corollary \ref{binv:cor:simplabel}]\label{binv:propD}
  Let $\Lambda$ be a finite-dimensional algebra and $\FF$ a torsion-free class in $\mod\Lambda$. Suppose that there is a maximal green sequence of $\FF$, and let $B_1,\dots,B_l$ be the associated brick sequence. Then the following hold:
  \begin{enumerate}
    \item Every simple object in $\FF$ is isomorphic to $B_i$ for some $i$.
    \item For $1 \leq i \leq l$, the following are equivalent:
    \begin{enumerate}
      \item $B_i$ is a simple object in $\FF$.
      \item Every morphism $B_i \to B_j$ with $i < j$ is either zero or injective.
      \item There is no surjection $B_i \defl B_j$ with $i < j$.
    \end{enumerate}
  \end{enumerate}
\end{propositioni}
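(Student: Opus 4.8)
The plan is to reduce the statement to two inputs — a reformulation of ``simple object'' using that $\FF$ is closed under submodules, and the interval description of brick labels from \cite{DIRRT} — and then run an induction on dimension for the hard implication.

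\emph{Preliminaries.} Since $\FF$ is closed under submodules, an admissible subobject of $M\in\FF$ is just a submodule $L\subseteq M$ with $M/L\in\FF$; passing to $N=M/L$ one sees that $M$ is a simple object of $\FF$ if and only if $M\neq 0$ and every quotient module of $M$ that lies in $\FF$ is isomorphic to $0$ or to $M$. (In particular every simple object of $\FF$ is a brick: a nonzero $f\in\End_\Lambda(M)$ has $M/\ker f\cong\im f\in\FF$, hence $\ker f=0$ and $f$ is bijective.) Next, for each $k$ put $\TT_k:={}^{\perp}\FF_k$, so $(\TT_k,\FF_k)$ is a torsion pair, $\TT_k\subseteq\TT_{k-1}$, and $\FF_{k-1}\covd\FF_k$ corresponds to a covering $\TT_k\covd\TT_{k-1}$ of torsion classes. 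From brick labelling in \cite{DIRRT} I take: $B_k$ is a brick, $B_k\in\FF_k\setminus\FF_{k-1}$, and $\Filt(B_k)=\TT_{k-1}\cap\FF_k$, so every module of $\TT_{k-1}\cap\FF_k$ is filtered by copies of $B_k$. I also note $B_i\not\cong B_j$ for $i<j$, since $B_i\in\FF_i\subseteq\FF_{j-1}$ while $B_j\notin\FF_{j-1}$.

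\emph{Proof of (1).} Given $M$ simple in $\FF$, let $i$ be least with $M\in\FF_i$ (so $i\geq 1$, $M$ is simple in $\FF_i$ as well, and $M\notin\FF_{i-1}$). Split $M$ along $(\TT_{i-1},\FF_{i-1})$ as $0\to tM\to M\to M/tM\to 0$ with $tM\in\TT_{i-1}$ and $M/tM\in\FF_{i-1}\subseteq\FF$. As $tM$ is a submodule of $M\in\FF_i$, we get $tM\in\TT_{i-1}\cap\FF_i=\Filt(B_i)$. Since $tM$ is an admissible subobject of $M$, either $tM=0$ — impossible, as then $M\cong M/tM\in\FF_{i-1}$ — or $tM=M$, so $M\in\Filt(B_i)$. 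A $B_i$-filtration of $M$ of length $r\geq 2$ would give a proper nonzero submodule $G_1\cong B_i$ with $M/G_1\in\Filt(B_i)\subseteq\FF$, contradicting simplicity; so $r=1$ and $M\cong B_i$.

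\emph{Proof of (2).} I prove $(a)\Rightarrow(b)\Rightarrow(c)\Rightarrow(a)$. For $(a)\Rightarrow(b)$: if $B_i$ is simple and $0\neq f\colon B_i\to B_j$ with $i<j$, then $\im f$ is a nonzero quotient of $B_i$ and a submodule of $B_j\in\FF$, hence lies in $\FF$, hence $\cong B_i$ by the reformulation, so $f$ is injective. For $(b)\Rightarrow(c)$: a surjection $B_i\defl B_j$ with $i<j$ would by (b) be injective, hence an isomorphism, contradicting $B_i\not\cong B_j$. For $(c)\Rightarrow(a)$, the substantial step, I argue by contraposition and induction on dimension: suppose there is a surjection $q\colon B_i\defl N$ with $N\in\FF$, $N\neq 0$, $N\not\cong B_i$. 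Let $m$ be least with $N\in\FF_m$; then $m>i$, for otherwise $N\in\FF_i$ is a quotient of $B_i\in\TT_{i-1}$, hence lies in $\TT_{i-1}\cap\FF_i=\Filt(B_i)$, forcing $\dim N\geq\dim B_i$, impossible for a proper quotient. Split $N$ along $(\TT_{m-1},\FF_{m-1})$ as $0\to tN\to N\to N/tN\to 0$, with $tN\neq 0$ (since $N\notin\FF_{m-1}$) and $tN\in\TT_{m-1}\cap\FF_m=\Filt(B_m)$, so $tN$ has $B_m$ as a quotient. Put $Q:=q^{-1}(tN)$; then $q$ restricts to a surjection $Q\defl tN\defl B_m$, and $B_i/Q\cong N/tN$. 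If $Q=B_i$ we obtain a surjection $B_i\defl B_m$ with $m>i$, contradicting (c); otherwise $N/tN\cong B_i/Q$ is a nonzero quotient of $B_i$ lying in $\FF_{m-1}\subseteq\FF$ with $\dim(N/tN)<\dim N$ and $N/tN\not\cong B_i$, and the induction hypothesis applies. As the dimension strictly drops at each step, the case $Q=B_i$ must eventually occur, giving the contradiction.

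The main obstacle is exactly $(c)\Rightarrow(a)$: a quotient of the brick $B_i$ need not be a brick nor one of the $B_k$, so a forward surjection cannot be read off directly. The device of repeatedly peeling off the $\TT_{m-1}$-torsion part of the offending quotient and using that it is $B_m$-filtered is what converts ``$B_i$ is not simple in $\FF$'' into an honest surjection $B_i\defl B_m$ with $m>i$. Everything else is bookkeeping with torsion pairs, the submodule-closedness of $\FF$, and the interval identity $\Filt(B_k)=\TT_{k-1}\cap\FF_k$ for brick labels.
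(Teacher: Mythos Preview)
Your proof is correct, and it takes a genuinely different route from the paper's for the nontrivial implication. The paper isolates an auxiliary notion, the \emph{zero-mono property} (a collection $\CC$ has it for $M$ if every $M\to C$ with $C\in\CC$ is zero or injective), proves it is closed under the operation $*$ and hence under $\Filt$, and uses this to get $(b)\Rightarrow(a)$ cheaply via $\FF=\Filt(B_1,\dots,B_l)$. The hard step in the paper is instead $(c)\Rightarrow(b)$: given a map $B_i\to B_j$ that is neither zero nor injective, one takes $j$ minimal, observes that minimality and the already-established $(b)\Leftrightarrow(a)$ force $B_i$ to be simple in $\FF_{j-1}$, and then a dimension count inside $\FF_j=\Filt(B_j)*\FF_{j-1}$ shows the map is in fact surjective. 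By contrast, you prove $(c)\Rightarrow(a)$ directly by a descent on $\dim N$ for a bad quotient $N$ of $B_i$, repeatedly stripping off the $\TT_{m-1}$-torsion part and using $\Filt(B_m)=\TT_{m-1}\cap\FF_m$. Your approach is more self-contained: it avoids the zero-mono lemma entirely and does not invoke $\FF=\Filt(B_1,\dots,B_l)$ (even for part (1), where the paper uses that equality while you argue directly with the torsion pair at the minimal level containing $M$). The paper's approach buys a reusable lemma about extensions and a slightly slicker route through $(b)$; yours buys directness and fewer auxiliary notions.
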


\subsection{Organization}
This paper is organized as follows. In Section \ref{binv:sec:2}, we give root-theoretical preliminaries and results. More precisely, we characterize Bruhat inversions (Theorem \ref{binv:thm:binvchar}). Next we introduce a root sequence associated to a reduced expression of an element of $W$, which plays an important role later.
In Section \ref{binv:sec:3}, we develop a general theory of simple objects in a torsion-free class, and prove Proposition \ref{binv:propD}. In Section \ref{binv:sec:4}, we focus on torsion-free classes over preprojective algebras of Dynkin type. We show that a brick sequence of $\FF(w)$ categorifies a root sequence of $w$, and prove Theorem \ref{binv:thmA}, \ref{binv:thmB}.
In Section \ref{binv:sec:5}, we deduce several results on path algebras using preprojective algebras, and prove Theorem \ref{binv:thmC}.
In the appendix, we give two combinatorial interpretation of the results for type A case. In Section \ref{binv:sec:A1}, we give a diagrammatic construction of simple objects in $\FF(w)$ using arc diagrams, and in Section \ref{binv:sec:A2}, we show that $\FF(w)$ satisfies (JHP) if and only if $w$ is a forest-like permutation.

\subsection{Conventions and notation}
Throughout this paper, \emph{we assume that all categories are skeletally small}, that is, the isomorphism classes of objects form a set. In addition, \emph{all subcategories are assumed to be full and closed under isomorphisms}.
For a Krull-Schmidt category $\EE$, we denote by $\ind\EE$ the set of isomorphism classes of indecomposable objects in $\EE$. We denote by $|X|$ the number of non-isomorphic indecomposable direct summands of $X$.

For a poset $P$ and two elements $a,b \in P$ with $a \leq b$, we denote by $[a,b]$ the \emph{interval poset} $[a,b]:=\{ x \in P \, | \, a \leq x \leq b \}$ with the obvious partial order.
For $x,y \in P$, we say that $x$ \emph{covers} $y$ if $x > y$ holds and there exists no $z \in P $ with $x > z > y$. In this case, we write $x \cov y$ or $y \covd x$.

For a set $A$, we denote by $\# A$ the cardinality of $A$.

\medskip\noindent
{\bf Acknowledgement.}
The author would like to thank his supervisor Osamu Iyama for many helpful comments and discussions. He is also grateful to Yuya Mizuno for explaining to him arc diagrams and bricks for type A case. This work is supported by JSPS KAKENHI Grant Number JP18J21556.

\section{Preliminaries on root system}\label{binv:sec:2}
In this section, we give some results on root systems which we need later. In particular, we give a definition and characterization of Bruhat inversions, and introduce the notion of a root sequence.
\subsection{Basic definitions}
First we recall some basic definitions and properties of root systems. We refer the reader to \cite{hum1,hum2} for the details.

Let $V$ be the Euclidean space, that is, a finite-dimensional $\R$-vector space with the positive definite symmetric bilinear form $(-,-) \colon V \times V \to \R$.

For two vectors $\al, \be \in V$ with $\al \neq 0$, we put
\[
\la \be, \al \ra := 2 \frac{(\be,\al)}{(\al, \al)}.
\]
Note that $\la-,\al\ra$ is linear but $\la \al,-\ra$ is not. For $\al \in V$ with $\al \neq 0$, we denote by $t_\al \colon V \to V$ the reflection with respect to $\al$, that is,
\[
t_\al(\be) = \be - \la \be, \al \ra \al.
\]

\begin{definition}
  A subset $\Phi$ of the Euclidean space $V$ is called a \emph{root system} if it satisfies the following axioms:
  \begin{enumerate}
    \item[(R0)] $\Phi$ is a finite subset of $V$ which spans $V$ as an $\R$-vector space, and does not contain $0$.
    \item[(R1)] $\Phi \cap \R \al = \{ \al, -\al \}$ for every $\al \in \Phi$.
    \item[(R2)] $t_\al(\Phi) = \Phi$ for every $\al \in \Phi$.
    \item[(R3)] $\la \be,\al\ra \in \Z$ for every $\al,\be \in \Phi$.
  \end{enumerate}
  A root system is called \emph{simply-laced} if it satisfies the following condition.
  \begin{enumerate}
    \item[(R4)] $\la \al,\be \ra = \la \be,\al \ra$ for every $\al,\be \in \Phi$.
  \end{enumerate}
\end{definition}
Obviously (R4) is equivalent to that for every $\al,\be \in \Phi$, if $(\al,\be) \neq 0$, then $\al$ and $\be$ have the same length, that is, $(\al,\al) = (\be,\be)$.

Possible values of integers $\la \al,\be \ra$ in (R3) are very limited as follows.
\begin{proposition}[{\cite[9.4]{hum1}}]\label{binv:prop:cartanint}
  Let $\Phi$ be a root system, and let $\al$ and $\be$ be two roots in $\Phi$ with $(\al,\al) \leq (\be,\be)$. Then the following hold:
  \begin{enumerate}
    \item $\la \al,\be \ra \cdot \la \be,\al \ra \geq 0$ and
    $(|\la\al,\be\ra|, |\la \be,\al\ra|) \in \{ (0,0), (1,1),  (1,2),  (1,3), (2,2) \}$.
    \item $\be = \pm \al$ if and only if $(|\la\al,\be\ra|, |\la \be,\al\ra|) = (2,2)$.
    \item $\Phi$ is simply-laced if and only if $\la \al,\be \ra \in \{-1,0,1\}$ for every $\al,\be \in \Phi$ with $\al \neq \be$.
  \end{enumerate}
\end{proposition}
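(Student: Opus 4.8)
The plan is to route all three parts through the single quantity
\[
N(\al,\be) := \la \al,\be \ra \cdot \la \be,\al \ra = 4\,\frac{(\al,\be)^2}{(\al,\al)(\be,\be)},
\]
after which everything becomes a short finite check. First I would note that the displayed formula makes $N(\al,\be) \geq 0$ obvious, and that $N(\al,\be) \leq 4$ is precisely the Cauchy--Schwarz inequality for the positive definite form $(-,-)$, with equality if and only if $\al$ and $\be$ are linearly dependent. Since $\la \al,\be \ra$ and $\la \be,\al \ra$ are integers by (R3), their product $N(\al,\be)$ lies in $\{0,1,2,3,4\}$; and as this product is a nonnegative product of two integers, the two factors have the same sign, and one of them vanishes exactly when the other does (both vanishing precisely when $(\al,\be) = 0$). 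This already yields the sign statement in (1).

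Next I would normalize signs: replacing $\be$ by $-\be$ negates both $\la \al,\be \ra$ and $\la \be,\al \ra$ while leaving $(\be,\be)$ unchanged, and $-\be \in \Phi$ by (R1), so we may assume $\la \al,\be \ra,\ \la \be,\al \ra \geq 0$. The hypothesis $(\al,\al) \leq (\be,\be)$ gives, whenever $\la \al,\be \ra \neq 0$,
\[
\frac{\la \be,\al \ra}{\la \al,\be \ra} = \frac{(\be,\be)}{(\al,\al)} \geq 1,
\]
so $\la \be,\al \ra \geq \la \al,\be \ra \geq 0$. Thus $(|\la \al,\be \ra|,|\la \be,\al \ra|)$ is a pair of nonnegative integers with product in $\{0,1,2,3,4\}$ and first entry at most the second, leaving exactly $(0,0),(1,1),(1,2),(1,3),(1,4),(2,2)$; it then remains only to exclude $(1,4)$. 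But $N(\al,\be) = 4$ forces equality in Cauchy--Schwarz, hence $\be \in \Phi \cap \R\al = \{\al,-\al\}$ by (R1), and then $\la \al,\be \ra = \la \al,\pm\al \ra = \pm 2$, giving the pair $(2,2)$ rather than $(1,4)$. This proves (1), and simultaneously (2): we have shown that $\be = \pm\al$, $N(\al,\be)=4$, and $(|\la \al,\be \ra|,|\la \be,\al \ra|)=(2,2)$ are all equivalent.

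Finally, for (3): if $\Phi$ is simply-laced then (R4) gives $\la \al,\be \ra = \la \be,\al \ra$ for all roots, so by (1)--(2) the only symmetric pairs occurring for $\al \neq \pm\be$ are $(0,0)$ and $(1,1)$, i.e.\ $\la \al,\be \ra \in \{-1,0,1\}$; conversely, if this holds whenever $\al \neq \pm\be$, then $\la \al,\be \ra = \la \be,\al \ra$ in that range, while for $\be = \pm\al$ both sides equal $\pm 2$ automatically, so (R4) holds. Nothing in this is deep, and the only step needing genuine care — the one place where a root system axiom beyond the inner-product bookkeeping is really used — is the exclusion of the spurious $(1,4)$ case, which relies on combining the equality case of Cauchy--Schwarz with (R1). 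I would present the argument in exactly this order: the formula for $N(\al,\be)$, integrality and sign, the length-ratio inequality, the finite enumeration with the $(1,4)$ exclusion, and then (2) and (3) as immediate consequences.
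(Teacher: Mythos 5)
The paper cites this result to Humphreys \cite{hum1} and supplies no proof of its own, so there is no internal argument to compare against. Your proof is correct, and it is essentially the classical derivation: your invariant $N(\al,\be) = \la\al,\be\ra\,\la\be,\al\ra$ is exactly the quantity $4\cos^2\theta$ that appears in Humphreys, and the chain of steps --- integrality of the two Cartan integers, the bound $N \leq 4$ from Cauchy--Schwarz, the sign normalization via (R1), the length-ratio identity $\la\be,\al\ra/\la\al,\be\ra = (\be,\be)/(\al,\al)$ to order the pair, and the exclusion of $(1,4)$ by the equality case of Cauchy--Schwarz together with (R1) --- reproduces that source's enumeration in a slightly streamlined form. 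The way you extract (2) and (3) simultaneously from the $N=4$ analysis is tidy and correct.

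One small discrepancy worth noting: in part (3) you proved the statement under the hypothesis $\al \neq \pm\be$, while the paper's statement reads $\al \neq \be$. Your version is the correct one. Taking $\be = -\al$ gives $\al \neq \be$ but $\la\al,\be\ra = -2 \notin \{-1,0,1\}$, so the forward implication of the paper's literal statement is false even for simply-laced $\Phi$. This imprecision is harmless in the paper, since the only places (3) is invoked (in the proofs of Theorem~\ref{binv:thm:binvchar} and Lemma~\ref{binv:lem:contfig}) concern distinct roots whose sum is again a root, which already excludes $\al = -\be$; but the hypothesis you used is the one that should appear.
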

\emph{Throughout this section, we will use the following notation:}
\begin{itemize}
  \item $\Phi$ is a root system in $V$.
  \item We fix a choice of simple roots $\Delta$ of $\Phi$.
  \item $\Phi^+$ (resp. $\Phi^-$) is the set of positive roots (resp. negative roots) in $\Phi$ with respect to $\Delta$.
  \item $W$ is the Weyl group associated with $\Phi$, that is, $W$ is a subgroup of $\GL(V)$ generated by $t_\al$ with $\al \in \Phi$.
  \item $T\subset W$ is a set of reflections in $W$, that is, $T= \{ t_\al \in W \, | \, \al \in \Phi \}$.
  \item We often write $s_\al = t_\al$ if $\al$ is a simple root.
\end{itemize}
It is well-known that $W$ is generated by simple reflections. For $w\in W$, let
\[
w = s_1 \cdots s_l
\]
be such an expression. Then this expression is called a \emph{reduced expression} if $l$ is minimal among all such expressions. In this case, $l$ is called the \emph{length} of $w$ and we write $\ell(w) := l$.

On the reduced expression, we will need the following \emph{exchange property} later.
\begin{lemma}[{\cite[1.6, 1.7]{hum2}}]\label{binv:lem:exchange}
  Let $w = s_1 \cdots s_l$ be a reduced expression of $w \in W$, and let $\al$ be a simple roots. Then the following are equivalent:
  \begin{enumerate}
    \item $s_1 \cdots s_l s_{\al}$ is not reduced, that is, $\ell(w s_\al) < \ell(w)$.
    \item There exists $i$ such that $s_1 \cdots s_l s_\al = s_1 \cdots \widehat{s_i} \cdots s_l$ ($s_i$ omitted).
    \item $w(\al) \in \Phi^{-}$.
  \end{enumerate}
\end{lemma}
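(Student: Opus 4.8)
The plan is to prove the cycle of implications $(2)\Rightarrow(1)\Rightarrow(3)\Rightarrow(2)$, of which only the last requires real work. Throughout I would use the inversion set $N(w):=\{\ga\in\Phi^+\mid w(\ga)\in\Phi^-\}$, which satisfies $\# N(w)=\ell(w)$, together with the elementary fact that a simple reflection $s_\be$ permutes $\Phi^+\setminus\{\be\}$ and sends $\be$ to $-\be$. The implication $(2)\Rightarrow(1)$ is then immediate: the right-hand side of $(2)$ writes $ws_\al$ as a product of $l-1$ simple reflections, so $\ell(ws_\al)\le l-1<l=\ell(w)$.

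For $(1)\Leftrightarrow(3)$, I would first establish the sublemma that, for a simple reflection $s=s_\al$, one has $\ell(ws)=\ell(w)+1$ if $w(\al)\in\Phi^+$ and $\ell(ws)=\ell(w)-1$ if $w(\al)\in\Phi^-$. In the first case $\al\notin N(w)$, so $s$ restricts to a bijection of $\Phi^+\setminus\{\al\}$, and a direct check gives $N(ws)=\{\al\}\sqcup s(N(w))$, whence $\# N(ws)=\# N(w)+1$; the second case follows by applying the first to $ws$ in place of $w$, using $(ws)(\al)=-w(\al)\in\Phi^+$. In particular $\ell(ws_\al)=\ell(w)\pm1$, so $(1)$, i.e.\ $\ell(ws_\al)<\ell(w)$, amounts to $\ell(ws_\al)=\ell(w)-1$, which the sublemma identifies with $(3)$.

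For $(3)\Rightarrow(2)$, assume $w(\al)\in\Phi^-$ and, writing $\al_i$ for the simple root with $s_i=s_{\al_i}$, set $\be_i:=(s_{i+1}s_{i+2}\cdots s_l)(\al)$ for $0\le i\le l$, so that $\be_l=\al\in\Phi^+$, $\be_0=w(\al)\in\Phi^-$, and $\be_{i-1}=s_i(\be_i)$. Since the sequence $\be_l,\be_{l-1},\dots,\be_0$ runs from a positive to a negative root, there is an index $k\in\{1,\dots,l\}$ with $\be_k\in\Phi^+$ but $\be_{k-1}=s_k(\be_k)\in\Phi^-$; as the only positive root negated by $s_k$ is $\al_k$, this forces $\be_k=\al_k$. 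By the conjugation formula $g\,t_\ga\,g^{-1}=t_{g(\ga)}$ for reflections, the identity $\be_k=\al_k$ rewrites as $(s_{k+1}\cdots s_l)\,s_\al\,(s_{k+1}\cdots s_l)^{-1}=s_k$, i.e.\ $(s_{k+1}\cdots s_l)\,s_\al=s_k\,(s_{k+1}\cdots s_l)$; multiplying on the left by $s_1\cdots s_k$ and cancelling the two resulting copies of $s_k$ yields $ws_\al=s_1\cdots s_{k-1}s_{k+1}\cdots s_l=s_1\cdots\widehat{s_k}\cdots s_l$, which is $(2)$ (and confirms in passing that $\ell(ws_\al)=l-1$).

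I expect the last step to be the main obstacle: one must set up the telescoping root sequence $\be_i$ and track the conjugation identities carefully so that the cancellation of the repeated $s_k$ lands in the correct position. The sign-flip behaviour of simple reflections on $\Phi^+$---the same ingredient behind the counting in the sublemma---is precisely what pins $\be_k$ down to a simple root and makes the argument close.
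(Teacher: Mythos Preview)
Your argument is correct and is precisely the standard proof of the exchange condition. Note, however, that the paper does not give its own proof of this lemma: it is simply quoted from \cite[1.6, 1.7]{hum2}, so there is nothing to compare against beyond observing that your cycle $(2)\Rightarrow(1)\Rightarrow(3)\Rightarrow(2)$, with the sign-change tracking of the root sequence $\be_i$ for the key step, is exactly the argument in Humphreys.
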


\subsection{Inversion sets and root sequences}
For $w \in W$, we define $\inv (w)$ by
\[
\inv (w) := \Phi^+ \cap w(\Phi^-),
\]
that is, $\inv(w)$ is the set of positive roots which are sent to negative roots by $w^{-1}$. We call an element of $\inv(w)$ an \emph{inversion of $w$}.
This set plays an important role in this paper, since it corresponds to the set of dimension vectors of bricks in a torsion-free class $\FF(w)$ over the preprojective algebra (Corollary \ref{binv:cor:diminv}).

The following description of inversion sets is well-known.
\begin{proposition}[{\cite[1.7]{hum2}}]\label{binv:prop:invdist}
  Let $w$ be an element of $W$. Take a reduced expression $w= s_{\al_1} \cdots s_{\al_l}$ of $w$ with $\al_i \in \Delta$. Then we have
    \[
    \inv(w) = \{ \al_1,\;  s_{\al_1}(\al_2),\; s_{\al_1} s_{\al_2} (\al_3),\; \dots,\; s_{\al_1}\cdots s_{\al_{l-1}}(\al_l) \},
    \]
    and all the elements above are distinct. In particular, we have $\#\inv(w) = \ell(w)$.
\end{proposition}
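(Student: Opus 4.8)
The plan is to induct on $\ell(w) = l$, the case $l = 0$ being trivial since then $w$ is the identity and $\inv(w) = \Phi^+ \cap \Phi^- = \emptyset$, matching the empty list. For the inductive step, factor $w = s_{\al_1} w'$ with $w' = s_{\al_2}\cdots s_{\al_l}$. Since $w'$ is a product of $l-1$ simple reflections we have $\ell(w') \leq l-1$, while $w = s_{\al_1}w'$ forces $\ell(w') \geq \ell(w) - 1 = l-1$; hence $\ell(w') = l-1$ and the displayed expression of $w'$ is reduced, so the induction hypothesis applies to $w'$.

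The crux is the decomposition
\[
\inv(w) \;=\; \{\al_1\} \;\sqcup\; s_{\al_1}\bigl(\inv(w')\bigr)
\]
(a \emph{disjoint} union). To establish it, I would start from $\inv(w') = \Phi^+ \cap w'(\Phi^-)$ and apply the linear isomorphism $s_{\al_1}$, obtaining $s_{\al_1}(\inv(w')) = s_{\al_1}(\Phi^+) \cap w(\Phi^-)$. By the classical fact that a simple reflection permutes $\Phi^+ \setminus \{\al_1\}$ and negates $\al_1$, we have $s_{\al_1}(\Phi^+) = (\Phi^+ \setminus \{\al_1\}) \cup \{-\al_1\}$. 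It then remains to pin down on which side of $w(\Phi^-)$ the roots $\pm\al_1$ lie: from $\ell(s_{\al_1}w') = \ell(w) > \ell(w')$ and Lemma \ref{binv:lem:exchange} (applied after replacing $w'$ by $(w')^{-1}$, using $\ell(u) = \ell(u^{-1})$, to convert the left multiplication into a right one) one deduces $(w')^{-1}(\al_1) \in \Phi^+$, hence $w^{-1}(\al_1) = -(w')^{-1}(\al_1) \in \Phi^-$. Thus $\al_1 \in \Phi^+ \cap w(\Phi^-) = \inv(w)$ whereas $-\al_1 \notin w(\Phi^-)$, so intersecting with $w(\Phi^-)$ kills the $-\al_1$ term and yields $s_{\al_1}(\inv(w')) = (\Phi^+ \setminus \{\al_1\}) \cap w(\Phi^-) = \inv(w) \setminus \{\al_1\}$; disjointness is automatic since $s_{\al_1}(\inv(w')) \subseteq \Phi^+ \setminus \{\al_1\}$.

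Granting this, the proposition follows by bookkeeping: the induction hypothesis writes $\inv(w')$ as the list $\al_2, s_{\al_2}(\al_3), \dots, s_{\al_2}\cdots s_{\al_{l-1}}(\al_l)$ with all $l-1$ entries distinct; applying the bijection $s_{\al_1}$ gives $s_{\al_1}(\al_2), s_{\al_1}s_{\al_2}(\al_3), \dots, s_{\al_1}\cdots s_{\al_{l-1}}(\al_l)$, still $l-1$ distinct roots, and prepending $\al_1$ — which lies outside this set by the decomposition above — produces exactly the claimed list of $l$ distinct elements, so $\#\inv(w) = l = \ell(w)$. I do not anticipate a genuine difficulty; the only point demanding care is the sign bookkeeping around $\al_1$, namely correctly transporting the exchange property from right to left multiplication and verifying that precisely $\al_1$, and not $-\al_1$, survives the intersection with $w(\Phi^-)$.
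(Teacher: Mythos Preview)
The paper does not supply its own proof of this proposition; it simply cites \cite[1.7]{hum2} and states the result. Your inductive argument is correct and is essentially the standard proof one finds in Humphreys: the key decomposition $\inv(s_{\al_1}w') = \{\al_1\} \sqcup s_{\al_1}(\inv(w'))$ (equivalently Lemma~\ref{binv:lem:invcon} specialized to $v = s_{\al_1}$) is exactly what drives the induction there, and your sign bookkeeping around $\pm\al_1$ is handled correctly.
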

By using this, we can easily show the following property.
\begin{lemma}\label{binv:lem:invcon}
  Let $v$ and $w$ elements of $W$ satisfying $\ell(vw) = \ell(v) + \ell(w)$. Then we have $\inv(vw) = \inv(v)\sqcup v (\inv(w))$.
\end{lemma}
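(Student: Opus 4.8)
The strategy is to reduce everything to Proposition \ref{binv:prop:invdist} by concatenating reduced expressions. First I would fix reduced expressions $v = s_{\al_1}\cdots s_{\al_p}$ and $w = s_{\be_1}\cdots s_{\be_q}$ with $\al_i, \be_j \in \Delta$, so that $p = \ell(v)$ and $q = \ell(w)$. The hypothesis $\ell(vw) = \ell(v)+\ell(w) = p+q$ means that the concatenation
\[
vw = s_{\al_1}\cdots s_{\al_p}\, s_{\be_1}\cdots s_{\be_q}
\]
is an expression for $vw$ of length $p+q = \ell(vw)$, hence is itself a reduced expression of $vw$.

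Next I would apply Proposition \ref{binv:prop:invdist} to this reduced expression of $vw$. It yields
\[
\inv(vw) = \bigl\{\, \al_1,\; \dots,\; s_{\al_1}\cdots s_{\al_{p-1}}(\al_p),\; v(\be_1),\; v s_{\be_1}(\be_2),\; \dots,\; v s_{\be_1}\cdots s_{\be_{q-1}}(\be_q) \,\bigr\},
\]
where I have used $s_{\al_1}\cdots s_{\al_p} = v$ to rewrite the last $q$ entries. Comparing with the same proposition applied to the reduced expressions of $v$ and of $w$ separately, the first $p$ entries form exactly $\inv(v)$, and the last $q$ entries form exactly $v(\inv(w))$. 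Therefore $\inv(vw) = \inv(v) \cup v(\inv(w))$ as sets.

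Finally, for disjointness: Proposition \ref{binv:prop:invdist} also asserts that all $p+q$ roots listed for $\inv(vw)$ are pairwise distinct, so in particular no element of the first block equals one in the second block; hence the union is disjoint, giving $\inv(vw) = \inv(v) \sqcup v(\inv(w))$. There is essentially no serious obstacle here — the only point requiring a moment's care is the observation that length additivity forces the concatenated expression to be reduced, after which everything is a direct bookkeeping consequence of Proposition \ref{binv:prop:invdist}.
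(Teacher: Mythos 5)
Your proof is correct and follows exactly the route the paper indicates: the paper states Lemma~\ref{binv:lem:invcon} immediately after Proposition~\ref{binv:prop:invdist} with the remark that it follows easily from that proposition, and your argument — concatenate reduced expressions of $v$ and $w$ into a reduced expression of $vw$, then read off and split the resulting root list, with distinctness giving the disjointness — is precisely that deduction.
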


If we choose a reduced expression of $w$, then Proposition \ref{binv:prop:invdist} gives a sequence of positive roots. It turns out that the order of appearance are important for our purpose. This leads to the notion of \emph{root sequences}.
\begin{definition}\label{binv:def:rootseq}
  Let $W$ be the Weyl group of a root system $\Phi$.
  \begin{enumerate}
    \item Let $w$ be an element of $W$ and $w = s_{\al_1}\cdots s_{\al_l}$ a reduced expression of $w$. Then a \emph{root sequence of $w$ associated to this expression} is a  (ordered) sequence of positive roots
    \[
    \al_1,\; s_{\al_1}(\al_2),\; s_{\al_1} s_{\al_2} (\al_3),\; \dots,\; s_{\al_1}\cdots s_{\al_{l-1}}(\al_l).
    \]
    \item A \emph{root sequence} is a sequence of roots which arises as a root sequence of some reduced expression of some element $w$ in $W$. We call such a sequence a \emph{root sequence of $w$}.
  \end{enumerate}
\end{definition}

The notion of root sequences appeared in several papers: they are called \emph{compatible (convex) orderings} in \cite{papi}, \emph{reflection orderings} in \cite{dyer}. We borrowed the terminology \emph{root sequences} from \cite{gl,fs}.

We will use the following characterization of inversion sets and root sequences due to Papi later.
\begin{theorem}[\cite{papi}]\label{binv:thm:rootseqchar}
  A sequence $\RR$ of positive roots is a root sequence if and only if the following two conditions are satisfied for any pair of positive roots $\al, \be$ satisfying $\al + \be \in \Phi^+$:
  \begin{enumerate}
    \item If $\al$ and $\be$ appear in $\RR$, then $\al + \be$ appears between $\al$ and $\be$ in $\RR$.
    \item If $\al + \be$ appears in $\RR$, then one of $\al$ and $\be$ appears and precedes $\al + \be$ in $\RR$.
  \end{enumerate}
\end{theorem}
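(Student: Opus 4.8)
The plan is to factor the statement through the notion of a \emph{biconvex} (biclosed) subset of $\Phi^+$: a subset $A\subseteq\Phi^+$ such that (i) if $\alpha,\beta\in A$ with $\alpha+\beta\in\Phi^+$ then $\alpha+\beta\in A$, and (ii) if $\gamma\in A$ and $\gamma=\alpha+\beta$ with $\alpha,\beta\in\Phi^+$ then $\alpha\in A$ or $\beta\in A$. Writing $\RR=(\beta_1,\dots,\beta_l)$ (with entries tacitly distinct, as is automatic for root sequences), I would prove and then combine two statements: (A) $\RR$ satisfies conditions (1) and (2) of the theorem if and only if every prefix $\{\beta_1,\dots,\beta_k\}$ is biconvex; and (B) $\RR$ is a root sequence if and only if every prefix $\{\beta_1,\dots,\beta_k\}$ is biconvex.

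Statement (A) is pure bookkeeping. Assume first the prefixes are biconvex and check (1): if $\beta_i,\beta_j$ occur with $i<j$ and $\gamma:=\beta_i+\beta_j\in\Phi^+$, then $\gamma\notin\{\beta_i,\beta_j\}$, closedness of the prefix ending at $\beta_j$ puts $\gamma$ strictly before $\beta_j$, and co-closedness of the prefix ending just before $\beta_i$ forbids $\gamma$ from occurring before $\beta_i$, so $\gamma$ lies strictly between; and (2) is immediate from co-closedness of the prefix ending at $\gamma$. Conversely, given (1) and (2): closedness of a prefix $A_k$ holds because a sum $\beta_i+\beta_j$ of two of its elements occurs, by (1), strictly between them, hence in $A_k$; co-closedness holds because a decomposition of $\gamma=\beta_m\in A_k$ puts, by (2), a summand before $\beta_m$, hence in $A_k$.

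For (B), the ``only if'' direction is short with the tools at hand: if $\RR$ is the root sequence of a reduced expression $w=s_{\al_1}\cdots s_{\al_l}$, each prefix $w_k=s_{\al_1}\cdots s_{\al_k}$ is again reduced, and Proposition \ref{binv:prop:invdist} identifies $\{\beta_1,\dots,\beta_k\}$ with $\inv(w_k)$; and every $\inv(v)$ is biconvex by a one-line check: applying $v^{-1}$ and using that a sum of two positive (resp. negative) roots that is a root is positive (resp. negative). The ``if'' direction I would prove by induction on $l$, the base case being immediate. Assuming all prefixes biconvex, the induction hypothesis realizes $(\beta_1,\dots,\beta_{l-1})$ as the root sequence of a reduced word $u=s_{\al_1}\cdots s_{\al_{l-1}}$, so $\{\beta_1,\dots,\beta_{l-1}\}=\inv(u)$; since $\beta_l\in\Phi^+\setminus\inv(u)$ we get $\al:=u^{-1}(\beta_l)\in\Phi^+$, and the crux is to show $\al$ is a \emph{simple} root. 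Granting this, Lemma \ref{binv:lem:exchange} applied to $u$ and $\al$ (note $u(\al)=\beta_l\in\Phi^+$) shows $us_\al$ is reduced of length $l$, and Proposition \ref{binv:prop:invdist} shows its root sequence is exactly $(\beta_1,\dots,\beta_{l-1},\beta_l)=\RR$.

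The main obstacle is precisely the sub-claim that $\al=u^{-1}(\beta_l)$ is simple, which is where biconvexity of the full set $A:=\{\beta_1,\dots,\beta_l\}=\inv(u)\cup\{\beta_l\}$ is used. I would argue by contradiction: if $\al=\ga+\delta$ with $\ga,\delta\in\Phi^+$, then $\beta_l=u(\ga)+u(\delta)$, and I split on the signs of $u(\ga),u(\delta)$. If both are positive, co-closedness of $A$ forces $u(\ga)$ or $u(\delta)$ into $A=\inv(u)\cup\{\beta_l\}$, and either option is absurd: membership in $\inv(u)$ sends $\ga$ or $\delta$ to a negative root under $u^{-1}$, and equality with $\beta_l$ makes $\delta$ or $\ga$ zero. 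If instead, say, $u(\ga)$ is negative, then $-u(\ga)\in\inv(u)\subseteq A$, so $u(\delta)=\beta_l+(-u(\ga))$ is a root expressed as a sum of two elements of $A$, hence lies in $\Phi^+$ and, by closedness, in $A$, and the same contradiction results. Once this case analysis is in place, the remaining steps are just assembling (A) and (B), so I expect the rest of the write-up to be brief.
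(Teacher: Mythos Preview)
The paper does not give its own proof of Theorem~\ref{binv:thm:rootseqchar}; it simply cites Papi's paper \cite{papi} for the result. So there is no ``paper's proof'' to compare against here---your proposal is effectively a reconstruction of the argument from the original source.

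That said, your argument is correct and is essentially the standard proof. The factorization through biconvex (biclosed) prefixes is exactly how Papi's characterization is typically established: part (A) is the elementary translation between the sequential conditions (1)--(2) and biconvexity of all initial segments, and part (B) is the well-known equivalence between inversion sets and biconvex subsets of $\Phi^+$, upgraded to an equivalence of chains. Your inductive step for the ``if'' direction of (B)---showing $u^{-1}(\beta_l)$ is simple by a sign-case analysis using closedness and co-closedness of $A=\inv(u)\cup\{\beta_l\}$---is the crux and is carried out correctly; the appeals to Lemma~\ref{binv:lem:exchange} and Proposition~\ref{binv:prop:invdist} are appropriate. The only point worth flagging explicitly in a final write-up is the distinctness assumption you note parenthetically: it is genuinely needed for the ``if'' direction (e.g.\ the sequence $\alpha,\alpha$ for a simple root $\alpha$ vacuously satisfies (1) and (2) but is not a root sequence), so you should state it as a hypothesis rather than leave it tacit.
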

To give examples of root sequences (and its connection with \emph{brick sequence} defined in the next section), it is convenient to introduce the \emph{right weak order $\leq_R$} on $W$ and its \emph{Hasse quiver}. Define a quiver $\Hasse(W,\leq_R)$ as follows:
\begin{itemize}
  \item A vertex set of $\Hasse (W,\leq_R)$ is $W$.
  \item We draw an arrow $v \ot vs$ if $\ell(vs) = \ell(v)+1$ for $v \in W$ and a simple reflection $s \in W$.
\end{itemize}
Then define a partial order $\leq_R$ on $W$ by $v \leq_R w$ if and only if there is a path from $w$ to $v$ in $\Hasse(W,\leq_R)$. It is known that $(W,\leq_R)$ is actually a lattice, see e.g. \cite[3.2]{bb}, and $\Hasse(W,\leq_R)$ is actually a Hasse quiver of $(W,\leq_R)$ defined later.

By construction, each reduced expression of $w = s_1 s_2 \cdots s_l$ gives a reverse path $e \ot s_1 \ot s_1 s_2 \ot \cdots \ot w$ from $e$ to $w$ in $\Hasse(W,\leq_R)$, and this correspondence is a bijection.
We define the \emph{root labeling} of arrows in $\Hasse(W,\leq_R)$ by attaching a positive root $v(\al)$ to an arrow $v \ot vs_\al$. Then a root sequence associated to a reduced expression $w=s_1 \cdots s_l$ is obtained by reading labels of the corresponding reverse path $e \ot s_1 \ot \cdots \ot w$.
\begin{example}\label{binv:ex:ainv}
  Let $\Phi$ be the root system of type $A_3$ with its Dynkin graph $1$---$2$---$3$. We denote by $s_i$ (resp. $\al_i$) the simple reflection (resp. simple root) associated to the vertex $i$ for $1 \leq i \leq 3$. Consider $w=s_1 s_2 s_3 s_1 s_2 \in W$. Figure \ref{binv:fig:ainv} shows all the reverse paths from $e$ to $w$ and its root labeling.
  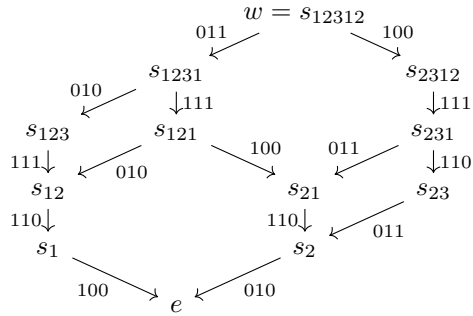
\begin{figure}[h]
  \begin{tikzcd}[column sep=tiny, row sep=small]
    &[.5cm] & w= s_{12312} \ar[dl, "011"'] \ar[dr, "100"] \\
    & s_{1231} \dar["111"] \ar[dl,"010"'] & & s_{2312} \dar["111"] \\
    s_{123}\dar["111"'] & s_{121}\ar[dl, "010"] \ar[dr, "100"] & & s_{231} \dar["110"] \ar[dl,"011"']\\
    s_{12}\dar["110"'] & & s_{21}\dar["110"'] & s_{23} \ar[dl, "011"]\\
    s_1\ar[rd, "100"'] & & s_2 \ar[ld, "010"]&  \\
    & e
  \end{tikzcd}
  \caption{Root sequences of $w=s_{12312}$}
  \label{binv:fig:ainv}
  \end{figure}
  Here we write $s_{1231}:=s_1 s_2 s_3 s_1$ and $110:=\al_1 + \al_2$ for example.
  Then the left most path corresponds to a reduced expression $s_{12312}$, and the right most corresponds to $s_{23121}$.
  Their associated root sequences are $100,110,111,010,011$ and $010,011,110,111,100$ respectively.
\end{example}

\begin{example}\label{binv:ex:dinv}
  Let $\Phi$ be the root system of type $D_4$, whose Dynkin diagram is as follows:
  \[
  \begin{tikzcd}[column sep=tiny, row sep=tiny]
    & 2 \dar[dash]\\
    1 \rar[dash] & 0 \rar[dash] & 3
  \end{tikzcd}
  \]
  Consider $w=s_{012301230}$ (here we use the same notation as in Example \ref{binv:ex:ainv}). Figure \ref{binv:fig:dinv} shows all the reduced expressions of $w$ and its root sequence.
  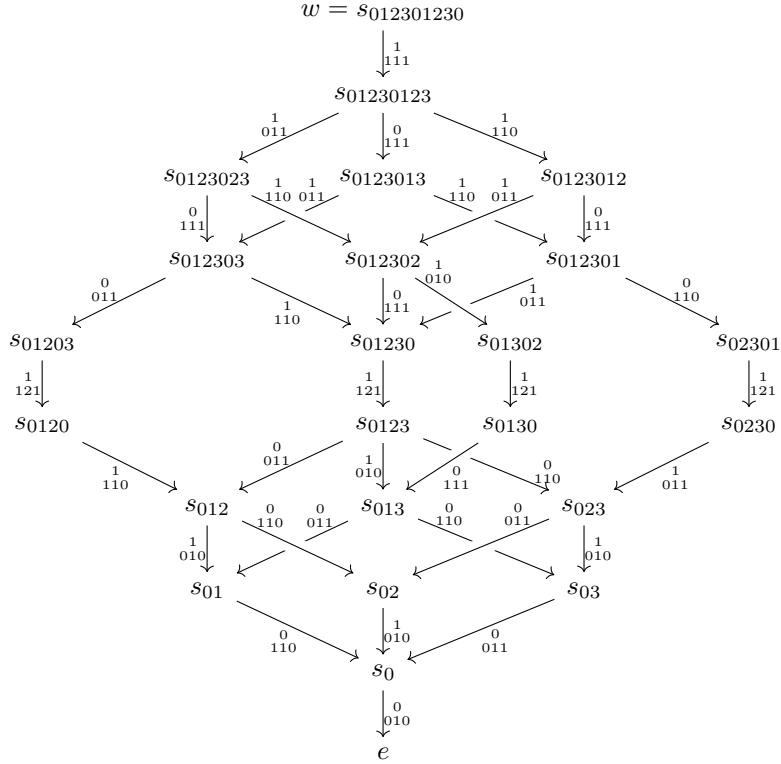
\begin{figure}[h]
  \begin{tikzcd}[every label/.style={auto,font=\tiny, inner sep=0mm}]
    & &[-.5cm] w=s_{012301230}\dar["\substack{1 \\ 111}"] &[-1cm] &[-1.2cm] \\
    & & s_{01230123}\ar[dl, "\substack{1 \\ 011}"'] \dar["\substack{0 \\ 111}"] \ar[drr, "\substack{1 \\ 110}"] \\
    & s_{0123023}\dar["\substack{0 \\ 111}"']  & s_{0123013} \ar[dl, "\substack{1 \\ 011}"', very near start]\ar[drr, "\substack{1 \\ 110}", very near start] && s_{0123012}\dar["\substack{0 \\ 111}"] \ar[dll, "\substack{1 \\ 011}"', very near start, crossing over] \\
    & s_{012303}\ar[dl, "\substack{0 \\ 011}"'] \ar[dr, "\substack{1 \\ 110}"'] & s_{012302} \ar[from=ul, "\substack{1 \\ 110}", crossing over, very near start] \dar["\substack{0 \\ 111}"] & &s_{012301}\ar[dll, "\substack{1 \\ 011}", very near start] \ar[dr, "\substack{0 \\ 110}"] \\
    s_{01203} \dar["\substack{1 \\ 121}"'] & & s_{01230}\dar["\substack{1 \\ 121}"'] & s_{01302} \dar["\substack{1 \\ 121}"] \ar[from=ul, "\substack{1 \\ 010}", crossing over, very near start] & & s_{02301}\dar["\substack{1 \\ 121}"] \\
    s_{0120}\ar[dr, "\substack{1 \\ 110}"'] & & s_{0123} \dar["\substack{1 \\ 010}"'] \ar[dl, "\substack{0 \\ 011}"'] \ar[drr, "\substack{0 \\ 110}", very near end] & s_{0130} \ar[dl, crossing over, "\substack{0 \\ 111}"] & & s_{0230} \ar[dl, "\substack{1 \\ 011}"]\\
    & s_{012} \dar["\substack{1 \\ 010}"']  &
    s_{013}\ar[dl,"\substack{0 \\ 011}"',very near start] \ar[drr,"\substack{0 \\ 110}", very near start] & & s_{023}\ar[dll, "\substack{0 \\ 011}"',very near start, crossing over]\dar["\substack{1 \\ 010}"] \\
    & s_{01} \ar[dr, "\substack{0 \\ 110}"'] & s_{02}\dar["\substack{1 \\ 010}"] \ar[from=ul, "\substack{0 \\ 110}", very near start, crossing over] & & s_{03}\ar[dll, "\substack{0 \\ 011}"] \\
    & & s_0 \dar["\substack{0 \\ 010}"]\\
    & & e
  \end{tikzcd}
  \caption{Root sequences for $w=s_{012301230}$}
  \label{binv:fig:dinv}
  \end{figure}
  For example, the right most path gives a reduced expression $w=s_{012030210}$ with its associated root sequence $\substack{0 \\ 010}, \substack{0 \\ 110}, \substack{1 \\ 010}, \substack{1 \\ 110}, \substack{1 \\ 121}, \substack{0 \\ 011}, \substack{0 \\ 111}, \substack{1 \\ 011}, \substack{1 \\ 111}$
  (here as before, $\substack{1 \\ 011}$ denotes $\al_0 + \al_2 + \al_3$ for example).
\end{example}

\subsection{Bruhat inversions}
A simple root in a root system is \emph{indecomposable in $\Phi^+$} in the  sense that we cannot write it as a positive linear combination of positive roots in a non-trivial way.

We are interested, not in the whole set of positive roots, but in the inversion set $\inv(w)$ for a fixed $w \in W$. It is natural to ask which elements are \emph{indecomposable} in the above sense. This leads to the notion of \emph{Bruhat inversions}.

Let us begin with the property of reflections with respect to inversion roots of $w$.
\begin{proposition}[{\cite[Theorem 5.8]{hum2}, \cite[Theorem 1.4.3]{bb}}]\label{binv:prop:invrefl}
  Let $w$ be an element of $W$ and $\be$ a positive root. Then the following hold:
  \begin{enumerate}
    \item Then $\be \in \inv(w)$ if and only if $\ell(t_\be w) < \ell(w)$ holds.
    \item Let $s_1 s_2 \cdots s_l = w$ be a reduced expression with $s_i = s_{\al_i}$, and let $\be_1,\be_2, \dots, \be_l$ a root sequence associated to it. Then we have
    \[
    t_{\be_i} w = s_1 s_2 \cdots \widehat{s_i} \cdots s_l.
    \]
  \end{enumerate}
\end{proposition}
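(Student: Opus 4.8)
The plan is to prove part (2) first and then derive part (1) from it, since (2) is a direct computation while (1) follows by a short bootstrapping argument.

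For (2), set $u := s_{\al_1}\cdots s_{\al_{i-1}}$, so that by the definition of a root sequence (Definition \ref{binv:def:rootseq}) we have $\be_i = u(\al_i)$. The key ingredient is that conjugation transports reflections, namely $t_{u(\al_i)} = u\, s_{\al_i}\, u^{-1}$ as elements of $\GL(V)$; this is immediate from the formula $t_\be(v) = v - \la v,\be\ra \be$ together with the fact that $u$ preserves the form $(-,-)$, which gives $\la u^{-1}v, \al_i\ra = \la v, u(\al_i)\ra$. Granting this, I compute
\[
t_{\be_i}\, w = u\, s_{\al_i}\, u^{-1} \cdot (s_{\al_1}\cdots s_{\al_l}) = u\, s_{\al_i}\cdot (s_{\al_i} s_{\al_{i+1}}\cdots s_{\al_l}) = s_{\al_1}\cdots s_{\al_{i-1}} s_{\al_{i+1}}\cdots s_{\al_l},
\]
which is exactly $s_1\cdots \widehat{s_i}\cdots s_l$. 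In particular $\ell(t_{\be_i} w) \leq l - 1 < \ell(w)$.

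For the forward implication of (1), suppose $\be \in \inv(w)$, and fix a reduced expression $w = s_{\al_1}\cdots s_{\al_l}$. By Proposition \ref{binv:prop:invdist} the associated root sequence $\be_1,\dots,\be_l$ enumerates $\inv(w)$, so $\be = \be_i$ for some $i$, and then part (2) gives $\ell(t_\be w)\leq l-1 < \ell(w)$. For the converse I argue by contraposition: assume $\be \notin \inv(w)$, so that $w^{-1}(\be)\in\Phi^+$ (using $\be\in\Phi^+$ and the definition of $\inv(w)$). Since $t_\be$ is an involution with $t_\be(\be) = -\be$, we get $(t_\be w)^{-1}(\be) = w^{-1}t_\be(\be) = -w^{-1}(\be) \in \Phi^-$, hence $\be \in \inv(t_\be w)$. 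Applying the already-established forward implication to the element $t_\be w$ and the root $\be$ yields $\ell\big(t_\be(t_\be w)\big) < \ell(t_\be w)$, i.e.\ $\ell(w) < \ell(t_\be w)$; in particular $\ell(t_\be w)\not< \ell(w)$, as required.

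There is essentially no serious obstacle here: everything reduces to the conjugation identity $t_{u(\al)} = u\, s_\al\, u^{-1}$ and standard manipulation of reduced words. The only point demanding care is the logical ordering — establishing (2), then the ``if'' half of (1), and only afterwards invoking that ``if'' half (for the element $t_\be w$) to obtain the ``only if'' half — so that no step is circular.
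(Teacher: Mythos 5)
Your proof is correct. The paper itself supplies no argument here — it simply cites \cite[Theorem 5.8]{hum2} and \cite[Theorem 1.4.3]{bb} — and your derivation is the standard one those references give: establish the conjugation identity $t_{u(\al)} = u s_\al u^{-1}$ to get (2) by telescoping the reduced word, then obtain the forward half of (1) from (2) via Proposition \ref{binv:prop:invdist}, and finally obtain the converse by the bootstrapping trick of applying the forward half to $t_\be w$. The logical ordering you flag is exactly the point that needs care, and you have handled it correctly.
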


Next we will define \emph{Bruhat inversions} of elements in $W$. Recall that the \emph{Bruhat order} on $W$ is the transitive closure of the following relation: for every $t \in T$ and $w\in W$ satisfying $\ell(t w) < \ell(w)$, we have $tw \leq w$.
\begin{definition}\label{binv:def:binv}
  Let $w$ be an element of $W$ and $\be$ a positive root. Then $\be$ is a \emph{Bruhat inversion} of $w$ if it satisfies $\ell(t_\be w) = \ell(w) - 1$. We denote by $\Binv(w)$ the set of Bruhat inversions of $w$. We call an element of $\inv(w) \setminus \Binv(w)$ a \emph{non-Bruhat inversion} of $w$.
\end{definition}
By the description in Proposition \ref{binv:prop:invrefl}, the number of Bruhat inversions can be computed as follows: fix a reduced expression $w = s_1 \cdots s_l$ of $w$, then $\#\Binv(w)$ is the number of $i$'s such that deleting $s_i$ from this expression still yields a reduced expression.

A Bruhat inversion is closely related to the covering relation in the Bruhat order as follows.
\begin{proposition}\label{binv:prop:bruhatcover}
  For $w\in W$ and $\be \in \Phi^+$, the following are equivalent:
  \begin{enumerate}
    \item $\be \in \Binv(w)$, that is, $\be$ is a Bruhat inversion of $w$.
    \item $w$ covers $t_\be w$ in the Bruhat order of $w$.
  \end{enumerate}
\end{proposition}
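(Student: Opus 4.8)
The plan is to prove the two implications separately, using only the defining description of the Bruhat order as the transitive closure of the relation ``$t_\be x \le x$ whenever $t_\be \in T$ and $\ell(t_\be x) < \ell(x)$'', Proposition~\ref{binv:prop:invrefl}(1) to translate between lengths and inversions, and one standard structural input: the \emph{chain property} of the Bruhat order, i.e.\ that whenever $u < w$ there is a saturated chain $u = x_0 \covd x_1 \covd \cdots \covd x_k = w$ with $\ell(x_j) = \ell(x_{j-1}) + 1$ for all $j$ (see e.g.\ \cite[Theorem~2.2.6]{bb}); equivalently, the Bruhat order is graded by $\ell$.

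First I would record the elementary fact that $\ell$ is strictly increasing along $<$: if $u < w$, then unwinding the transitive closure gives a sequence $u = z_0, z_1, \dots, z_m = w$ with $m \ge 1$ and $\ell(z_{j-1}) < \ell(z_j)$ for each $j$, so $\ell(u) < \ell(w)$, and in fact $\ell(w) - \ell(u) \ge m$.

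\emph{(1) $\Rightarrow$ (2).} Suppose $\ell(t_\be w) = \ell(w) - 1$ and put $u := t_\be w$. Since $\ell(t_\be w) < \ell(w)$, the generating relation (applied with $x = w$ and $t = t_\be$) gives $u \le w$; as $\ell(u) \ne \ell(w)$ we get $u < w$. If some $z$ satisfied $u < z < w$, the monotonicity above would force $\ell(w) - 1 = \ell(u) < \ell(z) < \ell(w)$, impossible for integers. Hence $w$ covers $t_\be w$. \emph{(2) $\Rightarrow$ (1).} Suppose $w$ covers $u := t_\be w$. Then $u < w$, so $\ell(t_\be w) < \ell(w)$ by monotonicity, whence $\be \in \inv(w)$ by Proposition~\ref{binv:prop:invrefl}(1). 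To upgrade this to $\ell(t_\be w) = \ell(w) - 1$ I invoke the chain property: a saturated chain from $u$ to $w$ with unit length steps must, since $w$ covers $u$, consist of a single step, so $\ell(w) = \ell(u) + 1 = \ell(t_\be w) + 1$, i.e.\ $\be \in \Binv(w)$.

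The step I expect to be the main obstacle is precisely this last one: passing from $\ell(t_\be w) < \ell(w)$ to $\ell(t_\be w) = \ell(w) - 1$. A reflection can a priori change the length by an odd number larger than $1$, and excluding this under the covering hypothesis is exactly the content of the gradedness of the Bruhat order. If a self-contained argument is preferred over the citation, it can be extracted from Proposition~\ref{binv:prop:invrefl}(2): writing $w = s_1 \cdots s_l$ reduced with associated root sequence $\be_1, \dots, \be_l$ and $\be = \be_i$, one has $t_\be w = s_1 \cdots \widehat{s_i} \cdots s_l$, and if this word were non-reduced the deletion condition would produce an element strictly between $t_\be w$ and $w$, contradicting that $w$ covers $t_\be w$; this route, however, needs more care with the exchange/deletion arguments, so citing the chain property is the cleaner option.
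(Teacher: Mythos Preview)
Your proof is correct and follows exactly the approach the paper intends: the paper's proof is the one-line remark that the statement ``easily follows from Proposition~\ref{binv:prop:invrefl} and the chain property of the Bruhat order, see \cite[Theorem 2.2.6]{bb}'', and you have simply written out that deduction in full. In particular, your identification of the chain property (gradedness by $\ell$) as the key input for $(2)\Rightarrow(1)$ matches the paper's citation precisely.
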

\begin{proof}
  This easily follows from Proposition \ref{binv:prop:invrefl} and the chain property of the Bruhat order, see \cite[Theorem 2.2.6]{bb} for example.
\end{proof}
The following is an example of Bruhat inversions for type A case (we refer the reader to the appendix for the detail).
\begin{example}
   Let $\Phi$ be the standard root system of type $A_n$, and $\al_1,\dots,\al_n$ the simple roots. Then positive roots are of the form $\be_{(i\,\,j)}:=\al_i + \al_{i+1} + \cdots + \al_{j-1}$ for $1 \leq i < j \leq n+1$.
  We can identify $W$ with the symmetric group $S_{n+1}$. For $w$ in $W = S_{n+1}$, we have that $\be_{(i,j)} \in \inv(w)$ if and only if $(i,j)$ is the \emph{classical inversion} of $w$, that is, $i<j$ and $w^{-1}(i) > w^{-1}(j)$ holds.
  Moreover, we can easily see that $\be_{(i,j)} \in \Binv(w)$ if and only if $\be_{(i,j)} \in \inv(w)$ and there is no $i < k < j$ with $\be_{(i,k)},\be_{(k,j)} \in \inv(w)$.
  For example, if $w = 42153 \in S_5$, we have $\inv(w) = \{ \be_{(1,2}, \be_{(1,4)}, \be_{(2,4)}, \be_{(3,4)}, \be_{(3,5)} \}$ and $\Binv(w) = \inv(w) \setminus \{ \be_{(1,4)} \}$.
\end{example}

For a fixed element $w \in W$, we will give a characterization of Bruhat inversions of $w$ among all inversions of $w$.
To do this, we prepare some lemmas.

The following is known as the \emph{lifting property} of the Bruhat order.
\begin{lemma}[{\cite[Proposition 2.2.7]{bb}}]\label{binv:lem:lifting}
  Let $v$ and $w$ be elements in $W$ satisfying $v<w$, and let $s$ be a simple reflection. If $\ell(sv) = \ell(v)+1$ and $\ell(sw) = \ell(w)-1$ holds, then we have $sv < w$ and $v < sw$.
\end{lemma}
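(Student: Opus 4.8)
The plan is to reduce the lifting property to the \emph{Subword Property} of the Bruhat order \cite[Theorem 2.2.2]{bb}, which states that $v \leq w$ if and only if some (equivalently, every) reduced expression of $w$ has a subword that is a reduced expression of $v$. Everything else is a careful choice of reduced expression together with a bookkeeping of lengths.

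First I would fix a reduced expression $sw = s_{i_1}\cdots s_{i_{k-1}}$, where $k = \ell(w)$. Since $\ell(sw) = \ell(w)-1$ by hypothesis, the expression $s\, s_{i_1}\cdots s_{i_{k-1}}$ has $k = \ell(w)$ letters and equals $w$, hence is a reduced expression of $w$ (no use of the exchange property is needed, only a length count). Because $v < w$, the Subword Property furnishes a reduced subword of $s\, s_{i_1}\cdots s_{i_{k-1}}$ equal to $v$. The one real point of the proof is that this subword cannot involve the initial letter $s$: if it did, then deleting that $s$ would leave an expression of length $\ell(v) - 1$ spelling $sv$, forcing $\ell(sv) \leq \ell(v)-1$ and contradicting the hypothesis $\ell(sv) = \ell(v)+1$. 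Hence the reduced subword for $v$ lies entirely inside $s_{i_1}\cdots s_{i_{k-1}}$, which is a reduced expression of $sw$; by the Subword Property, $v \leq sw$.

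For the second inequality I would start from that reduced subword $\underline v$ of $s_{i_1}\cdots s_{i_{k-1}}$ spelling $v$ (of length $\ell(v)$, being reduced) and, inside the reduced expression $s\, s_{i_1}\cdots s_{i_{k-1}}$ of $w$, adjoin the initial $s$ to it. This gives a subword with $\ell(v)+1$ letters spelling $sv$; since $\ell(sv) = \ell(v)+1$ by hypothesis, its length equals $\ell(sv)$, so it is a reduced expression of $sv$ sitting inside a reduced expression of $w$, and the Subword Property yields $sv \leq w$. The strict inequalities $sv < w$ and $v < sw$ then hold automatically except in the single degenerate situation $v = sw$ (equivalently $sv = w$), in which the stated hypotheses are still satisfied; I would either record this caveat or, as is done in the applications, use the lemma only when $v \neq sw$.

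I do not anticipate a real obstacle here: granted the Subword Property the argument is essentially the one-line descent observation of the second paragraph, and checking that the chosen sequences of positions are genuine subwords with the asserted lengths is routine. The only pitfall to avoid is circularity — but in the standard development, and in \cite{bb}, the Subword Property precedes the lifting property, so invoking it is legitimate. Should one wish to bypass the Subword Property entirely, the alternative is an induction on $\ell(w)$ built on the strong exchange property and the chain property of the Bruhat order (\cite[Theorems 1.5.1, 2.2.6]{bb}); this involves more case analysis but is no deeper.
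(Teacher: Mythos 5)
Your proof is correct, and since the paper cites \cite[Proposition 2.2.7]{bb} without proof there is no internal argument to compare against; the reduction to the Subword Property is the standard route and your execution is sound. The length count ruling out use of the initial letter $s$ in the reduced subword for $v$, and the observation that prepending $s$ to that subword gives a word of length $\ell(v)+1=\ell(sv)$ and hence a reduced subword for $sv$ inside a reduced expression for $w$, together yield $v\leq sw$ and $sv\leq w$.

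You are also right to flag the degenerate case. With the hypotheses as stated the strict inequalities can fail: take $v=e$, $w=s$ in $S_2$; then $v<w$, $\ell(sv)=\ell(v)+1$, $\ell(sw)=\ell(w)-1$, yet $sv=w$ and $v=sw$. What your argument proves, and what Björner--Brenti actually state, is the non-strict version $sv\leq w$, $v\leq sw$. That is also what the paper uses: in the proof of Lemma~\ref{binv:lem:multileft} the lemma is invoked to get $tw\leq sw$, and the very next sentence deduces $tw=sw$, so the equality case occurs. The ``$<$'' in the statement of Lemma~\ref{binv:lem:lifting} should read ``$\leq$''.
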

By using the lifting property, we can show the following technical lemma.
\begin{lemma}\label{binv:lem:multileft}
  Let $w$ be an element of $W$ and $\be \in \Binv(w)$, and put $t = t_\be \in W$. Then either one of the following holds:
  \begin{enumerate}
    \item $\be$ is a simple root.
    \item There exists a simple reflection $s \in W$ which satisfies the following two conditions:
    \begin{enumerate}
      \item $\ell(s w) = \ell(w) + 1$.
      \item $\ell(s t w) = \ell(tw) + 1$.
    \end{enumerate}
  \end{enumerate}
\end{lemma}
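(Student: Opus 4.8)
The plan is to prove the statement in the form ``(1) fails $\Rightarrow$ (2) holds'': I will assume $\be$ is not a simple root and produce the required simple reflection. Set $v := t_\be w = tw$; since $\be \in \Binv(w)$ we have $\ell(v) = \ell(w) - 1$, hence $v < w$ in the Bruhat order directly from its definition. Because $\ell(su) - \ell(u) = \pm 1$ for every simple reflection $s$ and every $u$, condition (a) of (2) says precisely that $s$ is not a left descent of $w$, and (b) says $s$ is not a left descent of $v$; so, writing $D_L(-)$ for left descent sets, (2) is the assertion that some simple reflection avoids $D_L(w) \cup D_L(v)$. I will assume it does not --- i.e.\ $D_L(w) \cup D_L(v)$ is the whole set $S$ of simple reflections --- and reach a contradiction.

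The main step will be to show $D_L(w) \subseteq D_L(v)$. Suppose $s \in D_L(w) \setminus D_L(v)$, so $\ell(sw) < \ell(w)$ while $\ell(sv) > \ell(v)$. Applying the lifting property to $v < w$ (Lemma~\ref{binv:lem:lifting}; cf.\ \cite[Proposition~2.2.7]{bb}) gives $sv \le w$; but $\ell(sv) = \ell(v) + 1 = \ell(w)$, so $sv \le w$ together with equality of lengths forces $sv = w$, whence $v = sw$ and $t_\be = w v^{-1} = s$ is a simple reflection --- so $\be$ is simple, contradicting our assumption. Hence $D_L(w) \subseteq D_L(v)$. I expect this to be the delicate point: one must use the non-strict conclusion $sv \le w$ and then upgrade it to the equality $sv = w$, and it is exactly here that the hypothesis $\be \notin \Delta$ enters (if $\be$ were simple, $sv = w$ is perfectly possible and no contradiction arises).

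Finally I will combine the two facts. The assumption that (2) fails says $S = D_L(w) \cup D_L(v)$, which by the previous step equals $D_L(v)$; so $D_L(v) = S$. But $D_L(u) = S$ means $u^{-1}(\Phi^+) = \Phi^-$, i.e.\ $u = w_0$ is the longest element; hence $v = w_0$, giving $\ell(w) - 1 = \ell(v) = \ell(w_0) \ge \ell(w)$, which is absurd. This contradiction establishes (2) and finishes the proof. The only auxiliary inputs are standard: $\ell(su) = \ell(u) \pm 1$ for simple $s$, the fact that $\ell(tw) < \ell(w)$ implies $tw < w$, and the characterization of $w_0$ just used (which itself follows from Lemma~\ref{binv:lem:exchange}).
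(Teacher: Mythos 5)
Your proof is correct and uses essentially the same ingredients and core argument as the paper's: the lifting property plus the cover relation $\ell(tw) = \ell(w)-1$ to handle a simple reflection $s$ in $D_L(w)\setminus D_L(tw)$, and the characterization of the longest element $w_0$ to rule out $D_L(tw)=S$. The only difference is logical arrangement: the paper argues directly that failure of (2) forces $\be$ simple, first producing (via the ``Claim'') an $s\in D_L(w)\setminus D_L(tw)$ and then using lifting to get $sw=tw$, whereas you assume both (1) and (2) fail and derive a contradiction, first showing $D_L(w)\subseteq D_L(tw)$ (the contrapositive of the paper's lifting step) and then concluding $D_L(tw)=S$ (the contrapositive of the paper's Claim). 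Same proof, contraposed.
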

\begin{proof}
  Suppose that (2) does not hold. We will show that $\be$ must be simple.

  Since (2) does not hold, for every simple reflection $s \in W$, we have that either (a) or (b) (or both) fails to hold. We will show the following claim.

  {\bf (Claim)}:
  \emph{There exists some simple reflection $s \in W$ such that {\upshape (a)} does not hold and {\upshape (b)} holds}.

  \emph{Proof of (Claim)}.
  If this is not the case, then (b) does not hold for every simple reflection $s \in W$. This means that $tw$ is the longest element in $W$, which contradicts to $\ell(w) = \ell(tw)+1$. $\qedb$

  Take such a simple reflection $s$. Then we have that $\ell(sw) = \ell(w) -1$ and $\ell(stw) = \ell(tw)+1$ hold, and that $t w < w$ by the assumption.
  Then by Lemma \ref{binv:lem:lifting}, we have that $tw \leq sw < w$ holds. Since $w$ covers $tw$ in the Bruhat order, we must have $tw = sw$, hence $t=s$. Therefore $\be$ is a simple root.
\end{proof}
Now we can show that a Bruhat inversion can be transformed into a simple root:
\begin{proposition}\label{binv:bruhachar0}
  Let $w$ be an element of $W$ and $\be$ an inversion of $w$. Then the following are equivalent:
  \begin{enumerate}
    \item $\be$ is a Bruhat inversion of $w$.
    \item There exists some element $v \in W$ which satisfies the following two conditions:
    \begin{enumerate}
      \item $\ell(vw) = \ell(v) + \ell(w)$ holds.
      \item $v(\be)$ is a simple root.
    \end{enumerate}
  \end{enumerate}
\end{proposition}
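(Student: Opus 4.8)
The plan is to prove the two implications separately: $(2)\Rightarrow(1)$ is a direct length computation, while $(1)\Rightarrow(2)$ is an induction on $\#\Phi^+-\ell(w)$ powered by Lemma \ref{binv:lem:multileft}.

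\emph{For $(2)\Rightarrow(1)$}, suppose $v$ satisfies (a) and (b) and set $\al:=v(\be)\in\Delta$. From $\ell(vw)=\ell(v)+\ell(w)$ and Lemma \ref{binv:lem:invcon} we get $\inv(vw)=\inv(v)\sqcup v(\inv(w))$, so $\al\in v(\inv(w))\subseteq\inv(vw)$ since $\be\in\inv(w)$. As $\al$ is simple and lies in $\inv(vw)$, we have $(vw)^{-1}(\al)\in\Phi^-$, hence $\ell(s_\al vw)=\ell(vw)-1$ by Lemma \ref{binv:lem:exchange}. Using the conjugation formula $s_\al=t_{v(\be)}=v\,t_\be\,v^{-1}$ we rewrite $s_\al vw=v\,t_\be\,w$, so $\ell(v\,t_\be\,w)=\ell(v)+\ell(w)-1$. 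On the other hand $\be\in\inv(w)$ gives $\ell(t_\be w)\le\ell(w)-1$ by Proposition \ref{binv:prop:invrefl}(1), so the triangle inequality $\ell(v\,t_\be\,w)\le\ell(v)+\ell(t_\be w)$ forces $\ell(t_\be w)=\ell(w)-1$; that is, $\be\in\Binv(w)$.

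\emph{For $(1)\Rightarrow(2)$}, I induct on $N-\ell(w)$ where $N:=\#\Phi^+=\ell(w_0)$. If $\be$ is simple, take $v=e$. Otherwise, Lemma \ref{binv:lem:multileft} yields a simple reflection $s=s_\gamma$ with $\ell(sw)=\ell(w)+1$ and $\ell(s\,t_\be\,w)=\ell(t_\be w)+1$; note that when $w=w_0$ the first condition is impossible, so in the base case $\be$ is necessarily simple and we are done. Put $w_1:=sw$, so $\ell(w_1)=\ell(w)+1$. Since $\be\notin\Delta$ we have $s(\be)\in\Phi^+$, and using $t_{s(\be)}=s\,t_\be\,s$ we compute $t_{s(\be)}w_1=s\,t_\be\,w$, whose length is $\ell(t_\be w)+1=\ell(w)=\ell(w_1)-1$; thus $s(\be)\in\Binv(w_1)$. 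Because $\ell(w_1)>\ell(w)$, the inductive hypothesis provides $v_1\in W$ with $\ell(v_1 w_1)=\ell(v_1)+\ell(w_1)$ and $v_1(s(\be))\in\Delta$. Set $v:=v_1 s$; then $v(\be)=v_1(s(\be))$ is simple. For the length condition, concatenate a reduced expression of $v_1$ with $w_1=s\cdot(\text{reduced expression of }w)$: the decomposition $\ell(v_1 w_1)=\ell(v_1)+\ell(w_1)$ says this word is reduced, so its length-$(\ell(v_1)+1)$ prefix $v_1 s$ is reduced, giving $\ell(v)=\ell(v_1)+1$; therefore $\ell(vw)=\ell(v_1 w_1)=\ell(v_1)+\ell(w_1)=\ell(v)+\ell(w)$, which completes the induction.

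I expect the only real obstacle to be in $(1)\Rightarrow(2)$: recognizing that the induction must run \emph{upward}, replacing $w$ by the longer element $sw$ supplied by Lemma \ref{binv:lem:multileft}, and then keeping track that both the Bruhat-inversion property (from $(\be,w)$ to $(s(\be),sw)$) and the additivity $\ell(vw)=\ell(v)+\ell(w)$ are preserved under this step. Given Lemma \ref{binv:lem:multileft}, each of these is a short verification, so the content is essentially the choice of induction.
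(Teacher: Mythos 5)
Your proof is correct and follows essentially the same strategy as the paper: the $(2)\Rightarrow(1)$ direction via the triangle inequality on $\ell(v\,t_\be\,w)$ is the paper's argument verbatim, and for $(1)\Rightarrow(2)$ the paper also applies Lemma~\ref{binv:lem:multileft} to pass to $(s(\be),sw)$ and terminates because lengths cannot grow forever (bounded by $\ell(w_0)$), which is exactly your induction on $N-\ell(w)$. The one small improvement you make is verifying the additivity $\ell(v)=\ell(v_1)+1$ and hence condition~(a) explicitly via the reduced-word prefix argument, which the paper leaves implicit.
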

\begin{proof}
  (1) $\Rightarrow$ (2):
  Suppose that $\be \in \Binv(w)$ holds.
  If $\be$ is a simple root, then $v=e$ satisfies the conditions (2)(a) and (b).

  From now on, we assume that $\be$ is not simple. Then by Lemma \ref{binv:lem:multileft}, there exists a simple reflection $s$ such that $\ell(s w) = \ell(w) + 1$ and $\ell(s t_\be w)= \ell(w)$ hold. Put $w'=s w$ and $\be':= s(\be)$, then $\be' \in \inv(w')$ by Lemma \ref{binv:lem:invcon}.
  Moreover, $\ell(s t_\be w)= \ell(w)$ implies that $\ell(t_{\be'} w') = \ell(s t_\be s\cdot sw) = \ell(w) = \ell(w')-1$. Thus $\be'$ is a Bruhat inversion of $w'$. If $\be'$ is a simple root, then $v=s$ satisfies the conditions (2)(a) and (b).

  If $\be'$ is not simple, then we can iterate this process by considering $\be'$ and $w'$ instead of $\be$ and $w$. Moreover, this process must stop at some point since otherwise we would have $\ell(w) < \ell(w') < \cdots$, which contradicts to the existence of the longest element in $W$. Therefore, we have that the Bruhat inversion at this point, which can be written as $v(\be)$ for some $v$, is a simple root.

  (2) $\Rightarrow$ (1):
  Since $v(\be)$ is a simple root and $v(\be) \in \inv(vw)$ by Lemma \ref{binv:lem:invcon}, we have $\ell(t_{v(\be)} v w) = \ell(vw) - 1 = \ell(v) + \ell(w) -1$.
  On the other hand, since $t_{v(\be)} v w = v t_\be v^{-1} v w = v t_\be w$ holds, we have $\ell(t_{v(\be)} v w) \leq \ell(v) + \ell(t_\be w)$. Thus $\ell(w)-1 \leq \ell(t_\be w)$.
  Since $\be$ is an inversion of $w$, we have $\ell(t_\be w) \leq \ell(w) -1$ by Proposition \ref{binv:prop:invrefl}. Therefore, we have $\ell(t_\be w )= \ell(w)-1$, that is, $\be$ is a Bruhat inversion of $w$.
\end{proof}

The following is a main result in this section, which gives a characterization of Bruhat inversions.
\begin{theorem}\label{binv:thm:binvchar}
  Let $w$ be an element of the Weyl group $W$ of $\Phi$, and $\be \in \inv(w)$. Then the following are equivalent:
  \begin{enumerate}
    \item $\be$ is a \emph{non-}Bruhat inversion of $w$.
    \item There exists $a_\ga \in \R_{\geq 0}$ for each $\ga \in \inv(w)$ which satisfies the following conditions:
    \begin{itemize}
      \item $\be = \sum_{\ga \in \inv(w)} a_\ga \ga$ holds.
      \item This expression is not of the form $\be = 1 \cdot \beta$, that is, there exists some $\ga \in \inv(w)$ such that $a_\ga \neq \delta_{\be,\ga}$, where $\delta$ is the Kronecker delta.
    \end{itemize}
    \item There exist $\ga_1,\ga_2 \in \inv(w)$ with $\ga_1 \neq \ga_2$ and $a_1, a_2 \in \R_{>0}$ such that $\be = a_1 \ga_1 + a_2 \ga_2$ holds.
    \item There exist $\ga_1, \ga_2 \in \inv(w)$ with $\ga_1 \neq \ga_2$ and $n \in \{1,2,3\}$ such that $\be = (\ga_1 + \ga_2)/n$ holds.
  \end{enumerate}
  Moreover, if $\Phi$ is simply-laced, then the above statements are equivalent to the following:
  \begin{enumerate}[resume]
    \item There exist $\ga_1, \ga_2 \in \inv(w)$ satisfying $\be = \ga_1 + \ga_2$.
  \end{enumerate}
\end{theorem}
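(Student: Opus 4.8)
The plan is to run the cycle $(1)\Rightarrow(4)\Rightarrow(3)\Rightarrow(2)\Rightarrow(1)$ and then treat the simply-laced equivalence separately as $(4)\Leftrightarrow(5)$. The implications $(4)\Rightarrow(3)\Rightarrow(2)$ are essentially formal: writing $\be=(\ga_1+\ga_2)/n$ as $\be=\tfrac1n\ga_1+\tfrac1n\ga_2$ gives $(3)$, and given $(3)$ one notes $\be\notin\{\ga_1,\ga_2\}$ — otherwise one of $\ga_1,\ga_2$ would be a positive multiple of $\be$ distinct from $\be$, forcing it to equal $-\be\notin\Phi^+$ by (R1) — so extending the two coefficients by zero yields an expression as in $(2)$ which is not $1\cdot\be$. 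For $(2)\Rightarrow(1)$ I would argue contrapositively: if $\be\in\Binv(w)$, Proposition \ref{binv:bruhachar0} gives $v\in W$ with $\ell(vw)=\ell(v)+\ell(w)$ and $\al:=v(\be)\in\Delta$; by Lemma \ref{binv:lem:invcon}, $v$ maps $\inv(w)$ into $\inv(vw)\subseteq\Phi^+$, so applying $v$ to $\be=\sum_{\ga\in\inv(w)}a_\ga\ga$ gives $\al=\sum_\ga a_\ga\,v(\ga)$ with each $v(\ga)$ a positive root; comparing coordinates in the basis $\Delta$ and using $\Phi\cap\R\al=\{\pm\al\}$ forces $v(\ga)=\al$, hence $\ga=\be$, for every $\ga$ with $a_\ga>0$, and then $a_\be=1$. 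So every such expression is trivial, which is $\neg(2)$.

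The core of the argument is $(1)\Rightarrow(4)$, which I would prove by studying the inversion set of the reflection $t_\be$ itself. First I would record the identity
\[
\ell(t_\be w)=\ell(w)+\ell(t_\be)-2\,\#\bigl(\inv(w)\cap\inv(t_\be)\bigr),
\]
valid for any positive root $\be$. It follows by applying $t_\be$ to $\inv(t_\be w)=\Phi^+\cap(t_\be w)(\Phi^-)$, using that $t_\be(\Phi^+)=(\Phi^+\setminus\inv(t_\be))\cup(-\inv(t_\be))$, intersecting with $w(\Phi^-)$, splitting the result into its part in $\Phi^+$ (which is $\inv(w)\setminus\inv(t_\be)$) and its part in $\Phi^-$ (which is $-(\inv(t_\be)\setminus\inv(w))$), and counting, with $\#\inv(u)=\ell(u)$ from Proposition \ref{binv:prop:invdist}.

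Next I would analyse $\inv(t_\be)$. The map $\ga\mapsto-t_\be(\ga)$ is an involution on $\inv(t_\be)$ fixing exactly $\be$, so
\[
\inv(t_\be)=\{\be\}\sqcup\{\ga_1,\delta_1\}\sqcup\cdots\sqcup\{\ga_r,\delta_r\},
\]
where $\delta_i=-t_\be(\ga_i)$ and hence $\ga_i+\delta_i=\la\ga_i,\be\ra\,\be$; since $\ga_i\neq\pm\be$ and $\la\ga_i,\be\ra>0$, Proposition \ref{binv:prop:cartanint} forces $n_i:=\la\ga_i,\be\ra\in\{1,2,3\}$, so $\be=(\ga_i+\delta_i)/n_i$ with $\ga_i\neq\delta_i$; in particular $\ell(t_\be)=2r+1$. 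The last ingredient is that each pair meets $\inv(w)$: if $\ga_i,\delta_i\notin\inv(w)$ then $w^{-1}\ga_i,w^{-1}\delta_i\in\Phi^+$, so $w^{-1}(n_i\be)=n_i\,w^{-1}(\be)$ is a nonzero vector with nonnegative $\Delta$-coordinates; but $\be\in\inv(w)$ means $w^{-1}(\be)\in\Phi^-$, so this vector has nonpositive coordinates — contradiction. Hence $\#(\inv(w)\cap\inv(t_\be))\geq 1+r$, with equality precisely when every pair meets $\inv(w)$ in a single element, which by the displayed identity happens iff $\ell(t_\be w)=\ell(w)-1$, i.e.\ iff $\be\in\Binv(w)$. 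Therefore $\be$ being a non-Bruhat inversion is equivalent to some pair satisfying $\ga_i,\delta_i\in\inv(w)$, and then $\be=(\ga_i+\delta_i)/n_i$ with $\ga_i\neq\delta_i$ — exactly $(4)$. For the simply-laced addendum, $(5)\Rightarrow(4)$ is immediate (take $n=1$; $\ga_1\neq\ga_2$ since $2\ga_1\notin\Phi$), and for $(4)\Rightarrow(5)$ I would divide $(\ga_1+\ga_2,\ga_1+\ga_2)=n^2(\be,\be)$ by the common squared length $(\rho,\rho)$ of all roots to get $2+\la\ga_1,\ga_2\ra=n^2$; as $\ga_1\neq\ga_2$ and $\ga_1+\ga_2\neq0$, Proposition \ref{binv:prop:cartanint}(3) gives $\la\ga_1,\ga_2\ra\in\{-1,0,1\}$, so $n^2\in\{1,2,3\}$ and $n=1$.

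\textbf{Main obstacle.} I expect the delicate step to be the length/inversion-set identity for $\ell(t_\be w)$: the sign bookkeeping in passing from $\inv(t_\be w)$ to $t_\be(\Phi^+)\cap w(\Phi^-)$ and correctly partitioning it into its $\Phi^+$- and $\Phi^-$-parts is where errors would creep in. Once that identity and Proposition \ref{binv:bruhachar0} are available, the remaining steps (the involution structure of $\inv(t_\be)$, the positive-cone argument that each pair meets $\inv(w)$, and the simply-laced norm computation) are short; the one point to watch there is that the positive-cone argument — not mere co-closedness of $\inv(w)$ — is what handles the cases $n_i=2,3$ uniformly.
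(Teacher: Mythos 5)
Your proof is correct and, for the key implication (1) $\Rightarrow$ (4), takes a genuinely different route from the paper's. The paper fixes a reduced expression $w = s_1\cdots s_l$ with root sequence $\be_1,\ldots,\be_l$ and $\be = \be_m$, applies the exchange property (Lemma \ref{binv:lem:exchange}) to the non-reduced word $s_1\cdots\widehat{s_m}\cdots s_l$ at the first position $j$ where reducedness fails, and shows by a direct computation with reflections that $t_\be(\be_j) = -\be_i$; this yields $\la\be_j,\be\ra\,\be = \be_i + \be_j$ with $\la\be_j,\be\ra\in\{1,2,3\}$ (and $=1$ when simply-laced), giving (4) and (5) at once. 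Your argument is expression-free: you derive the length formula $\ell(t_\be w)=\ell(w)+\ell(t_\be)-2\#\bigl(\inv(w)\cap\inv(t_\be)\bigr)$ from the partition of $t_\be(\Phi^+)\cap w(\Phi^-)$ into its positive and negative parts (the formula requires $t_\be^{-1}=t_\be$, which holds for a reflection), then pair off $\inv(t_\be)\setminus\{\be\}$ under the involution $\ga\mapsto -t_\be(\ga)$ so that each pair $\{\ga_i,\delta_i\}$ sums to $n_i\be$ with $n_i=\la\ga_i,\be\ra\in\{1,2,3\}$, and show every pair meets $\inv(w)$ by a positive-cone (coordinatewise sign) argument. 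This yields a sharper dichotomy: $\be\in\Binv(w)$ if and only if each pair meets $\inv(w)$ in exactly one element, and otherwise some pair lies wholly inside $\inv(w)$, giving (4). Your route avoids any choice of reduced expression, produces the explicit bound $\#(\inv(w)\cap\inv(t_\be))\geq(\ell(t_\be)+1)/2$ for every $\be\in\inv(w)$, and handles the non-simply-laced cases $n_i\in\{2,3\}$ uniformly via the cone argument rather than by root arithmetic; the paper's route is lower-tech (only the exchange lemma) and hands one a pair drawn directly from a root sequence of $w$. For the downstream application in Theorem \ref{binv:thm:main} the paper anyway re-locates the pair inside a root sequence via Theorem \ref{binv:thm:rootseqchar}, so either proof serves equally well. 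Two small polish points: in (3) $\Rightarrow$ (2), when $\be=\ga_1$ one gets $\ga_2$ as a \emph{scalar} (not necessarily positive) multiple of $\be$, and (R1) then rules out both $\ga_2=\be$ (as $\ga_2\neq\ga_1$) and $\ga_2=-\be$ (not positive); and when invoking Proposition \ref{binv:prop:cartanint}(3) you should say $\ga_1\neq\pm\ga_2$, which you do in fact have since both are distinct positive roots.
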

\begin{proof}
  The implications (5) $\Rightarrow$ (4) $\Rightarrow$ (3) $\Rightarrow$ (2) are clear.

  (2) $\Rightarrow$ (1):
  Suppose that $\be$ is a Bruhat inversion. Then by Proposition \ref{binv:bruhachar0}, we have an element $v\in W$ such that $\ell(vw) = \ell(v) + \ell(w)$ and that $v(\be)$ is a simple root.
  Note that Lemma \ref{binv:lem:invcon} implies $v(\inv(w)) \subset \Phi^+$.

  Now take $a_\ga \in \R_{\geq 0}$ for $\ga \in \inv(w)$ with $\be = \sum_\ga a_\ga \ga$ as claimed in (2). Then we have
  \[
  v(\be) = \sum_{\ga \in \inv(w)} a_\ga v(\ga).
  \]
  Since $v(\be)$ is a simple root and all $v(\ga)$'s are distinct positive roots, we must have that $a_\ga = \delta_{\be,\ga}$, which contradicts to the condition in (2). Thus $\be$ is not a Bruhat inversion.

  (1) $\Rightarrow$ (4),(5):
  Take any reduced expression $w=s_1 s_2 \cdots s_l$ of $w$, and let $\be_1, \be_2, \dots, \be_l$ be the root sequence associated to it. Then $\be = \be_m$ for some $m$.
  Since we have $t_\be w = s_1 \cdots \widehat{s_m} \cdots s_l$ by Proposition \ref{binv:prop:invrefl} and $\be$ is \emph{not} a Bruhat inversion, the expression $s_1 \cdots \widehat{s_m} \cdots s_l$ is not reduced.

  Take the minimal $j$ such that $s_1 \cdots \widehat{s_m} \cdots s_j$ is \emph{not} a reduced expression. Then by Lemma \ref{binv:lem:exchange}, there exists $i$ such that $s_1 \cdots \widehat{s_m} \cdots s_{j-1} = s_1 \cdots \widehat{s_i} \cdots \widehat{s_m} \cdots s_j$.
  In this situation, we will prove the following claim.

  {\bf (Claim)}:
  \emph{We have $t_\be(\be_j) = -\be_i$}.

  \emph{Proof of (Claim)}.
  First we have the following equation:
  \begin{align*}
    t_\be(\be_j) &= t_{\be_m} (\be_j) \\
    &= (s_1 \cdots s_{m-1} s_m s_{m-1} \cdots s_1)(\be_j)\\
    &= (s_1 \cdots s_{m-1} s_m s_{m-1} \cdots s_1) (s_1 \cdots s_{j-1})(\al_j)\\
    &= (s_1 \cdots \widehat{s_m} \cdots s_{j-1})(\al_j),
  \end{align*}
  where $\al_j$ is a simple root with $s_{\al_j} = s_j$.
  On the other hand, since $s_1 \cdots \widehat{s_m} \cdots s_{j-1} s_j$ is not reduced, Lemma \ref{binv:lem:exchange} implies that $(s_1 \cdots \widehat{s_m} \cdots s_{j-1})(\al_j)$ is a negative root. Thus $t_\be(\be_j)$ is a negative root.

  Now consider the reflection with respect to $t_\be(\be_j)$:
  \begin{align*}
    t_{t_\be(\be_j)} &= t_\be t_{\be_j} t_\be \\
    &= (s_1 \cdots s_m \cdots s_1) (s_1 \cdots s_m \cdots s_j \cdots s_m \cdots s_1) (s_1 \cdots s_m \cdots s_1) \\
    &= s_1 \cdots \widehat{s_m} \cdots s_{j-1} s_j s_{j-1} \cdots \widehat{s_m} \cdots s_1 \\
    &= (s_1 \cdots \widehat{s_i} \cdots \widehat{s_m} \cdots s_j) s_j s_{j-1} \cdots \widehat{s_m} \cdots s_1 \\
    &= s_1 \cdots s_i \cdots s_1 \\
    &= t_{\be_i}.
  \end{align*}
  Thus the reflection with respect to $t_\be(\be_j)$ coincides with that along $\be_i$. Since $t_\be(\be_j)$ is a negative root, we must have $t_\be(\be_j) = - \be_i$. $\qedb$

  Now by (Claim), we have $\be_j - \la \be_j, \be \ra \be = t_\be(\be_j)  = - \be_i$, thus $\la \be_j, \be \ra \be = \be_i + \be_j$.
  Since $\be$, $\be_i$ and $\be_j$ are positive roots, we must have $\la \be_j, \be \ra > 0$.
  We have $\la \be_j, \be \ra \in \{1,2,3\}$ by Proposition \ref{binv:prop:cartanint}, so (4) holds.

  Assume that $\Phi$ is simply-laced. Since $m < j$, we have that $\be_m=\be$ and $\be_j$ are distinct. Thus $\la \be_j, \be \ra =1$ holds by Proposition \ref{binv:prop:cartanint}, so (5) holds.
\end{proof}

By this theorem, a non-Bruhat inversion $\ga$ of $w$ can be written as $\ga = \al + \be$ with $\al,\be \in \inv(w)$ if $\Phi$ is simply-laced. This kind of equation gives a restriction of the relative position of $\al,\be$ and $\ga$ as follows.
\begin{lemma}\label{binv:lem:contfig}
  Suppose that $\Phi$ is simply-laced, and that $\al$ and $\be$ in $\Phi$ satisfies $\ga:=\al+\be \in \Phi$. Then $\la \al,\be\ra = -1$, $\la \al,\ga\ra = \la \be,\ga \ra = 1$ hold, thus they look as follows.
  \[
    \begin{tikzpicture}[scale=.7]
      \draw[->] (0,0) -- (0:2) node[right] {$\al$};
      \draw[->] (0,0) -- (60:2) node[above right] {$\ga = \al + \be$};
      \draw[->] (0,0) -- (120:2) node[above left] {$\be$};
      \draw[-Latex] (.5,0) arc (0:60:.5) node[midway, right] {$\frac{\pi}{3}$};
      \draw[-Latex] (60:.7) arc (60:120:.7) node[midway, above] {$\frac{\pi}{3}$};
    \end{tikzpicture}
  \]
\end{lemma}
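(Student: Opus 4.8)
The plan is to argue entirely inside the Euclidean space $V$ with the form $(-,-)$, using the Cartan-integer restrictions of Proposition \ref{binv:prop:cartanint} together with the reformulation of the simply-laced axiom recorded right after the definition: for roots $\mu,\nu$, if $(\mu,\nu)\neq 0$ then $(\mu,\mu)=(\nu,\nu)$. First I would write down the three elementary expansions
\[
(\ga,\ga) = (\al,\al) + 2(\al,\be) + (\be,\be),\qquad (\al,\ga) = (\al,\al)+(\al,\be),\qquad (\be,\ga) = (\be,\be)+(\al,\be),
\]
and note $\al\neq\be$, since $\al=\be$ would give $\ga=2\al\in\Phi$, contradicting (R1).

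The crucial first step is to rule out $(\al,\be)=0$. If $(\al,\be)=0$, then $(\al,\ga)=(\al,\al)>0$ and $(\be,\ga)=(\be,\be)>0$, so by the simply-laced hypothesis $\al$, $\be$, $\ga$ all have the same squared length, say $c>0$; but the first expansion then reads $c=(\ga,\ga)=(\al,\al)+(\be,\be)=2c$, forcing $c=0$, a contradiction. Hence $(\al,\be)\neq 0$. Since $\al\neq\be$ and $\Phi$ is simply-laced, Proposition \ref{binv:prop:cartanint}(3) gives $\la\al,\be\ra\in\{-1,0,1\}$, so in fact $\la\al,\be\ra\in\{-1,1\}$, which means $(\al,\be)=\pm\tfrac12(\al,\al)=\pm\tfrac12(\be,\be)$; in particular $(\al,\al)=(\be,\be)=:c$.

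Next I would use this to force all three vectors to the same length and then pin down the sign. From $(\al,\be)=\pm\tfrac12 c$ we get $(\al,\ga)=c\pm\tfrac12 c\neq 0$ and likewise $(\be,\ga)\neq 0$; hence $\al,\be,\ga$ again all have squared length $c$. Substituting $(\ga,\ga)=c$ into $(\ga,\ga)=2c+2(\al,\be)$ yields $(\al,\be)=-\tfrac12 c$, i.e. $\la\al,\be\ra=\la\be,\al\ra=-1$. Then $(\al,\ga)=c-\tfrac12 c=\tfrac12 c$, so $\la\al,\ga\ra=2(\al,\ga)/(\ga,\ga)=1$, and symmetrically $\la\be,\ga\ra=1$. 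The displayed picture is then immediate: $\al,\be,\ga$ have equal length, the angle between $\al$ and $\ga$ (and between $\be$ and $\ga$) has cosine $\tfrac12$ hence equals $\pi/3$, and the angle between $\al$ and $\be$ has cosine $-\tfrac12$ hence equals $2\pi/3$.

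The only delicate points are the first step (excluding $(\al,\be)=0$) and excluding the sign $\la\al,\be\ra=+1$; both reduce to the same length-comparison trick, namely that if two of the three vectors are non-orthogonal while a third is orthogonal to one of them or "too long", then comparing squared lengths via the simply-laced axiom produces $0=(\text{positive number})$. Everything else is routine substitution. As an alternative to the length argument for the sign one could invoke Proposition \ref{binv:prop:cartanint}(2) — were $\la\be,\al\ra$ equal to $\pm2$ we would get $\be=\pm\al$, both excluded — but this is not needed once the length computation is in place.
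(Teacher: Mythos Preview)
Your argument is correct, but it is noticeably longer than the paper's. The paper exploits the linearity of $\la -,\al\ra$ in the first variable: after noting that $\al,\be,\ga$ are pairwise distinct, Proposition~\ref{binv:prop:cartanint}(3) gives $\la\be,\al\ra,\la\ga,\al\ra\in\{-1,0,1\}$, and the single identity
\[
\la\ga,\al\ra=\la\al+\be,\al\ra=\la\al,\al\ra+\la\be,\al\ra=2+\la\be,\al\ra
\]
forces $\la\be,\al\ra=-1$ and $\la\ga,\al\ra=1$ simultaneously; the rest follows by symmetry. Your route instead works directly with the symmetric form $(-,-)$: you first exclude $(\al,\be)=0$ by a length comparison, then use the simply-laced axiom to equalize all three squared lengths, and finally read off the sign of $(\al,\be)$ from the expansion of $(\ga,\ga)$. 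This is perfectly valid and perhaps more ``Euclidean'' in flavor, but the paper's trick of combining the additive identity with the Cartan-integer bound avoids the separate case analysis for orthogonality and sign, yielding the conclusion in essentially one line.
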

\begin{proof}
  Clearly $\al,\be, \ga$ are distinct.
  Thus we have that $\la \be,\al \ra, \la \ga,\al \ra \in \{-1,0,1\}$ by Proposition \ref{binv:prop:cartanint}.
  However, we have $\la \ga,\al \ra =\la \al+\be,\al \ra =\la \al,\al\ra + \la \be,\al \ra = 2 + \la \be,\al \ra$. Thus we must have $\la \be,\al \ra = -1$ and $\la \ga,\al \ra = 1$.
  The equation $\la \ga,\be \ra = 1$ can be shown similarly.

  Since $(\al,\be),(\al,\ga) \neq 0$ and $\Phi$ is simply-laced, $\al,\be,\ga$ has the same length. Then it easily follows from $\la \be,\al \ra = -1$ that the angle between $\al$ and $\be$ is $\frac{2}{3}\pi$. Therefore the situation looks like the figure.
\end{proof}

\begin{remark}
  If $\Phi$ is not simply-laced, then a non-Bruhat inversion $\ga$ of $w$ may not be written as a sum of other inversions. For example, let $\Phi$ be the root system of type $B_2$ with $\al$ a short simple root and $\be$ a long simple root:
  \[
  \begin{tikzpicture}
    \draw[->] (0,0) -- (1,0) node [right] {$\al$};
    \draw[->] (0,0) -- (-1,1) node[above left] {$\be$};
    \draw[->] (0,0) -- (0,1) node[above] {$\al+\be$};
    \draw[->] (0,0) -- (1,1) node[above right] {$2 \al + \be$};
  \end{tikzpicture}
  \]
  Then we have four positive roots. Consider $w = s_\be s_\al s_\be$. Easy computation shows that its root sequence is $\be, \al+\be, 2\al+\be$, thus we have $\inv(w) = \{\be,\al+\be,2\al+\be\}$.
  By using Theorem \ref{binv:thm:binvchar}, one can conclude that $\al+\be$ is not a Bruhat inversion of $w$ since $\al+\be = \be/2 + (2\al+\be)/2$ holds (of course this can be deduced by checking that $s_\be \widehat{s_\al} s_\be$ is not reduced, which is trivial). Nevertheless, we cannot write $\al+\be$ as a sum of other inversions of $w$.
\end{remark}

\section{Brick sequence of a torsion-free class}\label{binv:sec:3}
In this section, we define a brick sequence of a torsion-free class $\FF$, which is associated to a maximal green sequence of $\FF$ (a saturated chain of torsion-free classes between $0$ and $\FF$). In particular, we focus on the relation between brick sequences and simple objects in $\FF$.

Throughout this section, let $\Lambda$ be a finite-dimensional $k$-algebra over a field $k$. We denote by $\mod\Lambda$ the category of finitely generated right $\Lambda$-modules. A module always means a finitely generated right modules.

\subsection{Brick labeling}
We briefly recall the lattice structure of torsion-free classes in $\mod \Lambda$, following \cite{DIRRT}. Note that in \cite{DIRRT}, \emph{torsion classes}, the dual notion of torsion-free classes, were mainly studied, but the same theory works also for torsion-free classes by the standard duality.

A subcategory $\FF$ of $\mod\Lambda$ is a \emph{torsion-free class} if it is closed under extensions and submodules in $\mod\Lambda$. We denote by $\torf\Lambda$ the set of all torsion-free classes in $\mod\Lambda$. Then $\torf\Lambda$ is a poset with respect to inclusion.
Moreover, since intersection of any torsion-free classes is also a torsion-free class, $\torf\Lambda$ is a complete lattice.

For a poset $P$, its \emph{Hasse quiver} $\Hasse P$ is a quiver defined as follows:
\begin{itemize}
  \item A vertex set of $\Hasse P$ is $P$.
  \item We draw a unique arrow $x \to y$ in $\Hasse P$ if $x \cov y$, that is, $x$ covers $y$ in a poset $P$.
\end{itemize}

Now $\Hasse (\torf\Lambda)$ has the additional structure called the \emph{brick labeling}, established in \cite{DIRRT}. Let us introduce some terminologies to state it.
A $\Lambda$-module $M$ in $\mod\Lambda$ is called a \emph{brick} if $\End_\Lambda(M)$ is a division ring.
For a collection $\CC$ of $\Lambda$-modules, we will use the following notations:
\begin{itemize}
  \item $\add \CC$ denotes the category of direct summands of finite direct sums of modules in $\CC$.
  \item $\Sub \CC$ denotes the category of modules $X$ such that there exists an injection from $X$ to a module in $\add\CC$.
  \item $\Filt\CC$ denotes the category of modules $X$ such that there exists a filtration $0 = X_0 \leq X_1 \leq \cdots \leq X_n = X$ of submodules of $X$ satisfying $X_i/X_{i-1} \in \CC$ for each $i$.
  \item $\FFF(\CC)$ denotes the smallest torsion-free class containing $\CC$, or equivalently, $\FFF(\CC) = \Filt(\Sub\CC)$.
  \item $^\perp \CC$ denotes the category of modules $X$ satisfying $\Hom_\Lambda(X,\CC) = 0$.
  \item $\CC^\perp$ denotes the category of modules $X$ satisfying $\Hom_\Lambda(\CC,X) = 0$.
  \item $\ind \CC$ denotes the set of isomorphism classes of indecomposable modules in $\CC$.
  \item $\brick\CC$ denotes the set of isomorphism classes of bricks in $\CC$.
\end{itemize}
For two collections $\CC$ and $\DD$ of $\Lambda$-modules, $\CC * \DD$ denotes the category of modules $X$ such that there exist an exact sequence
\[
\begin{tikzcd}
  0 \rar & C \rar & X \rar & D \rar & 0
\end{tikzcd}
\]
with $C\in \CC$ and $D \in \DD$.
From the classical torsion theory, we have $\mod\Lambda = {}^\perp \FF * \FF$ for a torsion-free class $\FF$ in $\mod\Lambda$, see e.g. \cite[VI.1]{ASS}. It can be easily checked that the operation $*$ is associative, that is, we have $(\CC * \DD) * \EE = \CC * (\DD * \EE)$ holds for collections $\CC$, $\DD$ and $\EE$ of $\Lambda$-modules. Thus we omit parentheses and just write as $\CC * \DD * \EE$.

In \cite{DIRRT}, the following basic observation was established.
\begin{proposition}[{\cite[Theorems 3.3, 3.4]{DIRRT}}]\label{binv:prop:dirrt}
  Let $\GG \subset \FF$ be two torsion-free classes in $\mod\Lambda$. Then there exists an arrow $q\colon \FF \to \GG$ in $\Hasse (\torf\Lambda)$ if and only if $\brick({}^\perp \GG \cap \FF)$ contains exactly one element $B_q$.
  In this case, we have $^\perp\GG \cap \FF = \Filt(B_q)$, $\FF = \Filt(B_q) * \GG$ and $\GG = \FF \cap B_q^\perp$.
\end{proposition}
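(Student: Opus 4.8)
The plan is to deduce everything from two structural facts about the subcategory $\WW := {}^\perp\GG \cap \FF$. Write $\TT := {}^\perp\FF \subseteq \TT' := {}^\perp\GG$ for the associated torsion classes, so that $(\TT,\FF)$ and $(\TT',\GG)$ are torsion pairs. First I would record the closure properties: $\WW = \TT' \cap \FF$ is closed under extensions, and under images of morphisms between its objects, since for $f\colon M \to N$ with $M,N \in \WW$ the module $\im f$ is a quotient of $M$, hence in $\TT'$, and a submodule of $N$, hence in $\FF$. Any nonzero subcategory closed under images of this kind contains a brick, by the usual minimal-length argument: a nonzero object $B$ of minimal length admits no nonzero non-isomorphism $B \to B$, as its image would be a nonzero object of strictly smaller length in the subcategory.

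Second, I would set up the canonical decomposition. For $M \in \FF$ the torsion sequence $0 \to t'M \to M \to M/t'M \to 0$ of the torsion pair $(\TT',\GG)$ has $t'M \in \TT'$ and $t'M \subseteq M \in \FF$, so $t'M \in \WW$ and $M/t'M \in \GG$; thus $\FF = \WW * \GG$. Running the same argument inside an intermediate torsion-free class $\GG \subseteq \FF' \subseteq \FF$, with $\WW' := {}^\perp\GG \cap \FF'$ and $\WW'' := {}^\perp\FF' \cap \FF$ (both contained in $\WW$ and both of the above form), gives $\FF' = \WW' * \GG$ and $\FF = \WW'' * \FF'$, and decomposing an object of $\WW$ along $({}^\perp\FF',\FF')$ gives $\WW = \WW'' * \WW'$. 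From this, the direction ``$\brick\WW$ a singleton $\Rightarrow \FF \cov \GG$'' is immediate: if $\brick\WW = \{B\}$ then $B \neq 0$ forces $\GG \subsetneq \FF$, and for any intermediate $\FF'$, were both $\WW'$ and $\WW''$ nonzero they would each contain a brick, necessarily $\cong B$, so that $B \in \FF'$ and $B \in {}^\perp\FF'$, whence $\Hom_\Lambda(B,B) = 0$, contradicting that $B$ is a brick; hence $\WW' = 0$ (so $\FF' = \GG$) or $\WW'' = 0$ (so $\FF' = \FF$).

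For the converse, assume $\FF \cov \GG$. Then $\WW \neq 0$, so I pick a brick $B \in \WW$ of minimal length, and the real work is to prove $\WW = \Filt(B)$. The inclusion $\Filt(B) \subseteq \WW$ is clear from extension-closure, and $\brick\Filt(B) = \{B\}$ follows since a $B$-filtration $0 = X_0 \subsetneq \cdots \subsetneq X_n = X$ with $n > 1$ yields a nonzero non-isomorphism $X \twoheadrightarrow X/X_{n-1} \cong X_1 \hookrightarrow X$, so $X$ is not a brick unless $n = 1$. For the reverse inclusion I would first use minimality of $B$ to pin down its submodules: for $S \subsetneq B$, the torsion part of $S$ for $(\TT',\GG)$ lies in $\WW$ and, being a submodule of $B$, must be $0$ or $B$, hence is $0$, so $S \in \GG$. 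Using this I would show by dévissage that $\Filt(B) * \GG$ is itself a torsion-free class; then $\GG \subsetneq \Filt(B) * \GG \subseteq \FF$ together with $\FF \cov \GG$ forces $\Filt(B) * \GG = \FF$, and for $X \in \WW \subseteq \FF = \Filt(B) * \GG$, writing $0 \to Y \to X \to Z \to 0$ with $Y \in \Filt(B)$ and $Z \in \GG$, the hypothesis $\Hom_\Lambda(X,\GG) = 0$ kills $X \to Z$, so $Z = 0$ and $X \in \Filt(B)$. Granting $\WW = \Filt(B)$, the remaining assertions follow: $\brick\WW = \{B\}$ gives uniqueness of the label $B_q := B$; $\FF = \WW * \GG = \Filt(B) * \GG$ by the decomposition above; and $\GG = \FF \cap B^\perp$, since $B \in {}^\perp\GG$ gives $\GG \subseteq B^\perp$, while any $Y \in \FF \cap B^\perp$, decomposed as $0 \to Y' \to Y \to Y'' \to 0$ with $Y' \in \Filt(B)$, $Y'' \in \GG$, has $Y' = 0$ (otherwise $B \hookrightarrow Y' \hookrightarrow Y$ contradicts $\Hom_\Lambda(B,Y) = 0$), so $Y = Y'' \in \GG$. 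Combining the two directions yields the equivalence and all three formulas.

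The main obstacle is precisely the step $\WW = \Filt(B)$ in the covering case, equivalently that $\Filt(B) * \GG$ is again a torsion-free class. The difficulty is structural rather than computational: the torsion class ${}^\perp\GG$ is not closed under submodules, so a submodule of a $B$-filtered module need not be $B$-filtered and a naive induction on length collapses. The way I would get around this is to lean on minimality of the length of $B$ in $\WW$ (which, as above, constrains the submodule lattice of $B$ so that every proper submodule already lies in $\GG$), or, more conceptually, to identify the interval $[\GG,\FF]$ of $\torf\Lambda$ with the lattice of relative torsion-free classes of the extension-closed category $\WW$, so that a covering relation corresponds to $\WW$ having no nontrivial relative torsion pair, forcing $\WW = \Filt(B)$ for a brick $B$.
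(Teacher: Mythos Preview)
The paper does not give its own proof of this proposition; it is quoted from \cite{DIRRT} without argument. So there is nothing in the present paper to compare against, and the assessment below is of your proof on its own merits.

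Your overall architecture is correct and matches the standard argument: the subcategory $\WW={}^\perp\GG\cap\FF$ is extension-closed and closed under images, hence contains a brick; the decomposition $\FF=\WW*\GG$ and its refinements $\WW=\WW''*\WW'$ for intermediate $\FF'$ give the implication ``unique brick $\Rightarrow$ cover'' cleanly; and the three displayed formulas follow once $\WW=\Filt(B)$ is known. You also correctly isolate the hard step as showing $\WW=\Filt(B)$ in the covering case.

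The gap is that neither of your two sketched patches is actually carried out, and the first one (``lean on minimality'') needs an extra idea you do not supply. The clean way to close it is this. First observe directly that $\Filt(\GG\cup\{B\})$ is closed under submodules: intersecting a $(\GG\cup\{B\})$-filtration of $X$ with a submodule $Y$ yields a filtration of $Y$ whose graded pieces are submodules of objects in $\GG\cup\{B\}$, hence again in $\GG\cup\{B\}$ because every proper submodule of $B$ lies in $\GG$. Thus $\Filt(\GG\cup\{B\})$ is a torsion-free class strictly between $\GG$ and $\FF$, so equals $\FF$ by the cover. Now take $0\ne X\in\WW$; from $\GG=\FF\cap B^\perp$ you get a nonzero map $B\to X$, and minimality of $B$ in $\WW$ forces it to be injective. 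The remaining point is that $X/B\in\FF$: locate the copy of $B$ in a $(\GG\cup\{B\})$-filtration of $X$, use minimality again to see that $B$ meets the filtration trivially below the first $B$-step and hence splits off that step, so $X/B$ inherits a $(\GG\cup\{B\})$-filtration and lies in $\FF$. Then $X/B\in\WW$ and induction on length gives $X\in\Filt(B)$. Your text stops at ``by d\'evissage $\Filt(B)*\GG$ is torsion-free'', which is exactly where this argument is needed; without it the induction in case~(b) of your submodule-closure attempt lands you in $\GG*\Filt(B)*\GG$ and does not terminate.
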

By this, to each arrow $q$ in $\Hasse(\torf\Lambda)$ we can associate a brick $B_q$, which we call the \emph{label of $q$}. This labeling is called the \emph{brick labeling} of $\torf\Lambda$.

\subsection{Brick sequence associated to a maximal green sequence}
To study the structure of a fixed torsion-free class $\FF$, a \emph{brick sequence} associated to a \emph{maximal green sequence} of $\FF$ plays an important role later. Let us introduce these notions.
\begin{definition}
  Let $\FF$ be a torsion-free class in $\mod\Lambda$. Then a \emph{maximal green sequence of $\FF$} is a finite path in $\Hasse(\torf\Lambda)$ which starts at $\FF$ and ends at $0$. Or equivalently, a mximal green sequence of $\FF$ is a saturated chain in $\torf\Lambda$ of the form $0 = \FF_0 \covd \FF_1 \covd \cdots \covd \FF_l = \FF$.
\end{definition}
Maximal green sequences were introduced by Keller in the context of quiver mutations in cluster algebras, and have been investigated from various viewpoints. We refer the reader to the recent article \cite{DK} and the reference therein for the details on this notion. The above definition is a straightforward generalization of maximal green sequences of abelian categories, which was introduced in \cite{bst}.

To a maximal green sequence of $\FF$, we can associate a sequence of bricks in $\FF$ as follows.
This is analogous to the root sequence associated to a reduced expression in the Weyl group (and actually gives a categorification as we shall see in Proposition \ref{binv:prop:categorify}).
\begin{definition}
  Let $0 = \FF_0 \ot \FF_1 \ot \dots \ot \FF_l = \FF$ be a maximal green sequence of a torsion-free class $\FF$ in $\mod\Lambda$. Then a \emph{brick sequence associated to it} is a sequence $B_1,B_2,\dots,B_l$ of bricks where each $B_i$ is the label of the arrow $\FF_i \ot \FF_{i-1}$ in $\Hasse(\torf\Lambda)$.
  We simply call a brick sequence associated to some maximal green sequence of $\FF$ a \emph{brick sequence of $\FF$}.
\end{definition}
As in the case of root sequences, we take the appearance order of bricks into account. For a fixed $\FF$, brick sequences of $\FF$ heavily depend on the choice of maximal green sequences. In general, the lengths of brick sequences may differ, as the following example shows.

\begin{example}
  Let $Q$ be a quiver $1 \ot 2$. Then $\Hasse(\torf kQ)$ and its brick labeling are as follows:
  \[
  \begin{tikzcd}[row sep=tiny]
    & \mod kQ = \add\{1,\substack{2 \\ 1}, 2\} \ar[dl, "2"'] \ar[ddr, "1"] \\
    \add \{1, \substack{2 \\ 1} \} \ar[dd, "\substack{2 \\ 1}"'] \\
    & & \add \{2\} \ar[ddl, "2"] \\
    \add \{1\} \ar[dr, "1"'] \\
    & 0
  \end{tikzcd}
  \]
  Here we write composition series to indicate $kQ$-modules. Thus there are exactly two brick sequences of $\mod kQ$, namely, $1,\substack{2 \\ 1}, 2$ and $2,1$.
\end{example}

The fundamental property of a brick sequence is the following.
\begin{theorem}\label{binv:thm:brickseq}
  Let $B_1,\dots,B_l$ be a brick sequence of a torsion-free class $\FF$ in $\mod\Lambda$. Then the following hold.
  \begin{enumerate}
    \item $\Hom_\Lambda(B_j,B_i) = 0$ for $j > i$.
    \item $B_1,\dots,B_l$ are pairwise non-isomorphic.
    \item $\FF = \Filt(B_l) * \Filt(B_{l-1}) * \dots * \Filt(B_2) * \Filt(B_1)$ holds. In particular, we have $\FF = \Filt(B_1,\dots,B_l)$.
  \end{enumerate}
\end{theorem}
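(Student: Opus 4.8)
The plan is to extract all three statements directly from Proposition \ref{binv:prop:dirrt}, applied to each covering relation $\FF_{i-1} \covd \FF_i$ in the given maximal green sequence $0 = \FF_0 \covd \FF_1 \covd \cdots \covd \FF_l = \FF$. By definition $B_i$ is the label of the Hasse arrow $\FF_i \ot \FF_{i-1}$, so Proposition \ref{binv:prop:dirrt} (with $\GG = \FF_{i-1}$ and $\FF = \FF_i$) yields, for each $i$, the three facts $\Filt(B_i) = {}^\perp\FF_{i-1} \cap \FF_i$, $\FF_i = \Filt(B_i) * \FF_{i-1}$, and $\FF_{i-1} = \FF_i \cap B_i^\perp$. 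In particular $B_i \in \Filt(B_i) \subseteq \FF_i$, and $B_i \in {}^\perp\FF_{i-1}$, that is, $\Hom_\Lambda(B_i,X) = 0$ for every $X \in \FF_{i-1}$. Together with the fact that $\FF_0 \subseteq \FF_1 \subseteq \cdots \subseteq \FF_l$ is an increasing chain, these are all the ingredients needed.

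For (1), I would fix $j > i$; then $i \leq j-1$, so $B_i \in \FF_i \subseteq \FF_{j-1}$, while $B_j \in {}^\perp\FF_{j-1}$, whence $\Hom_\Lambda(B_j, B_i) = 0$. Statement (2) then follows at once: an isomorphism $B_j \cong B_i$ with $i < j$ would be a nonzero element of $\Hom_\Lambda(B_j, B_i)$, contradicting (1). So I would treat (1) and (2) in a single short step.

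For (3), I would iterate the relation $\FF_i = \Filt(B_i) * \FF_{i-1}$ from the top down, using the associativity of $*$ noted just before Proposition \ref{binv:prop:dirrt}: $\FF = \FF_l = \Filt(B_l) * \FF_{l-1} = \Filt(B_l) * \Filt(B_{l-1}) * \FF_{l-2} = \cdots = \Filt(B_l) * \cdots * \Filt(B_1) * \FF_0$, and since $\FF_0 = 0$ we have $\CC * \FF_0 = \CC$ for any $\CC$ (a module with zero quotient equals its submodule), so the last factor drops out and the displayed equality follows. For the final assertion $\FF = \Filt(B_1,\dots,B_l)$, I would check both inclusions: each $\Filt(B_i)$ lies inside $\Filt(B_1,\dots,B_l)$, which is closed under $*$ because $\Filt(-)$ is closed under extensions, so the iterated $*$-product equal to $\FF$ is contained in $\Filt(B_1,\dots,B_l)$; conversely every $B_i$ is a brick in $\FF_i \subseteq \FF$ and $\FF$ is a torsion-free class, hence closed under extensions, giving $\Filt(B_1,\dots,B_l) \subseteq \FF$.

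There is no serious obstacle here once Proposition \ref{binv:prop:dirrt} is available; the proof is essentially a matter of bookkeeping. The two points deserving attention are getting the direction of the $\Hom$-vanishing right — it is maps \emph{out of} $B_j$ that die on $\FF_{j-1}$, and $B_i$ sits in $\FF_{j-1}$ exactly because $i < j$ and the chain is increasing — and keeping the $*$-factors in the correct order when unfolding the chain, with the trailing $\FF_0 = 0$ absorbed at the end.
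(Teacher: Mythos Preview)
Your proof is correct and follows essentially the same approach as the paper: parts (1) and (2) are argued identically, and for (3) you iterate the relation $\FF_i = \Filt(B_i) * \FF_{i-1}$ from Proposition~\ref{binv:prop:dirrt} together with associativity of $*$, which is exactly the content of the paper's backward induction (the paper phrases it as proving ${}^\perp\FF_{i-1} \cap \FF = \Filt(B_l) * \cdots * \Filt(B_i)$ and unpacks the torsion-pair decomposition explicitly, but the underlying argument is the same).
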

\begin{proof}
  Let $0 = \FF_0 \ot \FF_1 \ot \dots \ot \FF_l = \FF$ be a maximal green sequence of $\FF$ which gives a brick sequence $B_1,\dots,B_l$.

  (1)
  By Proposition \ref{binv:prop:dirrt} and the definition of the brick labeling, each $B_i$ is contained in $^\perp \FF_{i-1} \cap \FF_i$.
  Thus $B_j \in {}^\perp \FF_{j-1} \subset {}^\perp \FF_i$ holds since $\FF_{j-1} \supset \FF_i$ for $j > i$. Therefore $\Hom_\Lambda(B_j,B_i) = 0$ by $B_i \in \FF_i$.

  (2)
  This is clear from (1).

  (3)
  We will prove by backward induction on $i$ that ${}^\perp\FF_{i-1} \cap \FF = \Filt(B_l) * \dots * \Filt(B_{i+1}) * \Filt(B_i)$ holds for $1 \leq i \leq l$. For $i=l$, this holds by Proposition \ref{binv:prop:dirrt}.

  Suppose this holds for $i = j+1$ with $1 \leq j < l$, and take $M \in {}^\perp\FF_{j-1} \cap \FF$. Since $^\perp \FF_j * \FF_j = \mod\Lambda$  holds, we have an exact sequence in $\mod\Lambda$
  \[
  \begin{tikzcd}
    0 \rar & T \rar & M \rar & F \rar & 0
  \end{tikzcd}
  \]
  with $T \in {}^\perp\FF_j$ and $F \in \FF_j$.
  Since $\FF$ is closed under submodules and $M \in \FF$,
  we have $T \in {}^\perp\FF_j \cap \FF = \Filt (B_l) * \dots * \Filt(B_{j+1})$ by the induction hypothesis. On the other hand, since ${}^\perp\FF_{j-1}$ is closed under quotients, we have $F \in {}^\perp\FF_{j-1} \cap \FF_j = \Filt (B_j)$. Thus we have $M \in \Filt(B_l) * \dots * \Filt(B_{j+1}) * \Filt(B_j)$.
\end{proof}
We remark that this statement appeared in \cite[Corollary 6.5]{tre} for the case $\FF = \mod\Lambda$, and the filtration given in (3) above is called the \emph{Harder-Narasimhan filtration} associated to a maximal green sequence. Also this filtration was considered in \cite[Theorem 6.8]{tattar} in the setting of quasi-abelian categories. Since torsion-free classes are quasi-abelian, we can apply his result to our setting to obtain this theorem.

\subsection{Simple objects in a torsion-free class}
We introduce the notion of \emph{simple objects} in a torsion-free class.
\begin{definition}
  Let $\FF$ be a torsion-free class in $\mod\Lambda$.
  \begin{enumerate}
    \item $M \in \FF$ is a \emph{simple object in $\FF$} if $M \neq 0$ and for any short exact sequence
    \[
    \begin{tikzcd}
      0 \rar & L \rar & M \rar & N \rar & 0
    \end{tikzcd}
    \]
    of $\Lambda$-modules with $L,N \in \FF$, we have $L = 0$ or $M=0$.
    \item We denote by $\simp\FF$ the set of isomorphism classes of simple objects in $\FF$.
  \end{enumerate}
\end{definition}
Note that whether a given module $M$ is simple object or not depends on the torsion-free class which contains $M$. Originally, the notion of simple objects was introduced and studied in the context of exact categories by several papers such as \cite{eno,bhlr}. Since we are only interested in torsion-free classes, we will not work in full generality.

For a torsion-free class, a simple object can be described by the following property.
\begin{lemma}\label{binv:lem:simpcri}
  Let $\FF$ be a torsion-free class in $\mod\Lambda$ and $M$ a non-zero object in $\FF$. Then the following are equivalent:
  \begin{enumerate}
    \item $M$ is simple in $\FF$.
    \item Every morphism $M \to F$ with $F \in \FF$ is either zero or injective.
  \end{enumerate}
\end{lemma}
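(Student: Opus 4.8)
The statement is an iff between "$M$ is simple in $\FF$" and "every morphism $M \to F$ with $F \in \FF$ is zero or injective", and both directions are essentially bookkeeping once one keeps track of how short exact sequences interact with the closure properties of $\FF$.

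For the implication (2) $\Rightarrow$ (1), I would start from a short exact sequence $0 \to L \to M \to N \to 0$ with $L, N \in \FF$, and apply hypothesis (2) to the canonical surjection $M \to N$. Since $N \in \FF$, this map is either zero or injective; being a surjection from a nonzero statement, injective forces it to be an isomorphism, hence $L = 0$, while zero forces $N = 0$ (and then $M = 0$ after checking $M \cong L$ is consistent with $M \neq 0$ — actually in this case $M \cong L$, which is fine, or one rephrases as $L = 0$ or $N = 0$ as in the definition). This is the easy direction.

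For (1) $\Rightarrow$ (2), suppose $M$ is simple in $\FF$ and take any $f\colon M \to F$ with $F \in \FF$. The plan is to factor $f$ as $M \twoheadrightarrow \im f \hookrightarrow F$ and produce from it the short exact sequence $0 \to \ker f \to M \to \im f \to 0$. The key point to verify is that both $\ker f$ and $\im f$ lie in $\FF$: the kernel $\ker f$ is a submodule of $M \in \FF$, so it lies in $\FF$ since $\FF$ is closed under submodules; the image $\im f$ is a submodule of $F \in \FF$, so likewise $\im f \in \FF$. Now simplicity of $M$ forces $\ker f = 0$ (so $f$ is injective) or $\im f = 0$ (so $f = 0$) — wait, one must be slightly careful: simplicity says $\ker f = 0$ or $M = 0$; since $M \neq 0$ we can't conclude from the sequence alone unless we also allow the degenerate case. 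The correct reading of the definition is "$L = 0$ or $N = 0$" in the sequence $0 \to L \to M \to N \to 0$; applying this with $L = \ker f$, $N = \im f$ gives $\ker f = 0$ or $\im f = 0$, i.e. $f$ injective or $f = 0$, as desired.

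I do not anticipate a genuine obstacle here: the only thing to be careful about is matching the precise phrasing of the definition of simple object (which permits $M = 0$ as an alternative conclusion, but since $M \neq 0$ is assumed throughout, this reduces to "$L = 0$ or $N = 0$"), and making sure that closure under submodules is invoked for both $\ker f$ (submodule of $M$) and $\im f$ (submodule of $F$). No deeper structure theory, brick labeling, or root-system input is needed — this lemma is purely formal.
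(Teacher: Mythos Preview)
Your proposal is correct and follows essentially the same approach as the paper: for (1) $\Rightarrow$ (2) you form the kernel--image short exact sequence and use closure of $\FF$ under submodules to place $\ker f$ and $\im f$ in $\FF$, then invoke simplicity; for (2) $\Rightarrow$ (1) you apply the hypothesis to the surjection $M \to N$. The only cosmetic difference is that the paper begins (1) $\Rightarrow$ (2) by assuming $\varphi \neq 0$ up front, whereas you phrase the conclusion as the dichotomy $\ker f = 0$ or $\im f = 0$ directly.
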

\begin{proof}
  (1) $\Rightarrow$ (2):
  Take a non-zero morphism $\varphi \colon M \to F$ with $F \in \FF$. Then we have an exact sequence
  \[
  \begin{tikzcd}
    0 \rar & \ker \varphi \rar & M \rar & \im \varphi \rar & 0
  \end{tikzcd}
  \]
  in $\mod\Lambda$. Since $\im\varphi$ is a submodule of $F \in \FF$, we have $\im\varphi \in \FF$. Similarly $\ker \varphi \in \FF$ holds since so is $M$.
  On the other hand, $\im \varphi \neq 0$ by assumption. Therefore, $\ker \varphi = 0$ holds by the simplicity of $M$.

  (2) $\Rightarrow$ (1):
  Take a short exact sequence
  \[
  \begin{tikzcd}
    0 \rar & L \rar & M \rar["\pi"] & N \rar & 0
  \end{tikzcd}
  \]
  with $L,N \in \FF$. Then by assumption, $\pi$ should be zero or injection. We have $N = 0$ in the former case, and $L = 0$ in the latter. Thus $M$ is a simple object in $\FF$.
\end{proof}

We will investigate the relation between simple objects in $\FF$ and brick sequences of $\FF$.
One of the remarkable property is that simples always appear in every brick sequence of $\FF$:
\begin{proposition}\label{binv:prop:simpseq}
  Let $\FF$ be a torsion-free class in $\mod\Lambda$ and $S$ a simple object in $\FF$. Then $S$ appears exactly once in every brick sequence of it (if exists).
\end{proposition}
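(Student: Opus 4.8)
The plan is to show that $S$ is isomorphic to exactly one term of any given brick sequence. First I would record the elementary fact that \emph{a simple object in $\FF$ is automatically a brick}: applying Lemma~\ref{binv:lem:simpcri} with $F = S$, every nonzero endomorphism of $S$ is injective, hence bijective since $S$ is finite-dimensional, so $\End_\Lambda(S)$ is a division ring. Thus the assertion really is ``$S\cong B_i$ for exactly one $i$''.

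Now fix a maximal green sequence $0 = \FF_0 \covd \FF_1 \covd \cdots \covd \FF_l = \FF$ realizing the brick sequence $B_1,\dots,B_l$, so that $B_i$ is the unique brick of ${}^\perp\FF_{i-1} \cap \FF_i$ by Proposition~\ref{binv:prop:dirrt}. Since $S \neq 0$ and $S \in \FF_l$, while $\FF_0 = 0$, there is a well-defined smallest index $i$ with $S \in \FF_i$; hence $S \in \FF_i$ but $S \notin \FF_{i-1}$. The key step is to verify $S \in {}^\perp\FF_{i-1}$: given any $F \in \FF_{i-1} \subseteq \FF$ and any morphism $\varphi\colon S \to F$, Lemma~\ref{binv:lem:simpcri} forces $\varphi$ to be zero or injective, and injectivity would exhibit $S$ as a submodule of $F \in \FF_{i-1}$, so $S \in \FF_{i-1}$ because torsion-free classes are closed under submodules, contradicting minimality of $i$. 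Therefore $S \in {}^\perp\FF_{i-1} \cap \FF_i$, and since $S$ is a brick in this subcategory, the uniqueness clause of Proposition~\ref{binv:prop:dirrt} gives $S \cong B_i$.

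For the ``exactly once'' part I would invoke Theorem~\ref{binv:thm:brickseq}(2): the bricks $B_1,\dots,B_l$ are pairwise non-isomorphic, so the index $i$ produced above is the only one with $B_i \cong S$. The only nonformal step is the Hom-vanishing $S \in {}^\perp\FF_{i-1}$, and this is precisely where the hypothesis that $S$ is \emph{simple in $\FF$} (rather than merely a brick lying in $\FF$) is used; everything else is bookkeeping with the green sequence. I expect this to be the main obstacle only in the sense of being the one place the simplicity assumption is genuinely consumed. An alternative route, if one prefers not to choose the minimal index, is to peel $S$ off the top using the factorization $\FF = \Filt(B_l) * \cdots * \Filt(B_1)$ of Theorem~\ref{binv:thm:brickseq}(3): the induced surjection from $S$ onto the $\FF_{l-1}$-part is zero or injective by Lemma~\ref{binv:lem:simpcri}, so $S$ either lands in $\Filt(B_l) = {}^\perp\FF_{l-1}\cap\FF_l$ or lies in $\FF_{l-1}$, and iterating downward one again identifies $S$ with a unique $B_i$.
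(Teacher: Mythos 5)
Your proof is correct, but it takes a genuinely different route from the paper's. The paper argues directly from Theorem~\ref{binv:thm:brickseq}(3): since $\FF = \Filt(B_1,\dots,B_l)$, any $S \in \FF$ admits a filtration with subquotients among the $B_i$; the top step $0 \to X_{n-1} \to S \to B_{j_n} \to 0$ has both $X_{n-1}$ and $B_{j_n}$ in $\FF$, so simplicity forces $X_{n-1}=0$ and $S \cong B_{j_n}$. You instead avoid the $\Filt$-decomposition entirely: you single out the minimal index $i$ with $S \in \FF_i$, use Lemma~\ref{binv:lem:simpcri} plus closure of $\FF_{i-1}$ under submodules to force $S \in {}^\perp\FF_{i-1}$, and then appeal to the uniqueness clause of Proposition~\ref{binv:prop:dirrt} to identify $S$ with the label $B_i$. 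Your route has two small advantages: it pins down concretely \emph{which} $B_i$ is isomorphic to $S$ (namely, the one at the first stage where $S$ appears), and it isolates the observation that a simple object in a torsion-free class is automatically a brick, which is worth recording and is not stated explicitly in the paper at this point. The paper's route is shorter once Theorem~\ref{binv:thm:brickseq}(3) is in hand but uses the heavier ingredient. Your ``alternative route'' sketch at the end is essentially the paper's argument, phrased via the $*$-decomposition and peeling from the top; both are fine. The uniqueness clause in both proofs is handled identically via Theorem~\ref{binv:thm:brickseq}(2).
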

\begin{proof}
  Let $B_1,\dots,B_l$ be a brick sequence of $\FF$. Then we have $\FF = \Filt(B_1,\dots,B_l)$ holds by Theorem \ref{binv:thm:brickseq}. This means that $S$ has a filtration consisting of $B_1,\dots,B_l$, but since $S$ is a simple object in $\FF$, clearly $S \iso B_i$ holds for some $i$.
  By Theorem \ref{binv:thm:brickseq}, all the bricks in this brick sequence are pairwise non-isomorphic, thus $S$ must appear exactly once.
\end{proof}
By this, to find a simple object in $\FF$, we only have to work in a fixed brick sequence of $\FF$.
In order to give a criterion for a brick in a brick sequence to be simple, we will introduce some technical condition and lemmas.
\begin{definition}
  Let $M$ be a $\Lambda$-module and $\CC$ a collection of $\Lambda$-modules. Then we say that $\CC$ has the \emph{zero-mono property for $M$} if every morphism $M \to C$ with $C \in \CC$ is either zero or injective in $\mod\Lambda$.
\end{definition}
Then Lemma \ref{binv:lem:simpcri} amounts to that $M$ is simple in a torsion-free class $\FF$ if and only if $\FF$ has the zero-mono property for $M$.

The important advantage of the zero-mono property is that this property is closed under extensions in the following sense:
\begin{lemma}\label{binv:lem:zeromono}
  Let $M$ be a $\Lambda$-module and $\CC,\DD$ collections of $\Lambda$-modules. If $\CC$ and $\DD$ have the zero-mono property for $M$, then so does $\CC * \DD$.
\end{lemma}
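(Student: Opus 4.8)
The plan is to take an arbitrary morphism $\varphi\colon M \to E$ with $E \in \CC * \DD$ and show it is either zero or injective, directly from the definition of the zero-mono property. By definition of $\CC * \DD$, there is an exact sequence
\[
\begin{tikzcd}
  0 \rar & C \rar["\iota"] & E \rar["p"] & D \rar & 0
\end{tikzcd}
\]
with $C \in \CC$ and $D \in \DD$. First I would compose $\varphi$ with $p$ to get a morphism $p\varphi\colon M \to D$; since $\DD$ has the zero-mono property for $M$, this composite is either zero or injective. These two cases are the two branches of the argument.

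If $p\varphi$ is injective, then $\ker\varphi \subseteq \ker(p\varphi) = 0$, so $\varphi$ is injective and we are done. The substantive case is when $p\varphi = 0$: then $\varphi$ factors through $\ker p = \im\iota \cong C$, say $\varphi = \iota\psi$ for some $\psi\colon M \to C$. Now apply the zero-mono property of $\CC$ for $M$ to conclude that $\psi$ is either zero or injective. If $\psi = 0$ then $\varphi = \iota\psi = 0$; if $\psi$ is injective then $\varphi = \iota\psi$ is a composite of monomorphisms, hence injective. In every case $\varphi$ is zero or injective, so $\CC * \DD$ has the zero-mono property for $M$.

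The only mild subtlety, which I would state explicitly, is the factorization $\varphi = \iota\psi$ through the kernel of $p$: this uses that $\im\iota = \ker p$ (exactness at $E$) together with the universal property of the kernel in $\mod\Lambda$, and the resulting $\psi$ is unique since $\iota$ is monic. Everything else is a routine diagram chase with no real obstacle; the proof is short and the key idea is simply to split on whether $\varphi$ survives the projection to $D$ or lands in the sub-object $C$.
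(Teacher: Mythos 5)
Your proposal is correct and follows essentially the same argument as the paper: compose $\varphi$ with the projection to $D$, split cases according to whether $p\varphi$ is zero or injective, and in the zero case factor $\varphi$ through $C$ via the kernel and apply the zero-mono property of $\CC$. The extra remark about the universal property of the kernel is the same implicit step in the paper's proof.
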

\begin{proof}
  Consider any short exact sequence
  \[
  \begin{tikzcd}
    0 \rar & C \rar["\iota"] & X \rar["\pi"] & D \rar & 0
  \end{tikzcd}
  \]
  with $C \in \CC$ and $D \in \DD$.
  Take any morphism $\varphi \colon M \to X$, and it suffices to show that $\varphi$ is either zero or injective.

  Consider the following diagram.
  \[
  \begin{tikzcd}
    & & M \dar["\varphi"] \ar[dl,dashed,"\ov{\varphi}"']\\
    0 \rar & C \rar["\iota"] & X \rar["\pi"] & D \rar & 0
  \end{tikzcd}
  \]
  Since $\DD$ has the zero-mono property for $M$, either $\pi\varphi$ is an injection or $\pi\varphi = 0$. In the former case, $\varphi$ is an injection, so suppose the latter. Then there exists a morphism $\ov{\varphi}\colon M \to C$ with $\varphi = \ov{\varphi} \iota$.
  Since $\CC$ has the zero-mono property for $M$, we have $\ov{\varphi}$ is either zero or injective. Thus $\varphi$ is either zero or injective respectively.
\end{proof}
As a first application of this corollary, we can show the following.
\begin{corollary}
  Let $\FF$ be a torsion-free class in $\mod\Lambda$. Then taking labels gives an injection
  \[
  \{ \text{Arrows in $\Hasse(\torf\Lambda)$ starting at $\FF$} \} \hookrightarrow \simp \FF.
  \]
\end{corollary}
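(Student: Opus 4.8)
The plan is to verify two things: that the label $B_q$ of an arrow $q\colon \FF\to\GG$ starting at $\FF$ is a simple object in $\FF$, and that the assignment $q\mapsto B_q$ is injective. Both should fall out quickly from Proposition \ref{binv:prop:dirrt}, the characterization of simple objects in Lemma \ref{binv:lem:simpcri}, and the extension-closedness of the zero-mono property in Lemma \ref{binv:lem:zeromono}.

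For the first point, I would fix an arrow $q\colon \FF\to\GG$ in $\Hasse(\torf\Lambda)$ and write $B=B_q$. By Proposition \ref{binv:prop:dirrt} we have $\FF=\Filt(B)*\GG$ and $B\in{}^\perp\GG\cap\FF$. By Lemma \ref{binv:lem:simpcri} it suffices to show that $\FF$ has the zero-mono property for $B$, and by Lemma \ref{binv:lem:zeromono} it is enough to check this for $\Filt(B)$ and for $\GG$ separately. For $\GG$ it is immediate: $B\in{}^\perp\GG$ forces every morphism $B\to G$ with $G\in\GG$ to vanish. For $\Filt(B)$, I would first observe that the one-element collection $\{B\}$ has the zero-mono property for $B$, because $B$ is a brick, so every nonzero endomorphism of $B$ is invertible, in particular injective. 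Identifying $\Filt(B)$ with $\{0\}\cup\bigcup_{n\ge 1}\{B\}^{*n}$, an $n$-fold application of Lemma \ref{binv:lem:zeromono} shows that each $\{B\}^{*n}$, and hence $\Filt(B)$, has the zero-mono property for $B$. Thus $B\in\simp\FF$.

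For injectivity, suppose $q\colon\FF\to\GG$ and $q'\colon\FF\to\GG'$ are arrows starting at $\FF$ with $B_q\cong B_{q'}=:B$. By the last clause of Proposition \ref{binv:prop:dirrt}, $\GG=\FF\cap B_q^{\perp}=\FF\cap B^{\perp}$ and likewise $\GG'=\FF\cap B^{\perp}$, so $\GG=\GG'$, and since there is at most one arrow between two fixed vertices of a Hasse quiver, $q=q'$. I do not expect a genuine obstacle here; the only slightly delicate step is the bookkeeping identification $\Filt(B)=\{0\}\cup\bigcup_{n\ge 1}\{B\}^{*n}$ and the induction feeding it into Lemma \ref{binv:lem:zeromono}, which is routine.
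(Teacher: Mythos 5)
Your proof is correct and follows essentially the same route as the paper: show that the label $B$ is simple in $\FF$ by combining $\FF=\Filt(B)*\FF'$ and $\FF'=\FF\cap B^\perp$ from Proposition \ref{binv:prop:dirrt} with Lemmas \ref{binv:lem:simpcri} and \ref{binv:lem:zeromono}, observing that $B$ being a brick gives the zero-mono property for $\{B\}$ and $\FF'\subset B^\perp$ gives it for $\FF'$. The paper silently compresses the induction from $\{B\}$ to $\Filt(B)$ and leaves the injectivity of the labelling map implicit; you spell out both, which is a minor but harmless elaboration, and your injectivity argument via $\GG=\FF\cap B_q^\perp$ is exactly the right one.
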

\begin{proof}
  Let $\FF \to \FF'$ be an arrow in $\Hasse(\torf\Lambda)$ with $B$ its label. We will show that $B$ is simple in $\FF$, or equivalently, $\FF$ has the zero-mono property for $B$ by Lemma \ref{binv:lem:simpcri}.

  By Proposition \ref{binv:prop:dirrt}, we have $\FF = \Filt(B) * \FF'$ and $\FF' = \FF \cap B^\perp$. Therefore, according to Lemma \ref{binv:lem:zeromono}, it suffies to show that $B$ and $\FF'$ have the zero-mono property for $B$.
  Clearly $B$ has the zero-mono property for $B$ since $B$ is a brick. On the other hand, since $\FF' \subset B^\perp$, every morphism from $B$ to $F \in \FF'$ should be zero. Thus $\FF'$ has the zero-mono property.
\end{proof}

Another application of Lemma \ref{binv:lem:zeromono} is the following complete description of a brick in a given brick sequence to be simple.
\begin{corollary}\label{binv:cor:simplabel}
  Let $B_1, \dots, B_l$ be a brick sequence of a torsion-free class $\FF$ in $\mod\Lambda$. Then any simple objects in $\FF$ are contained in $\{B_1,\dots,B_l\}$, and the following are equivalent for $1 \leq i \leq l$.
  \begin{enumerate}
    \item $B_i$ is simple in $\FF$.
    \item Every morphism $B_i \to B_j$ is either zero or injective for each $j \neq i$, or equivalently, $j > i$.
    \item $\{B_1,\dots,B_l\}$ has the zero-mono property for $B_i$.
    \item There is no surjection $B_i \defl B_j$ with $i < j$.
  \end{enumerate}
\end{corollary}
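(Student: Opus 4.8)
The plan is to deduce the containment statement from Proposition~\ref{binv:prop:simpseq} and then prove the cycle $(1)\Rightarrow(3)\Leftrightarrow(2)\Rightarrow(4)\Rightarrow(1)$, with $(4)\Rightarrow(1)$ carrying essentially all of the content. That every simple object of $\FF$ lies in $\{B_1,\dots,B_l\}$ is immediate: a brick sequence of $\FF$ exists by hypothesis, so Proposition~\ref{binv:prop:simpseq} says any simple object occurs in it.

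For $(1)\Leftrightarrow(3)$: by Lemma~\ref{binv:lem:simpcri}, $B_i$ is simple in $\FF$ if and only if $\FF$ has the zero-mono property for $B_i$. By Theorem~\ref{binv:thm:brickseq}(3) we have $\FF=\Filt(B_1,\dots,B_l)$, so every object of $\FF$ lies in a finite iterated extension $\{B_1,\dots,B_l\}*\cdots*\{B_1,\dots,B_l\}$; hence if $\{B_1,\dots,B_l\}$ has the zero-mono property for $B_i$, repeated application of Lemma~\ref{binv:lem:zeromono} (together with associativity of $*$) shows $\FF$ does too, and the converse is trivial since $\{B_1,\dots,B_l\}\subseteq\FF$. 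For $(2)\Leftrightarrow(3)$, a morphism $B_i\to B_i$ is zero or injective because $B_i$ is a brick, so $(2)$ and $(3)$ differ only vacuously; and $\Hom_\Lambda(B_i,B_j)=0$ for $j<i$ by Theorem~\ref{binv:thm:brickseq}(1), which explains why one may restrict to $j>i$. For $(2)\Rightarrow(4)$: a surjection $B_i\defl B_j$ with $i<j$ would be nonzero (its target is a nonzero brick) and non-injective (otherwise an isomorphism, contradicting $B_i\not\cong B_j$ by Theorem~\ref{binv:thm:brickseq}(2)), contradicting $(2)$.

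The substantive step is $(4)\Rightarrow(1)$, which I would prove contrapositively: assuming $B_i$ is not simple in $\FF$, I will produce a surjection $B_i\defl B_k$ with $k>i$. By Lemma~\ref{binv:lem:simpcri} there is $\varphi\colon B_i\to F$ with $F\in\FF$ neither zero nor injective; set $M:=\im\varphi$, so $M\neq0$ is a submodule of $F\in\FF$, hence $M\in\FF$, and $\length M<\length B_i$ since $\ker\varphi\neq0$. Because $B_i$ labels the arrow $\FF_i\ot\FF_{i-1}$ it lies in the torsion class ${}^{\perp}\FF_{i-1}$, which is closed under quotients; since $M$ is a quotient of $B_i$ we get $M\in{}^{\perp}\FF_{i-1}$, whence $M\in\FF\cap{}^{\perp}\FF_{i-1}=\Filt(B_l)*\cdots*\Filt(B_i)$ as shown in the proof of Theorem~\ref{binv:thm:brickseq}(3). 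Now a descending induction on $p$ (base $p=l$: a nonzero object of $\Filt(B_l)$ surjects onto $B_l$) shows that any nonzero module in $\Filt(B_l)*\cdots*\Filt(B_p)$ surjects onto some $B_q$ with $p\leq q\leq l$: realizing it as an extension of an object of $\Filt(B_p)$ by an object of $\Filt(B_l)*\cdots*\Filt(B_{p+1})$, either the $\Filt(B_p)$-quotient is nonzero and hence surjects onto $B_p$, or the whole module lies in $\Filt(B_l)*\cdots*\Filt(B_{p+1})$ and the inductive hypothesis applies. Taking $p=i$ yields a surjection $M\defl B_k$ with $i\leq k\leq l$; since $\length M<\length B_i$ excludes $k=i$, we get $k>i$, and composing with $B_i\defl M$ gives the desired surjection, contradicting $(4)$.

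The main obstacle is this last implication, and within it the only non-formal point is the refinement $M\in\Filt(B_l)*\cdots*\Filt(B_i)$ rather than merely $M\in\FF$: this is where the ordering of the brick sequence is genuinely used, forcing the target brick to have index $\geq i$, with the length count upgrading $\geq i$ to $>i$. Everything else follows formally from Lemmas~\ref{binv:lem:simpcri} and~\ref{binv:lem:zeromono} and Theorem~\ref{binv:thm:brickseq}.
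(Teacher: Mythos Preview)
Your proof is correct. The equivalences $(1)\Leftrightarrow(2)\Leftrightarrow(3)$ and $(2)\Rightarrow(4)$ match the paper's argument essentially verbatim; the difference lies in how you close the cycle.

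The paper proves $(4)\Rightarrow(2)$ rather than $(4)\Rightarrow(1)$: assuming some $\varphi\colon B_i\to B_j$ with $j>i$ is neither zero nor injective, it takes the \emph{minimal} such $j$. By the already-established equivalence $(1)\Leftrightarrow(3)$ applied to the shorter brick sequence $B_1,\dots,B_{j-1}$, minimality forces $B_i$ to be simple in $\FF_{j-1}$, i.e.\ $\FF_{j-1}$ has the zero-mono property for $B_i$. Decomposing $\im\varphi\in\FF_j=\Filt(B_j)*\FF_{j-1}$ and using this zero-mono property, a dimension count shows $\im\varphi=B_j$, so the very map $\varphi$ is already a surjection. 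Your route instead works with an arbitrary bad map $B_i\to F$, places its image in $\Filt(B_l)*\cdots*\Filt(B_i)$ via the identity ${}^\perp\FF_{i-1}\cap\FF=\Filt(B_l)*\cdots*\Filt(B_i)$ from the proof of Theorem~\ref{binv:thm:brickseq}, and then peels off the $*$-factors by descending induction to extract a surjection onto some $B_k$, with the length inequality ruling out $k=i$. Your argument is more self-contained (it does not bootstrap from the other equivalences) and makes the role of the ordered filtration very explicit; the paper's minimality trick is a little shorter and yields the marginally sharper conclusion that the minimal offending map is itself surjective.
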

\begin{proof}
  Since we have $\Hom_\Lambda(B_i,B_j) = 0$ for $j < i$ by Theorem \ref{binv:thm:brickseq}, the two conditions in (2) are equivalent.

  (1) $\Rightarrow$ (2):
  This is clear by Lemma \ref{binv:lem:simpcri} since $B_j \in \FF$ for every $j$.

  (2) $\Rightarrow$ (3):
  It suffices to recall that $B_i$ has the zero-mono property for $B_i$ since $B_i$ is a brick.

  (3) $\Rightarrow$ (1):
  Lemma \ref{binv:lem:zeromono} shows that $\Filt (B_1, \dots, B_l)$ has the zero-mono property for $B_i$. Since $\FF = \Filt(B_1,\dots,B_l)$ holds by Theorem \ref{binv:thm:brickseq}, we have that $B_i$ is simple in $\FF$ by Lemma \ref{binv:lem:simpcri}.

  (2) $\Rightarrow$ (4):
  This is clear since $B_i$ and $B_j$ with $j > i$ are non-isomorphic by Theorem \ref{binv:thm:brickseq}.

  (4) $\Rightarrow$ (2):
  Suppose that there are some $j > i$ and a map $\varphi\colon B_i \to B_j$ which is neither zero nor injective. Take minimal $j$ with this property. We claim that $\varphi$ is actually a surjection.

  Let $0 = \FF_0 \ot \FF_1 \ot \cdots \ot \FF_l = \FF$ be a maximal green sequence of $\FF$ corresponding to the brick sequence $B_1,\dots,B_l$. Then we have $\FF_j = \Filt(B_j) * \FF_{j-1}$ by Proposition \ref{binv:prop:dirrt}.
  Moreover, since (1)-(3) are equivalent and $B_1,\dots,B_{j-1}$ is a brick sequence of $\FF_{j-1}$, the minimality of $j$ implies that $B_i$ is a simple object in $\FF_{j-1}$, that is, $\FF_{j-1}$ has the zero-mono property for $B_i$.

  We have $\im \varphi \in \FF_j$ since it is a submodule of $B_j \in \FF_j$. Thus by $\FF_j = \Filt(B_j) * \FF_{j-1}$, we have the following exact sequence
  \[
  \begin{tikzcd}
    & & B_i \rar["\varphi"]\dar[twoheadrightarrow, "\pi"] & B_j \\
    0 \rar & X \rar & \im \varphi \rar\ar[ur,hookrightarrow] & F \rar & 0
  \end{tikzcd}
  \]
  with $X \in \Filt (B_j)$ and $F \in \FF_{j-1}$. Suppose that $X = 0$, then $\im \varphi \iso F \in \FF_{j-1}$ holds. Thus $\pi$ is either zero or injective. Since $\varphi \neq 0$, we have that $\pi$ is injective, hence so is $\varphi$, which is a contradiction.
  Thus $X \neq 0$. It follows from $X \in \Filt (B_i)$ and $X \neq 0$ that $\dim B_i \leq \dim X$ holds. On the other hand, since we have injections $X \hookrightarrow \im \varphi \hookrightarrow B_i$, we have $\dim X \leq \dim (\im \varphi) \leq \dim B_i$.
  It follows that $\dim (\im \varphi) = \dim B_i$, hence the inclusion map $\im \varphi \hookrightarrow B_i$ is an isomorphism. Therefore, $\varphi$ is a surjection.
\end{proof}
Thus, the problem to determine simple objects in a torsion-free class $\FF$ can be solved by the following process in principle.
\begin{enumerate}
  \item Find and fix a maximal green sequence of $\FF$ (if exists).
  \item Compute a brick sequence $B_1, \dots, B_l$ associated to it.
  \item For each $i$, check whether every morphism $B_i \to B_j$ with $i<j$ is either zero or injective (or equivalently, check whether there is no surjection $B_i \defl B_j$ with $i < j$).
  \item $\simp \FF$ consists of $B_i$ with such a property.
\end{enumerate}
Probably the most non-trivial part of the above computation is (1) and (2). If $\FF$ is functorially finite, then (1) can be computed by using mutations of support $\tau$-tilting modules in \cite{AIR}, and (2) can be computed by using the description of brick labels associated to mutations due to \cite{asai}.
\begin{example}\label{binv:ex:dirrt}
  We will give two examples of computation of simple objects by considering algebras borrowed from \cite[Example 1.14]{DIRRT}. Let $\Lambda_1$ and $\Lambda_2$ be $k$-algebras defined by
  \[
  \Lambda_1 = k \left( \begin{tikzcd} 1 \ar[loop left, "u"] \rar & 2 \end{tikzcd}\right) / (u^2) \quad \text{and} \quad
  \Lambda_2 = k \left( \begin{tikzcd} 1 \rar & 2 \ar[loop right, "v"] \end{tikzcd}\right) / (v^2).
  \]
  Then $\torf\Lambda_i$ and their brick labeling for $i=1,2$ is as follows:
  \[
  \begin{tikzcd}[row sep=0mm, column sep = small]
    & \FF_1 \ar[ld, "2"'] & \FF_2 \lar["\substack{1 \\ 2}"'] & \FF_3 \lar["\substack{1 \\ 1 \\ 2}"'] \\
    0 & & & & \mod\Lambda_1 \ar[lu, "1"'] \ar[lld, "2"] \\
     & & \FF_4 \ar[llu, "1"]
  \end{tikzcd}
  \quad \text{and} \quad
  \begin{tikzcd}[row sep=0mm, column sep = small]
    & \GG_1 \ar[ld, "2"'] & \GG_2 \lar["\substack{1 \\ 2 \\ 2}"'] & \GG_3 \lar["\substack{1 \\ 2}"'] \\
    0 & & & & \mod\Lambda_2 \ar[lu, "1"'] \ar[lld, "2"] \\
     & & \GG_4 \ar[llu, "1"]
  \end{tikzcd}
  \]
  We omit the description of each torsion-free class. By using Corollary \ref{binv:cor:simplabel}, one can obtain the following table of simple objects.
  \begin{table}[h]
    \begin{tabular}{C|C}
      \text{torsion-free class in $\mod\Lambda_1$} & \text{simple objects}   \\ \hline \hline
      0 & \varnothing \\ \hline
      \FF_1 & 2 \\ \hline
      \FF_2 & 2, \substack{1 \\ 2}\\ \hline
      \FF_3 & 2, \substack{1 \\ 2}, \substack{1 \\ 1 \\ 2} \\ \hline
      \FF_4 & 1 \\ \hline
      \mod\Lambda_1 & 1,2
    \end{tabular}
    \begin{tabular}{C|C}
      \text{torsion-free class in $\mod\Lambda_1$} & \text{simple objects} \\ \hline \hline
      0 & \varnothing \\ \hline
      \GG_1 & 2 \\ \hline
      \GG_2 & 2, \substack{1 \\ 2 \\ 2}\\ \hline
      \GG_3 & 2, \substack{1 \\ 2}\\ \hline
      \GG_4 & 1 \\ \hline
      \mod\Lambda_1 & 1,2
    \end{tabular}
    \caption{Simple objects in torsion-free classes in $\mod \Lambda_1$ and $\mod\Lambda_2$}
  \end{table}
  Note that $\torf\Lambda_1$ and $\torf\Lambda_2$ are isomorphic as posets, and $\FF_2$ corresponds to $\GG_2$. However, the number of simple objects in $\FF_2$ differs from that in $\GG_2$. This shows that to determine (the number of) simple objects, the poset structure of $\torf\Lambda$ is not enough, and we indeed have to compute brick labels and homomorphisms between labels in general.
\end{example}

Fortunately, in the case of preprojective algebras (or path algebras) of Dynkin type, we can make use of root-theoretical properties developed in the previous section to determine simples, as we shall see in the next section.

We end this section by the following small lemma, which we need later.
\begin{lemma}\label{binv:lem:brickappear}
  Let $\FF$ be a torsion-free class in $\mod\Lambda$ such that the interval $[0,\FF]$ in $\torf\Lambda$ is finite. Then every brick $B$ in $\FF$ appears at least one brick sequence of $\FF$.
\end{lemma}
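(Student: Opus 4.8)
The plan is to argue by induction on the cardinality $\#[0,\FF]$, which is finite by hypothesis; when $\FF=0$ there is nothing to prove, since $0$ contains no bricks. Assume $\FF\neq 0$ and fix $B\in\brick\FF$. I would first introduce the torsion-free class $\GG:=\FF\cap B^{\perp}$, where $B^{\perp}=\{X\mid\Hom_\Lambda(B,X)=0\}$; this is a torsion-free class because $B^{\perp}$ is closed under submodules and extensions, and $B\notin\GG$ because $\Hom_\Lambda(B,B)\neq0$, so $\GG\subsetneq\FF$. Since $[\GG,\FF]$ is a finite poset with at least two elements, I can choose a torsion-free class $\FF^{-}$ with $\GG\subseteq\FF^{-}\covd\FF$; let $B'$ be the brick labelling the arrow $\FF\to\FF^{-}$, so that ${}^{\perp}\FF^{-}\cap\FF=\Filt(B')$ and $\FF^{-}=\FF\cap (B')^{\perp}$ by Proposition \ref{binv:prop:dirrt}. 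The point of placing $\FF^{-}$ above $\GG$ is the following: since $B'\in\FF$, if $\Hom_\Lambda(B,B')=0$ then $B'\in\FF\cap B^{\perp}=\GG\subseteq\FF^{-}=\FF\cap (B')^{\perp}$, which would force $\Hom_\Lambda(B',B')=0$; hence $\Hom_\Lambda(B,B')\neq0$.

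The crux of the proof is the claim that \emph{either $B\cong B'$ or $B\in\FF^{-}$}. I would prove it using the canonical exact sequence $0\to T\to B\to F\to 0$ for the torsion pair $({}^{\perp}\FF^{-},\FF^{-})$, so $T\in{}^{\perp}\FF^{-}$ and $F\in\FF^{-}$; as $\FF$ is closed under submodules, $T\in{}^{\perp}\FF^{-}\cap\FF=\Filt(B')$. If $T=0$ then $B\cong F\in\FF^{-}$. If $T\neq0$, the bottom step of a $B'$-filtration of $T$ gives an injection $\varphi\colon B'\hookrightarrow B$ with image inside $T$; choosing a nonzero $\psi\colon B\to B'$ (possible by the previous paragraph), the endomorphism $\varphi\psi$ of $B$ has image inside $T$, so if $T\subsetneq B$ it is not surjective, hence zero because $B$ is a brick, and then $\psi=0$ since $\varphi$ is mono — a contradiction; therefore $T=B$, i.e. $B\in\Filt(B')$, and projecting $B$ onto the top $B'$-quotient of a filtration and including a bottom $B'$-submodule yields a nonzero, hence invertible, endomorphism of $B$, which forces the filtration to have length one, i.e. $B\cong B'$.

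Granting the claim, the induction closes. If $B\cong B'$, the covering $\FF^{-}\covd\FF$ carries the label $B$, so appending it to any maximal green sequence of $\FF^{-}$ (one exists because $[0,\FF^{-}]$ is finite) produces a maximal green sequence of $\FF$ whose brick sequence contains $B$. If instead $B\in\FF^{-}$, then $B\in\brick\FF^{-}$ and $\#[0,\FF^{-}]<\#[0,\FF]$ since $\FF^{-}\subsetneq\FF$, so by the inductive hypothesis $B$ occurs in some brick sequence of $\FF^{-}$; appending the covering $\FF^{-}\covd\FF$ to the corresponding maximal green sequence extends it to one of $\FF$ still containing $B$ in its brick sequence.

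The routine points — that $B^{\perp}$ is a torsion-free class, that a covering inside a finite interval concatenates with maximal chains to give maximal green sequences, and that the brick labelling computed inside $[0,\FF]$ coincides with the one in $\torf\Lambda$ — I would treat briefly. The only real obstacle is the claim: the essential move is to take $\FF^{-}$ \emph{above} $\GG=\FF\cap B^{\perp}$ so that $\Hom_\Lambda(B,B')\neq0$ is at hand, and then to combine this with the Schur property of the brick $B$ to exclude the intermediate case $0\neq T\subsetneq B$. Everything else is bookkeeping with Proposition \ref{binv:prop:dirrt}.
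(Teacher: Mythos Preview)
Your proof is correct, but it takes a genuinely different route from the paper's. The paper argues in two lines: it passes to $\FFF(B)$, the smallest torsion-free class containing $B$, observes $\FFF(B)\in[0,\FF]$, and then invokes the fact (from \cite[Theorem 3.4]{DIRRT}) that $\FFF(B)$ is join-irreducible in $\torf\Lambda$ with unique outgoing arrow labelled $B$; any saturated chain $0\covd\cdots\covd\FFF(B)\covd\cdots\covd\FF$ then does the job.

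Your argument instead avoids this join-irreducibility statement entirely and works by induction, using only the basic covering description of Proposition~\ref{binv:prop:dirrt}. The key idea of choosing $\FF^{-}$ above $\GG=\FF\cap B^{\perp}$ so as to force $\Hom_\Lambda(B,B')\neq0$, and then exploiting the Schur property of $B$ to rule out $0\neq T\subsetneq B$, is a nice self-contained substitute for the structural input the paper borrows from \cite{DIRRT}. The trade-off is length and directness: the paper's proof is essentially immediate once one knows the join-irreducibility of $\FFF(B)$, whereas yours reproves a shadow of that fact by hand. On the other hand, your approach stays entirely within the toolkit already developed in the paper and makes the role of the brick condition on $B$ completely transparent.
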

\begin{proof}
  Consider $\FFF(B)$, the smallest torsion-free class containing $\FF$. Then $\FFF(B) \subset \FF$ holds, that is, $\FFF(B) \in [0,\FF]$.

  Since $[0,\FF]$ is a finite lattice, clearly there exists a maximal green sequence of $\FF$ which is of the form $0 \ot \dots \ot \FFF(B) \ot \dots \ot \FF$. In \cite[Theorem 3.4]{DIRRT}, it was shown that there is only one arrow which starts at $\FFF(B)$ in $\Hasse (\torf\Lambda)$, and that its label is $B$.
  Therefore, this maximal sequence gives the desired brick sequence.
\end{proof}

\section{Torsion-free classes over preprojective algebras of Dynkin type}\label{binv:sec:4}
In this section, we will classify simples in torsion-free classes in preprojective algebras of Dynkin type, by using root systems and brick sequences.

\subsection{Notation and preliminary results}\label{binv:sec:notation}
Let us briefly recall the definition of preprojective algebras of Dynkin type, with emphasis on their relation to root systems.

Let $\Phi$ be the simply-laced root system of the Dynkin type $X$, namely, $X \in \{ A_n, D_n, E_6, E_7,E_8\}$. We denote by $W$ its Weyl group, and we fix a choice of simple roots of $\Phi$.

Let $Q = (Q_0,Q_1)$ be a Dynkin quiver of the same type $X$, that is, $Q$ is a quiver whose underlying graph is the Dynkin diagram of type $X$. Then we may identify $Q_0$ with the index set of simple roots of $\Phi$. For $i \in Q_0$, we denote by $\al_i$ the simple root of $\Phi$ corresponding to the vertex $i$ in the Dynkin diagram, and by $s_i = s_{\al_i}\in W$ the simple reflection with respect to $\al_i$.

Let $\ov{Q}$ be a \emph{double quiver} of $Q$, which is obtained from $Q$ by adding an arrow $a^* \colon j \to i$ for each arrow $a\colon i \to j$ in $Q$.
The \emph{preprojective algebra} of $Q$ is defined by
\[
\Pi = \Pi_\Phi := \left.k\ov{Q} \middle/ \left(\sum_{a \in Q_1} a a^* - a^* a \right) \right. .
\]
It is known that $\Pi$ is a finite-dimensional $k$-algebra, and it only depends on $\Phi$ and does not depend on the choice of $Q$ (the choice of orientations of the Dynkin diagram) up to isomorphism. Thus we call it a \emph{preprojective algebra of $\Phi$}.

It is convenient to consider the dimension vector of $\Pi$-modules inside the ambient space of $\Phi$. Let $V$ be the ambient space of $\Phi$, then $V$ has a basis $\{ \al_i \, | \, i \in Q_0 \}$ as a $\R$-vector space.
Define a group homomorphism $\udim \colon \KKK_0(\mod\Pi) \to V$ by $[S_i] \mapsto \al_i$, where $\KKK_0(\mod\Pi)$ denotes the Grothendieck group of $\mod \Pi$ and $S_i$ denotes the simple $\Pi$-module corresponding to the vertex $i \in Q_0$.
We simply write $\udim M:= \udim [M]$ for $M \in \mod\Pi$.

To sum up, \emph{throughout this section, we keep the following notation:}
\begin{itemize}
  \item $\Phi$ is a simply-laced root system of Dynkin type $X$ in the ambient space $V$.
  \item $Q$ is a Dynkin quiver of the same type $X$.
  \item $W$ is the Weyl group of $\Phi$.
  \item $\Pi$ is the preprojective algebra of $\Phi$ (or $Q$).
  \item $S_i$ is the simple $\Pi$-module corresponding to the vertex $i \in Q_0$.
  \item $\udim \colon \KKK_0(\mod\Pi) \to V$ is a map defined by $[S_i] \mapsto \al_i$ for $i \in Q_0$.
\end{itemize}

Torsion-free classes in $\mod \Pi$ was completely classified by Mizuno in \cite{mizuno}, and they are in bijection with elements in $W$. Let us briefly explain his result in our context.

For a vertex $i \in Q_0$, we denote by $e_i$ the corresponding idempotent of $\Pi$. We denote by $I_i$ the two-sided ideal of $\Pi$ generated by $1-e_i$.
For an element $w \in W$, we can define a two-sided ideal $I_w$ of $\Pi$ as follows: Take any reduced expression $w = s_{u_1} s_{u_2} \cdots s_{u_l}$ of $w$ in $W$. Then $I(w)$ is defined by
\[
I(w) = I_{u_l \dots u_2 u_1} := I_{u_l} I_{u_{l-1}} \cdots I_{u_2} I_{u_1}.
\]
This construction does not depend on the choice of reduced expressions of $w$ by \cite[Theorem III.1.9]{BIRS}.

The following result of Mizuno gives the key connection between the representation theory of $\Pi$ and its root system $\Phi$.
\begin{theorem}[{\cite[Theorem 2.30]{mizuno}}]\label{binv:thm:mizuno}
  Let $\Phi$ be a simply-laced root system of Dynkin type, $W$ its Weyl group and $\Pi$ its preprojective algebra.
  \begin{enumerate}
    \item $\FF(w):= \Sub (\Pi/I(w))$ is a torsion-free class in $\mod\Pi$.
    \item The map $w \mapsto \FF(w)$ gives a bijection
    \[
    W \xrightarrow{\sim} \torf\Pi.
    \]
    Moreover, this bijection is actually an isomorphism of finite lattices, where we endow $W$ with the right weak order.
  \end{enumerate}
\end{theorem}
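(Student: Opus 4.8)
The plan is to derive both statements from the $\tau$-tilting theory of Adachi--Iyama--Reiten \cite{AIR} together with the combinatorial input of Buan--Iyama--Reiten--Scott \cite{BIRS}. As a preliminary observation, $\Pi$ is representation-finite (a Dynkin preprojective algebra has only finitely many indecomposable modules), hence $\tau$-tilting finite; consequently every torsion-free class in $\mod\Pi$ is functorially finite and $\torf\Pi$ is a finite lattice, so that the second assertion is a genuine statement about finite lattices.

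For (1) I would first record the elementary computation $\Pi/I_i \cong S_i$ (using that $\overline Q$ has no loops), which already gives $\FF(e) = \Sub(0) = 0$ and $\FF(s_i) = \Sub(S_i) = \add S_i$. The decisive lemma is that $I(w)$, regarded as a left $\Pi$-module, is a support $\tau$-tilting module; I would prove this by induction on $\ell(w)$, using the layer structure of the $I_i$ from \cite{BIRS} to control the passage from $I(w)$ to $I(ws_i)$ when $\ell(ws_i) = \ell(w)+1$. Granting this, $\Fac(I(w))$ is a functorially finite torsion class by \cite{AIR}, and one checks (again via $\Pi/I_i\cong S_i$ and the no-loops property) that its torsion-free part $\Fac(I(w))^{\perp}$ is precisely $\Sub(\Pi/I(w)) = \FF(w)$; in particular $\FF(w)$ is a torsion-free class.

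For (2), set $\psi\colon W \to \torf\Pi$, $\psi(w) = \FF(w)$. First, $\psi$ is well-defined, since $I(w)$ is independent of the chosen reduced expression by \cite[Theorem III.1.9]{BIRS}, which I would simply invoke; and it is order-preserving because $w \leq_R w'$ forces $I(w') \subseteq I(w)$ (extend a reduced expression of $w$ to one of $w'$ and use that a product of the two-sided ideals $I_i$ is again an ideal), hence a surjection $\Pi/I(w')\defl\Pi/I(w)$ and so $\FF(w)\subseteq\FF(w')$. Injectivity I would obtain by separating the support $\tau$-tilting modules $I(w)$ through their classes in $\KKK_0(\mod\Pi)$: distinct elements of $W$ yield non-isomorphic $I(w)$, hence distinct torsion classes $\Fac(I(w))$, hence distinct $\FF(w)$. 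Surjectivity is where \cite{AIR} and $\tau$-tilting finiteness enter essentially: every torsion-free class equals $\Fac(T)^{\perp}$ for some support $\tau$-tilting $T$, and one shows $\{I(w)\}_{w\in W}$ exhausts these by checking that $\psi$ identifies the (connected) support $\tau$-tilting mutation quiver of $\Pi$ with $\Hasse(W,\leq_R)$ arrow by arrow, with source $\psi(e) = 0$. This same arrow-by-arrow identification shows $\psi$ preserves and reflects the covering relation; as $(W,\leq_R)$ and $(\torf\Pi,\subseteq)$ are finite lattices and, in a finite poset, $x\leq y$ iff $x$ and $y$ are joined by a saturated chain, $\psi$ is an isomorphism of posets and hence of lattices.

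The main obstacle is the pair of assertions doing the real work: that the ideals $I(w)$ are support $\tau$-tilting, and that they exhaust \emph{all} support $\tau$-tilting modules, equivalently that every torsion-free class is some $\FF(w)$ with the weak-order covers exhausting the Hasse arrows of $\torf\Pi$. Both rest on the delicate compatibility --- through the \cite{BIRS} layer modules and the well-definedness of $I(w)$ --- between mutation of support $\tau$-tilting modules and multiplication by the idempotent ideals $I_i$. By comparison, the closure properties of $\Sub(\Pi/I(w))$, injectivity of $\psi$, and the step from a cover-preserving bijection to a lattice isomorphism are comparatively routine.
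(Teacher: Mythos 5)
This theorem is not proved in the paper: it is stated as \cite[Theorem 2.30]{mizuno} and simply imported, so there is no internal argument to compare against. Your sketch is, broadly, a reconstruction of Mizuno's actual proof (support $\tau$-tilting theory from \cite{AIR}, the idempotent ideals $I_i$ and the well-definedness of $I(w)$ from \cite{BIRS}, and the $\tau$-tilting finiteness of $\Pi$), and you correctly locate the load-bearing step: establishing that $\{I(w)\}_{w\in W}$ are precisely the basic support $\tau$-tilting $\Pi$-modules, which carries both (1) and the surjectivity in (2).

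Two places in your sketch paper over genuine work. First, the injectivity-via-$\KKK_0(\mod\Pi)$ step asserts ``distinct elements of $W$ yield non-isomorphic $I(w)$'' without exhibiting a separating invariant; this happens to be repairable here because $[I(w)]=[\Pi]-[\Pi/I(w)]$ and $\udim(\Pi/I(w))=\rho-w(\rho)$ (so $W$ acting simply on the orbit of $\rho$ gives injectivity), but that dimension formula is itself a consequence of the layer computation of \cite{AIRT} and is not a freebie, and in general $\KKK_0(\mod\Lambda)$-classes do not separate support $\tau$-tilting modules, so the step needs to be spelled out. Second, the identification $\Fac(I(w))^{\perp}=\Sub(\Pi/I(w))$, which you attribute to ``$\Pi/I_i\cong S_i$ and no-loops,'' in fact leans essentially on the self-injectivity of $\Pi$ to pass from the torsion class $\Fac(I(w))$ of a support $\tau$-tilting module to a torsion-free class of the form $\Sub(-)$; this is a substantive verification, not a formality. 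There is also a handedness slip: you speak of $I(w)$ ``as a left $\Pi$-module,'' whereas the paper works with right $\Pi$-modules and $\FF(w)\subset\mod\Pi$. None of this undermines the overall strategy, which is the correct one, but within the paper the intended move is simply to invoke Mizuno.
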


\subsection{Brick sequences in preprojective algebras}
We begin with studying the relation between brick sequences of $\FF(w)$ and root sequences of $w$.

By Theorem \ref{binv:thm:mizuno}, we can identify a maximal green sequence of $\FF(w)$ with a saturated chain in the interval $[e,w]$ in $(W,\leq_R)$, which in turn is identified with a particular choice of reduced expression of $w$ (see e.g. \cite[Proposition 3.1.2]{bb}). In this way, we can talk about a \emph{brick sequence of $\FF(w)$ associated to a reduced expression of $w$}.
More precisely, let $w = s_{u_1} \cdots s_{u_l}$ be a reduced expression of $w \in W$. Then we have a maximal green sequence $0 = \FF(e) \ot \FF(s_{u_1}) \ot \FF(s_{u_1}s_{u_2}) \ot \cdots \ot \FF(s_{u_1}\cdots s_{u_l}) = \FF(w)$ of $\FF(w)$, thus we obtain the corresponding brick sequence of $\FF(w)$.
We remark that this sequence coincides with a sequence of \emph{layer modules} considered in \cite{AIRT}.

By a lattice isomorphism in Theorem \ref{binv:thm:mizuno}, an arrow in $\Hasse(\torf\Pi)$ is of the form $\FF(w) \ot \FF(w s_i)$ for some $w \in W$ and $i \in Q_0$ satisfying $\ell(w) < \ell(w s_i)$. In this case, we have $I(w s_i) \subset I(w)$, and the following holds.
\begin{proposition}[{\cite[Theorem 4.1]{IRRT}}]
  The brick label of $\FF(w) \ot \FF(w s_i)$ is given by $I(w)/I(w s_i)$.
\end{proposition}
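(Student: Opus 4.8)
This is \cite[Theorem 4.1]{IRRT}; here is the shape of the argument I would give. Put $M := I(w)/I(w s_i)$. Since $I(w s_i) = I_i\, I(w) \subseteq I(w)$ (using the product description of $I(-)$ and that $I(w)$ is a left ideal), $M$ is a well-defined right $\Pi$-module, nonzero because $\FF(w) \subsetneq \FF(w s_i)$ forces $I(w s_i) \subsetneq I(w)$, and it fits into the short exact sequence
\[
0 \longrightarrow M \longrightarrow \Pi/I(w s_i) \longrightarrow \Pi/I(w) \longrightarrow 0 .
\]
Applying Proposition \ref{binv:prop:dirrt} to the covering $\FF(w) \subseteq \FF(w s_i)$, the brick label $B$ of $\FF(w) \ot \FF(w s_i)$ is the unique brick in ${}^\perp\FF(w) \cap \FF(w s_i)$, and moreover ${}^\perp\FF(w) \cap \FF(w s_i) = \Filt(B)$. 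So the plan is to show that $M$ is a brick lying in ${}^\perp\FF(w) \cap \FF(w s_i)$; then $M \iso B$, which is exactly the claim.

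Membership in $\FF(w s_i) = \Sub(\Pi/I(w s_i))$ is immediate from the displayed sequence, since $M$ is a submodule of $\Pi/I(w s_i)$. For membership in ${}^\perp\FF(w)$: as $\FF(w) = \Sub(\Pi/I(w))$, every object of $\FF(w)$ embeds into a finite power of $\Pi/I(w)$, so it suffices that $\Hom_\Pi(M, \Pi/I(w)) = 0$; and since $M$ is a quotient of $I(w)$, precomposition with $I(w) \defl M$ embeds $\Hom_\Pi(M,\Pi/I(w))$ into $\Hom_\Pi(I(w),\Pi/I(w))$, so it is enough to show $\Hom_\Pi(I(w), \Pi/I(w)) = 0$. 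This I would derive from the structure of the ideals $I(w)$ established in \cite{BIRS} together with Mizuno's torsion-theoretic picture \cite{mizuno}, namely that, as a right $\Pi$-module, $I(w)$ lies in the torsion class ${}^\perp\FF(w)$ complementary to $\FF(w) = \Sub(\Pi/I(w))$ — equivalently, that $0 \to I(w) \to \Pi \to \Pi/I(w) \to 0$ is the canonical torsion sequence of the projective module $\Pi$ — from which $\Hom_\Pi(I(w),\Pi/I(w)) = 0$ is immediate. Finally, that $M$ is a brick is precisely the known fact that the layer modules of \cite{AIRT} are bricks, $M$ being the one attached to this arrow; alternatively one can sidestep brickness by noting that $\udim M = \udim(\Pi/I(w s_i)) - \udim(\Pi/I(w)) = w(\al_i) \in \Phi^+$ by Mizuno's dimension-vector formula, and that the dimension vectors occurring in $\Filt(B)$ are exactly the non-negative multiples of $\udim B$, so that the primitivity of the root $w(\al_i)$ forces $M$ to be a single copy of $B$.

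The only genuinely substantial ingredient is the assertion, imported from \cite{BIRS, mizuno}, that $I(w)$ sits as a right $\Pi$-module inside ${}^\perp\FF(w)$ (equivalently, that $\Pi/I(w)$ is the torsion-free reflection of $\Pi$); I expect that proving this from scratch, rather than quoting it, is where the real work lies. Granting that fact and the brick property of layer modules, the identification of the brick label follows from the short verification above together with Proposition \ref{binv:prop:dirrt}.
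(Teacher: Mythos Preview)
The paper does not prove this proposition at all; it is simply quoted as \cite[Theorem 4.1]{IRRT} and used as a black box. So there is no ``paper's own proof'' to compare against.

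That said, your sketch is a correct outline of one way to establish the result directly. The two key inputs you identify are exactly the right ones: (i) that $I(w)$, as a right $\Pi$-module, lies in the torsion class ${}^\perp\FF(w)$ paired with $\FF(w)=\Sub(\Pi/I(w))$ --- this is part of Mizuno's $\tau$-tilting picture, where the torsion pair attached to $w$ is $(\Fac I(w),\Sub(\Pi/I(w)))$ and $I(w)\in\Fac I(w)$ --- and (ii) that $M=I(w)/I(ws_i)$ is a brick, which is the layer-module statement from \cite{AIRT}. Your alternative to (ii) via primitivity is a nice shortcut and is fully rigorous in the simply-laced setting: once $M\in\Filt(B)$ one has $\udim M=n\cdot\udim B$ with $n\geq 1$, and since $\udim M=w(\al_i)$ is a root while $\udim B$ is a nonzero element of the root lattice, the fact that roots are primitive lattice vectors (because the symmetric form is even and takes value $2$ on every root) forces $n=1$, hence $M\iso B$. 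This avoids invoking brickness altogether.

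The only place to be slightly careful is your claim that $\Hom_\Pi(M,\Pi/I(w))=0$ via the surjection $I(w)\defl M$: this is fine because precomposition with a surjection is injective on Hom, so $\Hom_\Pi(I(w),\Pi/I(w))=0$ indeed implies the vanishing you need. Everything else is as you wrote.
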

Next we will compute dimension vectors of brick labels following \cite{AIRT}.
\begin{proposition}\label{binv:prop:categorify}
  Let $w = s_{u_1} \dots s_{u_l}$ be a reduced expression of $w \in W$ and $B_1,B_2,\dots,B_l$ its associated brick sequence of $\FF(w)$. Then the following equality holds in $V$ for $ 1 \leq m \leq l$:
  \[
  \udim B_m = s_{u_1} \cdots s_{u_{m-1}}(\al_{u_m}).
  \]
  In particular, $\{\udim B_1, \dots, \udim B_l\} = \inv(w) \subset \Phi^+$ holds, and $\udim B_1,\udim B_2, \dots, \udim B_l$ is a root sequence associated to the above reduced expression of $w$, defined in Definition \ref{binv:def:rootseq}.
\end{proposition}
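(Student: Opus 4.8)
The plan is to compute the dimension vector of the brick label $B_m = I(s_{u_1}\cdots s_{u_m})/I(s_{u_1}\cdots s_{u_{m-1}})$ by reducing to the rank-one case and then transporting along an automorphism coming from the action of $W$ on $\mod\Pi$. First I would recall the functors $E_i \colon \mod\Pi \to \mod\Pi$ associated to the ideals $I_i$, studied in \cite{BIRS} and \cite{AIRT}: the key point is that $E_{u_1}\circ E_{u_2}\circ\cdots\circ E_{u_{m-1}}$, restricted to an appropriate subcategory, sends the layer module $B_m$ to the module $I_{u_m}/I(e) \cong \Pi/I_{u_m}$-type piece at the bottom, i.e.\ to the module supported at the vertex $u_m$, whose dimension vector is $\al_{u_m}$. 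Concretely, the known effect of these reflection functors on Grothendieck groups is precisely to act by the simple reflection $s_{u_j}$ on dimension vectors, so that $\udim B_m = s_{u_1}\cdots s_{u_{m-1}}(\al_{u_m})$; this is the content of \cite{AIRT} and I would cite it rather than reprove it, indicating how their layer modules coincide with our brick sequence (already noted in the paragraph preceding the proposition).

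More carefully, the first step is to identify $B_m$ with a layer module: by Theorem~\ref{binv:thm:mizuno} the maximal green sequence $0=\FF(e)\ot\FF(s_{u_1})\ot\cdots\ot\FF(w)$ is the image of the chosen reduced expression, and the arrow $\FF(s_{u_1}\cdots s_{u_m})\ot\FF(s_{u_1}\cdots s_{u_{m-1}})$ has brick label $I(s_{u_1}\cdots s_{u_{m-1}})/I(s_{u_1}\cdots s_{u_m})$ by \cite[Theorem 4.1]{IRRT}; this quotient is by definition the $m$-th layer module $L$ in the sense of \cite{AIRT}. The second step is to invoke the computation of $\udim L$ from \cite{AIRT}: using the ideal factorization $I(s_{u_1}\cdots s_{u_m}) = I_{u_m}\cdots I_{u_1}$ and the fact (again from \cite{BIRS}) that $I_j\otimes_\Pi -$ induces the reflection $s_j$ on $\KKK_0(\mod\Pi)\cong\Z Q_0$ when restricted suitably, one gets $\udim L = s_{u_1}\cdots s_{u_{m-1}}(\al_{u_m})$. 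The third step is purely book-keeping: by Proposition~\ref{binv:prop:invdist} the vectors $s_{u_1}\cdots s_{u_{m-1}}(\al_{u_m})$ for $1\le m\le l$ are exactly the elements of $\inv(w)$, pairwise distinct, and by Definition~\ref{binv:def:rootseq} this ordered list is by definition the root sequence associated to the reduced expression, giving the ``in particular'' clause.

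The main obstacle I expect is making precise the passage ``$I_j$ acts by $s_j$ on dimension vectors,'' since $I_j\otimes_\Pi-$ is not exact on all of $\mod\Pi$ and one must restrict to the modules actually appearing (those without $S_j$ in the top, or dually in the socle, depending on conventions), and one must check that the iterated functor does not pick up extra composition factors; this is exactly the technical heart of \cite{AIRT} and of \cite[III]{BIRS}, so the honest thing is to cite it, after verifying that our normalization of $\FF(w)=\Sub(\Pi/I(w))$ and our left-to-right reading of reduced expressions matches theirs. A secondary, easier point is confirming that the brick label really is the layer module and not its dual or a shift — this follows from \cite[Theorem 4.1]{IRRT} together with the identification $\FF(w)\ot\FF(ws_i)$ with the covering $\FF(s_{u_1}\cdots s_{u_{m-1}})\covd\FF(s_{u_1}\cdots s_{u_m})$, so no real difficulty there.
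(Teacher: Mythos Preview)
Your proposal is correct and follows essentially the same route as the paper: both identify $B_m$ with the layer module via \cite[Theorem 4.1]{IRRT} and then invoke \cite{AIRT} for the dimension-vector computation. The paper's execution is slightly more direct than yours, however: rather than passing through the reflection functors $E_i = I_i\otimes_\Pi-$ and worrying about exactness, it cites \cite[Theorem 2.7]{AIRT} for the ready-made formula $[B_m] = R_{u_1}\cdots R_{u_{m-1}}[S_{u_m}]$ in $\KKK_0(\mod\Pi)$, where $R_i$ is the explicit linear operator $R_i([S_j]) = [S_j] - (2\delta_{ij}-m_{ij})[S_i]$, and then simply checks on generators that $R_i$ intertwines with $s_i$ under $\udim$ using the Cartan integers $\la\al_j,\al_i\ra = 2\delta_{ij}-m_{ij}$. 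This bypasses entirely the obstacle you flagged about restricting to a subcategory where the functor is exact, since the whole argument lives in $\KKK_0$ from the start.
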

\begin{proof}
  In \cite[Theorem 2.7]{AIRT}, it was shown that
  \[
  [B_m] = [I(s_{u_{m-1}} \cdots s_{u_1})/I(s_{u_m} \cdots s_{u_1})]= R_{u_1} \cdots R_{u_{m-1}} [S_{u_j}]
  \]
  holds in $\KKK_0(\mod\Pi)$, where $R_i \colon \KKK_0(\mod\Pi) \to \KKK_0(\mod\Pi)$ for $i \in Q_0$ is a group homomorphism defined by
  \[
  R_i([S_j]) := [S_j] - (2 \delta_{ij} - m_{ij}) [S_i]
  \]
  on the free basis $\{ [S_j] \, | \, j \in Q_0 \}$ of $\KKK_0(\mod\Pi)$. Here $\delta$ is the Kronecker delta, and $m_{ij}$ is the number of edges in the Dynkin diagram of $\Phi$ which connect $i$ and $j$.

  Therefore, it suffices to check that the following diagram commutes:
  \[
  \begin{tikzcd}
    \KKK_0(\mod\Pi) \rar["R_i"] \dar["\udim"]& \KKK_0(\mod\Pi) \dar["\udim"] \\
    V \rar["s_i"] & V
  \end{tikzcd}
  \]
  We only have to check on the basis $[S_j]$ for $j \in Q_0$. This follows from the following equality:
  \begin{equation}\label{binv:fig:cartanint}
  2 \delta_{ij} - m_{ij} = \la \al_j,\al_i \ra =
  \begin{cases}
    2 & \text{if $i = j$,} \\
    0 & \text{if $i$ and $j$ are not connected by an arrow in $Q$,} \\
    -1 & \text{if $i$ and $j$ are connected by an arrow in $Q$},
  \end{cases}
  \end{equation}
  which can be checked directly.
\end{proof}
Therefore, a brick sequence of $\FF(w)$ serves as a categorification of a root sequence of $w$. Note that different bricks may have the same dimension vector.
\begin{example}
  Consider an element $w$ in Example \ref{binv:ex:ainv}. Then Figure \ref{binv:fig:ainvbrick} is the Hasse quiver of the interval $[0,\FF(w)]$ with its brick labels.
  \begin{figure}[h]
  \begin{tikzcd}[column sep=tiny, row sep=small]
    &[.5cm] & \FF(w) \ar[dl, "\substack{\; 3\\2}"'] \ar[dr, "1"] \\
    & \bullet \dar["\substack{\;\; 3 \\ \; 2 \\ 1}"] \ar[dl,"2"'] & & \bullet \dar["\substack{1\; 3\\2}"] \\
    \bullet \dar["\substack{\;\; 3 \\ \; 2 \\ 1}"'] & \bullet \ar[dl, "2"] \ar[dr, "1"'] & & \bullet \dar["\substack{1\\ \; 2}"] \ar[dl,"\substack{\; 3\\2}"']\\
    \bullet \dar["\substack{\; 2\\1}"'] & & \bullet \dar["\substack{1\\ \; 2}"'] & \bullet \ar[dl, "\substack{\; 3\\2}"]\\
    \bullet \ar[rd, "1"'] & & \bullet \ar[ld, "2"]&  \\
    & 0
  \end{tikzcd}
  \caption{Brick sequences of $w=s_{12312}$}
  \label{binv:fig:ainvbrick}
  \end{figure}
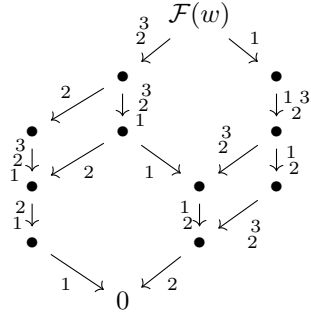
  For example, two bricks $\substack{1\\ \; 2}$ and $\substack{\;2 \\ 1}$ are non-isomorphic but have the same dimension vector $\al_1 + \al_2$.

  The same computation can be done for Example \ref{binv:ex:dinv}, and actually Figure \ref{binv:fig:ainvbrick} can be seen as a Hasse quiver of $[0,\FF(w)]$ with dimension vectors of its brick labels.
\end{example}

\begin{corollary}\label{binv:cor:diminv}
  Let $w$ be an element of $W$, then the following hold.
  \begin{enumerate}
    \item For any $M \in \FF(w)$, we have
    \[
    \udim M = \sum_{\be \in \inv(w)} n_\be \be,
    \]
    where $n_\be$ is some non-negative integer for each $\be \in \inv(w)$.
    \item For any $B \in \brick \FF(w)$, we have $\udim B \in \inv(w)$. Thus $\udim$ induces a surjection $\udim \colon \brick\FF(w) \to \inv(w)$.
  \end{enumerate}
\end{corollary}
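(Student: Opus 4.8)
The plan is to deduce both parts from the categorification statement in Proposition \ref{binv:prop:categorify}, together with the structural results on brick sequences from Section \ref{binv:sec:3}. The key preliminary point is that $\FF(w)$ admits a maximal green sequence: by Theorem \ref{binv:thm:mizuno} the interval $[0,\FF(w)]$ in $\torf\Pi$ is isomorphic to the interval $[e,w]$ in $(W,\leq_R)$, which is a finite lattice since $W$ is finite. Hence there is a saturated chain $0 = \FF_0 \covd \FF_1 \covd \cdots \covd \FF_l = \FF(w)$, and it yields a brick sequence $B_1,\dots,B_l$ of $\FF(w)$, whose dimension vectors form the inversion set $\inv(w)$ by Proposition \ref{binv:prop:categorify}.

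For (1), I would use Theorem \ref{binv:thm:brickseq}(3), which gives $\FF(w) = \Filt(B_1,\dots,B_l)$. Thus any $M \in \FF(w)$ admits a filtration whose subquotients all lie in $\{B_1,\dots,B_l\}$; since $\udim$ factors through $\KKK_0(\mod\Pi)$ and is therefore additive on short exact sequences, we obtain $\udim M = \sum_{i=1}^{l} c_i\, \udim B_i$ with $c_i \in \Z_{\geq 0}$ counting the occurrences of $B_i$. By Proposition \ref{binv:prop:categorify} each $\udim B_i$ lies in $\inv(w)$, so grouping together the coefficients attached to equal dimension vectors produces the asserted expression $\udim M = \sum_{\be \in \inv(w)} n_\be \be$ with $n_\be \in \Z_{\geq 0}$.

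For (2), given a brick $B \in \FF(w)$, I would invoke Lemma \ref{binv:lem:brickappear}: since $[0,\FF(w)]$ is finite, $B$ occurs in some brick sequence of $\FF(w)$, and then Proposition \ref{binv:prop:categorify} applied to that sequence forces $\udim B \in \inv(w)$. Surjectivity of $\udim\colon \brick\FF(w) \to \inv(w)$ is then immediate from the same proposition, since for a fixed brick sequence $B_1,\dots,B_l$ one has $\{\udim B_1,\dots,\udim B_l\} = \inv(w)$, so every inversion is realized. Since the heavy lifting --- the identification of the dimension vectors of brick labels with a root sequence, the $\Filt$-description of $\FF(w)$, and the realization of an arbitrary brick inside a brick sequence --- is already established, there is no genuine obstacle here; the only point meriting a moment's care is the existence of the maximal green sequence, i.e. the finiteness of $[0,\FF(w)]$, which is handled by Mizuno's lattice isomorphism as above.
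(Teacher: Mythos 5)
Your proof is correct and follows essentially the same route as the paper: part (1) via $\FF(w)=\Filt(B_1,\dots,B_l)$ from Theorem~\ref{binv:thm:brickseq} together with Proposition~\ref{binv:prop:categorify}, and part (2) via Lemma~\ref{binv:lem:brickappear} (using finiteness of $[0,\FF(w)]$ from Mizuno's isomorphism) plus Proposition~\ref{binv:prop:categorify} again. Your added remark on additivity of $\udim$ just makes explicit what the paper leaves implicit.
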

\begin{proof}
  (1)
  Let $B_1,\dots, B_l$ be a brick sequence of $\FF(w)$. Then by Proposition \ref{binv:prop:categorify}, we have $\inv(w) = \{ \udim B_1,\dots,\udim B_l\}$.

  Let $M \in \FF(w)$. Then Theorem \ref{binv:thm:brickseq} implies that $M \in \Filt(B_1,\dots, B_l)$. Thus by taking the dimension vector, the assertion immediately follows.

  (2)
  Let $B$ be a brick in $\FF(w)$. Then Lemma \ref{binv:lem:brickappear} implies that there is a brick sequence of $\FF(w)$ which contains $B$, since $[0,\FF(w)] \subset \torf\Pi \iso W$ is a finite lattice. Now Proposition \ref{binv:prop:categorify} implies that $\udim B \in \inv(w)$.
  Thus the map $\udim \colon \brick\FF(w) \to \inv(w)$ is well-defined.
  Moreover, it is clearly surjective by Proposition \ref{binv:prop:categorify}.
\end{proof}

\subsection{Simple objects versus Bruhat inversions}
By Proposition \ref{binv:prop:simpseq}, to find simple objects in $\FF(w)$, it suffices to check whether $B_i$ is simple or not for a fixed brick sequence $B_1, \dots, B_l$ of $\FF(w)$.
For this, the \emph{homological lemma} due to Crawley-Boevey is useful.
To state this, let us introduce the symmetric bilinear form $\la -,-\ra_\Pi$ on $\KKK_0(\mod\Pi)$ defined by
\[
\la \sum_i a_i[S_i], \sum_j b_j[S_j] \ra_\Pi = 2 \sum_{i \in Q_0} a_i b_i - \sum_{i \to j \in Q_1} (a_i b_j + a_j b_i).
\]
This coincides with the standard homological symmetric bilinear form associated to the quiver $Q$ (or twice of it, see e.g. \cite[VII.4]{ASS}). It can also be interpreted as the restriction of the Euler form of the preprojective algebra $\widehat{\Pi}$ of the extended Dynkin type corresponding to $\Pi$, see e.g. \cite[Section 3]{IRRT}.
Then we have the following formula due to Crawley-Boevey.
\begin{lemma}[{\cite[Lemma 1]{CB}}]\label{binv:lem:cbhom}
  Let $M,N \in \mod \Pi$. Then we have the following equation:
  \[
  \la [M],[N] \ra_\Pi = \dim \Hom_\Pi(M,N) + \dim \Hom_\Pi(N,M) - \dim \Ext_\Pi^1(M,N).
  \]
\end{lemma}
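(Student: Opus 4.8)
\emph{Proof proposal.} The plan is to deduce the identity from the \emph{standard projective presentation} of preprojective-algebra modules, which directly encodes the defining relation $\sum_{a\in Q_1}(aa^*-a^*a)$, together with the self-duality built into that relation. First I would recall that for every $M\in\mod\Pi$ there is a natural exact sequence of right $\Pi$-modules
\[
P_2(M)\xrightarrow{\ g_M\ }P_1(M)\xrightarrow{\ f_M\ }P_0(M)\xrightarrow{\ \epsilon_M\ }M\to 0,
\]
where $P_0(M)=\bigoplus_{i\in Q_0}Me_i\otimes_k e_i\Pi$, $P_1(M)=\bigoplus_{(a\colon i\to j)\in\ov{Q}_1}Me_i\otimes_k e_j\Pi$, $P_2(M)=\bigoplus_{i\in Q_0}Me_i\otimes_k e_i\Pi$, and $\epsilon_M,f_M,g_M$ are built from the $\Pi$-action and the preprojective relation. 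This is the initial part of the ``standard resolution'' of a preprojective module; for an arbitrary quiver it is exact at $M$, at $P_0(M)$ and at $P_1(M)$, and in the Dynkin case $g_M$ is in addition \emph{not} injective, its kernel being a Nakayama twist of $M$ --- the only feature distinguishing the Dynkin case (where $\gl\Pi=\infty$) from the case $\gl\Pi=2$. Crucially all three $P_j(M)$ are projective.

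Next I would apply $\Hom_\Pi(-,N)$ and use the standard identifications $\Hom_\Pi(Me_i\otimes_k e_i\Pi,N)\cong\Hom_k(e_iM,e_iN)$ and $\Hom_\Pi(Me_i\otimes_k e_j\Pi,N)\cong\Hom_k(e_iM,e_jN)$. Since the $P_j(M)$ are projective this gives a complex of finite-dimensional vector spaces
\[
0\to\Hom_\Pi(M,N)\to\bigoplus_{i}\Hom_k(e_iM,e_iN)\to\bigoplus_{(a\colon i\to j)\in\ov{Q}_1}\Hom_k(e_iM,e_jN)\to\bigoplus_{i}\Hom_k(e_iM,e_iN)\to C\to 0,
\]
where $C$ is the cokernel. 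A routine diagram chase --- splitting the four-term exact sequence above into the short exact sequences $0\to\Omega M\to P_0(M)\to M\to0$, $0\to\Omega^2 M\to P_1(M)\to\Omega M\to0$, $0\to\ker g_M\to P_2(M)\to\Omega^2 M\to0$ and using $\Ext^{\geq1}_\Pi(P_j(M),N)=0$ --- shows that this complex is exact everywhere except at the third term, where its homology is exactly $\Ext^1_\Pi(M,N)$. Equating the alternating sum of the dimensions of the terms with that of the homology, and using $\sum_{(a\colon i\to j)\in\ov{Q}_1}\dim e_iM\cdot\dim e_jN=\sum_{i\to j\in Q_1}(\dim e_iM\cdot\dim e_jN+\dim e_jM\cdot\dim e_iN)$, I obtain
\[
\dim\Hom_\Pi(M,N)+\dim C-\dim\Ext^1_\Pi(M,N)=2\sum_{i}\dim e_iM\cdot\dim e_iN-\sum_{i\to j\in Q_1}(\dim e_iM\cdot\dim e_jN+\dim e_jM\cdot\dim e_iN)=\la[M],[N]\ra_\Pi.
\]

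It then remains to identify the cokernel, $C\cong D\Hom_\Pi(N,M)$ with $D=\Hom_k(-,k)$, so that $\dim C=\dim\Hom_\Pi(N,M)$; combining this with the displayed identity finishes the proof. This last step is the main obstacle, and it is precisely where the antisymmetry of the relation $\sum_{a\in Q_1}(aa^*-a^*a)$ enters: after applying $\Hom_k(-,k)$ and the involution $a\leftrightarrow a^*$ of the double quiver $\ov{Q}$, the standard presentation of $M$ becomes the standard presentation of $N$ read in the opposite direction, and transporting this symmetry through the computation above exhibits $C$ as the $k$-dual of $\Hom_\Pi(N,M)$. (When $g_M$ is injective and $\gl\Pi=2$ this is the familiar Calabi--Yau identity $\Ext^2_\Pi(M,N)\cong D\Hom_\Pi(N,M)$; for Dynkin type, where $\Pi$ has infinite global dimension, one must argue with the presentation itself rather than with a finite projective resolution.) Alternatively, since the statement is already attributed to Crawley-Boevey, one may simply invoke \cite[Lemma~1]{CB}.
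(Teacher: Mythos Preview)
Your argument via the standard projective presentation is correct and is essentially Crawley-Boevey's original proof in \cite{CB}; the identification $C\cong D\Hom_\Pi(N,M)$ does follow from the self-duality of the presentation under $a\leftrightarrow a^*$, as you say. The paper itself gives no proof of this lemma --- it simply cites \cite[Lemma~1]{CB} --- but it does remark on a different route: embed $\mod\Pi$ into $\mod\widehat{\Pi}$ for the preprojective algebra $\widehat{\Pi}$ of the corresponding \emph{extended} Dynkin quiver, which has global dimension $2$ and is $2$-Calabi--Yau, so that $\Ext^2_{\widehat{\Pi}}(M,N)\cong D\Hom_{\widehat{\Pi}}(N,M)$ and the Euler-form computation $\dim\Hom-\dim\Ext^1+\dim\Ext^2$ yields the formula directly; one then checks that $\Hom$ and $\Ext^1$ agree over $\Pi$ and $\widehat{\Pi}$ for $\Pi$-modules and that $\la-,-\ra_\Pi$ is the restriction of the Euler form of $\widehat{\Pi}$. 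Your approach has the advantage of being self-contained and working uniformly for any quiver without passing to the extended diagram, at the cost of the explicit duality computation you flag as the main obstacle; the paper's suggested approach trades that computation for the (nontrivial but standard) fact that $\widehat{\Pi}$ is $2$-Calabi--Yau.
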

We remark that this formula can be shown easily by using the 2-Calabi-Yau property of the preprojective algebra $\widehat{\Pi}$ of the extended Dynkin type.

We can check the compatibility of the bilinear form on $\KKK_0(\mod\Pi)$ and the value $\la \al,\be \ra$ in $\al,\be \in V$ defined in Section 2 as follows.
\begin{lemma}\label{binv:lem:bilincomp}
  Suppose that $M,N \in \mod \Pi$ satisfy $\udim M,\udim N \in \Phi^+$. Then we have
  \[
  \la [M],[N] \ra_\Pi = \la \udim M,\udim N \ra.
  \]
\end{lemma}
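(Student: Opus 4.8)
The plan is to reduce everything to the behaviour of the two forms on the simple modules $S_i$, where the required identity is already recorded in \eqref{binv:fig:cartanint}, and then to propagate it by bilinearity.

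First I would unwind the left-hand side. Since $\{[S_i]\mid i\in Q_0\}$ is a $\Z$-basis of $\KKK_0(\mod\Pi)$, write $[M]=\sum_i a_i[S_i]$ and $[N]=\sum_j b_j[S_j]$, so that by the very definition of $\udim$ we have $\udim M=\sum_i a_i\al_i$ and $\udim N=\sum_j b_j\al_j$. Bilinearity of $\la-,-\ra_\Pi$ together with its defining formula then gives
\[
\la [M],[N]\ra_\Pi=\sum_{i,j}a_ib_j\,\la[S_i],[S_j]\ra_\Pi=\sum_{i,j}a_ib_j\,(2\delta_{ij}-m_{ij}).
\]
Next I would bring in the $W$-invariant symmetric bilinear form $(-,-)$ on $V$, normalised so that $(\al,\al)=2$ for every root $\al$ (which is consistent precisely because $\Phi$ is simply-laced). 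By \eqref{binv:fig:cartanint} the Cartan integers satisfy $(\al_i,\al_j)=\la\al_j,\al_i\ra=2\delta_{ij}-m_{ij}$, so the displayed sum equals $\sum_{i,j}a_ib_j(\al_i,\al_j)=(\udim M,\udim N)$. Finally, since $\udim N\in\Phi^+$ is a root we have $(\udim N,\udim N)=2$, whence $\la\udim M,\udim N\ra=2(\udim M,\udim N)/(\udim N,\udim N)=(\udim M,\udim N)$, and the two sides coincide.

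The argument is essentially a one-line computation, so there is no serious obstacle; the only point requiring care is that the bracket $\la-,-\ra$ is \emph{not} bilinear in its second argument in general, so one cannot expand $\la\udim M,\udim N\ra$ directly — one must first pass to the genuinely bilinear invariant form $(-,-)$, using that all roots of a simply-laced system have equal length in order to replace $\la\al,\be\ra$ by $(\al,\be)$ (any global scalar in the normalisation of $(-,-)$ cancels between the two occurrences). It is also worth noting that the hypothesis is slightly stronger than needed: only $\udim N$ being a root is used, to pin down the denominator, while $\udim M$ need merely lie in the root lattice; but stating the lemma for two positive roots is the form in which it will be applied.
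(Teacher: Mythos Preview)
Your proof is correct and follows essentially the same plan as the paper's: expand both sides on the basis of simple modules/simple roots and invoke \eqref{binv:fig:cartanint}. The only real difference is in how the non-bilinearity of $\la-,-\ra$ in its second variable is handled. You pass through the normalised invariant form $(-,-)$ with $(\al,\al)=2$, which is genuinely bilinear, and recover $\la-,-\ra$ at the end using that $\udim N$ is a root. The paper instead stays with $\la-,-\ra$ throughout: after expanding in the first variable it applies the simply-laced symmetry axiom (R4) to the pair of roots $\al_i$ and $\udim N$ to swap the two arguments, and then expands once more in the first variable. Both devices exploit exactly the same hypothesis (that $\udim N$ is a root in a simply-laced system), so neither is more general; your route is perhaps slightly more transparent about why only $\udim N\in\Phi$ matters, as you note.
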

\begin{proof}
  Since $\udim M$ and $\udim N$ are positive roots, we can write as $\udim M = \sum_i m_i \al_i$ and $\udim N = \sum_j n_j \al_j$ for non-negative integers $m_i,n_j$. Then we have
  \begin{align*}
    \la \udim M,\udim N \ra & = \la \sum_i m_i \al_i, \sum_j n_j \al_j \ra \\
    &= \sum_i \left( m_i \la \al_i,\sum_j n_j \al_j \ra \right) \\
    & = \sum_i \left( m_i \la \sum_j n_j \al_j, \al_i \ra \right) \\
    &= \sum_i \sum_j m_i n_j \la \al_j,\al_i \ra.
  \end{align*}
  The first equality follows from definition, the second and the last follow since $\la -,-\ra$ is linear with respect to the first variable, and the third follows since $\Phi$ is simply-laced and both vectors inside $\la-,-\ra$ are roots.
  By using the equation (\ref{binv:fig:cartanint}), we can compute this as follows:
  \[
  \sum_i \sum_j m_i n_j \la \al_j,\al_i \ra = 2 \sum_{i \in Q_0} m_i m_i - \sum_{i \to j \in Q} (m_i n_j + m_i n_j)
  \]
  Thus we have the assertion.
\end{proof}

By using this, we can show the following main result of this paper.
\begin{theorem}\label{binv:thm:main}
  Let $w$ be an element of $W$ and $B_1,\dots, B_l$ a brick sequence of $\FF(w)$. Then the following are equivalent for $1 \leq m \leq l$
  \begin{enumerate}
    \item $\udim B_m \in \Binv(w)$ holds.
    \item $B_m$ is a simple object in $\FF(w)$.
  \end{enumerate}
\end{theorem}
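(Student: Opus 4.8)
The plan is to prove the two implications separately, each by contraposition, using Corollary~\ref{binv:cor:simplabel} to translate ``$B_m$ is (not) simple in $\FF(w)$'' into the assertion that there is (no) surjection $B_m \defl B_j$ with $j > m$. Throughout I will use that, by Proposition~\ref{binv:prop:categorify} together with Proposition~\ref{binv:prop:invdist}, the map $i \mapsto \udim B_i$ is a bijection $\{1,\dots,l\} \to \inv(w)$; in particular the $\udim B_i$ are pairwise distinct and $\udim B_1,\dots,\udim B_l$ is a root sequence of $w$.

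For $(1)\Rightarrow(2)$: suppose $B_m$ is not simple in $\FF(w)$. By Corollary~\ref{binv:cor:simplabel} there is a surjection $B_m \defl B_j$ with $j > m$; its kernel $K$ is nonzero, since $B_m \not\iso B_j$ by Theorem~\ref{binv:thm:brickseq}, and lies in $\FF(w)$ because $\FF(w)$ is closed under submodules. Taking dimension vectors gives $\udim B_m = \udim B_j + \udim K$ with $\udim K \neq 0$, so $\udim B_j \neq \udim B_m$; using Corollary~\ref{binv:cor:diminv}(1) to expand $\udim K$ as a nonnegative integer combination of inversions, I obtain such an expression for $\udim B_m$ in which the coefficient of $\udim B_j$ is at least $1$, hence a nontrivial one. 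Then Theorem~\ref{binv:thm:binvchar}, implication $(2)\Rightarrow(1)$, shows $\udim B_m \notin \Binv(w)$.

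For $(2)\Rightarrow(1)$: suppose $\be := \udim B_m$ is a non-Bruhat inversion. Since $\Phi$ is simply-laced, Theorem~\ref{binv:thm:binvchar} gives $\be = \ga_1 + \ga_2$ with $\ga_1,\ga_2 \in \inv(w)$; these are necessarily distinct positive roots, both distinct from $\be$. Writing $\ga_1 = \udim B_p$ and $\ga_2 = \udim B_q$, Theorem~\ref{binv:thm:rootseqchar}(1) applied to the root sequence $\udim B_1,\dots,\udim B_l$ forces $m$ to lie strictly between $p$ and $q$, so after swapping $\ga_1$ and $\ga_2$ if needed I may assume $q > m$. By Lemma~\ref{binv:lem:contfig}, $\la\ga_1,\ga_2\ra = -1$, hence $\la\udim B_m,\udim B_q\ra = \la\ga_1+\ga_2,\ga_2\ra = 1$, and Lemma~\ref{binv:lem:bilincomp} turns this into $\la[B_m],[B_q]\ra_\Pi = 1$. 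As $q > m$, Theorem~\ref{binv:thm:brickseq}(1) gives $\Hom_\Pi(B_q,B_m) = 0$, so Crawley--Boevey's formula (Lemma~\ref{binv:lem:cbhom}) yields $\dim\Hom_\Pi(B_m,B_q) = 1 + \dim\Ext^1_\Pi(B_m,B_q) \geq 1$. Choosing a nonzero $g \colon B_m \to B_q$, I finish with the short exact sequence $0 \to \ker g \to B_m \to \im g \to 0$: here $\im g$ embeds into $B_q \in \FF(w)$ and is nonzero, $\ker g$ is a submodule of $B_m \in \FF(w)$, and $\udim\ker g = \udim B_m - \udim\im g \geq \udim B_m - \udim B_q = \ga_1$ coordinatewise, which is a nonzero vector with nonnegative entries, so $\ker g \neq 0$ as well; hence $B_m$ is not simple in $\FF(w)$.

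I expect the main obstacle to be finding the correct target brick in $(2)\Rightarrow(1)$: the point is that the positional information carried by the root sequence (Theorem~\ref{binv:thm:rootseqchar}(1)) guarantees that one of the two summands of $\be$ occurs after $B_m$, which is precisely what is needed to invoke the $\Hom$-vanishing of Theorem~\ref{binv:thm:brickseq}(1) and then Crawley--Boevey's formula; after that, the observation that $\udim B_m - \udim B_q = \ga_1$ is automatically a nonzero positive vector makes $\ker g$ nonzero with no separate discussion of whether $g$ is surjective. Minor points to verify are that $\udim B_m \in \inv(w)$ (so that Theorem~\ref{binv:thm:binvchar} applies), that $\ga_1,\ga_2,\be$ are pairwise distinct (so that $p,q,m$ are distinct), and the symmetry of $\la-,-\ra_\Pi$ used implicitly through Lemma~\ref{binv:lem:bilincomp}.
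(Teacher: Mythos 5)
Your proof is correct and follows essentially the same route as the paper's: the direction $(2)\Rightarrow(1)$ is the paper's argument almost verbatim (decompose $\udim B_m = \ga_1 + \ga_2$ by Theorem~\ref{binv:thm:binvchar}, locate $B_q$ to the right of $B_m$ via Theorem~\ref{binv:thm:rootseqchar}, compute $\la[B_m],[B_q]\ra_\Pi = 1$, use $\Hom$-vanishing and Crawley--Boevey to produce a non-injective nonzero map), and $(1)\Rightarrow(2)$ only repackages the paper's ``take a short exact sequence with nonzero ends in $\FF(w)$ and apply $\udim$'' step by routing it through Corollary~\ref{binv:cor:simplabel}. The small refinements you add (making the nontriviality of the coefficient expansion explicit, and showing $\ker g \neq 0$ by the coordinatewise estimate $\udim\ker g \geq \ga_1$) are sound and only tidy up details the paper leaves implicit.
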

\begin{proof}
  (1) $\Rightarrow$ (2)
  Suppose that $B_m$ is not a simple object in $\FF(w)$. Then there exists an exact sequence
  \[
  \begin{tikzcd}
    0 \rar & L \rar & B_m \rar & N \rar & 0
  \end{tikzcd}
  \]
   with $L,N \neq 0$. By applying $\udim$, we obtain
   \[
   \udim B_m = \udim L + \udim N
   \]
   with $\udim L, \udim N \neq 0$. By Corollary \ref{binv:cor:diminv}, both $\udim L$ and $\udim N$ are non-negative integer linear combinations of inversions of $w$ such that at least one of the coefficients should be strictly positive.
   Thus Theorem \ref{binv:thm:binvchar} implies that $\udim B_m$ is a non-Bruhat inversion.

   (2) $\Rightarrow$ (1):
   Suppose that $\udim B_m$ is a non-Bruhat inversion of $w$. We will use Lemma \ref{binv:lem:simpcri} to show that $B_m$ is not a simple object in $\FF(w)$.

   By Theorem \ref{binv:thm:binvchar}, there exists some $\al,\be \in \inv(w)$ such that $\al + \be = \udim B_m$ since $\Phi$ is simply-laced. Moreover, these satisfy $\la \udim B_m,\al \ra = \la \udim B_m,\be \ra = 1$ by Lemma \ref{binv:lem:contfig}.

  On the other hand, since $\{\udim B_1, \dots, \udim B_l \} = \inv(w)$ by Proposition \ref{binv:prop:categorify}, there are $i$ and $j$ such that $\udim B_i = \al$ and $\udim B_j = \be$. By exchanging $\al$ and $\be$ if necessary, we may assume that $i < j$.
   Moreover, since $\udim B_1,\dots, \udim B_l$ is a root sequence and $\udim B_m = \udim B_i + \udim B_j$ holds, Theorem \ref{binv:thm:rootseqchar} implies $i < m < j$.

   To summarize, we have found $i$ and $j$ with $i < m < j$ such that $\udim B_m = \udim B_i + \udim B_j$ and $\la \udim B_m,\udim B_j \ra = 1$ hold. Then Lemma \ref{binv:lem:bilincomp} implies
   \[
   \la [B_m],[B_j] \ra_\Pi = \la \udim B_m,\udim B_j \ra = 1.
   \]
   By combining this with Lemma \ref{binv:lem:cbhom}, we have
   \[
   \dim \Hom_\Pi(B_m,B_j) + \dim\Hom_\Pi(B_j,B_m) = 1 + \dim \Ext_\Pi^1(B_j,B_m) \geq 1.
   \]
   On the other hand, since $m < j$, we must have $\Hom_\Pi(B_j,B_m) = 0$ by Theorem \ref{binv:thm:brickseq}. Therefore, we have $\dim \Hom_\Pi(B_m,B_j) \geq 1$, that is, $\Hom_\Pi(B_m,B_j) \neq 0$.

   Take any non-zero morphism $\varphi \colon B_m \to B_j$. Since $\udim B_m = \udim B_k + \udim B_m$, we have $\dim B_m > \dim B_k$, hence $\varphi$ cannot be an injection. Therefore Lemma \ref{binv:lem:simpcri} implies that $B_m$ is not a simple object in $\FF(w)$.
\end{proof}

In conclusion, we have the following classification of simple objects.
\begin{corollary}\label{binv:cor:main}
  Let $w$ be an element of $W$. Then the following hold.
  \begin{enumerate}
    \item A brick $B$ in $\FF(w)$ is a simple object in $\FF(w)$ if and only if $\udim B \in \Binv(w)$ holds.
    \item The map $\udim \colon \brick\FF(w) \to \inv(w)$ in Corollary \ref{binv:cor:diminv} restricts to a bijection
    \[
    \udim \colon \simp\FF(w) \xrightarrow{\sim} \Binv(w).
    \]
    In other words, simple objects in $\FF(w)$ bijectively correspond to Bruhat inversions of $w$ by taking dimension vectors.
  \end{enumerate}\end{corollary}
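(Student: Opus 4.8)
The plan is to package Theorem \ref{binv:thm:main}, which resolves the problem for bricks occurring in a \emph{fixed} brick sequence of $\FF(w)$, into statements about arbitrary bricks and about the global map $\udim$. First note that the domain of the asserted bijection makes sense: any simple object $S$ in $\FF(w)$ is a brick, since by Lemma \ref{binv:lem:simpcri} every endomorphism of $S$ is zero or injective, and an injective endomorphism of a finite-dimensional module is an automorphism, so $\End_\Pi(S)$ is a division ring; thus $\simp\FF(w) \subseteq \brick\FF(w)$. For part (1), let $B \in \brick\FF(w)$. If $B$ is simple, then by Proposition \ref{binv:prop:simpseq} it occurs in every brick sequence of $\FF(w)$, and Theorem \ref{binv:thm:main} gives $\udim B \in \Binv(w)$. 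Conversely, since $[0,\FF(w)]$ is an interval in $\torf\Pi \iso W$ (Theorem \ref{binv:thm:mizuno}) and hence finite, Lemma \ref{binv:lem:brickappear} realises $B$ as a term $B_m$ of some brick sequence, and if $\udim B_m \in \Binv(w)$ then Theorem \ref{binv:thm:main} shows $B_m$ is simple. This proves (1).

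For part (2), Corollary \ref{binv:cor:diminv} already gives a surjection $\udim\colon \brick\FF(w) \to \inv(w)$, and part (1) shows it maps $\simp\FF(w)$ into $\Binv(w)$. Surjectivity onto $\Binv(w)$ is then immediate: given $\be \in \Binv(w)$, fix any brick sequence $B_1,\dots,B_l$ of $\FF(w)$; by Proposition \ref{binv:prop:categorify} there is an $m$ with $\udim B_m = \be$, and Theorem \ref{binv:thm:main} forces $B_m \in \simp\FF(w)$.

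The crux is injectivity. I would argue by contradiction: suppose $B, B' \in \simp\FF(w)$ with $\udim B = \udim B' =: \be$ but $B \not\iso B'$. Fix a brick sequence of $\FF(w)$; by Proposition \ref{binv:prop:simpseq} both $B$ and $B'$ occur in it, necessarily at distinct positions, say $B = B_i$ and $B' = B_j$ with $i < j$. Then $\Hom_\Pi(B_j, B_i) = 0$ by Theorem \ref{binv:thm:brickseq}(1). Since $B_i$ is simple, Lemma \ref{binv:lem:simpcri} says every map $B_i \to B_j$ is zero or injective; but $\dim B_i = \dim B_j$ because $\udim B_i = \udim B_j = \be$, so an injection would be an isomorphism, contradicting Theorem \ref{binv:thm:brickseq}(2); hence $\Hom_\Pi(B_i, B_j) = 0$ as well. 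Now $\be$ is a root, so $\la \be,\be\ra = 2$, whence Lemma \ref{binv:lem:bilincomp} gives $\la [B_i],[B_j]\ra_\Pi = 2$, and Lemma \ref{binv:lem:cbhom} then reads $0 + 0 - \dim\Ext^1_\Pi(B_i,B_j) = 2$, i.e. $\dim \Ext^1_\Pi(B_i,B_j) = -2$, which is absurd. Therefore $B \iso B'$, so $\udim$ is injective on $\simp\FF(w)$, completing the bijection.

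The step I expect to be the main obstacle is exactly this injectivity argument: a brick sequence can genuinely contain two non-isomorphic bricks with the same dimension vector (for instance the two length-two bricks in Example \ref{binv:ex:ainv}), so the conclusion cannot follow from the root sequence alone and really must use the homological input from the Crawley–Boevey form to rule out two \emph{simple} bricks sharing a dimension vector. Everything else is a straightforward assembly of results already established.
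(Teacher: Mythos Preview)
Your proof is correct, but the injectivity argument in part (2) is more elaborate than necessary, and rests on a misreading of the situation. You worry that ``a brick sequence can genuinely contain two non-isomorphic bricks with the same dimension vector,'' citing the two length-two bricks in Figure~\ref{binv:fig:ainvbrick}. But those two bricks lie in \emph{different} brick sequences; within any single brick sequence the dimension vectors are pairwise distinct, because by Proposition~\ref{binv:prop:categorify} they form a root sequence of $w$, and Proposition~\ref{binv:prop:invdist} says the $\ell(w)$ roots appearing there are all distinct. This is exactly how the paper argues: once $S$ and $S'$ are realised as $B_i$ and $B_j$ in a fixed brick sequence (via Proposition~\ref{binv:prop:simpseq}), the equality $\udim B_i = \udim B_j$ forces $i = j$ immediately.

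Your route via the Crawley--Boevey form does work and is a nice independent check: it shows directly that no two Hom-orthogonal bricks over $\Pi$ can share a dimension vector lying in $\Phi^+$. But it is not needed here, and the remark that ``the conclusion cannot follow from the root sequence alone'' is not accurate --- it can, and the paper does it that way.
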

\begin{proof}
  (1)
  Lemma \ref{binv:lem:brickappear} implies that $B$ appears in some brick sequence of $\FF(w)$. Then (1) is obvious by Theorem \ref{binv:thm:main}.

  (2)
  Fix a brick sequence $B_1, \dots, B_l$ of $\FF(w)$. If $S$ is a simple object in $\FF(w)$, then $S\iso B_j$ for some $j$ by Proposition \ref{binv:prop:simpseq}. Thus $\udim S \in \Binv(w)$ holds by Theorem \ref{binv:thm:main}. Thus we obtain a map $\udim \colon \simp\FF \to \Binv(w)$.

  We claim that this map is a bijection. Let $\ga \in \Binv(w)$. Then since $\{\udim B_1,\dots,\udim B_l\} = \inv(w) \supset \Binv(w)$ by Proposition \ref{binv:prop:categorify}, there exists $j$ with $\udim B_j = \ga$. This $B_j$ is simple in $\FF(w)$ by Theorem \ref{binv:thm:main}.
  Thus $\udim \colon \simp\FF \to \Binv(w)$ is surjective.

  On the other hand, let $S$ and $S'$ be simple objects in $\FF(w)$ satisfying $\udim S = \udim S'$. Then by the above argument, we have $S \iso B_i$ and $S' \iso B_j$ for some $i$ and $j$, hence $\udim B_i = \udim B_j$ holds.
  Since elements in $\{\udim B_1, \dots, \udim B_l\} = \inv(w)$ are pairwise distinct by Proposition \ref{binv:prop:invdist}, we have $i = j$, which shows $S \iso S'$. Thus $\udim \colon \simp\FF \to \Binv(w)$ is injective.
\end{proof}

\subsection{Characterization of the Jordan-H\"older property}
Next we will characterize when $\FF(w)$ satisfies the Jordan-H\"older property in the sense of \cite{eno} in terms of the combinatorics of $w$.
Let us recall some related definitions and results from \cite{eno}.
\begin{definition}\label{binv:def:jhp}
  Let $\FF$ be a torsion-free class in $\mod\Lambda$ for a finite-dimensional $k$-algebra $\Lambda$.
  \begin{enumerate}
    \item For $M$ in $\FF$, a \emph{composition series of $M$ in $\FF$} is a series of submodules of $M$
    \[
    0 = M_0 \subset M_1 \subset \dots \subset M_m
    \]
    such that $M_i/M_{i-1}$ is a simple object in $\FF$ for each $i$.
    \item For $M$ in $\FF$, let $0 = M_0 \subset \cdots \subset M_m = M$ and $0 = M'_1 \subset \cdots \subset M'_n = M$ be two composition series of $M$ in $\FF$.
    We say that these are \emph{equivalent} if $m = n$ holds and there exists a permutation $\sigma$ of the set $\{ 1,2, \dots,n\}$ such that $M_i/M_{i-1} \iso M'_{\sigma(i)}/M'_{\sigma(i)-1}$ holds for each $i$.
    \item We say that $\FF$ satisfies the \emph{Jordan-H\"older property}, abbreviated by \emph{(JHP)}, if any composition series of $M$ are equivalent for every object $M$ in $\FF$.
  \end{enumerate}
\end{definition}
In \cite[Theorem 5.10]{eno}, the author gives a numerical criterion for (JHP). To rephrase his result in our context, we introduce the \emph{support} of modules or torsion-free classes.
\begin{definition}
  Let $\Lambda$ be a finite-dimensional $k$-algebra and $\simp(\mod\Lambda)$ the set of isomorphism classes of simple $\Lambda$-modules.
  \begin{enumerate}
    \item For a module $M$, the \emph{support of $M$} is a set of simple $\Lambda$-modules defined by
    \[
    \supp M := \{ S \in \simp(\mod\Lambda) \, | \, \text{$S$ is a composition factor of $M$} \}.
    \]
    \item For a collection $\CC$ of modules, the \emph{support of $\CC$} is a set of simple $\Lambda$-modules defined by
    \[
    \supp \CC := \bigcup_{M \in \CC} \supp M.
    \]
  \end{enumerate}
\end{definition}
Then the following gives a numerical criterion for (JHP). Here for a set $A$, we denote by $\Z^{(A)}$ the free abelian group with basis $A$. We simply write $\Z^A := \Z^{(A)}$ if $A$ is a finite set.
\begin{theorem}\label{binv:thm:jhpchar}
  Let $\FF$ be a torsion-free class in $\mod\Lambda$ for a finite-dimensional $k$-algebra $\Lambda$. Suppose that $\FF = \Sub M$ holds for some $M \in \FF$. Then the following are equivalent:
  \begin{enumerate}
    \item $\FF$ satisfies (JHP).
    \item The natural map $\Z^{(\simp\FF)} \to \KKK_0(\mod\Lambda)$ which sends $M \in \simp\FF$ to $[M]$ is an injection.
    \item[\upshape (2$'$)] The natural map $\Z^{(\simp\FF)} \to \Z^{(\supp\FF)}$ is an isomorphism, where we identify $\Z^{(\supp\FF)}$ with a subgroup of $\KKK_0(\mod\Lambda)$ generated by $[S]$ with $S \in \supp\FF$.
    \item $\#\simp\FF = \#\supp\FF$ holds.
  \end{enumerate}
  Moreover, the map in {\upshape (2$'$)} is always surjective.
\end{theorem}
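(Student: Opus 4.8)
The plan is to obtain this statement as a reformulation of \cite[Theorem 5.10]{eno}, the substantive extra work being to pin down the image of the comparison map into $\KKK_0(\mod\Lambda)$. First I would record the structural facts needed to invoke \cite{eno}: since $\FF$ is closed under extensions in $\mod\Lambda$, it is an exact category whose conflations are the short exact sequences of $\Lambda$-modules with all three terms in $\FF$, and every $M\in\FF$ has a composition series in $\FF$ --- one inducts on $\dim_k M$, splitting off a proper subobject $L$ with $L,M/L\in\FF$ whenever $M$ is not simple in $\FF$. Hence $\FF$ is an exact category in which every object has finite length, and the hypothesis $\FF=\Sub M$ puts us precisely in the setting of \cite[Theorem 5.10]{eno}, which supplies the equivalence (1) $\iff$ (2).

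The heart of the remaining argument is to identify the image of the map $\psi\colon\Z^{(\simp\FF)}\to\KKK_0(\mod\Lambda)$, $B\mapsto[B]$, as the subgroup generated by the $[S]$ with $S\in\supp\FF$ (which, being a subset of a free basis of $\KKK_0(\mod\Lambda)$, is free abelian on $\supp\FF$, so may legitimately be written $\Z^{(\supp\FF)}$). I would argue as follows. Since every $X\in\FF$ has a composition series in $\FF$, telescoping the short exact sequences gives $[X]=\sum_i[B_i]$ in $\KKK_0(\mod\Lambda)$ with each $B_i\in\simp\FF$; therefore $\im\psi=\sum_{X\in\FF}\Z[X]$. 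On the one hand, all $\Lambda$-composition factors of any $X\in\FF$ lie in $\supp\FF$, so $\im\psi\subseteq\Z^{(\supp\FF)}$. On the other hand --- and here the hypothesis $\FF=\Sub M$ is used --- one has $\supp\FF=\supp M$, and a $\Lambda$-composition series $0=M_0\subsetneq M_1\subsetneq\cdots\subsetneq M_d=M$ consists of submodules of $M$, hence of objects of $\FF$; thus each $[M_i/M_{i-1}]=[M_i]-[M_{i-1}]$ lies in $\im\psi$, and as $i$ ranges over $1,\dots,d$ these classes exhaust $\{[S]\mid S\in\supp\FF\}$. Hence $\Z^{(\supp\FF)}\subseteq\im\psi$, so $\im\psi=\Z^{(\supp\FF)}$; in particular the map in (2$'$) is well-defined and surjective, which is the final assertion.

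Granting this, the remaining equivalences are formal. Condition (2) says $\psi$ is injective, and since $\im\psi=\Z^{(\supp\FF)}$ this is the same as $\psi$ being an isomorphism onto $\Z^{(\supp\FF)}$, i.e.\ condition (2$'$). Finally $\Z^{(\supp\FF)}$ is free abelian of finite rank $\#\supp\FF$ (note $\supp\FF\subseteq\simp(\mod\Lambda)$ is finite), so the surjection $\psi\colon\Z^{(\simp\FF)}\twoheadrightarrow\Z^{(\supp\FF)}$ is an isomorphism if and only if $\Z^{(\simp\FF)}$ is free of the same rank, i.e.\ if and only if $\#\simp\FF=\#\supp\FF$; this yields (2$'$) $\iff$ (3) and completes the chain of equivalences.

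I expect the one genuinely delicate point to be the appeal to \cite[Theorem 5.10]{eno} for (1) $\iff$ (2): one must verify that the exact-category framework there --- a finite-length exact category cogenerated by a module, with its notion of simple object and its comparison map to the ambient Grothendieck group --- matches the present setting verbatim, so that no additional hypothesis is silently used. The composition-series induction, the identification $\im\psi=\Z^{(\supp\FF)}$, and the free-abelian-group arithmetic are then all routine.
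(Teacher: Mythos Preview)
Your proof is correct and takes a more direct route than the paper's. The paper establishes the equivalences by passing through the Grothendieck group $\KKK_0(\FF)$ of the exact category and through $\tau$-tilting theory: it invokes \cite[Theorem 4.12, Corollary 5.14]{eno} for (JHP) $\iff$ $\Z^{(\simp\FF)}\to\KKK_0(\FF)$ is an isomorphism $\iff$ $\#\simp\FF = |U|$ for the support $\tau^-$-tilting module $U$ with $\FF=\Sub U$, then uses \cite[Proposition 2.2]{AIR} to get $|U|=\#\supp U=\#\supp\FF$, and finally identifies the image in $\KKK_0(\mod\Lambda)$ via \cite[Lemma 5.7]{eno}, which asserts that $\KKK_0(\FF)\hookrightarrow\KKK_0(\mod\Lambda)$ with image $\Z^{(\supp\FF)}$. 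You bypass both $\KKK_0(\FF)$ and $\tau$-tilting entirely by computing $\im\psi$ directly: the key elementary observation is that a $\Lambda$-composition series of $M$ lies termwise in $\Sub M=\FF$, so each $[S]$ with $S\in\supp M=\supp\FF$ appears as a difference $[M_i]-[M_{i-1}]$ of classes of objects of $\FF$, and hence lies in $\im\psi$. This is cleaner and more self-contained; the paper's route has the side benefit of exhibiting $\#\supp\FF$ as the number of indecomposable summands of the $\tau^-$-tilting cogenerator, but that is not needed for the equivalence itself.
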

\begin{proof}
  We give a proof using $\tau$-tilting theory and the Grothendieck group $\KKK_0(\FF)$ of the exact category $\FF$, for which we refer to \cite{AIR} and \cite{eno} respectively.
  It is shown in \cite[Theorem 4.12, Corollary 5.14]{eno} that the following are equivalent:
  \begin{enumerate}
    \item[(i)] $\FF$ satisfies (JHP).
    \item[(ii)] The natural map $\Z^{(\simp\FF)} \to \KKK_0(\FF)$, which is always surjective, is an isomorphism.
    \item[(iii)] $\# \simp\FF = | U |$ holds, where $U$ is a support $\tau^-$-tilting module with $\FF = \Sub U$ and $|U|$ is a number of non-isomorphic indecomposable direct summands of $U$.
  \end{enumerate}
  On the other hand, \cite[Proposition 2.2]{AIR} implies $|U| = \#\supp U$. Since $\FF = \Sub U$, clearly $\supp \FF = \supp U$ holds, hence we have $|U| = \#\supp \FF$.
  Therefore, (1) and (3) are equivalent.

  To see that (2) and (2$'$) are also equivalent, let us consider $\KKK_0(\FF)$. By using \cite[Lemma 5.7]{eno}, one can show that the natural map $\KKK_0(\FF) \to \KKK_0(\mod\Lambda)$ is an injection, and that its image is precisely $\Z^{(\supp\FF)}$. Thus all the conditions are equivalent.
\end{proof}

To describe a characterization of (JHP) for $\FF(w)$, we introduce the \emph{support of $w\in W$}. Recall that simple roots and simple reflections are parametrized by $Q_0$ in our setting.
\begin{definition}
  Let $w$ be an element of $W$. Then \emph{its support} is a subset $\supp(w)$ of $Q_0$ defined as follows:
  \[
  \supp(w) = \{ i \in Q_0 \, | \, \text{there is a reduced expression of $w$ which contains $s_i$} \}
  \]
\end{definition}
Then the support of $w$ coincides with the support of $\FF(w)$ in the following sense:
\begin{proposition}\label{binv:prop:suppsame}
  Let $w$ be an element in $W$. Then a natural bijection $Q_0 \xrightarrow{\sim} \simp(\mod\Pi)$, which sends $i$ to the simple $\Pi$-module corresponding to $i$, restricts to a bijection
  \[
  \supp (w) \xrightarrow{\sim} \supp \FF(w).
  \]
\end{proposition}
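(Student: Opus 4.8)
The plan is to describe both $\supp\FF(w)$ and $\supp(w)$ explicitly in terms of the coordinates of the inversion roots in $\inv(w)$, and then check that these descriptions agree. First I would reduce $\supp\FF(w)$ to a statement about $\inv(w)$: for any $M\in\mod\Pi$ the simple module $S_i$ is a composition factor of $M$ if and only if the coefficient of $\al_i$ in $\udim M$ is positive, since $\udim$ is a group homomorphism on $\KKK_0(\mod\Pi)$ with $\udim[S_j]=\al_j$. Now by Corollary \ref{binv:cor:diminv}(1), every $M\in\FF(w)$ has $\udim M=\sum_{\be\in\inv(w)}n_\be\be$ with each $n_\be$ a non-negative integer, and each $\be\in\inv(w)\subseteq\Phi^+$ has non-negative coordinates with respect to $\Delta$; hence a coordinate of $\udim M$ can be positive only if the same coordinate of some $\be\in\inv(w)$ is positive. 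Conversely, fixing a brick sequence $B_1,\dots,B_l$ of $\FF(w)$, Proposition \ref{binv:prop:categorify} gives $\{\udim B_1,\dots,\udim B_l\}=\inv(w)$ with every $B_k\in\FF(w)$. Putting these together, the bijection $Q_0\xrightarrow{\sim}\simp(\mod\Pi)$ identifies $\supp\FF(w)$ with the set $J:=\{\,i\in Q_0 \mid \text{the }\al_i\text{-coefficient of some }\be\in\inv(w)\text{ is positive}\,\}$.

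It then remains to prove $J=\supp(w)$. For $J\subseteq\supp(w)$, fix a reduced expression $w=s_{u_1}\cdots s_{u_l}$; by Proposition \ref{binv:prop:invdist} we have $\inv(w)=\{\be_1,\dots,\be_l\}$ with $\be_m=s_{u_1}\cdots s_{u_{m-1}}(\al_{u_m})$, and a straightforward induction using that $s_j$ alters only the $\al_j$-coordinate of a vector shows that $\be_m$ is supported on $\{u_1,\dots,u_m\}$; hence $J\subseteq\{u_1,\dots,u_l\}\subseteq\supp(w)$. For the reverse inclusion, take $i\in\supp(w)$, choose a reduced expression of $w$ containing $s_i$, and let $m$ be least with $u_m=i$; since $u_j\neq i$ for all $j<m$, successively applying $s_{u_{m-1}},\dots,s_{u_1}$ to $\al_i$ leaves its $\al_i$-coordinate equal to $1$, so $\be_m$ has positive $\al_i$-coordinate and $i\in J$. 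This yields $J=\supp(w)$, and hence $\supp\FF(w)$ corresponds to $\supp(w)$ under the natural bijection.

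The computations involved are elementary; the only points needing a little care are the bookkeeping of which coordinate a simple reflection modifies — used both to bound the support of $\be_m$ and to see that the $\al_i$-coordinate of $\al_i$ survives until $s_i$ is first applied — and verifying that the two halves of the first step combine to give the identification with $J$. Neither is a genuine obstacle once Corollary \ref{binv:cor:diminv} and Proposition \ref{binv:prop:categorify} are in hand.
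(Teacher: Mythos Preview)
Your proof is correct and follows essentially the same approach as the paper: first identify $\supp\FF(w)$ with the set $J=\bigcup_{\be\in\inv(w)}\supp(\be)$ via Corollary~\ref{binv:cor:diminv} and Proposition~\ref{binv:prop:categorify}, then prove $J=\supp(w)$ by exploiting that each simple reflection $s_j$ alters only the $\al_j$-coordinate. The only cosmetic difference is that the paper works with a single fixed reduced expression throughout (implicitly using that all reduced expressions involve the same simple reflections), whereas you allow yourself to choose a reduced expression containing $s_i$ for each $i\in\supp(w)$; both are fine.
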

\begin{proof}
  For a positive root $\be \in \Phi^+$, we can write $\be = \sum_{i \in Q_0} n_i \al_i$ with $n_i\geq 0$ in a unique way.
  Denote by $\supp(\be)$ the set of $i \in Q_0$ with $n_i > 0$.
  Then Corollary \ref{binv:cor:diminv} clearly implies that the bijection $Q_0 \to \simp(\mod\Pi)$ restricts to a bijection
  \[
  \bigcup_{\be \in \inv(w)} \supp(\be) \xrightarrow{\sim} \supp \FF(w).
  \]
  Thus it suffices to show $\bigcup_{\be \in \inv(w)}\supp(\be) = \supp(w)$.

  Let $w = s_{u_1} \cdots s_{u_l}$ be a reduced expression of $w$, and $\be_1, \dots, \be_l$ its associated root sequence. Then we have $\inv(w) = \{\be_1,\dots,\be_l\}$.
  First suppose that $i$ belongs to $\bigcup_{\be \in \inv(w)}\supp(\be)$, then $i \in \supp(\be_m)$ for some $1 \leq m \leq l$. Recall that $\be_m = s_{u_1} \cdots s_{u_{m-1}}(\al_{u_m})$, and that $\{ \al_u \, | u \in Q_0 \}$ is a basis of $V$.
  Since each $s_u \colon V \to V$ changes only the $\al_u$-component of roots with respect to this basis, $i$ should appear in $\{u_1,u_2, \dots, u_m \}$. Thus $i \in \supp(w)$ holds.

  Conversely, suppose $i \in \supp(w)$. Take the minimal $m$ such that $u_m = i$ holds. We claim $i \in \supp(\be_m)$.
  Indeed, we have $\be_m = s_{u_1} s_{u_2} \cdots s_{u_{m-1}}(\al_i)$, and $i$ does not appear in $u_1,\dots,u_{m-1}$ by the minimality of $m$. Since $s_u$ changes only the $\al_u$-component, the $\al_i$-component of $\be_m$ is $1$, hence $i \in \supp(\be_m)$ holds.
\end{proof}

Now the following immediately follows from these observations.
\begin{theorem}\label{binv:thm:jhpmain}
  Let $w$ be an element of $W$. Then the following are equivalent:
  \begin{enumerate}
    \item $\FF(w)$ satisfies the Jordan-H\"older property.
    \item A map $\varphi_w \colon\Z^{\Binv(w)} \to \Z^{\supp(w)}$ defined by $\varphi_w(\sum_i n_i \al_i) =\sum_i n_i e_i $ for $\sum_i n_i \al_i \in \Binv(w)$ is a bijection, where $e_i$ denotes the basis of $\Z^{\supp(w)}$ corresponding to $i \in \supp(w)$.
    \item Elements in $\Binv(w)$ are linearly independent in $V$.
    \item $\#\Binv(w) = \#\supp(w)$ holds.
  \end{enumerate}
  Moreover, the map in {\upshape (2)} is always surjective.
\end{theorem}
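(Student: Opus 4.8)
The plan is to deduce the theorem by transporting Theorem~\ref{binv:thm:jhpchar} along the bijections already established, namely $\udim\colon\simp\FF(w)\xrightarrow{\sim}\Binv(w)$ from Corollary~\ref{binv:cor:main} and $\supp(w)\xrightarrow{\sim}\supp\FF(w)$ from Proposition~\ref{binv:prop:suppsame}, together with the fact that the group homomorphism $\udim\colon\KKK_0(\mod\Pi)\to V$ is \emph{injective}: it carries the basis $\{[S_i]\}$ of $\KKK_0(\mod\Pi)$ bijectively onto the basis $\{\al_i\}$ of $V$, hence embeds $\KKK_0(\mod\Pi)$ onto the root lattice inside $V$. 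Since $\FF(w)=\Sub(\Pi/I(w))$ with $\Pi/I(w)\in\FF(w)$ by Theorem~\ref{binv:thm:mizuno}, the hypothesis of Theorem~\ref{binv:thm:jhpchar} holds, so its conditions (1), (2), (2$'$), (3) are all equivalent; it remains to match (2), (3), (4) of the present theorem with the remaining conditions there.

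For (3): condition (2) of Theorem~\ref{binv:thm:jhpchar} is the injectivity of the natural map $\Z^{(\simp\FF(w))}\to\KKK_0(\mod\Pi)$, $S\mapsto[S]$. A map is injective iff its composition with the injective map $\udim$ is injective, and the composite $\Z^{(\simp\FF(w))}\to V$ sends the basis vector indexed by $S$ to $\udim S$; via the bijection $\simp\FF(w)\cong\Binv(w)$ it becomes the map $\Z^{\Binv(w)}\to V$ sending each basis vector to the corresponding root $\be\in\Binv(w)$. That map is injective precisely when $\Binv(w)$ is $\Z$-linearly independent in $V$, and since the elements of $\Binv(w)$ are integral combinations of the $\al_i$, this is the same as $\R$-linear independence. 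Hence (1)$\Leftrightarrow$(3).

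For (4): by Corollary~\ref{binv:cor:main} we have $\#\simp\FF(w)=\#\Binv(w)$, and by Proposition~\ref{binv:prop:suppsame} we have $\#\supp\FF(w)=\#\supp(w)$, so condition (3) of Theorem~\ref{binv:thm:jhpchar} reads exactly as (4). For (2), the task is to check that under the two bijections the map of Theorem~\ref{binv:thm:jhpchar}(2$'$) becomes $\varphi_w$: that map sends $S\in\simp\FF(w)$ to $[S]=\sum_i m_i[S_i]$ in $\Z^{(\supp\FF(w))}\subset\KKK_0(\mod\Pi)$, where $m_i$ is the multiplicity of $S_i$ as a composition factor of $S$; applying the injective $\udim$ identifies $m_i$ with the $\al_i$-coordinate $n_i$ of $\udim S=\sum_i n_i\al_i\in\Binv(w)$, and translating the index set $\supp\FF(w)$ to $\supp(w)$ via $S_i\leftrightarrow i$ turns $\sum_i m_i[S_i]$ into $\sum_i n_i e_i=\varphi_w(\sum_i n_i\al_i)$. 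Thus (2) here is condition (2$'$) of Theorem~\ref{binv:thm:jhpchar}, giving (1)$\Leftrightarrow$(2), and the final assertion that $\varphi_w$ is always surjective is the corresponding statement there.

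This is essentially a bookkeeping argument, so I do not expect a serious obstacle; the two points requiring a little care are the injectivity of $\udim$ on $\KKK_0(\mod\Pi)$, which is what lets injectivity be tested inside $V$, and the fact that for roots $\Z$-linear independence coincides with $\R$-linear independence, which is what allows condition (3) to be phrased in $V$.
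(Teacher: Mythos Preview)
Your proof is correct and follows essentially the same approach as the paper: apply Theorem~\ref{binv:thm:jhpchar} to $\FF(w)=\Sub(\Pi/I(w))$ and transport its conditions along the bijections of Corollary~\ref{binv:cor:main} and Proposition~\ref{binv:prop:suppsame}. The only minor difference is that you obtain condition~(3) directly from condition~(2) of Theorem~\ref{binv:thm:jhpchar} via the injectivity of $\udim$, whereas the paper deduces (2)$\Leftrightarrow$(3) from the surjectivity of $\varphi_w$; both routes are immediate.
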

\begin{proof}
  Since $\FF(w) = \Sub (\Pi/I_w)$ by definition, we can apply Theorem \ref{binv:thm:jhpchar} to $\FF(w)$. Hence the following are equivalent:
  \begin{enumerate}
    \item[(i)] $\FF(w)$ satisfies (JHP).
    \item[(ii)] The map $\Z^{(\simp\FF(w))} \to \Z^{\supp\FF(w)}$, which is always a surjection, is an isomorphism.
    \item[(iii)] $\#\simp\FF(w) = \#\supp \FF(w)$ holds.
  \end{enumerate}
  By identifying $Q_0$ with simple $\Pi$-modules, $\supp \FF(w)$ bijectively corresponds to $\supp(w)$ by Proposition \ref{binv:prop:suppsame}. Moreover, $\simp\FF(w)$ bijectively corresponds to $\Binv(w)$ by taking dimension vectors by Corollary \ref{binv:cor:main}.
  Thus the map in (ii) are exactly same as $\varphi_w$ in the assertion under the identification $\Z^{(\simp\FF(w))} \iso \Z^{\Binv(w)}$ and $\Z^{\supp\FF(w)} \iso \sum_{i \in \supp(w)}\Z \al_i$.

  The left hand side in (iii) is equal to $\#\Binv(w)$ by Corollary \ref{binv:cor:main}, and the right hand side is equal to $\#\supp(w)$ by Proposition \ref{binv:prop:suppsame}, thus (1), (2) and (4) are equivalent.
  Moreover, it is clear that (2) is equivalent to (3) since $\varphi_w$ is always surjective.
\end{proof}
We will use the map $\varphi$ above later in the appendix to relate our characterization of (JHP) to \emph{forest-like permutations} defined in \cite{bmb} and the Schubert variety $X_w$ for type A case.
\begin{remark}
  The equality $\#\Binv(w) = \#\supp(w)$ naturally arises when one consider the Bruhat interval in $W$ and its Poincar\'e polynomial.
  Let $w$ be an element of $W$ and $[e,w]$ the interval with respect to the Bruhat order.
  A \emph{Pincar\'e polynomial $P_w(q)$ of $w$} is defined by
  \[
  P_w(q) := \sum_{v \in [e,w]} q^{\ell(v)}.
  \]
  Let us write $P_w(q) = \sum_{i=0}^{\ell(w)} a_i q^i$. Then we have $\# \supp(w) = a_1$ and $\#\Binv(w) = a_{l-1}$, since supports of $w$ are precisely simple reflections which are below $w$ in the Bruhat order, and Bruhat inversions of $w$ are in bijection with elements which are covered by $w$ in the Bruhat order by Proposition \ref{binv:prop:bruhatcover}.
  Thus our criterion is equivalent to $a_1 = a_{l-1}$.
\end{remark}

\begin{example}
  Consider an element $w = s_{12312}$ in Example \ref{binv:ex:ainv}. Then there are three Bruhat inversions of $w$, namely, $100$, $010$ and $011$. This number is equal to the number of $\supp(w) = \{1,2,3\}$, thus $\FF(w)$ satisfies (JHP).

  On the other hand, consider an element $w=s_{012301230}$ in Example \ref{binv:ex:dinv}. Then we have $\supp(w) = \{0,1,2,3\}$, but a computation shows $\Binv(w) = \inv(w) \setminus \{\substack{1 \\ 121} \}$ (for example, this follows from the fact that deleting any letter from $s_{012301230}$ yields a reduced expression except for the middle $0$). so there are eight Bruhat inversions of $w$. Thus $\FF(w)$ does not satisfy (JHP).
\end{example}

\subsection{Conjectures}

In this subsection, we give some natural conjectures on the existence of the particular kind of short exact sequences related to Theorem \ref{binv:thm:main}.

The most non-trivial part of the proof of Theorem \ref{binv:thm:main} is to show that $B_m$ is non-simple if $\udim B_m$ is non-Bruhat. If $\udim B_m$ is non-Bruhat, then as in the proof, there is a brick sequence $B_1,\dots,B_i,\dots,B,\dots,B_j, \dots, B_l$ of $\FF(w)$ such that $\udim B_i + \udim B_j = \udim B$. Then the following conjecture naturally occurs.
\begin{conjecture}\label{binv:conj1}
  Let w be an element of $W$. Take a brick sequence $B_1,\dots, B_l$ of $\FF(w)$, and suppose $\udim B_m = \udim B_i + \udim B_j$ for $1 \leq i < m < j \leq l$. Then there is an exact sequence
  \[
  \begin{tikzcd}
    0 \rar & B_i \rar & B_m \rar & B_j \rar & 0.
  \end{tikzcd}
  \]
\end{conjecture}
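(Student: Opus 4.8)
\noindent\emph{Proof proposal.}
The plan is to extract the sequence from the homological input used in the proof of Theorem~\ref{binv:thm:main} and then to pin down its middle term by a rigidity argument. Write $\al := \udim B_i$ and $\be := \udim B_j$, so $\al+\be = \udim B_m$. By Lemma~\ref{binv:lem:contfig} we have $\la\al,\be\ra = -1$ and $\la\al,\al+\be\ra = \la\be,\al+\be\ra = 1$, hence by Lemma~\ref{binv:lem:bilincomp} the Euler pairings satisfy $\la[B_i],[B_m]\ra_\Pi = \la[B_m],[B_j]\ra_\Pi = 1$ and $\la[B_i],[B_j]\ra_\Pi = -1$. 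Since $i < m < j$, Theorem~\ref{binv:thm:brickseq}(1) gives $\Hom_\Pi(B_m,B_i) = \Hom_\Pi(B_j,B_i) = \Hom_\Pi(B_j,B_m) = 0$, so Crawley-Boevey's formula (Lemma~\ref{binv:lem:cbhom}) yields $\dim\Hom_\Pi(B_i,B_m) = 1 + \dim\Ext_\Pi^1(B_i,B_m)$, $\dim\Hom_\Pi(B_m,B_j) = 1 + \dim\Ext_\Pi^1(B_m,B_j)$ and $\dim\Ext_\Pi^1(B_j,B_i) = 1 + \dim\Hom_\Pi(B_i,B_j)$. In particular there exist nonzero maps $f\colon B_i \to B_m$ and $g\colon B_m \to B_j$, and $\Ext_\Pi^1(B_j,B_i)\neq 0$.

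Since $\dim B_m = \dim B_i + \dim B_j$, the conjectured sequence is equivalent to the assertion that, for suitable $f$ and $g$ as above, $f$ is injective, $g$ is surjective and $gf = 0$ (a dimension count then forces $\ker g = \im f$). In particular it would be enough to show $\Hom_\Pi(B_i,B_j) = 0$ — whence $gf = 0$ automatically — together with the rank statements; establishing this vanishing (the preprojective analogue of one half of an exceptional pair) is the first thing I would try.

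The approach I would actually pursue builds the sequence from an extension. Fix a non-split $0 \to B_i \to E \to B_j \to 0$; then $E\in\FF(w)$ since $\FF(w)$ is closed under extensions, and $\udim E = \udim B_m$. Applying $\Hom_\Pi(B_j,-)$ shows the connecting map $\Hom_\Pi(B_j,B_j)\to\Ext_\Pi^1(B_j,B_i)$ sends $\id$ to the nonzero class of the sequence and is therefore injective, so $\Hom_\Pi(B_j,E) = 0$; together with $\Hom_\Pi(B_i,-)$ applied to the same sequence this gives $\dim\End_\Pi(E)\le 1 + \dim\Hom_\Pi(B_i,B_j)$, so $E$ is a brick as soon as $\Hom_\Pi(B_i,B_j) = 0$. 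Granting that, the conjecture reduces to the rigidity statement $E\iso B_m$, which I would attack by induction on $\ell(w)$: choose a reduced expression $w = s_{u_1}\cdots s_{u_l}$ realizing the given brick sequence, set $w' = s_{u_1}\cdots s_{u_{l-1}}$, so that $B_1,\dots,B_{l-1}$ is a brick sequence of $\FF(w')$ and $\FF(w) = \Filt(B_l)*\FF(w')$, $\FF(w') = \FF(w)\cap B_l^{\perp}$ by Proposition~\ref{binv:prop:dirrt}. If $j<l$ the entire configuration lies in $\FF(w')$ and we conclude by induction. If $j = l$, the same connecting-map argument (with $B_l$ in place of $B_j$) shows $\Hom_\Pi(B_l,E) = 0$, so $E\in\FF(w')$; if moreover $\udim B_m\in\Binv(w')$, then $B_m$ and $E$ are bricks in $\FF(w')$ with equal dimension vectors, hence simple objects there by Corollary~\ref{binv:cor:main} applied to $w'$, hence isomorphic.

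The hard part is the remaining case $j = l$ with $\udim B_m\notin\Binv(w')$, together with the side issue of removing the hypothesis $\Hom_\Pi(B_i,B_j) = 0$ used to see that $E$ is a brick. Here $\udim B_m$ has competing decompositions into inversions — already in type $A$, e.g. $\al_1+\al_2+\al_3 = \al_1 + (\al_2+\al_3) = (\al_1+\al_2)+\al_3$ — so the inductive hypothesis for $\FF(w')$ supplies short exact sequences through $B_m$ coming from the \emph{other} decompositions but not visibly the one through $B_i$ and $B_l$. Closing this seems to require either a genuine rigidity result describing the subquotient structure of $B_m$ inside $\FF(w)$ purely in terms of the root combinatorics of $\inv(w)$ and the root sequence (Theorem~\ref{binv:thm:rootseqchar}), or the explicit description of the layer modules $B_k = I(s_{u_{k-1}}\cdots s_{u_1})/I(s_{u_k}\cdots s_{u_1})$ of \cite{AIRT}, from which the maps $f$, $g$ and the sequence $0\to B_i\to B_m\to B_j\to 0$ can be written down directly. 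I expect the layer-module route to work type by type but to be combinatorially involved, and a uniform, type-free proof to be the real obstacle.
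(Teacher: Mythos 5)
This statement is an open conjecture in the paper (Conjecture~\ref{binv:conj1}); the paper does \emph{not} prove it. Immediately after stating it, the author remarks that the proof of Theorem~\ref{binv:thm:main} only constructs a non-zero, non-injective morphism $B_m\to B_j$, and that the conjecture would generalize the Dlab--Ringel result (Proposition~\ref{binv:prop:dr}), which is proved only for path algebras over an algebraically closed field and by algebro-geometric methods. So there is no proof in the paper against which to compare yours.

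As for the sketch itself: the homological bookkeeping via Lemmas~\ref{binv:lem:contfig}, \ref{binv:lem:bilincomp} and \ref{binv:lem:cbhom} is correct, and the strategy of taking a non-split extension $E$ of $B_j$ by $B_i$ and trying to prove $E\cong B_m$ is a natural one. But you correctly identify, and do not close, two genuine gaps. First, your route needs $\Hom_\Pi(B_i,B_j)=0$, both to make $E$ a brick and to force $gf=0$; Theorem~\ref{binv:thm:brickseq} only kills $\Hom_\Pi(B_j,B_i)$, and the Crawley--Boevey formula gives only the constraint $\dim\Ext^1_\Pi(B_j,B_i)=1+\dim\Hom_\Pi(B_i,B_j)$, which leaves $\dim\Hom_\Pi(B_i,B_j)$ undetermined. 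Second, the inductive step fails precisely when $j=l$ and $\udim B_m$ is not a Bruhat inversion of $w'=s_{u_1}\cdots s_{u_{l-1}}$: then Corollary~\ref{binv:cor:main} no longer identifies $B_m$ and $E$ as the unique simple object of $\FF(w')$ with that dimension vector, and the competing decompositions of $\udim B_m$ over $\inv(w')$ that you flag really do break the uniqueness argument. These are exactly the obstacles that presumably prevented the author from upgrading the conjecture to a theorem; your proposal reproduces the accessible part of the analysis but does not constitute a proof.
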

In fact, in the proof of Theorem \ref{binv:thm:main}, we only construct a non-zero non-injection $\varphi \colon B_m \to B_j$, which is enough for our purpose.
This conjecture can be seen as a natural generalization of the result of Proposition \ref{binv:prop:dr}, where the path algebra case was shown over an algebraically closed field.

We have another conjecture on non-simple objects. A \emph{semibrick} $\SS$ in $\mod\Pi$ is a set of bricks in $\mod\Pi$ such that $\Hom_\Pi(S,T) = 0$ holds for every two distinct elements $S,T \in \SS$.
\begin{conjecture}\label{binv:conj2}
  Let w be an element of $W$ and $B$ a non-simple object in $\FF(w)$. Then there is a semibrick $\{S,T\}$ in $\FF(w)$ and an exact sequence
  \[
  \begin{tikzcd}
    0 \rar & S \rar & B \rar & T \rar & 0.
  \end{tikzcd}
  \]
\end{conjecture}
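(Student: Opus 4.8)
The plan is to reduce to the case where $B$ is a brick, and then to extract the required short exact sequence from a brick sequence of $\FF(w)$ with the help of the Crawley-Boevey formula.

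Assume first that $B$ is a brick which is not simple in $\FF(w)$. Since $W$ is finite, $\torf\Pi\iso W$ (Theorem \ref{binv:thm:mizuno}) is finite, so Lemma \ref{binv:lem:brickappear} lets us fix a brick sequence $B_1,\dots,B_l$ of $\FF(w)$ with $B=B_m$. As $B_m$ is not simple in $\FF(w)$, condition (4) of Corollary \ref{binv:cor:simplabel} fails for $B_m$, so there is a surjection $\varphi\colon B_m\defl B_j$ with $j>m$. Put $T:=B_j$ and $S:=\ker\varphi$. Then $0\to S\to B_m\to T\to 0$ is exact, $S,T\in\FF(w)$, $T$ is a brick, and $S\neq 0$ since $\varphi$ cannot be an isomorphism ($B_m\not\iso B_j$ by Theorem \ref{binv:thm:brickseq}(2)). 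Moreover $\Hom_\Pi(T,S)\subseteq\Hom_\Pi(B_j,B_m)=0$ by Theorem \ref{binv:thm:brickseq}(1), because $j>m$. Hence, in the brick case, it only remains to prove that $S$ is a brick and that $\Hom_\Pi(S,T)=0$.

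For the latter I would use that over a preprojective algebra of Dynkin type every brick $C$ satisfies $\End_\Pi(C)=k$ and $\Ext^1_\Pi(C,C)=0$: indeed $\udim C\in\Phi^+$ gives $\la[C],[C]\ra_\Pi=\la\udim C,\udim C\ra=2$ by Lemma \ref{binv:lem:bilincomp}, whence $2\dim\End_\Pi(C)-\dim\Ext^1_\Pi(C,C)=2$ by Lemma \ref{binv:lem:cbhom}. Applying $\Hom_\Pi(-,T)$ to the short exact sequence and using $\Ext^1_\Pi(T,T)=0$ then gives $\dim\Hom_\Pi(S,T)=\dim\Hom_\Pi(B_m,B_j)-1$, so $\Hom_\Pi(S,T)=0$ would follow from $\dim\Hom_\Pi(B_m,B_j)=1$; by Lemma \ref{binv:lem:cbhom} and $\Hom_\Pi(B_j,B_m)=0$ this is equivalent to $\Ext^1_\Pi(B_m,B_j)=0$. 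Establishing this vanishing, together with the property that $S$ is a brick, is the main obstacle, and to attack it I would bring in structure of $\FF(w)$ not used so far: the description of the $B_i$ as layer modules in \cite{AIRT}, the stably $2$-Calabi-Yau Frobenius structure together with its relative Auslander-Reiten theory, or a reduction in the spirit of Proposition \ref{binv:bruhachar0}, left-multiplying $w$ until $\udim B_m$ becomes a simple root (where the statement is immediate) and transporting the sequence back.

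For a general non-simple $B$, the plan is to induct on $\dim B$ along a relative composition series $0\subsetneq B_{(1)}\subsetneq\dots\subsetneq B_{(n)}=B$ of $B$ in $\FF(w)$ (which exists by \cite{eno}), with $n\geq 2$: the top quotient $B/B_{(n-1)}$ is a simple object of $\FF(w)$, hence a brick, and one would need to replace the submodule $B_{(n-1)}$ by a brick admitting no homomorphisms to that quotient; making this bookkeeping work is a secondary obstacle. Finally, I expect a positive answer to Conjecture \ref{binv:conj1} to settle the brick case directly: writing $\udim B_m=\al+\be$ with $\al,\be\in\inv(w)$ and choosing, as in the proof of Theorem \ref{binv:thm:main}, a brick sequence with entries of dimension vectors $\al$ and $\be$ on either side of $B_m$, Conjecture \ref{binv:conj1} furnishes a short exact sequence $0\to B_i\to B_m\to B_j\to 0$ with $\udim B_i=\al$ and $\udim B_j=\be$, and rigidity of $B_j$ again reduces the remaining condition $\Hom_\Pi(B_i,B_j)=0$ to the vanishing of $\Ext^1_\Pi(B_m,B_j)$.
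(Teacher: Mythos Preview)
The statement you are attempting to prove is labeled \emph{Conjecture} in the paper, and the paper does not contain a proof. The only remark following it is that the author has verified it for types $A_n$ and $D_n$ using the combinatorics of (signed) permutations, and for $E_6$ by computer; none of these arguments is included. So there is no paper proof to compare against, and a uniform conceptual proof of the kind you sketch would go beyond what the paper claims.

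Your outline is honest about its gaps, and those gaps are genuine. In the brick case you correctly produce a surjection $B_m\defl B_j$ with $j>m$ and observe $\Hom_\Pi(B_j,\ker)=0$, but the two remaining conditions --- that $S=\ker$ is a brick and that $\Hom_\Pi(S,B_j)=0$ --- do not follow from the tools in the paper. Your reduction of $\Hom_\Pi(S,B_j)=0$ to $\Ext^1_\Pi(B_m,B_j)=0$ is fine, but there is no reason in general for this Ext group to vanish: bricks in a brick sequence need not be pairwise Hom- or Ext-orthogonal, and nothing in \cite{AIRT} or the stably $2$-Calabi-Yau structure forces this for an arbitrary pair $(m,j)$ arising from Corollary~\ref{binv:cor:simplabel}(4). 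In particular, for a given non-simple $B_m$ there may be several admissible $j$'s, and one would at minimum need a principled way to choose $j$; your proposal does not address this. The suggested reduction ``left-multiply until $\udim B_m$ becomes simple'' via Proposition~\ref{binv:bruhachar0} does not apply here, since that proposition concerns \emph{Bruhat} inversions, whereas $\udim B_m$ is a non-Bruhat inversion precisely when $B_m$ is non-simple.

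The final paragraph, reducing to general $B$ via a composition series, is also problematic: the conjecture as stated asks for a single short exact sequence with both ends bricks, and there is no obvious way to manufacture such a sequence for, say, $B=S\oplus S'$ with $S,S'$ non-orthogonal simples of $\FF(w)$. (Indeed the intended scope of the conjecture is almost certainly bricks $B$, as suggested by its stated equivalence with the subsequent conjecture about bricks appearing in every brick sequence.) In short, your plan isolates the right difficulty but does not overcome it, which is consistent with the paper's treatment of the statement as open in general.
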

This conjecture is closely related to the lattice property (forcing order) of the interval $[e,w]$ or $[0,\FF(w)]$, and the root-theoretical combinatorial property of the inversion set (contractibility of inversion triples defined in \cite{gl}). The author has obtained the proof of Conjecture \ref{binv:conj2} for type $A_n,D_n$ using combinatorics of (signed) permutations and $E_6$ using computer.

Conjecture \ref{binv:conj2} can be shown to be equivalent to the following conjecture. Recall that a simple object in a torsion-free class $\FF$ appears in every brick sequence of $\FF$ by Proposition \ref{binv:prop:simpseq}. Then it is natural to ask whether the converse holds:
\begin{conjecture}
  Let $w$ be an element of $W$. If a brick $B$ appears in every brick sequence of $\FF(w)$, then $B$ is a simple object in $\FF(w)$.
\end{conjecture}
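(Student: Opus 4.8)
\medskip\noindent
\textbf{A proposed proof.}
The implication ``simple $\Rightarrow$ in every brick sequence'' is Proposition \ref{binv:prop:simpseq}, so the content is the converse, which I would establish in contrapositive form: if a brick $B$ in $\FF(w)$ is \emph{not} a simple object, then some brick sequence of $\FF(w)$ omits $B$. By Corollary \ref{binv:cor:main}(1) the hypothesis says exactly that $\gamma := \udim B$ is a non-Bruhat inversion of $w$, and the whole statement reduces to the following.

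\medskip\noindent
\emph{Multiplicity Claim: if $\gamma$ is a non-Bruhat inversion of $w$, then $\FF(w)$ contains at least two non-isomorphic bricks of dimension vector $\gamma$.}

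\medskip\noindent
Granting this, choose $B' \not\cong B$ in $\brick\FF(w)$ with $\udim B' = \gamma$. Since $[0,\FF(w)] \cong [e,w]$ is a finite lattice by Theorem \ref{binv:thm:mizuno}, Lemma \ref{binv:lem:brickappear} supplies a brick sequence $B_1,\dots,B_l$ of $\FF(w)$ with $B' \in \{B_1,\dots,B_l\}$; by Propositions \ref{binv:prop:categorify} and \ref{binv:prop:invdist} the dimension vectors $\udim B_1,\dots,\udim B_l$ enumerate $\inv(w)$ without repetition, so $\gamma$ is realized by $B'$ alone and $B$ does not occur in this brick sequence, which is what we wanted.

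\medskip\noindent
For the Multiplicity Claim I would write $\gamma = \alpha + \beta$ with $\alpha,\beta \in \inv(w)$ (Theorem \ref{binv:thm:binvchar}(5)); by Lemma \ref{binv:lem:contfig} one has $\langle\alpha,\beta\rangle = -1$ and $\langle\gamma,\alpha\rangle = \langle\gamma,\beta\rangle = 1$, so $\{\alpha,\beta,\gamma\}$ is the positive part of an $A_2$-subsystem of $\Phi$. By Theorem \ref{binv:thm:rootseqchar} every root sequence of $w$ contains $\alpha,\beta,\gamma$ with $\gamma$ strictly between $\alpha$ and $\beta$, hence restricts to one of the orders $\alpha\prec\gamma\prec\beta$ or $\beta\prec\gamma\prec\alpha$. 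The first step is to check that \emph{both} orders are realized by root sequences of $w$ --- equivalently, that a convex order on a rank-two subsystem always extends to one of $\inv(w)$, a standard feature of reflection orderings that also follows from Tits' connectivity of reduced words under braid and commutation moves. Let $C$, $C'$ be the bricks at the $\gamma$-position of brick sequences realizing the two orders (Proposition \ref{binv:prop:categorify}). In the first, with $B_i$ of dimension $\alpha$ before $C$ and $B_j$ of dimension $\beta$ after, Theorem \ref{binv:thm:brickseq}(1) gives $\Hom_\Pi(C,B_i)=0$, whereas $\langle\gamma,\beta\rangle=1$, $\Hom_\Pi(B_j,C)=0$ and the Crawley--Boevey formula (Lemmas \ref{binv:lem:cbhom}, \ref{binv:lem:bilincomp}) force $\Hom_\Pi(C,B_j)\neq 0$; symmetrically, $C'$ maps nontrivially to a brick of dimension $\alpha$ but only by zero to the brick of dimension $\beta$ below it. If one of $\alpha,\beta$ is a \emph{Bruhat} inversion --- hence the dimension vector of a unique brick in $\FF(w)$ --- these facts contradict $C\cong C'$, and the claim follows.

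\medskip\noindent
The main obstacle is the remaining case, in which every decomposition $\gamma=\alpha+\beta$ with $\alpha,\beta\in\inv(w)$ has both summands non-Bruhat; then the $\Hom$-vanishing above only constrains particular bricks of dimensions $\alpha$ and $\beta$ and no longer separates $C$ from $C'$. I would try to resolve this by induction --- either on $\ell(w)$, peeling off a simple reflection on the left as in Lemma \ref{binv:lem:multileft} and Proposition \ref{binv:bruhachar0}, or on the height $\sum_i n_i$ of $\gamma=\sum_i n_i\alpha_i$, using a summand of smaller height. Failing that, the Multiplicity Claim --- and hence the whole statement --- is a formal consequence of Conjecture \ref{binv:conj2}: the semibrick $\{S,T\}$ with $0\to S\to B\to T\to 0$ spans an abelian subcategory $\Filt(\{S,T\})$ whose simple objects are $S$ and $T$, and the non-split extension $0\to T\to B'\to S\to 0$ (which exists because $\langle\udim S,\udim T\rangle=-1$ gives $\Ext_\Pi^1(T,S)\neq 0$ via Lemma \ref{binv:lem:cbhom}) is a second brick of dimension $\gamma$, not isomorphic to $B$ since its unique simple subobject in $\Filt(\{S,T\})$ is $T\not\cong S$. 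This recovers one half of the asserted equivalence with Conjecture \ref{binv:conj2}, which is currently known in types $A$, $D$ and $E_6$.
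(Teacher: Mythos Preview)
This statement is presented in the paper as a \emph{conjecture}, not a theorem: no proof is given, only the remark that it ``can be shown to be equivalent'' to Conjecture~\ref{binv:conj2} (itself claimed only for types $A$, $D$, $E_6$, and without argument). So there is no proof in the paper to compare against; your proposal is an attempt at an open problem.

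Your reduction to the Multiplicity Claim is correct and in fact an equivalence, as one checks by running the argument backwards. Your deduction of the Multiplicity Claim from Conjecture~\ref{binv:conj2} is also essentially right --- the only detail needing a line is that the non-split extension $B'$ is again a brick, which is routine inside the length-two wide subcategory $\Filt\{S,T\}$ --- and this is exactly one direction of the equivalence the paper asserts.

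The genuine gap is your claim that for a chosen decomposition $\gamma=\alpha+\beta$ \emph{both} convex orders on $\{\alpha,\gamma,\beta\}$ extend to root sequences of $w$. This is not ``a standard feature of reflection orderings'': it fails already in Example~\ref{binv:ex:dinv}. There $w=s_{012301230}$ in type $D_4$ has unique non-Bruhat inversion $\gamma=\alpha_1+2\alpha_0+\alpha_2+\alpha_3$, and one decomposition is $\gamma=\alpha_0+\beta$ with $\beta=\alpha_0+\alpha_1+\alpha_2+\alpha_3$; but $s_0$ is the unique left descent of $w$ (every reduced expression begins with $s_0$), so $\alpha_0$ is the \emph{first} entry of every root sequence of $w$, while $\beta$ is the \emph{last}, and the order $\beta\prec\gamma\prec\alpha_0$ is never realized. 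This is precisely the obstruction the paper alludes to when it mentions ``contractibility of inversion triples'' from \cite{gl}. Your argument would therefore need an extra step locating a decomposition for which both orders \emph{do} occur; combined with the unresolved case in which every decomposition has both summands non-Bruhat (which you yourself flag), the Multiplicity Claim --- and hence the conjecture --- remains open.
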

This conjecture makes sense for any torsion-free classes over any finite-dimensional algebras, but this fails in general. For example, consider $\GG_3$ in Example \ref{binv:ex:dirrt}. Then there is only one brick sequence of $\GG_3$, namely, $2, \substack{1\\2\\2},\substack{1\\2}$. However, $\substack{1\\2\\2}$ is non-simple in $\GG_3$.

\section{Torsion-free classes over path algebras of Dynkin type}\label{binv:sec:5}
In this section, we use the results in the previous section to study torsion-free classes over path algebras of Dynkin type.
\emph{Throughout this section, let $Q$ be a Dynkin quiver, and we use the same notation as in Section \ref{binv:sec:notation}}.
We have a natural surjection of algebras $\Pi \defl kQ$, defined by annihilating all arrows in $\ov{Q}$ which do not appear in $Q$. Thereby we have an embedding $\mod kQ \hookrightarrow \mod \Pi$, and we often identify $\mod kQ$ with a subcategory of $\mod\Pi$.

Let us recall the celebrated theorem of Gabriel:
\begin{theorem}\label{binv:thm:gabriel}
  The assignment $M \mapsto \udim M$ for $M \in \mod kQ$ induces a bijection
  \[
  \udim \colon \ind(\mod kQ) \xrightarrow{\sim} \Phi^+.
  \]
  In other words, indecomposable $kQ$-modules bijectively correspond to positive roots by taking dimension vectors.
\end{theorem}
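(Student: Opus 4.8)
The plan is to give the classical reflection-functor proof of Gabriel's theorem. The single external tool is the family of Bernstein--Gelfand--Ponomarev reflection functors: for a sink $i$ of a finite acyclic quiver $Q'$ there is a functor $S_i^+\colon \mod kQ' \to \mod k(\sigma_i Q')$, where $\sigma_i Q'$ reverses the arrows incident to $i$, and for a source $i$ there is $S_i^-$ in the opposite direction. I would take as known that (a) $S_i^- S_i^+$ is naturally isomorphic to the identity on the full subcategory of modules with no direct summand $\cong S_i$, and symmetrically, so that $S_i^{\pm}$ restrict to mutually inverse equivalences between the corresponding subcategories (in particular preserving indecomposability and non-vanishing), while $S_i^+$ annihilates exactly the sums of copies of $S_i$; and (b) for a module $M$ with no summand $\cong S_i$ one has $\udim S_i^+M = s_i(\udim M)$. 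Granting this, the theorem splits into three claims: $\udim M \in \Phi^+$ for indecomposable $M$, injectivity, and surjectivity; the first two will come from ``reflecting an indecomposable down to a simple'' and the third from ``reflecting a simple up to realize a given positive root''.

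The combinatorial engine is built from an admissible ordering $i_1,\dots,i_n$ of $Q_0$ --- $i_1$ a sink of $Q$, $i_2$ a sink of $\sigma_{i_1}Q$, and so on, with $\sigma_{i_n}\cdots\sigma_{i_1}Q = Q$ --- extended cyclically ($i_{j+n}=i_j$); the associated Coxeter element is $c := s_{i_n}\cdots s_{i_1}\in W$. A Coxeter element has no nonzero fixed vector on $V$, hence (using $c^h=1$ for $h$ its order) $1 + c + \dots + c^{h-1}=0$. Therefore a nonzero $v\in\Z^{Q_0}$ with coordinates $\geq 0$ cannot produce a sequence $v,\ s_{i_1}v,\ s_{i_2}s_{i_1}v,\ \dots$ all of whose terms are nonzero with coordinates $\geq 0$: the terms at positions $0,n,2n,\dots$ are $v,cv,c^2v,\dots$, and $\sum_{j=0}^{h-1}c^j v = 0$ is impossible for a sum of nonzero vectors with coordinates $\geq 0$. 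Specializing $v$ to a positive root $\be$ gives the reduction lemma: the sequence stays in $\Phi^+$ until some term equals a simple root $\al_{i_k}$ (the unique positive root sent below zero by $s_{i_k}$), giving $\be = s_{i_1}\cdots s_{i_{k-1}}(\al_{i_k})$, with $k$ and the word read off deterministically from $\be$.

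Now assemble, using the reflection functors. For an indecomposable $M\in\mod kQ$: if $M\cong S_{i_1}$ we stop; otherwise $M_1 := S_{i_1}^+M$ is a nonzero indecomposable over $\sigma_{i_1}Q$ with $\udim M_1 = s_{i_1}\udim M$, and we continue with the sink $i_2$ of $\sigma_{i_1}Q$. Each $\udim M_j$ is a nonzero vector with coordinates $\geq 0$ and the terms at positions $0,n,2n,\dots$ are $c^j(\udim M)$, so by the previous paragraph the process must halt, necessarily because the current indecomposable has become the simple at the current sink; reading off dimension vectors gives $\udim M = s_{i_1}\cdots s_{i_{k-1}}(\al_{i_k})\in\Phi^+$. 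Since every functor used is an equivalence on the subcategory it was restricted to, $M$ is reconstructed from that terminal simple by the inverse functors, hence is determined up to isomorphism by $\udim M$: this is injectivity. For surjectivity, given $\be\in\Phi^+$, run the reduction lemma to get $\be = s_{i_1}\cdots s_{i_{k-1}}(\al_{i_k})$ and apply $S_{i_{k-1}}^-,\dots,S_{i_1}^-$ in turn to $S_{i_k}$; none of the modules produced along the way is the exceptional simple for the next functor (this follows from the minimality of $k$ in the reduction), so each step is defined, indecomposability is preserved, dimension vectors track the successive partial reflections, and the end result is an indecomposable $kQ$-module of dimension vector $\be$.

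The real content is entirely in the ``black box'': verifying properties (a) and (b) of the reflection functors --- the compatibility $\udim S_i^+M = s_i\udim M$ is a short linear-algebra check, whereas the mutual-inverse-equivalence statement (a) is the technical heart of the BGP construction --- together with the fact that a Coxeter element has no nonzero fixed vector; one must also be a little careful with the sink/source bookkeeping along the cyclically extended admissible ordering. If one prefers to avoid reflection functors, $\udim M\in\Phi^+$ can instead be obtained from the representation-directedness of $kQ$: a Harada--Sai argument shows every indecomposable is a brick with no self-extensions, so that $\dim_k\End_{kQ}(M) - \dim_k\Ext^1_{kQ}(M,M) = 1$ equals the value at $\udim M$ of the (positive definite, Dynkin-type) Tits form $q_Q(x) = \sum_i x_i^2 - \sum_{(i\to j)\in Q_1} x_i x_j$, whose unit vectors are exactly $\Phi$; and one could likewise extract the bijection from the layer modules attached to a reduced word of $w_0$ adapted to $Q$ --- but each of these variants re-proves the same underlying facts.
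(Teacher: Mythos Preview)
Your proof is the standard Bernstein--Gelfand--Ponomarev reflection-functor argument and is correct as a proof of Gabriel's theorem. However, the paper does not actually prove this statement: it is introduced with the phrase ``Let us recall the celebrated theorem of Gabriel'' and is simply stated as background. So there is nothing to compare against --- you have supplied a genuine proof where the paper only quotes a classical result.

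That said, a couple of minor remarks on your write-up. First, the claim that a Coxeter element has no nonzero fixed vector is itself a nontrivial fact (usually deduced from the explicit eigenvalue computation or from $\det(1-c)\neq 0$ in the Dynkin case), so if you intend this as a self-contained proof you should flag it as an ingredient on the same footing as the reflection-functor black box. Second, in the surjectivity step you assert that ``none of the modules produced along the way is the exceptional simple for the next functor''; this is true, but the cleanest justification is not minimality of $k$ per se but rather that each intermediate dimension vector $s_{i_{j}}\cdots s_{i_{k-1}}(\al_{i_k})$ is a positive root distinct from $\al_{i_j}$ (since applying $s_{i_j}$ keeps it positive by construction), and an indecomposable with dimension vector $\neq \al_{i_j}$ cannot be $S_{i_j}$. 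You essentially say this, but the phrasing ``follows from the minimality of $k$'' slightly obscures the actual mechanism.
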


\subsection{Coxeter-sortable elements and torsion-free classes}
We begin with introducing some terminology which we need to give a description of $\torf kQ$.
Put $n:= \# Q_0$. Then a \emph{Coxeter element $c_Q$ of $Q$} is an element $c_Q = s_{u_1} \cdots s_{u_n}$ of $W$ with $Q_0 = \{u_1,\dots,u_n\}$ which satisfies the following condition: if there is an arrow $i \ot j$ in $Q$, then $s_i$ appears before $s_j$ in this expression of $c_Q$.

Let $c = c_Q$ be a Coxeter element of $Q$, and $w$ an element of $W$.
We say that $w$ is \emph{$c$-sortable} if there exists a reduced expression of the form $w = c^{(0)} c^{(1)} \cdots c^{(m)}$ such that each $c^{(i)}$ is a subword of $c$ satisfying $\supp(c^{(0)}) \supset \supp(c^{(1)}) \supset \cdots \supset \supp(c^{(m)})$. We call such an expression a \emph{$c$-sorting word of $w$.}

Now we can state the classification of torsion-free classes in $\mod kQ$, which was first established by \cite{IT}, and then generalized to any acyclic quiver by \cite{AIRT} and \cite{thomas}.
\begin{theorem}[{\cite[Theorem 4.3]{IT}}]\label{binv:thm:itbij}
  Let $Q$ be a Dynkin quiver and $W$ its Weyl group. For $w \in W$, define a subcategory $\FF_Q(w)$ of $\mod kQ$ by
  \[
  \FF_Q(w) := \add \{ M \in \ind(\mod kQ) \, | \, \udim M \in \inv(w) \}.
  \]
  Then the assignment $w \mapsto \FF_Q(w)$ gives a bijection
  \[
  \{ w \, | \, \text{$w$ is $c_Q$-sortable} \} \xrightarrow{\sim} \torf kQ.
  \]
\end{theorem}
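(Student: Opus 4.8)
The plan is to deduce this from Mizuno's bijection $W \xrightarrow{\sim} \torf\Pi$ (Theorem \ref{binv:thm:mizuno}) by means of the inclusion $\mod kQ \hookrightarrow \mod\Pi$. The basic point is that $\mod kQ$ is exactly the subcategory of $\Pi$-modules annihilated by the arrows of $\ov Q$ not lying in $Q$; in particular every $\Pi$-submodule of a $kQ$-module is again a $kQ$-module, and $\Hom_\Pi(M,N) = \Hom_{kQ}(M,N)$ for $M,N \in \mod kQ$. Hence, for $\FF \in \torf\Pi$, the intersection $\rho(\FF) := \FF \cap \mod kQ$ is closed under submodules and extensions inside $\mod kQ$, so $\rho\colon \torf\Pi \to \torf kQ$ is well-defined. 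Moreover $\rho$ is surjective: given $\GG \in \torf kQ$, every $\Pi$-submodule of an object of $\GG$ is a $kQ$-module and hence already lies in $\GG$, so the smallest torsion-free class $\widetilde\GG$ of $\mod\Pi$ containing $\GG$ equals $\Filt_\Pi(\GG)$; and if a $kQ$-module $X$ lies in $\Filt_\Pi(\GG)$, then every term of a $\Pi$-filtration of $X$ with subquotients in $\GG$ is again a $kQ$-module, whence $X \in \GG$ because $\GG$ is extension-closed in $\mod kQ$. Thus $\rho(\widetilde\GG) = \GG$.

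Composing with Mizuno's bijection we obtain a surjection $W \to \torf kQ$, $w \mapsto \FF(w) \cap \mod kQ$. It then suffices to prove: \textbf{(A)} for every $c_Q$-sortable $w$, $\FF_Q(w)$ is a torsion-free class and $\FF_Q(w) = \FF(w) \cap \mod kQ$; and \textbf{(B)} every fibre of $W \to \torf kQ$ contains a $c_Q$-sortable element. Indeed, granting (A) and (B), the restriction of $w \mapsto \FF_Q(w)$ to $c_Q$-sortable elements is surjective (by (B) and (A)) and injective: by Gabriel's theorem (Theorem \ref{binv:thm:gabriel}), $\udim$ identifies $\ind \FF_Q(w)$ with $\inv(w)$, and $w$ is determined by $\inv(w)$ (a standard fact, which can be extracted by iterating Lemma \ref{binv:lem:invcon}).

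One inclusion in (A) is automatic for arbitrary $w$: by Corollary \ref{binv:cor:diminv}(2) an indecomposable $kQ$-module contained in $\FF(w)$ is a brick over $\Pi$ with dimension vector in $\inv(w)$, so $\FF(w) \cap \mod kQ \subseteq \FF_Q(w)$. The reverse inclusion, and the fact that $\FF_Q(w)$ is closed under submodules and extensions, is where $c_Q$-sortability enters, and here I would induct on $\ell(w)$ along the recursive structure of $c_Q$-sortable elements: writing $c_Q = s_i c'$ with $i$ an initial source of $Q$, one has either $i \notin \supp(w)$, so that $w$ is $c'$-sortable for the Dynkin quiver $Q \setminus \{i\}$ and $\FF_Q(w) = \FF_{Q \setminus \{i\}}(w)$ (a torsion-free class of $\mod kQ$, since modules without composition factor $S_i$ form a submodule- and extension-closed subcategory), or $i \in \supp(w)$, equivalently $\al_i \in \inv(w)$, in which case $s_i w$ is sortable for a reoriented Coxeter element. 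In the second case one compares $\FF_Q(w)$ with $\FF_{Q'}(s_i w)$, where $Q'$ is obtained by reversing the arrows at $i$, through the reflection functor at $i$, using $\inv(w) = \{\al_i\} \sqcup s_i(\inv(s_i w))$ (Lemma \ref{binv:lem:invcon}) on the root side and the factorisation $I(w) = I(s_i w)\,I_i$ on the ideal side; each operation preserves torsion-freeness and is compatible with $\rho$, closing the induction. (Equivalently, one may package this via Proposition \ref{binv:prop:categorify}: a $c_Q$-sorting word gives a brick sequence of $\FF(w)$ whose terms one verifies to be $kQ$-modules realising each root of $\inv(w)$, so that $\Filt$ of these bricks, intersected with $\mod kQ$, is precisely $\FF_Q(w)$.)

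The genuinely hard part is (B): one must show that the fibres of $W \to \torf kQ$ are the classes of a lattice congruence on $(W,\leq_R)$ whose minimal elements are exactly the $c_Q$-sortable ones; concretely, that $\FF(v) \cap \mod kQ = \FF(w) \cap \mod kQ$ if and only if $v$ and $w$ have the same image under the $c_Q$-Cambrian projection, and that $c_Q$-sortable elements are the bottoms of these fibres. This is precisely the Cambrian-lattice input behind \cite{IT} (and \cite{AIRT,thomas} in the acyclic case); granting it, $\rho$ becomes, under Mizuno, the Cambrian quotient, and its restriction to the sortable fibre-representatives is the asserted bijection. A more self-contained alternative is to bypass $\Pi$ entirely and argue as in \cite{IT}: in the Dynkin case every torsion-free class of $\mod kQ$ is functorially finite, hence of the form $\Sub U$ for a support $\tau^{-}$-tilting module $U$, and the bijection between such $U$ and $c_Q$-sortable elements (the Cambrian correspondence, as used there) then yields the claim.
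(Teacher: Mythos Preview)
The paper does not prove this theorem at all: it is quoted as a result from \cite{IT} (and its extensions \cite{AIRT,thomas}) and used as a black box. So there is no ``paper's own proof'' to compare against; your proposal is an attempt to supply an argument where the author deliberately chose to cite one.

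That said, a few comments on your sketch. Your reduction via $\rho\colon \torf\Pi \to \torf kQ$ is sound, and the surjectivity argument is fine. In (A), the inclusion $\FF(w)\cap\mod kQ \subseteq \FF_Q(w)$ using Corollary~\ref{binv:cor:diminv}(2) is correct (indecomposable $kQ$-modules are bricks over $\Pi$ since $\End_{kQ}$ is $k$ in Dynkin type). For the reverse inclusion and the torsion-freeness of $\FF_Q(w)$, beware that the paper's Proposition~\ref{binv:prop:fwadd} \emph{uses} Theorem~\ref{binv:thm:itbij} in its proof (to know $\FF_Q(w)$ is extension-closed), so you cannot simply invoke it; your reflection-functor induction is the right idea, but it is exactly the content of the original Ingalls--Thomas argument and needs real work to execute. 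As for (B), you correctly identify it as the crux and then essentially defer to the Cambrian congruence, which is again the substance of \cite{IT}. So your proposal is less an independent proof than an outline of how the cited result is proved, routed through $\Pi$; it is coherent but not self-contained, and the honest dependencies are the same ones the author is citing.
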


\subsection{Simple objects versus Bruhat inversions}
Let $w$ be a $c_Q$-sortable element, then we have a torsion-free class $\FF_Q(w)$ in $\mod kQ$ and a torsion-free class $\FF(w)$ in $\mod \Pi$. The relation between these two was stated implicitly in \cite{AIRT} and the proof was involved, thus we present a brief explanation of it.

We begin with the following observation.
\begin{proposition}[{\cite[Theorem 3.3]{AIRT}}]\label{binv:prop:inkq}
  Let $w$ be a $c_Q$-sortable element in $W$ and $B_1,\dots, B_l$ a brick sequence of $\FF(w) $ associated with a $c_Q$-sorting word of $w$. Then we have $B_i \in \mod kQ$ for each $i$.
\end{proposition}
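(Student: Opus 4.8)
The plan is to argue by induction on $\#Q_0 + \ell(w)$, the cases $\ell(w)=0$ and $\#Q_0=0$ being trivial. Assume $\ell(w)\ge 1$, and let $u$ be the first vertex occurring in the Coxeter word $c_Q$; by the definition of $c_Q$ this $u$ is a sink of $Q$. Throughout I would keep in play the description $\FF(w)=\Sub(\Pi/I(w))$, Proposition \ref{binv:prop:categorify} (so $\udim B_i = s_{u_1}\cdots s_{u_{i-1}}(\al_{u_i})\in\inv(w)$), and the brick-label formula $B_i = I(s_{u_1}\cdots s_{u_{i-1}})/I(s_{u_1}\cdots s_{u_i})$ recalled above. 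I would then split into two cases according to whether $u\in\supp(w)$.

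Suppose first that $u\notin\supp(w)$. Then by Corollary \ref{binv:cor:diminv} the dimension vector of every module in $\FF(w)$ is a non-negative combination of roots supported on $\supp(w)$, hence is annihilated by the idempotent $e_u$; thus $\FF(w)$ is a subcategory of $\mod(\Pi/\Pi e_u\Pi)$, and $\Pi/\Pi e_u\Pi$ is exactly the preprojective algebra of the full subquiver $Q\setminus\{u\}$. Since $w$ lies in the parabolic subgroup generated by the $s_j$ with $j\neq u$ and the given $c_Q$-sorting word of $w$ is simultaneously a $c_{Q\setminus\{u\}}$-sorting word, one checks that $\FF(w)$ together with its maximal green sequence and brick sequence is identified with the corresponding data over $Q\setminus\{u\}$, and the inductive hypothesis (with $\#Q_0$ strictly smaller) applies.

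Now suppose $u\in\supp(w)$. Then the $c_Q$-sorting word of $w$ begins with $s_u$, so $w=s_u w_1$ with $\ell(w_1)=\ell(w)-1$, and the remaining word is a $c^*$-sorting word of $w_1$, where $\sigma_u Q$ reverses all arrows at $u$ (making $u$ a source) and $c^*$ is the corresponding rotation of $c_Q$. The maximal green sequence of $\FF(w)$ attached to the sorting word starts $0\ot\FF(s_u)\ot\cdots$, and since $\Pi/I_u\iso S_u$ we get $\FF(s_u)=\add S_u$ and first brick $B_1=S_u$, a $kQ$-module. For $B_2,\dots,B_l$ I would use a suitable ideal functor (in the style of $I_u\otimes_\Pi-$ from \cite{BIRS}), together with $I(s_u w_1)=I_u I(w_1)$, to compare $\FF(w_1)$ with $\FF(w)$ inside the common category $\mod\Pi$: it carries the maximal green sequence of $\FF(w_1)$ onto the tail $\FF(s_u)\ot\cdots\ot\FF(w)$ and acts on dimension vectors by $s_u$, consistently with $\inv(s_u w_1)=\{\al_u\}\sqcup s_u(\inv(w_1))$ (Lemma \ref{binv:lem:invcon}). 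The decisive point is that this functor intertwines the surjection $\Pi\twoheadrightarrow kQ$ with $\Pi\twoheadrightarrow k(\sigma_u Q)$ via the BGP reflection functor $\Sigma_u^-\colon\mod k(\sigma_u Q)\to\mod kQ$ at the source $u$; granting this, $B_2,\dots,B_l$ is the image under $\Sigma_u^-$ of the brick sequence of $\FF(w_1)$, which consists of $k(\sigma_u Q)$-modules by the inductive hypothesis, so $B_2,\dots,B_l\in\mod kQ$.

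The step I expect to be the main obstacle is exactly this last compatibility: verifying that, through the quotients $\Pi\to kQ$ and $\Pi\to k(\sigma_u Q)$, the ideal functor corresponds to the classical BGP reflection functor, with brick labels matched under this identification. This is the technical core, and is precisely what \cite[Theorem 3.3]{AIRT} establishes, so the proposition may alternatively simply be cited from there. A more self-contained but longer variant keeps the inductive reduction to the final brick: induct on $\ell(w)$, deleting the last letter $s_{u_l}$ of the sorting word (a prefix of a $c_Q$-sorting word is again one), so $B_1,\dots,B_{l-1}$ is the brick sequence of $\FF(ws_{u_l})$, which lies in $\mod kQ$ by induction, and only $B_l$ remains; letting $M$ be the unique indecomposable $kQ$-module with $\udim M=\udim B_l\in\inv(w)$ (Theorem \ref{binv:thm:gabriel}), one shows $B_l\iso M$ using $\FF(w)=\Filt(B_l)*\FF(ws_{u_l})$ (Theorem \ref{binv:thm:brickseq}) and the elementary fact that a positive root which is a non-negative integer combination of an inversion set already belongs to it --- but checking $M\in\FF(w)$ in this variant still ultimately rests on the same reflection-functor input.
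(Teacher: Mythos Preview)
The paper does not give its own proof of this proposition: it is stated as a direct citation of \cite[Theorem 3.3]{AIRT} and used as a black box. So there is nothing to compare your argument against except the original source.

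Your sketch is a reasonable outline of how such a result is proved, and it is in the spirit of the argument in \cite{AIRT}: reduce by reflection at a sink, identifying the ideal functor $I_u\otimes_\Pi-$ on $\mod\Pi$ with the BGP reflection functor through the quotients $\Pi\twoheadrightarrow kQ$ and $\Pi\twoheadrightarrow k(\sigma_u Q)$. You correctly identify this compatibility as the crux, and you also correctly note that establishing it \emph{is} the content of \cite[Theorem 3.3]{AIRT}. In that sense your proposal is not really an independent proof but a roadmap back to the cited result; your own closing remark that ``the proposition may alternatively simply be cited from there'' is exactly what the paper does. The alternative variant you mention (deleting the last letter and handling only $B_l$) does not escape this dependence either, as you observe: showing that the indecomposable $kQ$-module $M$ with $\udim M=\udim B_l$ actually lies in $\FF(w)$ (rather than just in $\mod kQ$) is not obtainable from the general results of Sections~\ref{binv:sec:3}--\ref{binv:sec:4} alone, and again needs the reflection-functor description of layer modules from \cite{AIRT}.
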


Using this, we obtain the following description of $\FF_Q(w)$ via a brick sequence (c.f. \cite[Theorem 3.11]{AIRT}).
\begin{proposition}\label{binv:prop:fwadd}
  Let $w$ be a $c_Q$-sortable element in $W$, and let $B_1, \dots, B_l$ be a brick sequence of $\FF(w)$ associated with a $c_Q$-sorting word of $w$. Then we have $\FF_Q(w) = \add \{ B_1,\dots,B_l \}$. Moreover, $\FF_Q(w)  = \FF(w) \cap \mod kQ$ holds.
\end{proposition}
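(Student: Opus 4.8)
The plan is to prove the equality $\FF_Q(w) = \add\{B_1,\dots,B_l\}$ first, and then read off the identity $\FF_Q(w) = \FF(w)\cap\mod kQ$ from it.

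For the first equality the key inputs are Propositions \ref{binv:prop:inkq} and \ref{binv:prop:categorify}. By the former, each $B_i$ lies in $\mod kQ$; since the embedding $\mod kQ\hookrightarrow\mod\Pi$ is full, $\End_{kQ}(B_i)=\End_\Pi(B_i)$ is a division ring, so each $B_i$ is a brick, in particular an indecomposable $kQ$-module. By the latter, $\{\udim B_1,\dots,\udim B_l\}=\inv(w)$, so each $B_i$ satisfies the defining condition of $\FF_Q(w)$ from Theorem \ref{binv:thm:itbij}, giving $\add\{B_1,\dots,B_l\}\subseteq\FF_Q(w)$. Conversely, I would take $M\in\ind(\mod kQ)$ with $\udim M\in\inv(w)$, pick $i$ with $\udim M=\udim B_i$ (possible since $\inv(w)=\{\udim B_1,\dots,\udim B_l\}$), and invoke Gabriel's theorem (Theorem \ref{binv:thm:gabriel}) --- two indecomposable $kQ$-modules with equal dimension vector are isomorphic --- to get $M\iso B_i$; passing to additive closures yields $\FF_Q(w)\subseteq\add\{B_1,\dots,B_l\}$.

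For the ``moreover'' part, the inclusion $\FF_Q(w)\subseteq\FF(w)\cap\mod kQ$ is immediate: $\FF_Q(w)\subseteq\mod kQ$ by definition, and $\FF_Q(w)=\add\{B_1,\dots,B_l\}\subseteq\FF(w)$ because each $B_i$ lies in $\FF(w)=\Filt(B_1,\dots,B_l)$ (Theorem \ref{binv:thm:brickseq}) and $\FF(w)$, being a torsion-free class, is closed under finite direct sums and direct summands. For the reverse inclusion I would take $M\in\FF(w)\cap\mod kQ$, write $M=\bigoplus_k M_k$ with each $M_k$ indecomposable over $kQ$, note that each $M_k$ is a submodule (direct summand) of $M$ and hence lies in $\FF(w)$, observe that $M_k$ is a brick (an indecomposable module over the representation-finite hereditary algebra $kQ$), and apply Corollary \ref{binv:cor:diminv}(2) to conclude $\udim M_k\in\inv(w)$, whence $M_k\in\FF_Q(w)$ and therefore $M\in\FF_Q(w)$.

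I do not expect a serious obstacle here: once Propositions \ref{binv:prop:inkq} and \ref{binv:prop:categorify} are in hand the whole argument is bookkeeping with dimension vectors together with Gabriel's theorem. The single point that deserves a word of justification is the claim that an indecomposable $kQ$-module is a brick, so that Corollary \ref{binv:cor:diminv}(2) applies to each $M_k$; this is standard for path algebras of Dynkin type, but should one prefer to avoid it, one can instead note that the filtration of $M$ furnished by Theorem \ref{binv:thm:brickseq} consists of $kQ$-submodules of $M$ with subquotients among the $B_i$, and then use that $\FF_Q(w)$ is extension-closed in $\mod kQ$ to conclude $M\in\FF_Q(w)$.
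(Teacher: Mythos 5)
Your argument is correct, and for the equality $\FF_Q(w)=\add\{B_1,\dots,B_l\}$ it coincides with the paper's proof: both use Proposition~\ref{binv:prop:inkq} to place the $B_i$ in $\mod kQ$, Proposition~\ref{binv:prop:categorify} to match dimension vectors with $\inv(w)$, and Gabriel's theorem (Theorem~\ref{binv:thm:gabriel}) to identify indecomposables by dimension vector.

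For the ``moreover'' part your primary route is genuinely different from the paper's. You decompose $M\in\FF(w)\cap\mod kQ$ into indecomposable $kQ$-summands $M_k$, observe each $M_k\in\FF(w)$ (torsion-free classes are closed under direct summands), invoke the fact that indecomposable modules over a Dynkin path algebra are bricks (and remain bricks in $\mod\Pi$ by fullness of the embedding), and apply Corollary~\ref{binv:cor:diminv}(2) to conclude $\udim M_k\in\inv(w)$, hence $M_k\in\FF_Q(w)$. The paper instead takes $M\in\FF(w)\cap\mod kQ$ as a whole, uses the filtration $\FF(w)=\Filt(B_1,\dots,B_l)$ from Theorem~\ref{binv:thm:brickseq}, notes that $\mod kQ$ is closed under subquotients in $\mod\Pi$ so the filtration lives in $\mod kQ$, and then uses that $\FF_Q(w)$ is extension-closed in $\mod kQ$ (Theorem~\ref{binv:thm:itbij}) to conclude $M\in\FF_Q(w)$. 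Your route trades the filtration-plus-extension-closure argument for the classical fact that indecomposables over Dynkin $kQ$ are bricks; this is clean but imports one extra piece of hereditary representation theory (which, as you note, can be proved via the Tits form and vanishing of self-extensions). You also sketch the paper's filtration argument as your alternative, so you have both available; either closes the gap, and the filtration version is the one that stays entirely within the tools already set up in Sections~3--4.
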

\begin{proof}
  By Proposition \ref{binv:prop:inkq}, we have $B_1, \dots, B_l \in \mod kQ$, and $\{ \udim B_1, \dots, \udim B_l \}$$ = \inv(w)$ holds by Proposition \ref{binv:prop:categorify}.
  Since Theorem \ref{binv:thm:gabriel} implies that there exists exactly one indecomposable $kQ$-module which has a fixed dimension vector, every indecomposable $kQ$-module $M$ with $\udim M \in \inv(w)$ should appear in $\{ B_1,\dots, B_l \}$.
  Therefore, by the definition of $\FF_Q(w)$, we have $\FF_Q(w) = \add \{ B_1, \dots, B_l \}$.

  We will prove $\FF_Q(w) = \FF(w) \cap \mod kQ$. Since each $B_i$ belongs to $\FF(w) \cap \mod kQ$, we have $\FF_Q(w) \subset \FF(w) \cap \mod kQ$. Conversely, let $M \in \FF(w) \cap \mod kQ$.
  Then we have $M \in \Filt(B_1, \dots, B_l)$ by Theorem \ref{binv:thm:brickseq}, where $\Filt$ is considered inside $\mod \Pi$.
  On the other hand, since $\mod kQ \subset \mod \Pi$ is closed under subquotients, clearly we have $M \in \Filt_{kQ}(B_1, \dots, B_l)$, where $\Filt_{kQ}$ means we consider it inside $\mod kQ$.
  However, $\add \{B_1,\dots,B_l\} = \FF_Q(w)$ is known to be closed under extensions in $\mod kQ$ since it is a torsion-free class by Theorem \ref{binv:thm:itbij}. Thus $M \in \FF_Q(w)$ holds.
\end{proof}

Now we can state our classification of simple objects in $\FF_Q(w)$.
\begin{theorem}\label{binv:thm:pathmain}
  Let $w$ be a $c_Q$-sortable element of $W$. Then a bijection $\udim \colon \ind \FF_Q(w) \to \inv(w)$ restricts to a bijection
  \[
  \udim \colon \simp \FF_Q(w) \xrightarrow{\sim} \Binv(w).
  \]
  In other words, simple objects in $\FF_Q(w)$ bijectively correspond to Bruhat inversions of $w$ by taking dimension vectors.
\end{theorem}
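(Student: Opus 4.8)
The plan is to reduce this to the preprojective-algebra case already settled in Corollary \ref{binv:cor:main}, by showing that along the brick sequence coming from a $c_Q$-sorting word the two notions of simplicity agree. First I would fix a $c_Q$-sorting word of $w$ and let $B_1,\dots,B_l$ be the associated brick sequence of $\FF(w)$. By Proposition \ref{binv:prop:inkq} each $B_i$ lies in $\mod kQ$, and by Proposition \ref{binv:prop:fwadd} we have $\FF_Q(w) = \add\{B_1,\dots,B_l\} = \FF(w) \cap \mod kQ$. Since a torsion-free class is closed under direct summands, every simple object of $\FF_Q(w)$ is indecomposable, hence isomorphic to some $B_i$; so it suffices to decide which $B_i$ are simple in $\FF_Q(w)$, and to compare this with simplicity in $\FF(w)$.

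The easy direction is ``simple in $\FF(w)$ $\Rightarrow$ simple in $\FF_Q(w)$'': the embedding $\mod kQ \hookrightarrow \mod\Pi$ (restriction of scalars along $\Pi \defl kQ$) is exact and fully faithful, and $\FF_Q(w) \subseteq \FF(w)$, so any short exact sequence in $\mod kQ$ with end terms in $\FF_Q(w)$ is also a short exact sequence in $\mod\Pi$ with end terms in $\FF(w)$; hence simplicity in $\FF(w)$ forces simplicity in $\FF_Q(w)$. For the converse I would invoke Corollary \ref{binv:cor:simplabel}: $B_i$ is simple in $\FF(w)$ if and only if every morphism $B_i \to B_j$ with $i<j$ is zero or injective. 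Because every $B_j$ lies in $\mod kQ$ and the embedding is full and exact, these $\Hom$-spaces and the injectivity condition may be computed inside $\mod kQ$. Now if $B_i$ is simple in $\FF_Q(w)$, then Lemma \ref{binv:lem:simpcri} applied inside $\mod kQ$ says every morphism from $B_i$ to an object of $\FF_Q(w) = \add\{B_1,\dots,B_l\}$ — in particular to each $B_j$ — is zero or injective; combining with the previous sentence, $B_i$ is simple in $\FF(w)$.

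Once this equivalence is in hand, Corollary \ref{binv:cor:main}(1) gives that $B_i$ is simple in $\FF_Q(w)$ if and only if $\udim B_i \in \Binv(w)$. Since $\{\udim B_1,\dots,\udim B_l\} = \inv(w)$ consists of pairwise distinct roots (Proposition \ref{binv:prop:categorify}, Proposition \ref{binv:prop:invdist}) and $\Binv(w) \subseteq \inv(w)$, the bijection $\udim\colon \ind\FF_Q(w) \xrightarrow{\sim} \inv(w)$ restricts to a bijection $\simp\FF_Q(w) \xrightarrow{\sim} \Binv(w)$, as claimed.

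The main obstacle is precisely the equivalence ``simple in $\FF_Q(w)$ $\Leftrightarrow$ simple in $\FF(w)$'': testing simplicity in $\FF(w)$ a priori requires testing morphisms against all $\Pi$-modules in $\FF(w)$, not merely against the $kQ$-modules, and what makes the comparison work is that Corollary \ref{binv:cor:simplabel} lets one test only against the finitely many bricks $B_1,\dots,B_l$, all of which happen to lie in $\mod kQ$ because the brick sequence was produced from a $c_Q$-sorting word. Everything else is formal bookkeeping about the exact fully faithful embedding $\mod kQ \hookrightarrow \mod\Pi$.
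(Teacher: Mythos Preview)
Your proof is correct and follows the same overall strategy as the paper: fix the brick sequence coming from a $c_Q$-sorting word, show that simplicity of $B_m$ in $\FF_Q(w)$ and in $\FF(w)$ coincide, and then invoke Theorem~\ref{binv:thm:main}/Corollary~\ref{binv:cor:main}. The only difference is in the converse direction of the simplicity equivalence. You reduce the test via Corollary~\ref{binv:cor:simplabel} to morphisms $B_i \to B_j$, all of which live in $\mod kQ$; the paper instead argues directly that if $0 \to L \to B_m \to N \to 0$ is a short exact sequence in $\mod\Pi$ with $L,N \in \FF(w)$, then $L,N \in \mod kQ$ because $\mod kQ$ is closed under subquotients in $\mod\Pi$, whence $L,N \in \FF(w) \cap \mod kQ = \FF_Q(w)$. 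The paper's route is a touch more direct and avoids the detour through the brick-sequence criterion, but your argument is perfectly valid.
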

\begin{proof}
  Let $B_1, \dots, B_l$ be a brick sequence of $\FF(w)$ (not $\FF_Q(w)$!) associated to a $c_Q$-sorting word of $w$. Then Proposition \ref{binv:prop:fwadd} says that $\FF_Q(w) = \{ B_1, \dots, B_l \}$.
  Thus it suffices to show the following:

  {\bf (Claim)}: \emph{The following are equivalent for $1 \leq m \leq l$:
  \begin{enumerate}
    \item $\udim B_m \in \Binv(w)$ holds.
    \item $B_m$ is a simple object in $\FF(w)$
    \item $B_m$ is a simple object in $\FF_Q(w)$.
  \end{enumerate}
  }
  The equivalence of (1) and (2) is nothing but Theorem \ref{binv:thm:main}, thus it suffices to show that (2) and (3) are equivalent. This immediately follows from the fact that $\FF_Q(w) = \FF(w) \cap \mod kQ$ holds by Proposition \ref{binv:prop:fwadd} and that $\mod kQ$ is closed under subquotients in $\mod\Pi$.
\end{proof}

As in the case of $\FF(w)$, we can characterize the validity of the Jordan-H\"older property as follows.
\begin{corollary}\label{binv:cor:jhppath}
  Let $w$ be an element of $W$. Then $\FF_Q(w)$ satisfies (JHP) if and only if $\#\Binv(w) = \# \supp(w)$ holds.
\end{corollary}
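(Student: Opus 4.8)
The plan is to transcribe the proof of Theorem~\ref{binv:thm:jhpmain} to the path-algebra setting, replacing $\FF(w)$ by $\FF_Q(w)$ and $\mod\Pi$ by $\mod kQ$. First I would note that $\FF_Q(w)$ has the form $\Sub M$ for some $M \in \mod kQ$: since $kQ$ is representation-finite by Gabriel's theorem (Theorem~\ref{binv:thm:gabriel}), one may take $M := \bigoplus_{B \in \ind\FF_Q(w)} B$, and then $\FF_Q(w) = \Sub M$ because a torsion-free class is closed under submodules. Hence Theorem~\ref{binv:thm:jhpchar} applies to $\FF_Q(w)$ inside $\mod kQ$ and yields: $\FF_Q(w)$ satisfies (JHP) if and only if $\#\simp\FF_Q(w) = \#\supp\FF_Q(w)$.

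Next I would rewrite both sides root-theoretically. For the left-hand side, Theorem~\ref{binv:thm:pathmain} gives the bijection $\udim\colon \simp\FF_Q(w) \xrightarrow{\sim} \Binv(w)$, so $\#\simp\FF_Q(w) = \#\Binv(w)$. For the right-hand side, I would record the path-algebra analogue of Proposition~\ref{binv:prop:suppsame}: under the natural bijection $Q_0 \xrightarrow{\sim} \simp(\mod kQ)$ sending $i$ to $S_i$, the subset $\supp(w)$ corresponds to $\supp\FF_Q(w)$. Indeed, the composition factors of an indecomposable $M \in \ind\FF_Q(w)$ are exactly the $S_i$ with $i$ in the support of $\udim M$, and $\{\udim M : M \in \ind\FF_Q(w)\} = \inv(w)$ by definition of $\FF_Q(w)$; so $\supp\FF_Q(w)$ corresponds to $\bigcup_{\be \in \inv(w)} \supp(\be)$, and the identity $\bigcup_{\be \in \inv(w)} \supp(\be) = \supp(w)$ is precisely the combinatorial fact proved (via a root sequence of $w$) in Proposition~\ref{binv:prop:suppsame}, which can be quoted verbatim. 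This gives $\#\supp\FF_Q(w) = \#\supp(w)$, and combining the two computations finishes the proof.

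Alternatively, I could avoid re-deriving the support statement by transporting everything through $\mod\Pi$: Proposition~\ref{binv:prop:fwadd} gives $\FF_Q(w) = \FF(w) \cap \mod kQ$, and the Claim in the proof of Theorem~\ref{binv:thm:pathmain} shows the simple objects of $\FF_Q(w)$ and of $\FF(w)$ among $B_1,\dots,B_l$ coincide. Since $\mod kQ$ is closed under subquotients in $\mod\Pi$, a composition series of $M \in \FF_Q(w)$ in $\FF_Q(w)$ is literally a composition series in $\FF(w)$, so (JHP) for $\FF_Q(w)$ is equivalent to (JHP) for $\FF(w)$, and Theorem~\ref{binv:thm:jhpmain} applies directly.

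I do not expect a genuine obstacle: every ingredient (the criterion of Theorem~\ref{binv:thm:jhpchar}, the classification of Theorem~\ref{binv:thm:pathmain}, and the support computation of Proposition~\ref{binv:prop:suppsame}) is already in place. The only points requiring a line of care are the verification that $\FF_Q(w) = \Sub M$ so that Theorem~\ref{binv:thm:jhpchar} is applicable, and the observation that "support" is unambiguous under $\mod kQ \subset \mod\Pi$ — both immediate.
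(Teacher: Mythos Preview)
Your first approach is correct and is exactly the paper's own argument: the paper's proof reads ``Immediate from Theorems~\ref{binv:thm:jhpchar} and~\ref{binv:thm:pathmain}, once we observe that $\supp\FF_Q(w)$ is in bijection with $\supp(w)$, which can be proved similarly to Proposition~\ref{binv:prop:suppsame}.'' You are simply more explicit in verifying $\FF_Q(w)=\Sub M$ so that Theorem~\ref{binv:thm:jhpchar} applies, which the paper leaves implicit.

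Your alternative route, however, has a directional gap. From ``a composition series of $M\in\FF_Q(w)$ in $\FF_Q(w)$ is literally a composition series in $\FF(w)$'' you only obtain the implication \emph{(JHP) for $\FF(w)$ $\Rightarrow$ (JHP) for $\FF_Q(w)$}: the converse does not follow, because $\FF(w)$ contains $\Pi$-modules outside $\mod kQ$, and (JHP) for $\FF_Q(w)$ says nothing about their composition series. The repair is easy---observe $\simp\FF_Q(w)=\simp\FF(w)$ and $\supp\FF_Q(w)=\supp\FF(w)$ and compare the numerical criteria of Theorem~\ref{binv:thm:jhpchar} on both sides---but as written the claimed equivalence is not justified.
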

\begin{proof}
  Immediate from Theorems \ref{binv:thm:jhpchar} and \ref{binv:thm:pathmain}, once we observe that $\supp\FF(w)$ are in bijection with $\supp(w)$, which can be proved similarly to Proposition \ref{binv:prop:suppsame}.
\end{proof}
These results generalize the results in \cite{eno}, in which the type A case was proved by direct computation of representations of $Q$.

If we assume that the base field $k$ is algebraically closed, then we can use the following result of \cite{dr} to give a more quick proof of Theorem \ref{binv:thm:pathmain} without using preprojective algebras.
\begin{proposition}[\cite{dr}]\label{binv:prop:dr}
  Let $k$ be an algebraically closed field and $Q$ a Dynkin quiver. Take indecomposable $kQ$-modules $L,M,N$ such that $\udim L + \udim N = \udim M$ holds in $\Phi^+$. Then by interchanging $L$ and $N$ if necessary, there is an exact sequence in $\mod kQ$ of the following form:
  \[
  \begin{tikzcd}
    0 \rar & L \rar & M \rar & N \rar & 0
  \end{tikzcd}
  \]
\end{proposition}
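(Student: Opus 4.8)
The plan is to derive the short exact sequence from a single Euler-form computation together with Lemma \ref{binv:lem:contfig}, and then to use the hypothesis $k=\bar k$ to promote a non-split extension to an indecomposable one, which must be $M$ by Gabriel's theorem.

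\emph{Step 1 (locating a non-split extension).} Put $\al=\udim L$, $\be=\udim N$, $\ga=\udim M$, so $\ga=\al+\be\in\Phi^+$. By Lemma \ref{binv:lem:contfig} we have $\la\al,\be\ra=-1$ for the symmetric bilinear form of $\Phi$. This form is the symmetrization of the Euler form $\la-,-\ra_{kQ}$ of $kQ$ on dimension vectors, so $\la\al,\be\ra_{kQ}+\la\be,\al\ra_{kQ}=-1$; combining this with the Euler--Ringel formula $\la\udim X,\udim Y\ra_{kQ}=\dim\Hom_{kQ}(X,Y)-\dim\Ext^1_{kQ}(X,Y)$ and the fact that every indecomposable $kQ$-module is a brick (as $Q$ is Dynkin) one gets
\[
\dim\Ext^1_{kQ}(L,N)+\dim\Ext^1_{kQ}(N,L)=1+\dim\Hom_{kQ}(L,N)+\dim\Hom_{kQ}(N,L)\ \geq\ 1 .
\]
In particular at least one of the two $\Ext^1$-groups is non-zero, so after interchanging $L$ and $N$ if necessary I may assume $\Ext^1_{kQ}(N,L)\neq 0$ and fix a non-split extension $0\to L\to E\to N\to 0$, so that $\udim E=\ga$.

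\emph{Step 2 (making $E$ indecomposable).} Since $Q$ is acyclic, the variety of representations of dimension vector $\ga$ is an affine space, hence irreducible; and $M$, being the unique indecomposable with $\udim M=\ga$, satisfies $\Ext^1_{kQ}(M,M)=0$, so its orbit is open and therefore dense. Moreover any brick of dimension vector $\ga$ is indecomposable, hence isomorphic to $M$ by Theorem \ref{binv:thm:gabriel}; so it is enough to produce \emph{some} extension $E$ of $N$ by $L$ with $\End_{kQ}(E)=k$. I would take $E$ to be the generic extension of $N$ by $L$ (which exists because $k=\bar k$) and compute $\dim\End_{kQ}(E)$ --- equivalently $\dim\Ext^1_{kQ}(E,E)$, via $\la\ga,\ga\ra_{kQ}=1$ --- by chasing the long exact sequences obtained from $\Hom_{kQ}(-,E)$, $\Hom_{kQ}(L,-)$ and $\Hom_{kQ}(N,-)$ applied to $0\to L\to E\to N\to 0$, exploiting $\Ext^2_{kQ}=0$, the rigidity $\Ext^1_{kQ}(L,L)=\Ext^1_{kQ}(N,N)=0$, and the fact that for the generic extension the relevant connecting maps are as surjective as possible.

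\emph{The main obstacle} is precisely this last computation: showing that the generic (or at least some) extension of $N$ by $L$ is indecomposable. In the situations actually used it is easy. For instance, when $\Hom_{kQ}(L,N)=\Hom_{kQ}(N,L)=0$ --- which holds automatically in type $A_n$, since there every positive root has coordinates in $\{0,1\}$ and hence $\al$ and $\be$ have disjoint support --- the displayed identity forces exactly one $\Ext^1$-group to be one-dimensional; the unique non-split extension $E$ then has $\Ext^1_{kQ}(L,E)\iso\Ext^1_{kQ}(L,N)=0$ and, since $\Ext^1_{kQ}(N,L)$ is one-dimensional and the extension is non-split, $\Ext^1_{kQ}(N,E)=0$, whence $\Ext^1_{kQ}(E,E)=0$, $\End_{kQ}(E)=k$, and $E\iso M$. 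For general Dynkin $Q$ one must also handle the cases with nonzero $\Hom$-spaces; here the key extra input is that over a Dynkin path algebra at most one of $\Ext^1_{kQ}(X,Y)$, $\Ext^1_{kQ}(Y,X)$ can be nonzero (via Auslander--Reiten duality $\Ext^1_{kQ}(N,L)\iso D\Hom_{kQ}(L,\tau N)$ and the directedness of $kQ$), so the reduction of Step 1 in fact gives $\Ext^1_{kQ}(L,N)=0$, which is what makes the $\Hom$--$\Ext$ bookkeeping in Step 2 close. Carrying that bookkeeping out carefully --- and, if one prefers a geometric route, identifying $\dim\overline{\{X:\ X\ \text{has a submodule}\iso L\ \text{with quotient}\iso N\}}$ with $\dim\rep(Q,\ga)$ so that its dense orbit meets $O_M$ --- is the part I expect to be genuinely technical.
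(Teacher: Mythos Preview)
The paper does not prove this proposition; it simply quotes it from \cite{dr} and remarks afterward that ``the proof of this given in \cite{dr} is algebro-geometric.'' So there is no in-paper argument to compare your proposal against.

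On your proposal itself: Step~1 is correct and standard, and your observation that for Dynkin $Q$ at most one of $\Ext^1_{kQ}(L,N)$, $\Ext^1_{kQ}(N,L)$ is nonzero (via Auslander--Reiten duality and the directedness of $\mod kQ$) is correct and a useful reduction. But Step~2 remains a genuine gap, as you yourself acknowledge: you do not actually show that some extension of $N$ by $L$ is indecomposable. The long-exact-sequence ``bookkeeping'' you allude to does not close on its own when the $\Hom$-spaces are nonzero; one really needs either a careful analysis of the generic extension or the orbit-closure argument. The geometric route you sketch at the end --- showing that the locus of representations admitting a submodule isomorphic to $L$ with quotient isomorphic to $N$ is dense and hence meets the open orbit $O_M$ --- is essentially the algebro-geometric argument the paper is citing from \cite{dr}, and it is there that the hypothesis $k=\bar k$ is genuinely used.
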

Note that the proof of this given in \cite{dr} is algebro-geometric. The author do not know whether the same method can be used to study preprojective algebras, and whether this kind of exact sequence always exists in the preprojective algebra case (see Conjecture \ref{binv:conj1}).

Now we can give another proof of Theorem \ref{binv:thm:pathmain} provided that $k$ is algebraically closed.
\begin{proof}[Proof of Theorem \ref{binv:thm:pathmain}]
  Recall that we have a bijection $\udim \colon \ind\FF_Q(w) \to \inv(w)$.
  We will show that $M\in \ind\FF_Q(w)$ is simple in $\FF_Q(w)$ if and only if $\udim M \in \Binv(w)$.
  If $M$ is not simple, then Theorem \ref{binv:thm:binvchar} clearly implies that $\udim M$ is a non-Bruhat inversion of $w$ by taking dimension vectors.
  Thus it suffices to show that if $\udim M$ is a non-Bruhat inversion of $w$, then $M$ is not a simple object in $\FF_Q(w)$.

  By Theorem \ref{binv:thm:binvchar}, there are $\al,\be \in \inv(w)$ such that $\udim M = \al + \be$ holds since $\Phi$ is simply-laced.
  Take indecomposable $kQ$-modules $M_\al$ and $M_\be$ with $\udim M_\al = \al$ and $\udim M_\be = \be$, which exist by Theorem \ref{binv:thm:gabriel}. By definition we have $M_\al,M_\be \in \FF_Q(w)$ holds.
  Then Proposition \ref{binv:prop:dr} implies that there exists a short exact sequence
  \[
  \begin{tikzcd}
    0 \rar & M_\al \rar & M \rar & M_\be \rar & 0
  \end{tikzcd}
  \]
  by interchanging $\al$ and $\be$ if necessary. Clearly this implies that $M$ is not a simple object in $\FF_Q(w)$.
\end{proof}

\begin{appendix}

\section{Description and enumeration for type A}
In this appendix, we focus on type A case and give more explicit and combinatorial description of results in this paper. First, we give an explicit diagrammatic construction of simple objects in $\FF(w)$ by using a \emph{Bruhat inversion graph $G_w$}. Next, we characterize elements $w$ such that $\FF(w)$ satisfies (JHP) in terms of $G_w$, and deduce some numerical consequences.

\emph{Throughout this appendix, we will use the following notation}:
\begin{itemize}
  \item $Q$ is a quiver whose underlying graph is the following Dynkin diagram of type $A_n$:
  \[
  \begin{tikzcd}
    1 \rar[dash] & 2 \rar[dash] & \cdots \rar[dash] & n.
  \end{tikzcd}
  \]
  \item $\Phi$ is a standard root system of type $A_n$ in $V:= \R^{n+1}$, that is, $\Phi = \{ \vare_i - \vare_j \, | \, 1 \leq i,j \leq n+1 \}$, where $\vare_i$ denotes the standard basis of $V$.
  \item We fix simple roots by $\al_i := \vare_i - \vare_{i+1}$ for $1 \leq i \leq n$.
  \item $W$ is the Weyl group of $\Phi$, and we often identify $W = S_{n+1}$ with the symmetric group $S_{n+1}$ acting on the set $[n+1]:=\{1,2,\dots,n,n+1\}$ so that $w(\vare_i) = \vare_{w(i)}$ holds.
  \item For $i,j \in [n+1]$, we denote by $(i \,\, j) \in S_{n+1}$ the transposition of the letter $i$ and $j$, and put $\be_{(i,j)}:= \vare_i - \vare_j$. Then $(i\,\,j)$ is identified with the reflection with respect to $\be_{(i,j)}$.
  \item For $w \in S_{n+1}$, we often use the \emph{one-line notation for $w$}, that is, we write as $w = w(1)w(2) \cdots w(n+1)$.
  \item $\Pi$ is the preprojective algebra of $\Phi$.
  \item $\FF(w) \in \torf\Pi$ is the torsion-free class in $\mod \Pi$ defined in Theorem \ref{binv:thm:mizuno}.
\end{itemize}

First, let us introduce the combinatorial variants of (Bruhat) inversion sets.
\begin{definition}
  Let $w$ be an element of $S_{n+1}$.
  \begin{enumerate}
    \item $\Inv(w)$ consists of a pair $(i,j)$ with $1 \leq i < j \leq n+1$ such that the letter $j$ appears left to $i$ in the one-line notation for $w$.
    \item $\BInv(w)$ consists of a pair $(i,j)$ with $1 \leq i < j \leq n+1$ such that the letter $j$ appears left to $i$ and there is no $k$ with $i < k < j$ such that the letter $k$ appears between $j$ and $i$.
  \end{enumerate}
\end{definition}
This notation is justified by the following, which can be proved by direct calculation.
\begin{proposition}
  Let $w$ be an element of $W = S_{n+1}$ and $i,j \in [n+1]$. Then the following hold.
  \begin{enumerate}
    \item $\be_{(i,j)} \in \inv(w)$ if and only if $(i,j) \in \Inv(w)$.
    \item $\be_{(i,j)} \in \Binv(w)$ if and only if $(i,j) \in \BInv(w)$.
  \end{enumerate}
\end{proposition}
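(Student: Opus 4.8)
The plan is to prove both parts by direct translation into the combinatorics of one-line notation, using the root-theoretic facts already available.

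Part (1) is immediate from the definitions. Since $i<j$, the root $\be_{(i,j)}=\vare_i-\vare_j$ is positive, and by definition $\be_{(i,j)}\in\inv(w)$ iff $w^{-1}(\be_{(i,j)})\in\Phi^-$. Now $w^{-1}(\vare_i-\vare_j)=\vare_{w^{-1}(i)}-\vare_{w^{-1}(j)}$, which lies in $\Phi^-$ exactly when $w^{-1}(i)>w^{-1}(j)$, i.e.\ when the letter $j$ stands to the left of the letter $i$ in $w(1)\cdots w(n+1)$; this is precisely $(i,j)\in\Inv(w)$. (This is the computation already recorded in the Example following Theorem~\ref{binv:thm:binvchar}.)

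For part (2) the key step is an exact length count. By Proposition~\ref{binv:prop:invrefl}(1) we have $\ell(t_{\be_{(i,j)}}w)<\ell(w)$ iff $\be_{(i,j)}\in\inv(w)$, so by (1) we may assume $(i,j)\in\Inv(w)$ and set $p:=w^{-1}(j)<q:=w^{-1}(i)$; in the remaining case $(i,j)\notin\BInv(w)$ and also $\ell(t_{\be_{(i,j)}}w)>\ell(w)$, so there is nothing to prove. Since $t_{\be_{(i,j)}}=(i\,\,j)$ acts from the left, the one-line notation of $(i\,\,j)w$ is obtained from that of $w$ by interchanging the letters $i$ and $j$, i.e.\ by putting $i$ in position $p$ and $j$ in position $q$. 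Using $\ell(u)=\#\{(a,b):a<b,\ u(a)>u(b)\}$ for $u\in S_{n+1}$, I would compare the inversions of $w$ and of $(i\,\,j)w$ one position-pair at a time: the pair $(p,q)$ changes from an inversion to a non-inversion; any pair involving a position $r<p$ or $r>q$ keeps the same total count; and for a position $r$ with $p<r<q$ the two pairs $(p,r)$ and $(r,q)$ keep the same total count unless $i<w(r)<j$, in which case both change from inversions to non-inversions. This yields
\[
\ell(w)-\ell\big((i\,\,j)w\big)\;=\;1+2\cdot\#\{\,r:p<r<q,\ i<w(r)<j\,\},
\]
so $\ell(t_{\be_{(i,j)}}w)=\ell(w)-1$ iff there is no $k$ with $i<k<j$ whose letter appears between $j$ and $i$ in the one-line notation of $w$; combined with $(i,j)\in\Inv(w)$, this is exactly the defining condition of $(i,j)\in\BInv(w)$.

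The argument has no real obstacle: the one point needing care in part (2) is the position-pair bookkeeping, in particular checking that a position $r$ strictly between $p$ and $q$ with $w(r)<i$ or $w(r)>j$ contributes no net change (the two pairs $(p,r)$ and $(r,q)$ merely swap their inversion status). Alternatively, (2) could be deduced from Proposition~\ref{binv:prop:bruhatcover} together with the standard description of the covering relation of the Bruhat order on $S_{n+1}$ (see \cite{bb}), but the self-contained count above is shorter and avoids translating between left and right multiplication.
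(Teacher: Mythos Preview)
Your argument is correct and carries out precisely the ``direct calculation'' that the paper invokes without giving details; both parts are handled exactly as intended. One small slip in your parenthetical remark: for a position $r$ with $p<r<q$ and $w(r)<i$ (respectively $w(r)>j$), the pairs $(p,r)$ and $(r,q)$ do not swap their inversion status but rather each retains it---e.g.\ if $w(r)<i$ then $(p,r)$ is an inversion both before and after, and $(r,q)$ is a non-inversion both before and after---so the net change is zero for a simpler reason than you state, and the length formula and conclusion are unaffected.
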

Let us introduce a way to visualize Bruhat inversions, a \emph{Bruhat inversion graph}. Let $w$ be an element of $W = S_{n+1}$. Consider a square array of boxes with $(n+1)$ rows and $(n+1)$ columns. We name $(i,j)$ to the box in the $i$-th row and $j$-th column, and put a dot in $(i,w(i))$ for each $1 \leq i \leq n+1$. We call it a \emph{diagram of $w$}.
A \emph{Bruhat inversion graph $G_w$} is obtained by connecting every two dots in the diagram of $w$ which correspond to the Bruhat inversion of $w$, that is, we connect $(w^{-1}(i),i)$ and $(w^{-1}(j),j)$ if $(i,j) \in \BInv(w)$ holds.
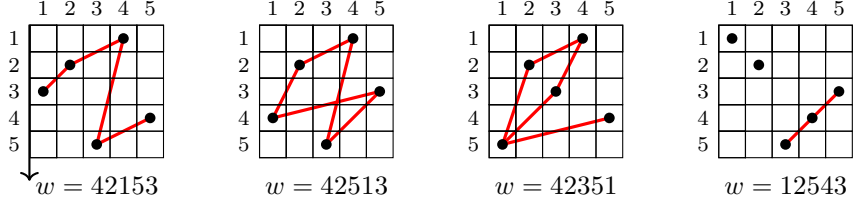
\begin{figure}[h]
  \centering
  \begin{tikzpicture}[
      mycell/.style={draw, minimum size=1em},
      dot/.style={mycell,
          append after command={\pgfextra \fill (\tikzlastnode) circle[radius=.2em]; \endpgfextra}}]

  \matrix (m) [matrix of nodes, row sep=-\pgflinewidth, column sep=-\pgflinewidth,
      nodes={mycell}, nodes in empty cells]
  {
  &&&|[dot]|&\\
  &|[dot]|&&&\\
  |[dot]|&&&&\\
  &&&&|[dot]|\\
  &&|[dot]|&&\\
  };

  \begin{scope}[on background layer, every path/.style={red, very thick}]
    \draw (m-1-4.center) -- (m-2-2.center);
    \draw (m-2-2.center) -- (m-3-1.center);
    \draw (m-1-4.center) -- (m-5-3.center);
    \draw (m-4-5.center) -- (m-5-3.center);
  \end{scope}
  \foreach \i [count=\xi from 1] in  {1,...,5}{
      \node[mycell, label=above:\footnotesize\xi] at (m-1-\i) {};
      \node[mycell,label=left:\footnotesize\xi] at (m-\i-1) {};
  }
  \node[below=3mm of m-5-1.south west] (A) {};
  \draw[->,thick] (m-1-1.north west) -- (A);

  \matrix (n) [matrix of nodes, row sep=-\pgflinewidth, column sep=-\pgflinewidth,
      nodes={mycell}, nodes in empty cells, right=of m]
  {
  &&&|[dot]|&\\
  &|[dot]|&&&\\
  &&&&|[dot]|\\
  |[dot]|&&&&\\
  &&|[dot]|&&\\
  };

  \begin{scope}[on background layer, every path/.style={red, very thick}]
    \draw (n-1-4.center) -- (n-2-2.center);
    \draw (n-2-2.center) -- (n-4-1.center);
    \draw (n-1-4.center) -- (n-5-3.center);
    \draw (n-3-5.center) -- (n-5-3.center);
    \draw (n-3-5.center) -- (n-4-1.center);
  \end{scope}
  \foreach \i [count=\xi from 1] in  {1,...,5}{
      \node[mycell, label=above:\footnotesize\xi] at (n-1-\i) {};
      \node[mycell,label=left:\footnotesize\xi] at (n-\i-1) {};
  }

  \matrix (nn) [matrix of nodes, row sep=-\pgflinewidth, column sep=-\pgflinewidth,
      nodes={mycell}, nodes in empty cells, right=of n]
  {
  &&&|[dot]|&\\
  &|[dot]|&&&\\
  &&|[dot]|&&\\
  &&&&|[dot]|\\
  |[dot]|&&&&\\
  };

  \begin{scope}[on background layer, every path/.style={red, very thick}]
    \draw (nn-1-4.center) -- (nn-2-2.center);
    \draw (nn-1-4.center) -- (nn-3-3.center);
    \draw (nn-2-2.center) -- (nn-5-1.center);
    \draw (nn-3-3.center) -- (nn-5-1.center);
    \draw (nn-4-5.center) -- (nn-5-1.center);
  \end{scope}
  \foreach \i [count=\xi from 1] in  {1,...,5}{
      \node[mycell, label=above:\footnotesize\xi] at (nn-1-\i) {};
      \node[mycell,label=left:\footnotesize\xi] at (nn-\i-1) {};
  }

  \matrix (mm) [matrix of nodes, row sep=-\pgflinewidth, column sep=-\pgflinewidth,
      nodes={mycell}, nodes in empty cells, right=of nn]
  {
  |[dot]|&&&&\\
  &|[dot]|&&&\\
  &&&&|[dot]|\\
  &&&|[dot]|&\\
  &&|[dot]|&&\\
  };

  \begin{scope}[on background layer, every path/.style={red, very thick}]
    \draw (mm-3-5.center) -- (mm-4-4.center);
    \draw (mm-4-4.center) -- (mm-5-3.center);
  \end{scope}
  \foreach \i [count=\xi from 1] in  {1,...,5}{
      \node[mycell, label=above:\footnotesize\xi] at (mm-1-\i) {};
      \node[mycell,label=left:\footnotesize\xi] at (mm-\i-1) {};
  }

  \node[below=0mm of m] {$w=42153$};
  \node[below=0mm of n] {$w=42513$};
  \node[below=0mm of nn] {$w=42351$};
  \node[below=0mm of mm] {$w=12543$};

  \end{tikzpicture}
  \caption{Bruhat inversion graphs $G_w$}
  \label{binv:fig:graph}
\end{figure}
Figure \ref{binv:fig:graph} is examples for $G_w$ for elements $w$ in $S_5$. It is clear from definition that $G_w$ is obtained by connecting every two dots in the diagram of $w$ which look as follows,
\[
\begin{tikzpicture}[
    mycell/.style={draw, minimum size=1em},
    dot/.style={mycell,
        append after command={\pgfextra \fill (\tikzlastnode) circle[radius=.2em]; \endpgfextra}}]
  \matrix (m) [matrix of nodes, row sep=-\pgflinewidth, column sep=-\pgflinewidth,
      nodes={mycell}, nodes in empty cells, row 2/.style={text height=5mm}, column 2/.style={text width = 7mm}]
  {
  & &|[dot]|\\
  & & \\
  |[dot]|&  &\\
  };

  \begin{scope}[on background layer, every path/.style={red, very thick}]
    \fill[gray!50] (m-2-2.north east) rectangle (m-2-2.south west);
    \draw (m-1-3.center) -- (m-3-1.center);

  \end{scope}
\end{tikzpicture}
\]
such that there are no dots in the gray region. It is also clear from definition that an edge in $G_w$ bijectively corresponds to a Bruhat inversion of $w$.

\subsection{Diagrammatic description of simples in $\FF(w)$}\label{binv:sec:A1}
In this subsection, we will give a combinatorial description of simple objects in $\FF(w)$ using \emph{arc diagrams} introduced in \cite{readingarc} and the description of bricks given in \cite{asai2}.
The author would like to thank Y. Mizuno for explaining to him the interpretation of the description in \cite{asai2} in terms of arc diagrams.

Let $w$ be an element of $S_{n+1} = W$. We will construct a $\Pi$-module $B_e$ for each edge $e$ in $G_\pi$ in the following way:
\begin{enumerate}
  \item Remove all the edges in $G_w$ except $e$.
  \item Move down all the dots into a single horizontal line, allowing $e$ to curve, but not to pass through any dots. We call this diagram an \emph{arc diagram} of $e$.
  \item Draw $n$ vertical dashed lines between adjacent dots in the arc diagram, and name these lines as $1,2,\dots,n$ from left to right.
  \item Define a (not necessarily full) subquiver $Q(e)$ of $\ov{Q}$ by the following rule:
  \begin{itemize}
    \item The vertex set of $Q(e)$ consists of $i \in \ov{Q}_0$ such that $e$ and the line $i$ intersect in the arc diagram of $e$.
    \item Suppose that we have $i, i+1 \in Q(e)_0$.
    If the segment of $e$ cut by the lines $i$ and $i+1$ is above the unique dot between these lines, then we put an arrow $i \to i+1$, and put $i \ot i+1$ if the segment is below the dot.
  \end{itemize}
  We call $Q(e)$ \emph{the defining quiver of $B_e$.}
  \item Define a $\Pi$-module $B_e$ as follows, where we construct $B_e$ as a representation of $\ov{Q}$.
  \begin{itemize}
    \item To each $i \in \ov{Q}_0$, we assign $k$ if $i \in Q(e)_0$, and $0$ otherwise.
    \item To each arrow $i \to j \in \ov{Q}$, we assign the identity map if $i \to j \in Q(e)_1$, and $0$ otherwise.
  \end{itemize}
  Since two cycles in $k\ov{Q}$ annihilates $B_e$ by construction, we can regard $B_e$ as a $\Pi$-module.
\end{enumerate}

\begin{example}
  Figure \ref{binv:fig:bex1} is an example of this construction for $w=42513$ and all the edges in $G_w$. The middle part is an arc diagram of three edges, and the right part shows defining quivers of $B_e$ corresponding to magenta, green, red, gray and blue edges from top to bottom.
  \begin{figure}[h]
    \centering
    \caption{Example of $B_e$ for $w=42513$}
    \label{binv:fig:bex1}
    \begin{tikzpicture}
      [ mycell/.style={draw, minimum size=1em},
        dot/.style={mycell,
            append after command={\pgfextra \fill (\tikzlastnode) circle[radius=.2em]; \endpgfextra}}]

      \matrix (n) [matrix of nodes, row sep=-\pgflinewidth, column sep=-\pgflinewidth,
          nodes={mycell}, nodes in empty cells]
      {
      &&&|[dot]|&\\
      &|[dot]|&&&\\
      &&&&|[dot]|\\
      |[dot]|&&&&\\
      &&|[dot]|&&\\
      };

      \node[below=0mm of n] {$G_w$};

      \begin{scope}[on background layer, every path/.style={very thick}]
        \draw[red] (n-3-5.center) -- (n-4-1.center);
        \draw[blue] (n-3-5.center) -- (n-5-3.center);
        \draw[gray] (n-1-4.center) -- (n-5-3.center);
        \draw[green] (n-1-4.center) -- (n-2-2.center);
        \draw[magenta] (n-2-2.center) -- (n-4-1.center);
      \end{scope}

      \begin{scope}[every node/.style={circle, fill=black, inner sep=.5mm, outer sep=0}]
        \node[right=of n] (1) {};
        \node[right=5mm of 1] (2) {};
        \node[right=5mm of 2] (3) {};
        \node[right=5mm of 3] (4) {};
        \node[right=5mm of 4] (5) {};
      \end{scope}

      \begin{scope}[every path/.style={dashed}, every node/.style={font=\footnotesize}]
        \draw ($(1)!0.5!(2) + (0,7mm)$) node[above] {$1$} -- +(0,-14mm);
        \draw ($(2)!0.5!(3) + (0,7mm)$) node[above] {$2$} -- +(0,-14mm);
        \draw ($(3)!0.5!(4) + (0,7mm)$) node[above] {$3$} -- +(0,-14mm);
        \draw ($(4)!0.5!(5) + (0,7mm)$) node[above] {$4$} -- +(0,-14mm);
      \end{scope}

      \begin{scope}
        [thick, rounded corners=8pt]
        \draw[red]
        (1) -- ($(2) - (0,5mm)$) -- ($(3) + (0,5mm)$) -- ($(4) - (0,5mm)$) -- (5) ;
        \draw[blue]
        (3) -- ($(4) - (0,7mm)$) -- (5);
        \draw[gray]
        (3) -- (4);
        \draw[green]
        (2) -- ($(3) + (0,7mm)$) -- (4);
        \draw[magenta]
        (1) -- (2);
      \end{scope}

      \node[below= of 3] {Arc diagrams};

      \matrix (c)
      [matrix of math nodes, column sep=5mm, right=15mm of 5, nodes in empty cells]
      {
      1 & 2 & 3 & 4 \\
      1 & & & & & \\
      & 2 & 3 & & & \\
      1 & 2 & 3 & 4 & &\\
      &  & 3 &&& \\
      \phantom{1}&  & 3 & 4 & &\\
      };

      \draw[decorate, decoration={brace, mirror}, thick] (c-2-1.west) -- (c-6-1.west);

      \node[left=1mm of c-1-1] {$\ov{Q}$:};

      \draw[<->] (c-1-1) -- (c-1-2);
      \draw[<->] (c-1-2) -- (c-1-3);
      \draw[<->] (c-1-3) -- (c-1-4);

      \begin{scope}[every path/.style={->,thick}]
        \draw[green] (c-3-2) -- (c-3-3);
        \draw[red] (c-4-2) -- (c-4-1);
        \draw[red] (c-4-2) -- (c-4-3);
        \draw[red] (c-4-4) -- (c-4-3);
        \draw[blue] (c-6-4) -- (c-6-3);
      \end{scope}
      \draw[magenta,thick] (c-2-5) -- (c-2-6);
      \draw[green,thick] (c-3-5) -- (c-3-6);
      \draw[red,thick] (c-4-5) -- (c-4-6);
      \draw[gray,thick] (c-5-5) -- (c-5-6);
      \draw[blue,thick] (c-6-5) -- (c-6-6);
      \node[left=1mm of c-4-1] {$Q(e)$:};
    \end{tikzpicture}
  \end{figure}

  Figure \ref{binv:fig:bex2} is an example for $w=42351$ and two particular edges in $G_w$. Note that the orientations of edges between $2$ and $3$ in $Q(e)$ may differ as in this example.
  \begin{figure}[h]
    \centering
    \caption{Example of $B_e$ for $w=42351$}
    \label{binv:fig:bex2}
    \begin{tikzpicture}
      [ mycell/.style={draw, minimum size=1em},
        dot/.style={mycell,
            append after command={\pgfextra \fill (\tikzlastnode) circle[radius=.2em]; \endpgfextra}}]

      \matrix (n) [matrix of nodes, row sep=-\pgflinewidth, column sep=-\pgflinewidth,
          nodes={mycell}, nodes in empty cells]
      {
      &&&|[dot]|&\\
      &|[dot]|&&&\\
      &&|[dot]|&&\\
      &&&&|[dot]|\\
      |[dot]|&&&&\\
      };

      \begin{scope}[on background layer, every path/.style={very thick}]
        \draw[red] (n-1-4.center) -- (n-2-2.center);
        \draw[blue] (n-4-5.center) -- (n-5-1.center);
      \end{scope}

      \begin{scope}[every node/.style={circle, fill=black, inner sep=.5mm, outer sep=0}]
        \node[right=of n] (1) {};
        \node[right=5mm of 1] (2) {};
        \node[right=5mm of 2] (3) {};
        \node[right=5mm of 3] (4) {};
        \node[right=5mm of 4] (5) {};
      \end{scope}

      \begin{scope}[every path/.style={dashed}, every node/.style={font=\footnotesize}]
        \draw ($(1)!0.5!(2) + (0,7mm)$) node[above] {$1$} -- +(0,-14mm);
        \draw ($(2)!0.5!(3) + (0,7mm)$) node[above] {$2$} -- +(0,-14mm);
        \draw ($(3)!0.5!(4) + (0,7mm)$) node[above] {$3$} -- +(0,-14mm);
        \draw ($(4)!0.5!(5) + (0,7mm)$) node[above] {$4$} -- +(0,-14mm);
      \end{scope}

      \begin{scope}
        [thick, rounded corners=8pt]
        \draw[blue]
        (1) -- ($(3) - (0,5mm)$) -- (5) ;
        \draw[red]
        (2) -- ($(3) + (0,5mm)$) -- (4);
      \end{scope}

      \matrix (c)
      [matrix of math nodes, column sep=5mm, right=of 5, nodes in empty cells]
      {
      1 & 2 & 3 & 4 \\
      & & & \\
       & 2 & 3 &  \\
      1 & 2 & 3 & 4 \\
      };

      \node[left=0mm of c-1-1] {$\ov{Q}:$};

      \draw[<->] (c-1-1) -- (c-1-2);
      \draw[<->] (c-1-2) -- (c-1-3);
      \draw[<->] (c-1-3) -- (c-1-4);

      \begin{scope}[every path/.style={->,thick}]
        \draw[red] (c-3-2) -- (c-3-3);
        \draw[blue] (c-4-2) -- (c-4-1);
        \draw[blue] (c-4-3) -- (c-4-2);
        \draw[blue] (c-4-4) -- (c-4-3);
      \end{scope}
    \end{tikzpicture}
  \end{figure}
\end{example}

The above construction of arc diagrams is due to \cite{readingarc}. More precisely, in \cite{readingarc}, arcs were assigned only to \emph{descents} of $w$, which are inversions $(i,j) \in \Inv(w)$ such that $w$ is of the form $\cdots j i \cdots$.
Similarly, our construction of $\Pi$-modules $B_e$ is a generalization of the one given in \cite[Theorem 4.6]{asai2}, where $B_e$ was given (without using arc diagrams) for elements $w$ with unique descent.

We will confirm that $B_e$ is the simple object in $\FF(w)$ associated with the Bruhat inversion corresponding to $e$.
\begin{proposition}\label{binv:prop:simpdesc}
  Let $w$ be an element of $W = S_{n+1}$. Take $(i,j) \in \BInv(w)$, and let $e$ denotes the edge in $G_w$ corresponding to it. Then $B_e$ is the unique simple object in $\FF(w)$ with $\udim B_e = \be_{(i,j)}$.
\end{proposition}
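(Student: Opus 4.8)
The plan is to deduce the statement from Corollary \ref{binv:cor:main}: I will show that $B_e$ is a brick, that $\udim B_e = \be_{(i,j)}$, and that $B_e \in \FF(w)$. Since $(i,j) \in \BInv(w)$ forces $\be_{(i,j)} \in \Binv(w)$, part (1) of that corollary then gives that $B_e$ is a simple object of $\FF(w)$, and part (2) (injectivity of $\udim$ on $\simp\FF(w)$) gives that it is the \emph{unique} simple object with dimension vector $\be_{(i,j)}$; this is exactly the assertion.

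First, the dimension vector. By construction $\udim B_e = \sum_{\ell \in Q(e)_0}\al_\ell$, so it suffices to check $Q(e)_0 = \{i,i+1,\dots,j-1\}$. In the arc diagram the $n+1$ dots lie on the baseline in the order of the columns of the diagram of $w$, i.e.\ in the order of values, so the two endpoints of $e$ are the $i$-th and the $j$-th dot and the dashed line $\ell$ separates the $\ell$-th and $(\ell+1)$-st dots. An arc joining the $i$-th and the $j$-th dot while avoiding all dots meets the line $\ell$, exactly once, precisely when $i \le \ell \le j-1$. Hence $Q(e)_0 = \{i,\dots,j-1\}$ and $\udim B_e = \al_i + \cdots + \al_{j-1} = \be_{(i,j)}$.

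Second, $B_e$ is a brick. The underlying graph of $Q(e)$ is the connected path on the vertices $i, i+1, \dots, j-1$, and $B_e$ is the thin representation of $\ov{Q}$ with $k$ at each vertex of $Q(e)_0$, the identity along each arrow of $Q(e)$, and zero elsewhere. An endomorphism of such a representation is a scalar at each vertex of its support, and the identity maps along the connected quiver $Q(e)$ force all of these scalars to coincide; thus $\End_{\ov Q}(B_e) = k$, and since $\mod\Pi$ is a full subcategory of $\mod k\ov Q$ we also get $\End_\Pi(B_e) = k$. So $B_e$ is a brick.

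The substantial point is $B_e \in \FF(w)$, and this is where I expect the main difficulty. The plan is to realize $B_e$ as a layer module: pick a reduced expression $w = s_{u_1}\cdots s_{u_l}$ whose associated root sequence has $\be_{(i,j)}$ in some position $m$ (any reduced expression works, by Proposition \ref{binv:prop:invdist}), chosen so that the brick $B_m$ in the corresponding brick sequence of $\FF(w)$ is transparently $B_e$; since $\be_{(i,j)}$ is a Bruhat inversion, deleting $s_{u_m}$ still yields a reduced expression (Proposition \ref{binv:prop:invrefl}), and in any case $B_m \in \FF(w)$ with $\udim B_m = \be_{(i,j)}$ by Proposition \ref{binv:prop:categorify}. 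It then remains to identify $B_m$ with $B_e$. This is a purely combinatorial comparison between the explicit ideal-quotient description $B_m \iso I(s_{u_{m-1}}\cdots s_{u_1})/I(s_{u_m}\cdots s_{u_1})$ of layer modules from \cite{AIRT} — equivalently, a recursion governed by the successive reflections $s_{u_1},\dots,s_{u_{m-1}}$, extending the unique-descent computation of \cite[Theorem 4.6]{asai2} — and the global arc-diagram recipe defining $B_e$, the crux being that the ``above/below the intervening dot'' rule orienting $Q(e)$ records precisely how the partial product $s_{u_1}\cdots s_{u_{m-1}}$ moves the relevant wires; choosing the reduced expression adapted to the arc diagram of $e$ (in the manner of \cite{readingarc}) should make the two descriptions coincide letter by letter. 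Once $B_m \iso B_e$ is established we have $B_e \in \FF(w)$, and Corollary \ref{binv:cor:main} completes the proof, uniqueness being automatic from the bijectivity in its part (2).
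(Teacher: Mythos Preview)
Your overall architecture is right and matches the paper's endgame: once $B_e$ is known to be a brick in $\FF(w)$ with $\udim B_e=\be_{(i,j)}\in\Binv(w)$, Corollary~\ref{binv:cor:main} finishes the proof. Your computations of $\udim B_e$ and of $\End_\Pi(B_e)=k$ are correct.

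The genuine gap is in the step $B_e\in\FF(w)$. You propose to pick a reduced expression of $w$ and identify $B_e$ with the layer module $B_m$ at the position where $\be_{(i,j)}$ occurs in the root sequence, but you do not carry this identification out; ``choosing the reduced expression adapted to the arc diagram \dots\ should make the two descriptions coincide'' is not a proof. Note that there can be several non-isomorphic bricks in $\FF(w)$ with the same dimension vector, so $\udim B_m=\udim B_e$ alone does not give $B_m\iso B_e$; a structural comparison is really needed, and for a general $w$ this is exactly the nontrivial content you are deferring.

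The paper bypasses this computation by a different reduction. From $w$ and $e$ it builds an auxiliary permutation $w_e$ with a \emph{unique} descent (moving dots and sorting so that only the edge $e$ survives as a descent), arranged so that: (i) the arc of that unique descent in $w_e$ has the same over/under pattern as $e$ in $w$, hence $B_e$ coincides with the module attached to $w_e$ in \cite[Theorem~4.6]{asai2}, which is already known to be the brick label of the unique arrow out of $\FF(w_e)$; and (ii) $\Inv(w_e)\subset\Inv(w)$, hence $w_e\le_R w$ and $\FF(w_e)\subset\FF(w)$, so $B_e\in\brick\FF(w)$. This places the only needed identification inside the unique-descent case already handled by Asai and then transports it via the weak order, avoiding any direct layer-module computation for $w$. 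If you want to complete your route instead, you would have to exhibit an explicit reduced word for $w$ and verify $B_m\iso B_e$ at the module level; the paper's $w_e$ trick is precisely a way to avoid that.
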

\begin{proof}
  First, we will construct another element $w_e$ of $W$ with a unique descent.
  The following picture illustrates the construction, where all the dots lie in the gray regions.
  \[
  \begin{tikzpicture}[
      mycell/.style={draw, minimum size=1em},
      dot/.style={mycell,
          append after command={\pgfextra \fill (\tikzlastnode) circle[radius=.2em]; \endpgfextra}}]
    \matrix (m) [matrix of nodes, row sep=-\pgflinewidth, column sep=-\pgflinewidth,
        nodes={mycell}, nodes in empty cells, every odd row/.style={text height=5mm}, every odd column/.style={text width = 7mm}]
    {
    & & & &\\
    && &|[dot]|&\\
    && & &\\
    &|[dot]|&  &&\\
    & & & &\\
    };
    \begin{scope}[every node/.style={inner sep=0mm}]
      \node at (m-1-1.center) (A) {(A)};
      \node at (m-3-1.center) (B) {(B)};
      \node at (m-5-1.center) (C) {(C)};
      \node at (m-1-5.center) (D) {(D)};
      \node at (m-3-5.center) (E) {(E)};
      \node at (m-5-5.center) (F) {(F)};
    \end{scope}

    \begin{scope}[every path/.style={->, decorate, decoration={zigzag, amplitude=.3mm, segment length=1mm, post length=1mm}}]
      \draw (B) -- (A);
      \draw (C) -- (B);
      \draw (D) -- (E);
      \draw (E) -- (F);
    \end{scope}

    \begin{scope}
      [on background layer, every path/.style={red, very thick, fill={gray!30}}]
      \fill (m-1-1.north east) rectangle (m-1-1.south west);
      \fill (m-1-3.north east) rectangle (m-1-3.south west);
      \fill (m-1-5.north east) rectangle (m-1-5.south west);
      \fill (m-3-1.north east) rectangle (m-3-1.south west);
      \fill (m-3-5.north east) rectangle (m-3-5.south west);
      \fill (m-5-1.north east) rectangle (m-5-1.south west);
      \fill (m-5-3.north east) rectangle (m-5-3.south west);
      \fill (m-5-5.north east) rectangle (m-5-5.south west);
      \draw (m-2-4.center) -- (m-4-2.center);
    \end{scope}

    \matrix (n) [right=of m, matrix of nodes, row sep=-\pgflinewidth, column sep=-\pgflinewidth,
        nodes={mycell}, nodes in empty cells, row 1/.style={text height=5mm}, row 4/.style={text height=5mm}, every odd column/.style={text width = 7mm}]
    {
    & & & &\\
    && &|[dot]|&\\
    &|[dot]|&  &&\\
    & & & &\\
    };

    \begin{scope}
      [on background layer, every path/.style={red, very thick}]
      \draw (n-2-4.center) -- (n-3-2.center);
    \end{scope}

    \draw[fill=gray!30] (n-1-1.north west) rectangle (n-1-3.south east);
    \draw[fill=gray!30] (n-4-3.north west) rectangle (n-4-5.south east);

    \node at (n-1-2.center) {(G)};
    \node at (n-4-4.center) {(H)};

    \node at ($(m)!0.5!(n)$) {$\Rightarrow$};

    \node[right=of n] (s) {Sort (G) and (H) $\quad\Rightarrow\qquad w_e$};

    \node at ($(n)!0.5!(s)$) {$\Rightarrow$};
  \end{tikzpicture}
  \]
  The leftmost diagram is the diagram of $w$, and the red edge indicates $e$. Then perform the following procedure, requiring that all the diagrams in each step are diagrams of some elements in $W$:
  \begin{enumerate}
    \item Move all the dots in (B) and (C) to (A), and those in (D) and (E) to (F).
    \item Sort all the dots in (G) and (H) so that the column number increases from top to bottom.
  \end{enumerate}
  Denote by $w_e$ the resulting element. Then it is clear from the construction that $w_e$ has the unique descent. See Figure \ref{binv:fig:proof} for the example of this process, where $w=56723814$.

  Alternatively, in terms of the one-line notation, we can describe $w_e$ as follow. Let $w = \cdots (a) \cdots j \cdots (b) \cdots i \cdots (c) \cdots$ be the one-line notation for $w$.
  First, move all the numbers in $(b)$ and $(c)$ which are smaller than $i$ to $(a)$, and all the numbers in $(a)$ and $(b)$ which are larger than $j$ to $(c)$.
  Then we obtain an element of the form $\cdots (a) \cdots j i \cdots (c) \cdots$, since $(i,j)$ is a Bruhat inversion of $w$.
  Next, sort the part $(a)$ and $(c)$ in ascending order, and denote by $w_e$ the resulting element. Then $w_e = \cdots \un{j i} \cdots$ has the unique descent at the underlined part.

  It is straightforward to see that the module $B_e$ is the same as the module $B_{w_e}:=B_{e'}$, where $e'$ is the (unique) edge in the Bruhat inversion graph of $w_e$.
  Moreover, it is easily checked that $B_{w_e}$ is nothing but the brick constructed in \cite[Theorem 4.6]{asai2} associated to $w_e$, which has the unique descent.

  It is shown in \cite[Theorem 3.1]{asai2} that $B_{w_e}$ is the label of the unique arrow starting at $\FF(w_e)$. In particular, we have $B_{w_e} \in \brick\FF(w_e)$.
  On the other hand, by construction, it is straightforward to check that $\Inv(w_e) \subset \Inv(w)$ holds, which implies $w_e \leq w$ in the right weak order in $W$, see e.g. \cite[Proposition 3.1.3]{bb}. Thus we have $\FF(w_e) \subset \FF(w)$ in $\torf\Pi$ by Theorem \ref{binv:thm:mizuno}. Hence we have $B_{w_e} \in \brick\FF(w)$.
  Therefore, Corollary \ref{binv:cor:main} implies that $B_{w_e}$ is the unique simple object in $\FF(w)$ with its dimension vector $\be_{(i,j)}$, since $\be_{(i,j)} \in \Binv(w)$.
\end{proof}
By this, we can obtain all the simple objects in $\FF(w)$ by computing $B_e$ for each edge $e$ in $G_w$, as we have done in Figure \ref{binv:fig:bex1}.

\begin{figure}[h]
  \centering
  \caption{Example for $w_e$ in Proposition \ref{binv:prop:simpdesc}}
  \label{binv:fig:proof}
  \begin{tikzpicture}
    [ mycell/.style={draw, minimum size=1em},
      dot/.style={mycell,
          append after command={\pgfextra \fill (\tikzlastnode) circle[radius=.2em]; \endpgfextra}}]

    \matrix (n) [matrix of nodes, row sep=-\pgflinewidth, column sep=-\pgflinewidth,
        nodes={mycell}, nodes in empty cells]
    {
    &&&&|[dot]|&&&\\
    &&&&&|[dot]|&&\\
    &&&&&&|[dot]|&\\
    &|[dot]|&&&&&&\\
    &&|[dot]|&&&&&\\
    &&&&&&&|[dot]|\\
    |[dot]|&&&&&&&\\
    &&&|[dot]|&&&&\\
    };

    \begin{scope}[on background layer, every path/.style={very thick}]
      \draw[red] (n-2-6.center) -- (n-5-3.center);
    \end{scope}

    \matrix (m) [right=of n, matrix of nodes, row sep=-\pgflinewidth, column sep=-\pgflinewidth,
        nodes={mycell}, nodes in empty cells]
    {
    &&&&|[dot]|&&&\\
    &|[dot]|&&&&&&\\
    |[dot]|&&&&&&&\\
    &&&&&|[dot]|&&\\
    &&|[dot]|&&&&&\\
    &&&&&&|[dot]|&\\
    &&&&&&&|[dot]|\\
    &&&|[dot]|&&&&\\
    };

    \begin{scope}[on background layer, every path/.style={very thick}]
      \draw[red] (m-4-6.center) -- (m-5-3.center);
    \end{scope}

    \matrix (mm) [right=of m, matrix of nodes, row sep=-\pgflinewidth, column sep=-\pgflinewidth,
        nodes={mycell}, nodes in empty cells]
    {
    |[dot]|&&&&&&&\\
    &|[dot]|&&&&&&\\
    &&&&|[dot]|&&&\\
    &&&&&|[dot]|&&\\
    &&|[dot]|&&&&&\\
    &&&|[dot]|&&&&\\
    &&&&&&|[dot]|&\\
    &&&&&&&|[dot]|\\
    };

    \begin{scope}[on background layer, every path/.style={very thick}]
      \draw[red] (mm-4-6.center) -- (mm-5-3.center);
    \end{scope}

    \node at ($(n)!0.5!(m)$) {$\Rightarrow$};
    \node at ($(m)!0.5!(mm)$) {$\Rightarrow$};

    \foreach \i [count=\xi from 1] in  {1,...,8}{
        \node[mycell, label=above:\footnotesize\xi] at (m-1-\i) {};
    }
    \foreach \i [count=\xi from 1] in  {1,...,8}{
        \node[mycell, label=above:\footnotesize\xi] at (n-1-\i) {};
    }
    \foreach \i [count=\xi from 1] in  {1,...,8}{
        \node[mycell, label=above:\footnotesize\xi] at (mm-1-\i) {};
    }
    \node[below=0mm of n] {$w=56723814$};
    \node[below=0mm of m] {$52163784$};
    \node[below=0mm of mm] {$w_e =12563478$};

  \end{tikzpicture}

\end{figure}

\subsection{Forest-like permutations and the Jordan-H\"older property}\label{binv:sec:A2}
Next, we will investigate elements $w \in W = S_{n+1}$ such that $\FF(w)$ satisfy the Jordan-H\"older property. By Theorem \ref{binv:thm:jhpmain}, this is equivalent to that $\#\supp(w) = \#\Binv(w)$.
By using \cite{sage}, we calculated the number of such elements in $S_{n+1}$ and obtained a sequence $2,6,22,89,379,1661,\dots$. This coincides with \cite[A111053]{OEIS}, a sequence of the number of \emph{forest-like permutations} defined in \cite{bmb}.
These conditions are equivalent (up to a multiplication by the longest element), as we shall see later.

The following is the result of \cite{bmb} in our context.
\begin{theorem}[{\cite[Theorem 1.1]{bmb}}]\label{binv:thm:bmb}
  Let $w$ be an element of $W = S_{n+1}$. Then the following are equivalent:
  \begin{enumerate}
    \item The Bruhat inversion graph $G_w$ is a forest, that is, it does not contain any cycles as an undirected graph.
    \item Define a map $L_w \colon \Z^n \to \Z^{\Binv(w)}$ by $L_w(e_i)=\sum \{\be \, | \,\be\in \Binv(w), i \in \supp(\be)\}$, where $e_i$ denotes the $i$-th standard basis of $\Z^n$. Then this map is surjective.
    \item $w$ avoids the patterns $4231$ and $3\ov{41}2$ with Bruhat restriction $4 \leftrightarrow 1$, that is, there exist no $1\leq a < b < c < d \leq n+1$ such that the one-line notation for $w$ is of the form $w = \cdots d \cdots b \cdots c \cdots a \cdots$ or $w= \cdots c \cdots d a \cdots b \cdots$.
  \end{enumerate}
\end{theorem}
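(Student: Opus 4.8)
The plan is to establish the equivalences by reducing both (1) and (2) to the purely numerical statement $\#\Binv(w)=\#\supp(w)$, and then recovering (3) from a cycle analysis of $G_w$ (which is, in essence, Theorem 1.1 of \cite{bmb}). Throughout I identify the vertex set of $G_w$ with the set of values $i\in[n+1]$ that occur in some Bruhat inversion, so $G_w$ has an edge $\{i,j\}$ exactly when $\be_{(i,j)}\in\Binv(w)$, and I recall that $\supp(\be_{(i,j)})=\{i,i+1,\dots,j-1\}$ and (combining Proposition \ref{binv:prop:suppsame} with Theorem \ref{binv:thm:binvchar}, which recursively writes every inversion as a sum of Bruhat inversions with the same total support) that $\supp(w)=\bigcup_{\be\in\Binv(w)}\supp(\be)$.

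\emph{Step 1: (2) $\Leftrightarrow$ $\#\Binv(w)=\#\supp(w)$.} The map $L_w$ is, up to a zero block on the coordinates outside $\supp(w)$, the transpose of the map $\varphi_w\colon\Z^{\Binv(w)}\to\Z^{\supp(w)}$ of Theorem \ref{binv:thm:jhpmain}: if $A$ denotes the $0/1$ matrix of $\varphi_w$ with rows indexed by $\supp(w)$, columns by $\Binv(w)$, and $A_{i,\be}=1$ iff $i\in\supp(\be)$, then directly from the definitions $L_w(e_i)=\sum_{\be\,:\,i\in\supp(\be)}\be=A^{\mathsf{T}}(e_i)$ for $i\in\supp(w)$ and $L_w(e_i)=0$ otherwise, so $\im L_w=\im A^{\mathsf{T}}$. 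By Theorem \ref{binv:thm:jhpmain} the map $\varphi_w$ is surjective, hence $A$ has a right inverse over $\Z$, so $A^{\mathsf{T}}$ is a split monomorphism whose image is a free direct summand of rank $\#\supp(w)\le\#\Binv(w)$; this image is all of $\Z^{\Binv(w)}$ precisely when $\#\supp(w)=\#\Binv(w)$ (in which case $A$ is square and unimodular and $A^{\mathsf{T}}$ is invertible), and the inequality $\#\supp(w)>\#\Binv(w)$ is impossible since $\varphi_w$ is onto. So $L_w$ is surjective iff $\#\Binv(w)=\#\supp(w)$.

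\emph{Step 2: (1) $\Leftrightarrow$ $\#\Binv(w)=\#\supp(w)$.} The heart of this step is the structural \textbf{Lemma}: the vertex set of every connected component of $G_w$ is an interval of values. Granting it, distinct components have pairwise disjoint vertex-intervals $[p_K,q_K]$; for $p_K\le k<q_K$, any path in $G_w$ joining $p_K$ to $q_K$ must use an edge $\{i,j\}$ with $i\le k<j$, so $k\in\supp(w)$, whence $\bigcup_{\{i,j\}\in E(K)}\supp(\be_{(i,j)})=\{p_K,\dots,q_K-1\}$; summing over components gives $\#\supp(w)=\sum_K(\#V_K-1)=\#V(G_w)-c(G_w)$, where $c(G_w)$ is the number of edge-bearing components. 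Since any graph satisfies $\#E\ge\#V-c$ with equality exactly when it is a forest, and $\#E(G_w)=\#\Binv(w)$, we conclude $G_w$ is a forest iff $\#\Binv(w)=\#\supp(w)$. To prove the Lemma I would first reduce to the case $\supp(w)=\{1,\dots,n\}$: decomposing $\supp(w)$ into its maximal runs of consecutive Dynkin nodes expresses $w$ as a length-additive product of commuting elements of the corresponding (pairwise non-adjacent) parabolic subgroups, under which both $\Binv(w)$ and $G_w$ split into blocks, so it suffices that a full-support $w\in S_{n+1}$ have $G_w$ connected. Equivalently, one must show that for each $k$ the values $k$ and $k+1$ lie in a common component; starting from a Bruhat inversion $\be_{(i,j)}$ with $i\le k<j$, the cover condition forces the one-line notation to place each value in $(i,j)$ either left of value $j$ or right of value $i$, and choosing such a value $c$ as close as possible (in one-line position) to one of the two ends produces a new Bruhat inversion $\be_{(i,c)}$ or $\be_{(c,j)}$; iterating inward ties the whole block $\{i,\dots,j\}$ together. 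I expect this Lemma to be the main obstacle.

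\emph{Step 3: (1) $\Leftrightarrow$ (3).} First, $G_w$ is triangle-free: if dots of values $i<k<j$ were pairwise joined, then $\be_{(i,k)},\be_{(k,j)}\in\Binv(w)$ would place value $k$ positionally between values $j$ and $i$, contradicting $\be_{(i,j)}\in\Binv(w)$. A similar but longer local analysis shows that the presence of \emph{any} cycle already forces a $4$-cycle, and that a $4$-cycle consists of four dots whose mutual positions fall into exactly two admissible shapes, which translate into an occurrence of $4231$, respectively of $3\ov{41}2$ with the side condition that the ``crossing'' edge of the $4$-cycle be a Bruhat cover (this is precisely the Bruhat restriction $4\leftrightarrow1$); conversely such an occurrence exhibits a $4$-cycle. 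Since this is the combinatorial content of \cite[Theorem 1.1]{bmb}, one may alternatively just invoke that result together with the dictionary above; a from-scratch argument essentially reproves \cite{bmb}, whose most delicate point is handling the vincular/Bruhat condition attached to $3\ov{41}2$.
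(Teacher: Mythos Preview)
This theorem is not proved in the paper at all: it is simply quoted from \cite[Theorem~1.1]{bmb}, and the paper uses it as a black box (the only related argument in the paper is Proposition~\ref{binv:prop:jhpforest}, which shows that condition~(2) here is equivalent to the bijectivity of $\varphi_w$, hence to $\#\Binv(w)=\#\supp(w)$). So there is no ``paper's own proof'' to compare against; your proposal is an independent attempt at the cited result.

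On its own merits, your Step~1 is correct and is exactly the duality argument the paper uses in Proposition~\ref{binv:prop:jhpforest}. Your Step~2 is the interesting part: the counting identity $\#\supp(w)=\#V(G_w)-c(G_w)$ reducing ``forest'' to the numerical criterion is elegant, and the structural Lemma (each component of $G_w$ spans an interval of values) is true. However, your sketch of the Lemma is incomplete. A clean way to finish it is to first prove by strong induction on $j-i$ that whenever $(i,j)\in\Inv(w)$ the values $i$ and $j$ lie in the same component of $G_w$: if $(i,j)\in\BInv(w)$ this is an edge, and otherwise some $i<m<j$ sits positionally between $j$ and $i$, so both $(i,m)$ and $(m,j)$ are inversions of smaller spread and induction applies. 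The interval property then follows by walking along a path from the minimum to the maximum of a component and applying the above to any edge that straddles a given intermediate value $k$. Your reduction to the full-support case and the ``choose $c$ as close as possible'' maneuver are heading in this direction but do not quite close the argument as written.

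Your Step~3 is not really a proof: the assertion that any cycle in $G_w$ forces a $4$-cycle, and that a $4$-cycle corresponds precisely to a $4231$ or $3\ov{41}2$ occurrence with the stated Bruhat restriction, is exactly the combinatorial content of \cite{bmb}, and you ultimately defer to that reference. That is reasonable, since the theorem \emph{is} \cite[Theorem~1.1]{bmb}; but you should be explicit that (1)$\Leftrightarrow$(3) is being quoted rather than reproved.
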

We call $w$ \emph{forest-like} if $G_w$ is a forest. We remark that $w$ is forest-like in our sense if and only if $w_0 w$ is forest-like in the sense of \cite{bmb}, where $w_0 = (n+1)n \cdots 21$ is the longest element.

This turns out to be equivalent to our characterization of $w$ such that $\FF(w)$ satisfies (JHP):
\begin{proposition}\label{binv:prop:jhpforest}
  Let $w$ be an element of $W = S_{n+1}$. Then the following are equivalent:
  \begin{enumerate}
    \item $\FF(w)$ satisfies (JHP), that is, $\#\supp(w) = \#\BInv(w)$ holds.
    \item $w$ is forest-like.
  \end{enumerate}
\end{proposition}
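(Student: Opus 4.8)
The plan is to reduce the statement, by means of Theorem \ref{binv:thm:jhpmain}, to an elementary fact about incidence-type vectors of a graph. Recall that Theorem \ref{binv:thm:jhpmain} gives that $\FF(w)$ satisfies (JHP) if and only if the roots in $\Binv(w)$ are linearly independent in $V = \R^{n+1}$. On the other hand, every Bruhat inversion of $w$ has the form $\be_{(i,j)} = \vare_i - \vare_j$ with $(i,j) \in \BInv(w)$, and by the very construction of the Bruhat inversion graph such a pair $(i,j)$ is exactly an edge of $G_w$ joining the dots in columns $i$ and $j$. Identifying the vertex $(w^{-1}(i),i)$ of $G_w$ with its column label $i \in [n+1]$, I would regard $G_w$ as a finite simple graph on the vertex set $[n+1]$ with edge set $\{\,\{i,j\} : (i,j)\in\BInv(w)\,\}$; under this identification the family of roots $\{\be_{(i,j)} : (i,j)\in\BInv(w)\}$ is precisely the family of edge vectors $\{\vare_i-\vare_j : \{i,j\}\in E(G_w)\}$.

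It then suffices to prove the following standard lemma: for a finite simple graph $H$ on vertex set $[N]$, the vectors $\{\vare_a-\vare_b : \{a,b\}\in E(H)\}\subset\R^N$ are linearly independent if and only if $H$ contains no cycle. I would argue the ``only if'' direction contrapositively: if $H$ has a cycle $a_1\!-\!a_2\!-\!\cdots\!-\!a_k\!-\!a_1$, then $\sum_{t=1}^{k}(\vare_{a_t}-\vare_{a_{t+1}}) = 0$ (indices modulo $k$), and each summand is $\pm$ an edge vector of $H$, so the edge vectors are linearly dependent. For the ``if'' direction I would induct on $\#E(H)$: if $H$ is a forest with at least one edge, some tree component has at least two vertices and hence a leaf $a$, incident to a unique edge $\{a,b\}$; in any vanishing linear combination of the edge vectors, the $\vare_a$-coordinate forces the coefficient of $\vare_a-\vare_b$ to vanish, and what remains is a vanishing combination of the edge vectors of the forest obtained by deleting $\{a,b\}$, which has one fewer edge, so the induction hypothesis applies.

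Putting the two steps together yields $\FF(w)$ satisfies (JHP) $\iff$ the roots in $\Binv(w)$ are linearly independent $\iff$ $G_w$ is a forest $\iff$ $w$ is forest-like, which is the claim (and it also matches the equivalence $\#\supp(w)=\#\BInv(w)$ stated, since for a forest the number of edges equals the rank of its edge vectors, and that rank is $\#\supp(w)$ by Theorem \ref{binv:thm:jhpmain}). I do not expect a genuine obstacle: the only points requiring care are checking that $G_w$ is really a loop-free graph with no repeated edges, so that the notion of cycle in Theorem \ref{binv:thm:bmb}\,(1) agrees with the one used in the lemma, and that the correspondence $(i,j)\leftrightarrow\{i,j\}$ between $\BInv(w)$ and $E(G_w)$ is compatible with $\udim$ via $\be_{(i,j)}=\vare_i-\vare_j$ and Corollary \ref{binv:cor:main}; both are immediate from the definitions recorded in this appendix.
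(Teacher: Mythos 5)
Your argument is correct, and it takes a genuinely different route from the paper's proof. The paper proves the proposition by reducing to a comparison between the map $\varphi_w\colon \Z^{\Binv(w)}\to\Z^{\supp(w)}$ from Theorem~\ref{binv:thm:jhpmain} and the map $L_w$ appearing in Theorem~\ref{binv:thm:bmb}: it shows (via an inclusion $\Z^{\supp(w)}\hookrightarrow\Z^n$ and a $\Z$-duality argument on matrix transposes) that surjectivity of $L_w$ is equivalent to bijectivity of $\varphi_w$, and then cites the nontrivial equivalence of Bousquet-M\'elou--Butler and Woo--Yong that surjectivity of $L_w$ is equivalent to $G_w$ being a forest. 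You instead use condition (3) of Theorem~\ref{binv:thm:jhpmain} --- linear independence of $\Binv(w)$ --- and observe that these roots are precisely the edge vectors $\vare_i-\vare_j$ of the simple graph $G_w$, so the whole proposition collapses to the elementary and standard fact that a family of edge vectors of a simple graph is linearly independent if and only if the graph is acyclic; your sketch of both directions of that fact (cycle gives a syzygy; leaf-stripping gives independence for forests) is correct. The trade-off: the paper's argument explains exactly \emph{why} the $L_w$ criterion of Theorem~\ref{binv:thm:bmb} and the $\varphi_w$ criterion of Theorem~\ref{binv:thm:jhpmain} are dual to each other, which is what feeds into the later link to factoriality of Schubert varieties in Corollary~\ref{binv:cor:typeAjhp}, whereas your argument is shorter, self-contained, and does not rely on Theorem~\ref{binv:thm:bmb}~(1)$\Leftrightarrow$(2) at all --- in fact it reproves that equivalence as a byproduct. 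If you wanted to also recover item~(5) of Corollary~\ref{binv:cor:typeAjhp} (factoriality of $X_w$), you would still need the $L_w$ criterion of \cite{wy}, so the paper's duality computation is not wasted in the larger picture.
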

\begin{proof}
  We will show that the surjectivity of $L_w \colon \Z^n \to \Z^{\Binv(w)}$ is equivalent to the bijectivity of the map $\varphi_w \colon \Z^{\Binv(w)} \to \Z^{\supp(w)}$ in Theorem \ref{binv:thm:jhpmain}.

  Since $\supp(w)$ is a subset of $Q_0 = \{1,2,\dots,n\}$, we have the natural inclusion $\iota \colon \Z^{\supp(w)} \hookrightarrow \Z^n$. Then it is clear from the definition of $L_w$ that $L_w$ is surjective if and only if so is $L'_w := L_w \circ \iota\colon \Z^{\supp(w)} \to \Z^{\Binv(w)}$.

  Now it is straightforward to check that $L'_w$ is nothing but the $\Z$-dual of $\varphi_w$ by calculating matrix representations of $L'_w$ and $\varphi_w$: the transpose of the matrix representing $L'_w$ coincides with the matrix representing $\varphi_w$.
  In particular, since $\varphi_w$ is always surjective, $L'_w$ should be injective. Therefore, the surjectivity of $L_w$ is equivalent to the bijectivity of $L'_w$, which is in turn equivalent to the bijectivity of $\varphi_w$, since $\Z$-dual $\Hom_\Z(-,\Z)$ is a duality of the category of finitely generated free $\Z$-modules.
\end{proof}
The motivation of forest-like permutations in \cite{bmb} comes from the study of Schubert varieties in the flag variety.
Consider the flag variety $\SL_{n+1}(\mathbb{C})/B$, where $B$ is the subgroup of upper triangular matrices.
For $w \in S_{n+1}$, let $e_w$ denote the permutation matrix for $w$. Then the \emph{Schubert variety} $X_w$ is the Zariski closure of the $B$-orbit of $e_w$ in the flag variety.

It is known that $X_w$ is a projective variety, which is not necessarily smooth. There are various studies on the relation between algebro-geometric properties of $X_w$ and combinatorial properties of $w$. For example, $X_w$ is smooth if and only if $w$ avoids the patterns $4231$ and $3412$.
Since there are many excellent papers and books on Schubert varieties, we only refer the reader to the recent survey article \cite{abebi} for the details.

We say that a variety is \emph{factorial} if the local ring at every point is a unique factorization domain. In \cite[Proposition 2]{wy}, it was proved that $X_w$ is factorial if and only if the map $L_w$ in Theorem \ref{binv:thm:bmb} is surjective. Thus Theorem \ref{binv:thm:bmb} characterizes the factoriality of $X_w$ in a combinatorial way.

By combining this result to Theorems \ref{binv:thm:jhpmain} and \ref{binv:thm:bmb}, Proposition \ref{binv:prop:jhpforest} and \cite[Theorem 3.1]{bmb}, we immediately obtain the following summary:
\begin{corollary}\label{binv:cor:typeAjhp}
  Let $w$ be an element of $W = S_{n+1}$. Then the following are equivalent:
  \begin{enumerate}
    \item $G_w$ is forest.
    \item $\#\supp(w) = \#\BInv(w)$ holds.
    \item Elements in $\Binv(w)$ are linearly independent.
    \item $\FF(w)$ satisfies (JHP).
    \item $X_w$ is factorial.
    \item $w$ avoids the pattern $4231$ and $3\ov{41}2$ with Bruhat restriction $4 \leftrightarrow 1$.
  \end{enumerate}
\end{corollary}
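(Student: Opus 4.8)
The plan is to assemble the six-fold equivalence entirely from the results already proved, since each block of equivalences is in hand. First I would quote Theorem~\ref{binv:thm:jhpmain} applied to $\FF(w)$: it gives directly that (2), (3), and (4) are equivalent, i.e. $\FF(w)$ satisfies (JHP) $\iff$ the elements of $\Binv(w)$ are linearly independent in $V$ $\iff$ $\#\supp(w)=\#\BInv(w)$. This disposes of the ``representation-theoretic'' trio in one stroke, with no extra work beyond unwinding the identification $\Z^{(\simp\FF(w))}\cong\Z^{\Binv(w)}$ and $\Z^{\supp\FF(w)}\cong\Z^{\supp(w)}$ that was already set up there.

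Next I would invoke Proposition~\ref{binv:prop:jhpforest}, which says that $\FF(w)$ satisfies (JHP) if and only if $w$ is forest-like, that is, if and only if $G_w$ is a forest; this links (4) to (1). For the equivalence (1) $\iff$ (6) I would simply cite Theorem~\ref{binv:thm:bmb}, comparing its conditions (1) and (3): $G_w$ is a forest precisely when $w$ avoids $4231$ and $3\ov{41}2$ with the Bruhat restriction $4\leftrightarrow 1$. Finally, for (1) $\iff$ (5), I would combine the remaining condition (2) of Theorem~\ref{binv:thm:bmb} — $G_w$ is a forest iff the map $L_w$ is surjective — with \cite[Proposition~2]{wy}, which asserts that the Schubert variety $X_w$ is factorial iff $L_w$ is surjective. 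Chaining these: (2) $\Leftrightarrow$ (3) $\Leftrightarrow$ (4) by Theorem~\ref{binv:thm:jhpmain}, (4) $\Leftrightarrow$ (1) by Proposition~\ref{binv:prop:jhpforest}, (1) $\Leftrightarrow$ (6) by Theorem~\ref{binv:thm:bmb}, and (1) $\Leftrightarrow$ (5) by Theorem~\ref{binv:thm:bmb} together with \cite[Proposition~2]{wy}, proving the corollary.

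At the level of this corollary the only real obstacle is bookkeeping: one must be careful that the Bruhat-restriction pattern and the longest-element twist are stated in the paper's own conventions (the remark following Theorem~\ref{binv:thm:bmb} is exactly what guarantees this), and that the map $L_w$ in condition (2) of Theorem~\ref{binv:thm:bmb} is the same map appearing in \cite{wy}. The genuinely substantive inputs sit upstream — the identification in Proposition~\ref{binv:prop:jhpforest} of $L_w$ (restricted to $\Z^{\supp(w)}$) with the $\Z$-dual of $\varphi_w$, and the fact that Theorem~\ref{binv:thm:jhpmain} rests on the classification of simple objects in Corollary~\ref{binv:cor:main} — but since both are already established, the present statement is a clean synthesis requiring no new argument.
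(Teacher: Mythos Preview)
Your proposal is correct and matches the paper's own treatment: the corollary is stated as an immediate combination of Theorem~\ref{binv:thm:jhpmain}, Theorem~\ref{binv:thm:bmb}, Proposition~\ref{binv:prop:jhpforest}, and \cite[Proposition~2]{wy}, exactly as you chain them. The paper additionally cites \cite[Theorem~3.1]{bmb} (likely for the pattern-avoidance statement), but this is already absorbed into the version of Theorem~\ref{binv:thm:bmb} quoted in the paper, so your argument is complete as written.
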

We remark that there is an explicit formula for the generation function of the number of forest-like permutations in \cite[(2)]{bmb}, and the number grows exponentially, which says that although there are lots of forest-like permutations, the number is relatively small compared to $\# S_{n+1} = (n+1)!$.

Theorem \ref{binv:thm:jhpmain} shows that (2)-(4) above are equivalent for all Dynkin types. It seems that they are also equivalent to (5) for other Dynkin types, by the same argument as in Proposition \ref{binv:prop:jhpforest} using the maps $\varphi_w$ in Theorem \ref{binv:thm:jhpmain} and $L_w$ in Theorem \ref{binv:thm:bmb} and the Monk-Chevalley formula.
Since the author does not have appropriate knowledge on algebraic geometries and Schubert varieties, we state it in the following conjecture:
\begin{conjecture}
  Let $G$ be a simple algebraic group over $\mathbb{C}$, $B$ a Borel subgroup, $\Phi$ the associated root system and $W$ the Weyl group. Fix $w \in W$, and denote by $X_w$ the Schubert variety in $G/B$. Then the following are equivalent:
  \begin{enumerate}
    \item $X_w$ is factorial.
    \item $\#\supp(w) = \#\Binv(w)$ holds, or equivalently, Bruhat inversions of $w$ are linearly independent, or equivalently, the torsion-free class $\FF(w)$ in $\mod \Pi_\Phi$ satisfies (JHP).
  \end{enumerate}
\end{conjecture}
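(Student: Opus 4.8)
The plan is to identify the two sides of Theorem~\ref{binv:thm:jhpmain} with, respectively, the Picard group and the divisor class group of $X_w$, and then to invoke a divisor-class criterion for factoriality. First I would assemble the relevant facts about Schubert varieties, valid for an arbitrary simple $G$: the variety $X_w$ is normal; its big cell $BwB/B \cong \A^{\ell(w)}$ has trivial class group, so the excision sequence, together with the fact that a rational function on $X_w$ that is regular and nowhere zero on $BwB/B$ must be constant, shows that $\operatorname{Cl}(X_w)$ is \emph{free} with basis the classes $[X_v]$ of the codimension-one Schubert subvarieties $X_v$, $v \lessdot w$. By Proposition~\ref{binv:prop:bruhatcover} the set $\{v : v \lessdot w\}$ is in bijection with $\Binv(w)$, so $\operatorname{Cl}(X_w) \cong \Z^{\Binv(w)}$. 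On the other hand the restriction $\operatorname{Pic}(G/B) = \bigoplus_i \Z\,\mathcal{L}_{\omega_i} \to \operatorname{Pic}(X_w)$ is surjective and $\mathcal{L}_{\omega_i}|_{X_w}$ is trivial precisely when $s_i \not\le w$, so $\operatorname{Pic}(X_w) \cong \Z^{\supp(w)}$; since $X_w$ is normal, $X_w$ is factorial if and only if the resulting inclusion $\iota \colon \Z^{\supp(w)} \cong \operatorname{Pic}(X_w) \hookrightarrow \operatorname{Cl}(X_w) \cong \Z^{\Binv(w)}$ is an isomorphism.

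Next I would make $\iota$ explicit via the Chevalley--Monk formula: writing $v = t_{\beta_v} w$ with $\beta_v \in \Phi^+$, one has $c_1(\mathcal{L}_{\omega_i}) \cap [X_w] = \sum_{v \lessdot w} \langle \omega_i, \beta_v^\vee\rangle\,[X_v]$, so the matrix of $\iota$ has $(v,i)$-entry $\langle \omega_i, \beta_v^\vee\rangle$, which is nonzero exactly when $i \in \supp(\beta_v)$. When $\Phi$ is simply-laced, $\langle \omega_i, \beta^\vee\rangle$ equals the coefficient of $\alpha_i$ in $\beta$, so $\iota$ is exactly the $\Z$-transpose of the map $\varphi_w \colon \Z^{\Binv(w)} \to \Z^{\supp(w)}$ of Theorem~\ref{binv:thm:jhpmain} --- the same identification used in Proposition~\ref{binv:prop:jhpforest} for type~A (with the map $L_w$ of Theorem~\ref{binv:thm:bmb}). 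For general $\Phi$, $\iota$ is the transpose of the coroot analogue $\beta \mapsto \sum_i [\beta^\vee : \alpha_i^\vee]\,e_i$; since the coroot of a root is a positive scalar multiple of it, a family of roots is linearly independent if and only if the corresponding family of coroots is, and $\supp(\beta^\vee) = \supp(\beta)$, so this coroot map is a bijection if and only if $\varphi_w$ is.

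Then I would conclude. The map $\varphi_w$, hence its coroot analogue, is always surjective by Theorem~\ref{binv:thm:jhpmain}; therefore $\iota$ is always injective (this recovers $\#\supp(w) \le \#\Binv(w)$), and $\iota$ is an isomorphism if and only if $\varphi_w$ is, if and only if $\#\supp(w) = \#\Binv(w)$, if and only if the Bruhat inversions of $w$ are linearly independent, if and only if $\FF(w)$ satisfies (JHP) --- the last equivalences being precisely Theorem~\ref{binv:thm:jhpmain}. Combined with the factoriality criterion from the first step, this gives the asserted equivalence in all Dynkin types.

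The combinatorial heart of the argument is thus already in hand; the main obstacle is supplying the geometric inputs uniformly: the freeness of $\operatorname{Cl}(X_w)$ on Schubert divisors, the surjectivity of $\operatorname{Pic}(G/B) \to \operatorname{Pic}(X_w)$ with precise kernel $\bigoplus_{i \notin \supp(w)} \Z\,\mathcal{L}_{\omega_i}$, and the Chevalley--Monk formula on $X_w$ itself. These are standard in the literature on Schubert varieties, but pinning down the references and normalisations (especially the root/coroot bookkeeping away from the simply-laced case, which in the end causes no harm) is the part that requires care. Alternatively, if one may invoke the criterion of Woo--Yong \cite{wy} --- that $X_w$ is factorial if and only if the Monk map $L_w$ of Theorem~\ref{binv:thm:bmb} is surjective --- in full generality, the proof reduces to the purely algebraic observation that $L_w$ is the transpose of $\varphi_w$ (up to the coroot substitution) and that $\varphi_w$ is always surjective by Theorem~\ref{binv:thm:jhpmain}.
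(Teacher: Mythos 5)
The statement you are proving is presented in the paper as a \emph{conjecture}, explicitly without proof: the author says he ``does not have appropriate knowledge on algebraic geometries and Schubert varieties'' and leaves it open. So there is no ``paper's own proof'' to compare against --- only a hint (``by the same argument as in Proposition~\ref{binv:prop:jhpforest} using the maps $\varphi_w$ $\dots$ and $L_w$ $\dots$ and the Monk--Chevalley formula''). Your sketch follows exactly the route the paper signals, and it is substantially more fleshed out than anything the paper offers; in that sense it is a genuine contribution rather than a rederivation.

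That said, two points in your argument are thinner than your confident wording suggests, and both occur precisely at the boundary of what the paper establishes. First, your conclusion ``$\iota$ is an isomorphism if and only if $\#\supp(w) = \#\Binv(w)$'' silently uses that $\iota$ is \emph{split} injective, i.e.\ that $\Cokernel(\operatorname{Pic}(X_w)\to\operatorname{Cl}(X_w))$ is torsion-free; an injection $\Z^m\hookrightarrow\Z^n$ with $m=n$ need not be an isomorphism. In the simply-laced case you can transfer this from the surjectivity of $\varphi_w$ in Theorem~\ref{binv:thm:jhpmain}, but for non-simply-laced $\Phi$ that theorem does not apply, so the freeness of the cokernel must be supplied separately (either a combinatorial proof that the coroot map $\beta\mapsto\sum_i[\beta^\vee:\alpha_i^\vee]e_i$ is surjective onto $\Z^{\supp(w)}$, or a geometric reference). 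Second, your claim that ``this coroot map is a bijection if and only if $\varphi_w$ is'' is correct for injectivity (linear independence is scale-invariant) but not automatically for surjectivity over $\Z$: the two matrices differ by non-unit scalar row/column factors $(\alpha_i,\alpha_i)/(\beta,\beta)$, which can change the image lattice. This is harmless only once you already know both maps are surjective --- which is exactly the point above. The geometric inputs you list (normality of $X_w$, freeness of $\operatorname{Cl}(X_w)$ on the Schubert divisors via excision from the big cell, surjectivity of $\operatorname{Pic}(G/B)\to\operatorname{Pic}(X_w)$ with kernel $\bigoplus_{i\notin\supp(w)}\Z\mathcal{L}_{\omega_i}$, the Chevalley formula in $\operatorname{Cl}(X_w)$) are standard and are the natural way to close the conjecture, but, as you acknowledge, each must be pinned to a reference with matching normalizations before the sketch becomes a proof.
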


For type D case, we can use the realization of the Weyl group as a group of signed permutations with even number of negatives. In particular, it seems to be an interesting problem to find an analogue of Corollary \ref{binv:cor:typeAjhp} for type D case.

\end{appendix}

\end{document}